\DeclareSymbolFont{matha}{OML}{txmi}{m}{it}
\DeclareMathSymbol{\varv}{\mathord}{matha}{118}
\newcommand{\diag}{{\rm diag}}
\def\tr{{\rm tr}}
\def\q{\quad}
\newcommand{\SO}{{\mathrm{SO}}}
\newcommand{\Ot}{{\mathrm{O}}}
\def\PSp{{\rm PSp}}
\def\PSU{{\rm PSU}}
\newcommand{\GL}{{\mathrm{GL}}}
\newcommand{\SL}{{\mathrm{SL}}}
\newcommand{\Sp}{{\mathrm{Sp}}}
\newcommand{\SU}{{\mathrm{SU}}}
\newcommand{\GF}{{\mathrm{GF}}}
\newcommand{\PSL}{{\mathrm{PSL}}}
\newcommand{\myupsilon}{{\nu}}
\def\UU{{V}} 
\newtheorem{definition}{Definition}[section]
\newtheorem{lemma}[definition]{Lemma}
\newtheorem{theorem}[definition]{Theorem}
\newtheorem{corollary}[definition]{Corollary}
\newcommand{\CH}{{C}} 
\begin{document}


\title{Presentations on standard generators for classical groups}

\author{C.R.\ Leedham-Green}
\address{
School of Mathematical Sciences, Queen Mary,
University of London, London E1 4NS, United Kingdom
}
\email{c.r.leedham-green@qmul.ac.uk}

\author{E.A.\ O'Brien}
\address{Department of Mathematics, University of Auckland,
Auckland, New Zealand}
\email{e.obrien@auckland.ac.nz}

\thanks{
Both authors were supported by the Marsden Fund of New Zealand 
via grant UOA 1323. 
O'Brien thanks the Hausdorff Institute, Bonn,
for its hospitality while this work was completed.
We thank Peter Brooksbank, Marston Conder,
George Havas, 
Derek Holt, 
Alexander Hulpke, 
Bill Kantor, 
Martin Kassabov, 
and Martin Liebeck for helpful comment and discussion.}


\date{\today}
\maketitle

\begin{abstract}
For each family of finite classical groups, 
and their associated simple quotients, we provide an explicit 
presentation on a specific generating set of size at most 8. 
Since there exist efficient algorithms to construct
this generating set in any copy of the group, 
our presentations can be used to verify claimed isomorphisms
between representations of the classical group.
The presentations are available in {\sc Magma}. 
\end{abstract}

\begin{center}
\begin{small} 
 {\it In memory of Charles Sims who taught us so much} 
\end{small}
\end{center}

\tableofcontents
\setcounter{tocdepth}{1} 

\section{Introduction}
The finite simple groups of Lie type
play a fundamental role in many areas of mathematics. 
Various presentations of these groups satisfying specific requirements 
are known.
Steinberg \cite{Steinberg68,Steinberg81} gave 
the first presentations. An important
variation is that of Curtis-Steinberg-Tits (see \cite[Theorem 2.9.3]{GLS3}).
Motivated in part by a conjecture of \cite{BabaiSzemeredi84},
Babai {\it et al.\ }\cite{Babaietal97} provided presentations 
on a reduced set of Curtis-Steinberg-Tits generators.
Guralnick {\it et al.\ }\cite{GKKL,JEMS}
provide {\it short} presentations (defined below)
for every finite simple group of Lie type except the Ree groups $^2G_2(q)$.
In this paper, we focus on the classical groups.
For each family of finite classical groups, 
and their associated simple quotients, we provide an explicit 
presentation on a specific generating set of size at most 8. 
The most important feature of our work is that 
there exist efficient algorithms to construct
this generating set in any copy of the group, 
so our presentations can be used in practice to verify claimed isomorphisms
between representations of the classical group.
Encodings of the resulting presentations are available 
in {\sc Magma} \cite{Magma}.

The principal motivation for our work is 
the \emph{matrix group recognition project} \cite{OBrien09}; it aims 
to develop high-quality algorithms to compute efficiently 
with an arbitrary subgroup $G$ of $\GL(m,K)$, where $K$ is a finite field
and $G$ is specified by a generating set $Y$.
The {\sc CompositionTree} algorithm \cite{BHLO} 
constructs a composition 
series for $G$, and facilitates membership testing in $G$ and writing 
elements of $G$ as words in $Y$.  Additional functionality building on 
the data structure provided by {\sc CompositionTree} includes, 
for example, the computation of chief series, the 
soluble radical, and Sylow subgroups.  A highly optimised implementation
of this machinery is available in {\sc Magma}. 

Central to that work is a requirement to produce a concrete 
isomorphism between an arbitrary representation of a quasisimple 
group and (a central quotient of) its {\it standard copy}, 
a specific representation of the group.  
The standard copy of $A_n$ is the group acting on $n$ symbols.
Our principal focus here is on the classical groups: 
namely, $\SL(d, q)$, $\Sp(2n, q)$, $\SU(d, q)$, $\Omega^+(2n, q)$, 
$\Omega(2n + 1, q)$, and $\Omega^-(2n, q)$, where $d \geq 2$,
$n \geq 1$ and $q$ is a prime power; 
the standard copy of each is 
a natural representation of the group preserving a designated classical form. 
The standard copy of a quasisimple exceptional group of Lie type over 
$\GF(q)$ is defined in \cite{HRT}: a specific faithful irreducible 
representation of 
minimal dimension of the simply connected group. 

Let $\CH$ be the standard copy of a quasisimple group 
and let $H = \langle X \rangle \leq \GL(m, K)$ be isomorphic 
to $\CH / Z$ where $Z$ is central in $\CH$. 
Informally, a {\it constructive recognition} algorithm for $H$
constructs an isomorphism between $H$ and $\CH / Z $ and 
exploits this isomorphism
to write an arbitrary element of $H$ as a word in its generators $X$.
For a more formal definition, see \cite[p.~192]{Seress03}.

We can realise such an algorithm as follows. 
For each quasisimple group $\CH$, we define a specific 
sequence of \emph{standard generators} $\mathcal{S}$. The first task is to 
construct, as words in $X$, a sequence 
$\mathcal{S}'$ in $H$ 
such that $\mathcal{S} \to \mathcal{S}'$ induces an 
isomorphism from $\CH/Z$ to $H$.
The second task is to solve the
{\it constructive membership problem} for $H$ with
respect to $\mathcal{S}'$: namely, express any given $h\in H$ as a
word in $\mathcal{S}'$, and so as a word in $X$.
Now the isomorphism $\varphi\colon H\to \CH/Z$ that maps $\mathcal{S}'$ to
the images of $\mathcal{S}$ in $\CH/Z$ is \emph{constructive}: 
any given $h \in H$ 
may be written as a word $w(\mathcal{S}')$ in
$\mathcal{S}'$, and the image $\varphi(h)$
is immediately determined by the corresponding word in
the images of 
${\mathcal{S}}$ in $\CH/Z$. Similarly, the inverse of $\varphi$ is constructive.

Standard generators have been defined 
and associated constructive recognition
algorithms developed for all of the quasisimple groups.
The standard generators of $A_n$ are a 3-cycle 
and an $(n-1)$- or $n$-cycle depending on the parity of $n$; 
for a recognition algorithm, see \cite{MR3085037}.
For exceptional groups of rank at least 2, see \cite{CMT,LOB}. For the 
sporadic simple groups, see \cite{Wilson96, atlas_www}.
For the classical groups, see \cite{even,black,odd}; 
we use algorithms of 
Costi \cite{costi-phd}
and Schneider to solve the 
constructive membership problem. 
In all cases, the algorithms demonstrate both 
excellent theoretical and practical performance. 
For example, those in \cite{even,odd} have complexity $O(r^4 \log q)$ where 
the classical group has rank $r$.
Implementations are available in {\sc Magma}. 

All existing constructive recognition algorithms employ random selection 
in $H$ and are Monte Carlo; consequently, there is a small probability of error in the 
claimed isomorphism between $H = \langle X \rangle \leq \GL(m, K)$ and $\CH/Z$.
Presentations on standard generators for (central quotients of) 
quasisimple groups play a fundamental role:
namely, they permit us to verify the correctness of this isomorphism.
Assume we know a presentation $\{ \mathcal{S} \; |\;  \mathcal{R} \}$ 
for $\CH/Z$. 
The standard generators $\mathcal{S'}$ are constructed as words in $X$,
so demonstrating that $\langle X \rangle \geq 
\langle \mathcal{S'} \rangle$;
we use the constructive membership algorithm 
to write the input generators $X$ as words in $\mathcal{S'}$, so demonstrating
that $\langle X \rangle \leq \langle \mathcal{S'} \rangle$;
finally, if the relators in $\mathcal{R}$ evaluate to the identity
on $\mathcal{S'}$, then we have verified the claimed isomorphism.

As part of {\sc CompositionTree}, 
we must often construct both a homomorphism $\theta$
from $G \leq \GL(m, K)$ 
onto the simple quotient of a specified classical group $\CH$, 
and its associated kernel. 
We apply (a natural generalisation of) a constructive recognition algorithm to $G$,
so constructing $\mathcal{S'}$; 
now evaluating $\mathcal{R}$ on $\mathcal{S'}$ provides
normal generators for $\ker \theta$.
More generally, if we construct a composition tree 
for an arbitrary group $G = \langle Y \rangle \leq \GL(m, K)$, 
have a constructive membership test for each of its composition factors, 
and know a presentation on the standard generators of each factor, 
then, as we outline in \cite[Section 4]{BHLO}, 
we can construct a presentation for $G$ on $Y$; this allows
us to verify the correctness of the composition tree constructed for $G$.  

Presentations on standard generators for $A_n$ are well known.
For exceptional groups of rank at least 2, the 
requisite presentations are listed in \cite{LOB}; 
for (most of) the sporadic
simple groups, they are listed in the Atlas \cite{atlas_www}.
In this paper, we address the missing case by listing 
explicit presentations on standard generators for the 
classical groups and their central quotients. 

A more detailed account of the background and context of this work
appears in the survey \cite{OBrien09}.

\subsection{The main result}

Our motivations dictate the following
desiderata for presentations of classical groups.
\begin{enumerate}
\item The presentation is correct.
\item The presentation can be used to verify the
claimed isomorphism.
\item The number of relators is small.
\item The presentation length is ``short".
\end{enumerate}
The requirement that the presentation is correct may seem
obvious, but it is not easy to achieve. 
We have concentrated on producing presentations 
whose correctness is easy to verify. 

The second is realised by providing a presentation
on the standard generators of the group.

The desire for a small number of relators arises because 
we may have a group $G$ that maps onto a classical group $\CH$,  
and a normal subgroup $K$ of $G$ contained in the kernel of the
homomorphism of $G$ onto $\CH$. To verify that
$K$ is the full kernel, we must test
that a preimage of each relator belongs to $K$.

We seek a ``short" presentation
to minimise the cost of isomorphism verification. 
Babai and Szemer{\'e}di \cite{BabaiSzemeredi84}
define the {\it length} of a presentation $\{ X \,|\, R\}$
to be the number of symbols required to write down the presentation.
Typically, two metrics are applied to measure the length. 
One is {\it word length}: $|X|$ plus the sum of the lengths 
of the words in $R$ 
as strings in the alphabet $X \cup X^{-1}$.
A second is {\it bit-length}: $|X|$ plus the 
total number of bits required to encode the 
words in $R$ where all exponents are encoded as binary strings. 
As discussed in \cite{GKKL},
we can readily convert a presentation having bit-length $N$ into 
a presentation having word length $4N$. As we show in 
Section \ref{length}, for our applications the critical metric is bit-length.

We now state the main result, observing that it satisfies these desiderata. 
Each classical group has at most eight 
standard generators, as defined in \cite{even, odd}.
\begin{theorem}\label{main-result}
Every classical group of rank $r$ defined over $\GF(q)$ 
has a presentation on its standard generators
with $O(r)$ relations and total bit-length $O(r + \log q)$. 
\end{theorem}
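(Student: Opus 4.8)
The plan is to treat the six families --- $\SL(d,q)$, $\Sp(2n,q)$, $\SU(d,q)$, $\Omega^+(2n,q)$, $\Omega(2n+1,q)$, and $\Omega^-(2n,q)$ --- uniformly by induction on the rank $r$, exploiting the fact that each classical group of rank $r$ contains a naturally embedded copy of its rank-$(r-1)$ relative together with one further root subgroup. First I would fix, for each family, an explicit dictionary expressing the Steinberg root elements and the Weyl-group generators as short words in the standard generators $\mathcal{S}$. A key feature of $\mathcal{S}$ is that it includes a cycle element $\varv$ that conjugates one embedded rank-$1$ (or rank-$2$) subgroup to the next, so that an entire family of root subgroups is recovered from a single one by successive $\varv$-conjugation; this is precisely what lets a presentation on at most eight symbols encode the full root datum without naming each root element separately.

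Next I would establish the base case: a presentation on $\mathcal{S}$ of the smallest member of each family, such as $\SL(2,q)$, $\Sp(4,q)$, $\SU(3,q)$, and the small orthogonal groups. This is the only stage at which the field $\GF(q)$ enters, and it supplies the $O(\log q)$ term. The field-dependent relators encode three things: the additive structure of a root subgroup, isomorphic to $(\GF(q),+)\cong (C_p)^e$ with $q=p^e$, whose $\mathbb{F}_p$-linear relations need bit-length $O(e)=O(\log q)$; the cyclic torus of order dividing $q-1$, which needs a single power relation of bit-length $O(\log q)$; and the Steinberg relation tying the additive and toral parts together. Crucially there are only $O(1)$ such relators, so the entire field-dependent cost is $O(\log q)$ in bit-length.

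For the inductive step I would add, at each increment of the rank, only a bounded number of relators of \emph{constant} bit-length: the commutativity of the new root subgroup with all non-adjacent ones, captured by $O(1)$ relators phrased through $\varv$-conjugation; the Chevalley or braid relation with the adjacent subgroup, again $O(1)$ relators with small exponents; and the relators pinning down the action of $\varv$ at this level. Because each of the $r$ steps contributes $O(1)$ relators each of $O(1)$ bit-length, the accumulation yields $O(r)$ relators and $O(r)$ additional bit-length; together with the base case this gives the claimed $O(r)$ relations and total bit-length $O(r+\log q)$. The orders of $\varv$ and of the toral elements are fixed by single relators with exponents $O(r)$ or $O(q)$, hence of bit-length $O(\log r)\subseteq O(r)$ and $O(\log q)$ respectively, which does not disturb the budget.

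The hard part is not writing down relators that \emph{hold} in $\CH$ --- that is routine matrix verification --- but proving that they \emph{suffice}, i.e.\ that the abstractly presented group is no larger than $\CH$ (or its central quotient), rather than a proper cover or an unrelated overgroup. Since $\mathcal{S}$ consists of a handful of non-canonical generators, one must first recover the root and Weyl elements as words in $\mathcal{S}$, deduce the Curtis--Steinberg--Tits (or Phan-type) amalgam relations from the stated relators, and then invoke the corresponding presentation theorem to identify the presented group with $\CH$. Carrying this out while keeping every intermediate word of controlled length is the delicate technical core, and the orthogonal families $\Omega^{\pm}$ and $\Omega$ are expected to be the most troublesome, owing to spinor-norm and discriminant complications and to their irregular small-rank base cases, which must be checked separately --- most efficiently by coset enumeration or a direct machine verification.
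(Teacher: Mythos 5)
Your proposal correctly identifies the three sources of cost ($O(\log q)$ from a rank-bounded base case, Weyl-group conjugation to collapse families of root relations, a Steinberg/Curtis--Steinberg--Tits theorem to certify sufficiency), but its architecture is not the paper's and, more importantly, it stops exactly where the proof begins. The paper does not induct on rank. For each family it writes down \emph{one} presentation whose relations fall into four blocks: a presentation of the torus normaliser $N$ (built on Moore's presentation of $S_n$ or of the signed permutation group -- this block alone accounts for all $O(r)$ relations); presentations of the fixed base subgroups $\SL(2,q)$ and $\SU(3,q)$ on specified generators (this is where $\log q$ enters); relations asserting that an explicit generating set for the centraliser in $N$ of each named root element actually centralises it, so that every root element is \emph{defined} as an $N$-conjugate of a named one; and a globally bounded set (at most 56, independent of $r$) of Steinberg commutator relations, one chosen from each orbit of $N$ on unordered pairs of root elements. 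In particular the root-subgroup relations cost $O(1)$ in total, not $O(1)$ per rank increment, and the sufficiency proof consists of deriving every Steinberg relation from these representatives by $N$-conjugation plus closure under addition and multiplication by powers of $\omega$.

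The genuine gap in your plan is that this derivation -- the ``delicate technical core'' you defer -- is the theorem. Your claim that each rank step needs only $O(1)$ relators of constant bit-length to force the new root subgroup to commute with all $O(r)$ non-adjacent ones is precisely what must be proved, and it is false as stated without the orbit analysis: the number of $N$-orbits on pairs of root elements, hence the number of required representatives, depends on congruences such as $q\equiv 1\bmod 3$ and on small parameters ($q=2,3,4$; $d=3,4$), which is why the paper carries extra relations like (iv)(e)--(g) of Theorem 5.4 and relation (vi) of Theorem 7.9, and why exceptional Schur multipliers ($\SU(4,2)$, $\Omega(7,3)$) force additional relations that no uniform CST invocation supplies. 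A rank induction on a fixed eight-element standard generating set also has to re-express the standard generators of the rank-$(r-1)$ subgroup as words in those of the rank-$r$ group at every step, and must handle the fact that Moore-type relations $(UU^{V^j})^2=1$ have bit-length $O(\log j)$, summing to $O(r\log r)$ unless auxiliary generators $V_j=VV_{j-1}$ are introduced as the paper does. Without executing these steps your argument establishes the plausibility of the bounds, not the theorem.
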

For each family of classical groups, and their central quotients,
we exhibit such a presentation. 
Sometimes, for the convenience of theoretical arguments, we list a 
presentation on a (mildly) different generating set.
If so, we record explicitly the relationships between those
generators and the standard generators needed to rewrite the presentation  
on the standard generators; see, for example, \cite[\S 4.4, Remark 4]{Joh90}. 


Our presentation for a classical group $\CH$ of rank at least 2 
on its standard generators usually includes one for the 
normaliser of a maximal torus in a split $BN$ pair, and so
for $S_n$, the symmetric group of relevant degree $n$, or 
the corresponding group of signed permutation matrices of determinant 1. 
Our generators for $S_n$ in its action on $n$ symbols 
are a 2-cycle and $n$-cycle, and 
every presentation for $S_n$ on this generating set has word 
length $O(n)$ (see \cite[A2]{JEMS}). 
At most 56 additional relations are used to define $\CH$.

The structure of the paper is the following. In Section \ref{symmetric}
we provide presentations for $S_n$ and for the group
of signed permutation matrices of degree $n$ and determinant 1. 
In Sections $\ref{CRW}$  and $\ref{SU3q}$ we discuss 
important base cases: $\SL(2, q)$ and $\SU(3, q)$.
In subsequent sections, we provide presentations for 
each of $\SL(d, q)$, $\Sp(2n, q)$, $\SU(d, q)$, 
$\Omega^+(2n, q)$, $\Omega(2n + 1, q)$, and $\Omega^-(2n, q)$. 

The presentations and corresponding matrix generators are available 
\cite{code} in {\sc Magma}.  As we report in Section \ref{verify}, we 
used these extensively to verify the correctness of this work. 

\subsection{Related work}
Babai and Szemer{\'e}di \cite{BabaiSzemeredi84}
formulated the {\it Short Presentation Conjecture}:
there exists a constant $c$ such that
every finite simple group $G$ has a presentation of
bit-length $O(\log^c |G|)$.
The results of \cite{Babaietal97,HulpkeSeress01, Suzuki62} establish
this conjecture with $c=2$ for all finite simple groups, 
with the possible exception of the Ree groups ${}^2G_2(q)$.

The conjecture was motivated by potential 
complexity applications to questions about
matrix groups defined over finite fields 
(see \cite{BabaiSzemeredi84} for details); 
its proof also provided verification for the first constructive
recognition algorithms for classical groups, 
developed by Kantor and Seress \cite{KantorSeress01}. 


In two remarkable papers, 
Guralnick {\it et al.\ }\cite{GKKL, JEMS}
establish much more. We summarise just two of their results.
\begin{theorem}
Every non-abelian finite simple group of rank $r$ defined over ${\GF}(q)$,
with the possible exception of the Ree groups $^2G_2(q)$,
has a presentation with a bounded number of generators
and relations and word length $O(\log r + \log q)$.
Every finite quasisimple group of rank $r$ defined over ${\GF}(q)$,
with the possible exception of the Ree groups $^2G_2(q)$,
has a presentation with at most $9$ generators, $49$ relations,
and bit-length $O(\log r + \log q)$.
\end{theorem}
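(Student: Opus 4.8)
The plan is to reduce everything to a short list of rank-$1$ and rank-$2$ building blocks and then to assemble them economically, controlling separately the two sources of potential blow-up: the field size $q$ and the rank $r$. The first ingredient is a short presentation of the field itself. Writing $q = p^e$, the additive group of $\GF(q)$ is the $\GF(p)$-span of a single element under the Frobenius $x \mapsto x^p$, and its multiplicative structure is pinned down by the minimal polynomial of a field generator. Encoding the $\GF(p)$-module structure together with the defining polynomial, with all exponents written in binary, costs $O(\log q)$ bits, and this is the device that converts the naive $O(q)$ cost of a root group into $O(\log q)$.

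Second, I would present the rank-$1$ groups $\SL(2,q)$ and $\SU(3,q)$ on bounded generating sets with bit-length $O(\log q)$, layering the field presentation beneath a torus acting on a root group, a Weyl involution $w$ satisfying $w t w^{-1} = t^{-1}$, and the single Steinberg-type relation forcing the product structure to be that of $\SL_2$ (respectively $\SU_3$). The Suzuki groups and ${}^2F_4(q)$ admit analogous treatments; the Ree groups ${}^2G_2(q)$ do not, which is precisely why they are excepted.

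For rank at least $2$ I would invoke the Curtis-Steinberg-Tits presentation, realising the group as the amalgam of its rank-$1$ and rank-$2$ subgroups indexed by the nodes and edges of the Dynkin diagram, together with commuting relations for non-adjacent nodes; substituting the short rank-$\leq 2$ presentations already bounds the contribution of $q$. The genuine obstacle is that this naive assembly uses $\Theta(r)$ subgroup presentations and $\Theta(r^2)$ commuting relations. To collapse these I would exploit the self-similar structure of the root system: introduce a generator realising a ``shift'' that permutes the simple roots, present the relevant indexing group (the symmetric or signed-permutation group) in word length $O(\log r)$ via a successive-doubling encoding rather than on a $2$-cycle/$r$-cycle pair—which forces length $\Theta(r)$, as the excerpt notes—and then derive each root-group and commutator relation as a conjugate of a single generic template under this shift. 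Achieving a \emph{bounded} number of relations together with $O(\log r)$ word length simultaneously is the hard part, since the symmetric-group factor resists sublinear word length and one must encode its action so that one template unfolds into all $O(r)$ required instances.

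Finally, to pass from the simple group to the quasisimple statement with the explicit bounds of $9$ generators and $49$ relations, I would adjoin the finitely many relators defining the universal central extension. Because the Schur multipliers of groups of Lie type are small and expressed by known relators, only a bounded number of extra relations of bounded bit-length is needed, and the intermediate central quotients follow by adjoining the appropriate central relator.
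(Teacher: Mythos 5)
You should first be aware that the paper you are working from does not prove this statement at all: it appears in the ``Related work'' section as a verbatim summary of results of Guralnick, Kantor, Kassabov and Lubotzky \cite{GKKL,JEMS}, and the authors explicitly decline to use those presentations (they cannot even convert them to standard generators). So there is no proof in the paper to compare against; your proposal has to be judged as a reconstruction of the GKKL/JEMS argument.

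As such a reconstruction it identifies the right ingredients --- $O(\log q)$-bit encodings of $\GF(q)$ via a minimal polynomial, short bounded presentations of the rank-$1$ and rank-$2$ groups, assembly via Curtis--Steinberg--Tits, and a central-extension step for the quasisimple case --- but it has a genuine gap exactly where you flag ``the hard part.'' Saying that one should ``derive each root-group and commutator relation as a conjugate of a single generic template under this shift'' does not by itself reduce the $\Theta(r)$ subgroup presentations and $\Theta(r^2)$ commuting relations to a bounded number: one needs (a) the non-trivial theorem that $S_n$ (or the relevant Weyl group) has a presentation with a \emph{bounded} number of generators and relations and bit-length $O(\log n)$, which in \cite{GKKL} and \cite{BCLO} is obtained not by ``successive doubling'' but by embedding arguments through groups such as $\PSL(2,p)$ together with a gluing lemma, and (b) a mechanism for deducing that entire families of conjugate root subgroups commute from finitely many commutator relations --- precisely the content of \cite[Lemma 4.5]{GKKL}, reproduced as Lemma \ref{trick} in this paper. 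Without supplying (a) and (b), the count of generators and relations remains unbounded in $r$, so the proposal does not establish the theorem; it is an accurate table of contents for the GKKL proof rather than a proof.
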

\noindent 
Word and bit-length $O(\log r + \log q)$ are both best possible, 
and such presentations are {\it short}.

Rarely do we know explicit words (short or otherwise) to express
standard generators in terms of the generators used in \cite{JEMS},
or vice versa, so it is not feasible to convert their presentations 
to ones on standard generators. 
One significant obstruction is that 
sometimes generators used there 
are identified only by specifying properties
they must satisfy.  Nor do constructive recognition algorithms employing 
their generators exist 
for classical groups, so these presentations cannot be
used directly to verify the necessary isomorphisms. 
Theorem 10.1 of \cite{JEMS} illustrates an additional concern: 
it employs a generator $\sigma$ specified only 
up to certain properties.  This element does not exist in $\Omega^{+}(2n, q)$; 
in private communication in 2012, the authors of \cite{JEMS} fixed the error.  
Huxford \cite{Huxford19} also corrects various errors in 
the presentations given there for $A_n$ and $S_n$. 
Thus, while the results of \cite{GKKL, JEMS} provide spectacular 
answers to long-standing challenging problems,
we believe that our presentations are necessary 
for our significant algorithmic application. 


\subsection{Our approach}
Our general strategy to construct a presentation for 
a classical group $\CH$ is straight-forward.

Our generating set usually contains generators for 
the normaliser $N$ of a maximal torus in a split $BN$ pair, 
and one root element of each type. 
For example, we take
$\left(\begin{smallmatrix}1&1\cr0&1\end{smallmatrix}\right)$ as our root 
element in $\SL(d,q)$,
and both a short and long root element in $\Sp(2n,q)$.  

We take, as our base cases, groups $H$ of minimum rank, 
subject to containing root 
elements of all types.  For example, $H=\SL(2,q)$ in the case of $\SL(d,q)$.
These groups lie naturally in the larger groups of the same type, and 
contain our chosen root elements.  

Our presentation  $\{ \mathcal{S} \; |\;  \mathcal{R} \}$ 
for $\CH$ always contain four sets of  
relations: a presentation for $N$; presentations for the subgroups $H$; 
relations that give the centraliser in $N$ of certain root elements;  
and certain Steinberg relations. 
In some cases extra relations are required when $\CH$ 
has an exceptional Schur multiplier.

For Chevalley groups, the sufficiency of the Steinberg relations 
is due to \cite{Steinberg62,Steinberg81};  
for the twisted groups it follows from \cite{Babaietal97}.
The Steinberg relations consist of relations that
hold in the root subgroups, and relations that express commutators of
root elements in terms of other root elements. 
A sufficient number of
relations of this second kind is included in $\mathcal{R}$ by 
ensuring that one is chosen, or is implied,  
from each orbit of the action of $N$ on the set of unordered
pairs of distinct root elements. 

\subsection{Comments on length} \label{length}
Formally, we define a presentation of a group $G$ to be 
of the form $\{ X,Y \, | \,W,R\}$, where $X$ is the given generating set, 
$Y=\{y_1,\ldots,y_n\}$ are {\it auxiliary} generators, 
$W=\{w_1,\ldots,w_n\}$ are words where $w_i$ is a 
word on $X\cup\{w_1,\ldots,w_{i-1}\}$ which defines $y_i$, and
$R$ is a defining set of relators for $G$, expressed as words in 
$X\cup Y$, and hence, with the use of $W$, as words in $X$.
The sets $Y$ and $W$ are simply tools for expressing the
relators $R$ efficiently as words in $X$,
and should not be counted either as generators or relations.  
For example, the generators $a=\Delta^x$ and $b=\Delta^y$ in 
the presentation for $\SU(3,q)$ given in Section \ref{SU3q}
are superfluous, so we do not count $a$ and $b$ as generators, 
nor their definitions as relations.
Variations of this approach may be used to convert 
a presentation of bit-length $N$ into one of word length $4N$.
We also store words as straight-line programs on $X \cup Y$; for 
a formal definition
and discussion of their significance, see \cite[p.\ 10]{Seress03}.
The most important use of straight-line programs rather 
than words is when we raise a word to a high power. There are 
fast algorithms for raising a matrix over a finite field to a high power 
(see \cite[Section 10]{odd}); so our straight-line programs encode 
a cell as a power of an earlier cell.  Hence, for our purposes, the 
critical metric to assess presentation length is bit-length.

\subsection{Notation}
The dihedral group of order $2n$, where $n$ is positive,  is 
$$D_{2n} := \langle a,b \; \vert  \; b^n=1, b^a=b^{-1}, a^2=1\rangle.$$
The generalised quaternion group of order $2n$, where $n$ is positive and
even, is 
$$Q_{2n} := \langle a,b \; \vert  \; b^n=1, b^a=b^{-1}, a^2=b^{n/2}\rangle.$$
The semidihedral group of order $2n$, where $n$ is positive and 
$n \equiv 0 \bmod 4$, is 
$$SD_{2n} := \langle a,b \; \vert  \; b^{n}=1, b^a=b^{-1 + n/ 2}, a^2=1\rangle.$$
Note that we generalise the usual definitions of these groups.

\section{Presentations for the symmetric group and related groups}
\label{symmetric}
Let $d$ be a positive integer. Define $\epsilon_d=(-1)^{d+1}$. 
If $\epsilon = \pm 1$, and $(a_1,a_2,\ldots,a_d)$ is a cyclic permutation, 
then we define the signed permutation 
$(a_1,a_2,\ldots,a_d)^{\epsilon}$ by $a_i\mapsto a_{i+1}$ for $i<d$, 
and $a_d\mapsto \epsilon a_1$.
In particular, $(a)^-$ is the transposition that interchanges $a$ and~$-a$.

For the symmetric group on $\{1,2,\ldots,d\}$, our generating set 
is $\{U,V\}$, where $U$ and $V$ 
stand for the permutations $(1,2)$ and $(1,2,\ldots,d)$ respectively. 

Let $U'=(1,2)^-$ and $V'=(1,2,\ldots,d)^{\epsilon_d}$ be
permutations on $\{\pm 1,\pm 2,\ldots, \pm d\}$;
so $\langle U',V'\rangle$ may be taken as the group 
of signed permutation matrices of determinant~$1$, 
a subgroup of index 2 in the group of all signed permutations of degree $d$.

Moore \cite[p.\ 358]{Moore} gave the following presentation for $S_d$. 
\begin{theorem}\label{symmetricpres}
The symmetric group $S_d$, for $d>2$, has the following presentation: 
$$\{U,V\;\vert\; U^2=V^d=(UV)^{d-1}=
(U U^V)^3 = (U U^{V^j})^2 = 1 {\rm\ for\ } 2 \leq j \leq d/2 \}.$$  
\end{theorem}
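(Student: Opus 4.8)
The plan is to pin down the order of the group $G$ defined by the displayed presentation by a two-sided estimate, and then to identify $G$ with $S_d$. First I would dispatch the easy direction, that the relators all evaluate to the identity when $U=(1,2)$ and $V=(1,2,\ldots,d)$. Reading indices cyclically modulo $d$, the conjugate $U^{V^{j}}$ is the transposition $(1+j,2+j)$; so $U^2=V^d=1$ is immediate, $(UV)^{d-1}=1$ records that $UV$ is a $(d-1)$-cycle, $(UU^V)^3=1$ is the braid relation for the adjacent transpositions $(1,2)$ and $(2,3)$, and $(UU^{V^j})^2=1$ records that $(1,2)$ and $(1+j,2+j)$ are disjoint for $2\le j\le d/2$. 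This produces an epimorphism $\pi\colon G\twoheadrightarrow S_d$ with $U\mapsto(1,2)$ and $V\mapsto(1,2,\ldots,d)$, whence $|G|\ge d!$.

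The substance is the reverse inequality $|G|\le d!$. Inside $G$ I would set $s_i:=U^{V^{i-1}}$ for $1\le i\le d-1$, so that $s_1=U$ and $s_{i+1}=s_i^{V}$, and then derive the complete set of Coxeter relations of $S_d$ on $s_1,\ldots,s_{d-1}$ purely by conjugating the given relators by powers of $V$. Indeed $s_i^2=1$ and $(s_is_{i+1})^3=1$ are the $V^{i-1}$-conjugates of $U^2=1$ and $(UU^V)^3=1$, while $(s_is_j)^2=1$ for $|i-j|\ge 2$ is the $V^{i-1}$-conjugate of $(UU^{V^{t}})^2=1$ with $t=|i-j|$. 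The one genuinely delicate step, and the part I expect to require the most care, is that the presentation supplies this last relator only for $2\le t\le d/2$; I would recover it for $d/2<t\le d-2$ by conjugating $(UU^{V^{d-t}})^2=1$ by $V^{t}$, using $V^d=1$ and the identity $(ab)^2=1\Leftrightarrow(ba)^2=1$. Verifying that the pairs $\{t,d-t\}$ exactly exhaust $\{2,\ldots,d-2\}$ while correctly excluding the adjacent difference $t=1$ and its wrap-around partner $t=d-1$ is precisely the cyclic bookkeeping where an off-by-one error is easiest to commit. Once all these relations are in hand, $H:=\langle s_1,\ldots,s_{d-1}\rangle$ is an epimorphic image of the Coxeter group of type $A_{d-1}$, that is of $S_d$, so $|H|\le d!$.

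It remains to show $G=H$, for which it suffices to prove $V\in H$; this is the structural point that lets the bound on $|H|$ bound $|G|$, and it is exactly where $(UV)^{d-1}=1$ is used. I would first note that $(UV)^{d-1}=1$ forces $(UV^{-1})^{d-1}=1$, since $UV^{-1}=U(V^{-1}U)U^{-1}$ is conjugate to $(UV)^{-1}=V^{-1}U$; hence $(UV^{-1})^{d-2}=(UV^{-1})^{-1}=VU$. Telescoping the definitions of the $s_i$ gives the free-group identity $s_1s_2\cdots s_{d-1}=(UV^{-1})^{d-2}\,U\,V^{d-2}$, and substituting $(UV^{-1})^{d-2}=VU$ together with $U^2=1$ collapses the right-hand side to $V^{d-1}=V^{-1}$. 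Thus $V=(s_1\cdots s_{d-1})^{-1}\in H$, so $G=\langle U,V\rangle=H$ and therefore $|G|=|H|\le d!$. Combining this with $|G|\ge d!$ from $\pi$ yields $|G|=d!$ and $G\cong S_d$, with $\pi$ an isomorphism sending $s_i$ to $(i,i+1)$. (The hypothesis $d>2$ is needed for $U^V\neq U$ and for the indices to behave as above; the range $2\le j\le d/2$ is understood with the floor when $d$ is odd.)
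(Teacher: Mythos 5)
Your argument is correct and complete: the derivation of the full Coxeter relations on $s_i=U^{V^{i-1}}$ by conjugation (including the wrap-around recovery of $(UU^{V^t})^2=1$ for $d/2<t\le d-2$ from $V^d=1$), and the use of $(UV)^{d-1}=1$ to telescope $s_1\cdots s_{d-1}$ to $V^{-1}$ and so conclude $V\in\langle s_1,\ldots,s_{d-1}\rangle$, all check out. The paper itself gives no proof of this theorem, citing Moore; your argument is the standard reduction to the Coxeter presentation of $S_d$ and fills that gap correctly.
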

This presentation for $S_d$ on its standard generators has 
word length $O(d^2)$ and bit-length $O(d\log d)$,
and can easily be rewritten to have word length $O(d)$ by 
introducing auxiliary generators $V_j=V^j$, with relations $V_j = VV_{j-1}$.  

At the cost of introducing relations with 
bit-length $O(d \log d)$,
we could convert the presentation for $S_d$ in 
\cite[Theorem 1.2]{BCLO} of bit-length $O(\log d)$ to one on generators 
$U$ and $V$ that has an absolutely bounded number of relations.

We now give presentations for the group of signed permutation matrices 
of determinant~$1$ and degree $d$. 
By introducing auxiliary generators, these also achieve
word length $O(d)$. 
\begin{theorem}\label{signedsymmetricodd}
The group of signed permutation matrices of determinant $1$ and degree $d$, 
when $d>2$ is odd, has the following presentation:
$$\{U',V'\;\vert\; {U'}^4 = {U'}^{2V'U'}{U'}^2{U'}^{2V'}=
{V'}^d = (U'V')^{d-1}=(U' U'^{V'})^3=$$ 
$$ [U', U'^{V'^j}]  = 1
\; {\rm for}\; 2\le j\le(d-1)/2\}.$$
\end{theorem}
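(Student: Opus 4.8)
The plan is to show that the abstractly presented group $\Gamma = \langle U', V' \mid \mathcal{R}\rangle$, with $\mathcal{R}$ the listed relations, is isomorphic to the concrete group $B$ of signed permutation matrices of determinant $1$ and degree $d$, whose order is $2^{d-1}d!$. First I would verify that every relation holds in $B$; here one uses that $d$ is odd, so $\epsilon_d = 1$ (whence $V'$ is the unsigned $d$-cycle) and $d-1$ is even (so that ${U'}^4=1$ and $(U'V')^{d-1}=1$ are compatible with the signs). This yields an epimorphism $\pi\colon\Gamma\to B$, and it then suffices to prove $|\Gamma|\le 2^{d-1}d!$.

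To bound $|\Gamma|$, let $T\trianglelefteq\Gamma$ be the normal closure of ${U'}^2$. The forgetful map $B\to S_d$ is onto with kernel the group $E$ of even sign changes, where $E\cong C_2^{d-1}$ has order $2^{d-1}$; so it is enough to prove $\Gamma/T\cong S_d$ and $|T|\le 2^{d-1}$. For the quotient I would adjoin the relation ${U'}^2=1$: then ${U'}^4=1$ and the middle relation become trivial, ${V'}^d=1$, $(U'V')^{d-1}=1$ and $(U'{U'}^{V'})^3=1$ are Moore's relations, and $[U',{U'}^{V'^j}]=1$ becomes $(U U^{V^j})^2=1$. Since for odd $d$ the range $2\le j\le (d-1)/2$ is exactly $2\le j\le\lfloor d/2\rfloor$, Theorem~\ref{symmetricpres} gives $\Gamma/T\cong S_d$.

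Next I would pin down the structure of $T$. Writing $x_i={U'}^{2V'^i}$, the relation ${V'}^d=1$ shows $T$ is generated by $x_0,\dots,x_{d-1}$, each an involution by ${U'}^4=1$. The crucial local input is the middle relation ${U'}^{2V'U'}{U'}^2{U'}^{2V'}=1$, which reads $x_1^{U'}=x_1x_0$; combined with $x_0^{U'}=x_0$ it gives $x_1^{{U'}^2}=(x_1x_0)^{U'}=x_1x_0\cdot x_0=x_1$, that is $[x_0,x_1]=1$. Conjugating by powers of $V'$ makes all cyclically adjacent pairs commute, while the relations $[U',{U'}^{V'^j}]=1$ — extended to $(d+1)/2\le j\le d-2$ by conjugating by $V'^{-j}$ — make the non-adjacent pairs commute after squaring. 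Hence $T$ is elementary abelian, and the same relations show $T$ is normalised by $U'$ (and visibly by $V'$), so $T$ is genuinely generated by these $d$ involutions and a priori $|T|\le 2^d$.

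The main obstacle is the last factor of $2$: I must show the $d$ commuting involutions satisfy one relation, $x_0x_1\cdots x_{d-1}=1$, forcing $|T|\le 2^{d-1}$. This relation is \emph{invisible} in $B$, since $x_0\cdots x_{d-1}$ maps to $t_{12}+t_{23}+\cdots+t_{d1}=0$ in $E$ (written additively over $\GF(2)$); it therefore cannot be read off from the target group and must be derived symbolically. The natural source is the global relation $(U'V')^{d-1}=1$: reducing $(U'V')^{d-1}$ modulo the relations already established expresses it as an element of $T$ lying in the kernel of $T\to E$, hence as either $1$ or $x_0\cdots x_{d-1}$, and the real content is to check it is the latter. (For $d=3$ this collapses pleasantly, since there $x_1^{U'}=x_2$, so the middle relation already reads $x_2x_0x_1=1$.) Granting this, $|T|\le 2^{d-1}$ and $|\Gamma|=|T|\cdot|\Gamma/T|\le 2^{d-1}d!=|B|$, so $\pi$ is an isomorphism.
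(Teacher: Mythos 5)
Your overall strategy is the paper's: surject onto the concrete group, let $T$ be the normal closure of ${U'}^2$, identify $\Gamma/T$ with $S_d$ via Moore's presentation, show the conjugates $x_i={U'}^{2V'^i}$ commute (adjacent pairs via the middle relation, the rest via the commutator relations extended to $2\le j\le d-2$), and finally cut the order of $T$ down to $2^{d-1}$ using the product relation $x_0x_1\cdots x_{d-1}=1$. You have also correctly diagnosed that this last relation is the crux and that it must come from $(U'V')^{d-1}=1$. But you do not prove it: the sentence ``the real content is to check it is the latter'' followed by ``Granting this'' is exactly the step the paper's proof consists of, and your proposed mechanism (``reducing $(U'V')^{d-1}$ modulo the relations already established expresses it as an element of $T$'') does not parse as stated, since $(U'V')^{d-1}=1$ is already a relation of $\Gamma$ and so the word is the identity, not a nontrivial element of $T$ to be identified. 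The correct move, which the paper carries out, is to compute the conjugation action of the \emph{word} $(U'V')^{d-1}$ on $x_0$ using only the other relations: since $U'$ fixes each $x_j$ with $j\ne 1$, $0\le j\le d-2$, and sends $x_1$ to $x_0x_1$, one gets the telescoping chain
$x_0^{(U'V')^{d-1}}=x_1^{(U'V')^{d-2}}=(x_1x_2)^{(U'V')^{d-3}}=\cdots=x_1x_2\cdots x_{d-1}$,
and then $(U'V')^{d-1}=1$ forces $x_0=x_1\cdots x_{d-1}$, i.e.\ $x_0x_1\cdots x_{d-1}=1$.

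There is a second, related flaw in the logical order. You assert, before obtaining the product relation, that $T$ is generated by $x_0,\dots,x_{d-1}$ and that ``the same relations show $T$ is normalised by $U'$.'' This is not available at that stage: the extended commutator relations give $[U',x_j]=1$ only for $2\le j\le d-2$, and the middle relation controls $x_1^{U'}$, but nothing yet controls $x_{d-1}^{U'}$, so $\langle x_0,\dots,x_{d-1}\rangle$ is not known to be $U'$-invariant and hence not known to equal the normal closure of ${U'}^2$. It is precisely the product relation that rescues this, since it gives $x_{d-1}=x_0x_1\cdots x_{d-2}$ and hence $x_{d-1}^{U'}=x_0x_{d-1}$. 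So the argument must run: commutativity first, then the telescoping computation yielding $x_0\cdots x_{d-1}=1$, and only then normality of $\langle x_0,\dots,x_{d-1}\rangle$ and the bound $|T|\le 2^{d-1}$. With those two repairs your proof coincides with the paper's.
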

\begin{proof}
These relations are readily verified. 
The relations 
$[U', U'^{{V'}^j}] = 1\; {\rm for}\; 2\le j\le (d - 1) /2$ extend, 
by conjugation with $V'^{d-j}$,
to allow $2\le j\le d-2$.
Let $G$ denote the presented group, and $A$ the normal 
closure in $G$ of $a := {U'}^2$.  
By the previous theorem, $G/A$ is isomorphic to $S_d$, 
the relation $[U, U^{V^j}]=1$ being equivalent
to the relation $(UU^{V^{j}})^2=1$ in the presence of the relation $U^2=1$.  

We prove that $B := \langle \{a_{j+1}:=a^{{V'}^j} \,|\, 0\le j <d \}\rangle$ 
is an elementary abelian $2$-group.  
This generating set is permuted cyclically by $V'$. 
The relation ${U'}^{2V'U'}{U'}^2{U'}^{2V'}=1$ states 
that $a_2^{U'} a_1a_2 = 1$, so $a_1a_2$ is an involution,  
and hence $a_1$ commutes with $a_2$.
It follows, by conjugation with $V'^{-1}$,
that $a_d$ commutes with $a_1$.
But $a_1$ commutes with $a_j$
for $3\le j\le d-1$ since $U'$ commutes with $U'^{V^j}$ for $2\le j\le d-2$. 
So $a_1$ commutes with $a_j$ for all $j$, and
conjugating by ${V'}^i$ then proves that $a_i$ and $a_j$ commute for  
all $i$ and $j$.

We now prove that $a_1a_2\ldots a_d=1$.
Clearly $U'$ commutes with $a_{j+1}=(U'^{{V'}^j})^2$ for  
$2\le j\le d-1$, so
$$a_1^{(U'V')^{d-1}} = a_2^{(U'V')^{d-2}} = 
(a_2a_3)^{(U'V')^{d-3}} = 
(a_2a_3a_4)^{(U'V')^{d-4}} = \cdots = a_2a_3\ldots a_d.$$
But $(U'V')^{d-1} = 1$, so $a_1a_2\ldots a_d=1$, as required.
Hence $a_d^{U'}=a_1a_d$; so $B$ is a normal subgroup of $G$, 
and the result follows.
\end{proof}

The following is proved similarly.
\begin{theorem}\label{signedsymmetriceven}
The group of signed permutation matrices of determinant $1$ and degree $d$, 
when $d>2$ is even, has the following presentation:
$$\{U',V'\;\vert\; {U'}^4 = {U'}^{2V'U'}{U'}^2{U'}^{2V'}=
(U' U'^{V'})^3=  
[U', U'^{V'^j}] = 1\; {\rm for}\; 2\le j\le d/2,$$  
$${V'}^d = (U'V')^{d-1}, [V'^d,U'] = 1, V'^{2d} = 1 \}.$$
\end{theorem}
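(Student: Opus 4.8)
The plan is to mirror the proof of Theorem~\ref{signedsymmetricodd}, the only genuinely new feature being that the element $z:=V'^d$ is now nontrivial: in the group $W$ of signed permutation matrices of determinant $1$ it equals $-I$, which is central and lies in the subgroup of sign changes. The two relations $[V'^d,U']=1$ and $V'^{2d}=1$ are present precisely to control this element. Let $G$ denote the presented group. After checking that all relations hold in $W$, so that $G$ maps onto $W$ since $U',V'$ generate it, I set $a:=U'^2$, $a_i:=a^{V'^{i-1}}$, and $B:=\langle a_1,\dots,a_d\rangle$. I would first verify, exactly as before, that the relations $[U',U'^{V'^j}]=1$ for $2\le j\le d/2$ extend by conjugation to $2\le j\le d-2$; here the shift by $V'^{d-j}$ uses centrality of $z$, so that $U'^{V'^{d}}=U'^{z}=U'$.

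The next block of steps is formally identical to the odd case. The relation $U'^{2V'U'}U'^2U'^{2V'}=1$ reads $a_2^{U'}a_1a_2=1$, forcing $a_1$ and $a_2$ to commute; together with the extended commutator relations this shows $B$ is elementary abelian, and conjugation by powers of $V'$ propagates commutativity to all pairs. The telescoping identity $a_1^{(U'V')^{d-1}}=a_2a_3\cdots a_d$ is derived as before; the one change is at the final step, where in place of $(U'V')^{d-1}=1$ I use $(U'V')^{d-1}=z$ together with centrality of $z$, so that $a_1^{(U'V')^{d-1}}=a_1^{z}=a_1$ and hence $a_1a_2\cdots a_d=1$. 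This bounds $|B|\le 2^{d-1}$, and the identity $a_d^{U'}=a_1a_d$, obtained from $a_d=a_1\cdots a_{d-1}$ and the known values of the $a_i^{U'}$, shows $B\trianglelefteq G$, so $B$ is the normal closure of $a$.

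The main obstacle is to show $G/B\cong S_d$, equivalently $\bar z=\bar V^d=1$ in $G/B$; in the odd case this was immediate because $V'^d=1$ was a relation, but here it must be proved. Writing $t_i:=\bar U^{\bar V^{i-1}}$ in $G/B$, centrality of $z$ gives $t_{i+d}=t_i$, so the $t_i$ are indexed by $\mathbb{Z}/d$; modulo $B$ the relations $U'^2=1$, $(U'U'^{V'})^3=1$ and the extended commutator relations become exactly the Coxeter relations of the cyclic diagram, the affine relation $(t_dt_1)^3=1$ following from $(t_1t_2)^3=1$ by conjugation once $t_{d+1}=t_1$. Thus $G/B$ is a quotient of the affine Coxeter group of type $\tilde A_{d-1}$. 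I would then exploit the relation $V'^d=(U'V')^{d-1}$: expanding $(t_1\bar V)^{d-1}$ and cancelling $\bar V^{d-1}$ against $\bar V^{d}$ yields
$$\bar V=t_1t_dt_{d-1}\cdots t_3,$$
a product of all the $t_i$ except $t_2$.

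These reflections lie in the parabolic subgroup $L:=\langle t_1,t_3,t_4,\dots,t_d\rangle$ obtained by deleting one node of the cyclic diagram, which is the image of a parabolic of type $A_{d-1}$ and hence a quotient of $S_d$; moreover $\bar V$ is a product of each simple reflection of $L$ exactly once, i.e.\ a Coxeter element, so $\bar V^{d}=1$ and $\bar z=1$. Since $t_2=t_1^{\bar V}\in L$ as well, we get $G/B=L$, a quotient of $S_d$; as $G/B$ also surjects onto $S_d$ via $G\twoheadrightarrow W\twoheadrightarrow W/B$, it follows that $G/B\cong S_d$. Then $|G|=|G/B|\,|B|\le d!\cdot 2^{d-1}=|W|$, which together with the surjection $G\twoheadrightarrow W$ gives $G\cong W$. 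The relation $V'^{2d}=1$ is then recovered from $z\in B$ and $B$ elementary abelian; I would retain it in the presentation for uniformity with the odd case.
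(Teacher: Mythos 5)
Your argument is correct, and it supplies considerably more detail than the paper, whose entire proof of this theorem is the sentence ``The following is proved similarly'', referring to the odd case. Most of your proof does track that template, with centrality of $z=V'^d$ (from $[V'^d,U']=1$) standing in for the relation $V'^d=1$ everywhere the latter was used in the odd case: in extending the commutator relations to $2\le j\le d-2$, in closing $\{a_1,\dots,a_d\}$ under conjugation by $V'$, and in the final step of the telescoping identity, where $(U'V')^{d-1}=z$ central still yields $a_1a_2\cdots a_d=1$. You correctly isolate the one genuinely new difficulty, which the paper's ``similarly'' glosses over: proving that $\bar z=\bar V^d$ dies in $G/B$, i.e.\ that $G/B$ is $S_d$ rather than a central extension of $S_d$ by $C_2$ (a real danger, since $S_d$ has Schur multiplier $C_2$ for $d\ge 4$). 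Your resolution is correct and clean: the $t_i$ satisfy the Coxeter relations of type $\tilde A_{d-1}$; the relation $V'^d=(U'V')^{d-1}$ expands to $\bar V=t_1t_dt_{d-1}\cdots t_3$ via $(t_1\bar V)^{d-1}=t_1\,t_1^{\bar V^{-1}}\cdots t_1^{\bar V^{-(d-2)}}\bar V^{d-1}$; and this word is the image of a Coxeter element of the standard parabolic of type $A_{d-1}$ obtained by deleting node $2$ of the cycle, hence has order dividing $d$ (I checked the $d=4$ instance explicitly, where $t_1\bar V$ maps to a $3$-cycle and $(t_1\bar V)^3=\bar V^4=1$). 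Two small remarks. First, your claim that $G/B$ is a quotient of $W(\tilde A_{d-1})$ is, where you state it, only established for $\langle t_1,\dots,t_d\rangle$; it holds for all of $G/B$ only once $\bar V$ has been written as a word in the $t_i$, so the order of those two observations should be reversed. Second, your observation that $V'^{2d}=1$ is then redundant appears to be right --- nothing in your argument invokes it --- but retaining it costs nothing, since the theorem asserts only that the listed words form a set of defining relators.
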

\noindent 
If $d=2$, then the group is cyclic of order 4.


We shall frequently need the following result.

\begin{theorem}\label{subgp} 
Let $d > 3$.
The group $H$ generated by $U'^{{V'}^2}$ and $V'U'^{-1}U'^{-V'}$ 
is the group of signed permutation matrices of 
$1,\ldots,d$ having determinant $1$ that 
fixes both $1$ and $2$. The group $G$ generated  by
$U'^{{V'}^2}$ and $V'U'U'^{V'}$ is the direct product of $H$ 
with the cyclic group of order $2$ generated by $(1)^-(2)^-$.
\end{theorem}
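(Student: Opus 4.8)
The plan is to avoid abstract manipulation and instead identify both generators explicitly as signed permutations, thereby recognising $H$ and $G$ as concrete groups; the only genuinely nonformal point will be the final "direct product versus diagonal" dichotomy in the second assertion.

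First I would compute the two generators of $H$ by tracing their action symbol by symbol. Conjugation by $V'$ shifts the support of $U'$ upward, so $s:=U'^{{V'}^2}=(3,4)^-$, which plainly fixes $1$ and $2$. A short calculation then shows that $V'U'^{-1}U'^{-V'}$ sends $1\mapsto 1$, $2\mapsto 2$, $i\mapsto i+1$ for $3\le i<d$, and $d\mapsto\epsilon_d\cdot 3$; that is, $P:=V'U'^{-1}U'^{-V'}=(3,4,\ldots,d)^{\epsilon_d}$, again fixing $1$ and $2$. The key observation is that $\epsilon_{d-2}=(-1)^{d-1}=(-1)^{d+1}=\epsilon_d$, so $s$ and $P$ are exactly the generators $U',V'$ of the preceding theorems, but realised on the symbol set $\{3,4,\ldots,d\}$, i.e.\ in degree $d-2$. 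Since $d>3$ gives $d-2\ge 2$, the generation statement already recorded for $\langle U',V'\rangle$, applied in degree $d-2$, shows that $H=\langle s,P\rangle$ is the full group of determinant-$1$ signed permutation matrices of $\{3,\ldots,d\}$. Finally, a determinant-$1$ signed permutation of $\{1,\ldots,d\}$ fixes $1$ and $2$ exactly when its matrix is the identity on those two coordinates, in which case its determinant equals that of its action on $\{3,\ldots,d\}$; hence $H$ is precisely the subgroup of determinant-$1$ signed permutations fixing $1$ and $2$. This proves the first assertion.

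For the second assertion I would run the same computation on $Q:=V'U'U'^{V'}$. It differs from $P$ only in the images of $1$ and $2$, namely $1\mapsto-1$ and $2\mapsto-2$, so $Q=z\cdot P$, where $z:=(1)^-(2)^-$ has order $2$ and $P\in H$. Because every element of $H$ fixes $1$ and $2$ while $z$ acts only on those two coordinates, $z$ commutes with all of $H$; thus $z$ is central in $G$, and $G=\langle s,zP\rangle\le H\times\langle z\rangle$ with $z\notin H$. It remains to establish the reverse inclusion, which is equivalent to showing $z\in G$.

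The hard part is exactly this last step: a priori $G$ could be a diagonal copy of $H$ inside $H\times\langle z\rangle$ rather than the full direct product. To rule this out I would prove $H\le G$. Since $s\in G$ and $z$ is central, conjugating $s$ by powers of $zP$ gives $s^{(zP)^k}=s^{P^k}\in G$ for all $k$; the subgroup $M$ these generate is normalised by $P$ and contains $s$, hence $M\trianglelefteq\langle s,P\rangle=H$ with $H/M$ cyclic, generated by the image of $P$. So $H/M$ is a cyclic quotient of the abelianisation $H/[H,H]$ in which the class of $s$ dies. Reading off the abelianisation from the presentations of Theorems~\ref{signedsymmetricodd} and~\ref{signedsymmetriceven} (applied in degree $d-2$), the relation ${V'}^{d}=(U'V')^{d-1}$ forces the class of $P$ to equal that of $s$ when $d$ is even, while for $d$ odd the relations ${V'}^{d}=(U'V')^{d-1}=1$ force the class of $P$ to be trivial; in the remaining case $d=4$ one has $P=(3,4)^{\epsilon_2}=(3,4)^-=s$ outright. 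In every case the class of $s$ generates $H/[H,H]$, so $H/M=1$, i.e.\ $M=H$ and $H\le G$. Then $z=(zP)P^{-1}\in G$, giving $G=H\times\langle z\rangle$. I expect the even case to be the real obstacle, since there the shortcut available for odd $d$—namely that $P$ has odd order $d-2$, whence $z=(zP)^{d-2}\in G$ immediately—breaks down, and one must instead appeal to the abelianisation as above.
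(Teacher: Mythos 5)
Your proof is correct and follows essentially the same route as the paper: identify $U'^{{V'}^2}=(3,4)^-$ and $V'U'^{-1}U'^{-V'}=(3,4,\ldots,d)^{\epsilon_d}$ as the standard generators in degree $d-2$ (using $\epsilon_{d-2}=\epsilon_d$), observe that $V'U'U'^{V'}=(1)^-(2)^-(3,4,\ldots,d)^{\epsilon_d}$ with $(1)^-(2)^-$ central, and recover $H\le G$ from the normal closure of $(3,4)^-$ in $G$. The only difference is that the paper simply asserts this normal closure is all of $H$, whereas you verify it via the abelianisation; that verification is sound, since in either parity the image of $P$ in $H/[H,H]$ lies in the subgroup generated by the image of $s$, so a cyclic quotient killing $s$ is trivial (and the degenerate case $d=4$, where the degree-$(d-2)$ presentations do not apply, is handled separately by $P=s$).
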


\begin{proof}
Observe that 
$U'^{{V'}^2}=(3,4)^-$ and $V'U'^{-1}U'^{-V'}=(3,4,\ldots,d)^{\epsilon_d}$,  
which proves the first statement.
Since $V'U'U'^{V'}=(1)^-(2)^-(3,4,\ldots,d)^{\epsilon_d}$ it follows
that $G$ maps onto $H$, with kernel $\langle(1)^-(2)^-\rangle$. 
The subgroup
of $G$ generated by $(3,4)^-$ and its conjugates is isomorphic to $H$, 
so the extension splits.
\end{proof}

\section{A presentation for $\SL(2,q)$}\label{CRW}

\subsection{Generators and notation}
Let $q = p^e$ for a prime $p$. Let $\omega$ be a primitive element of $\GF(q)$. 
We define the following elements of $\SL(2,q)$.

\smallskip
$\tau(\alpha) = \left(\begin{smallmatrix}1&\alpha\cr0&1\end{smallmatrix}\right)$ for $\alpha\in\GF(q)$;

\smallskip
$\delta =  \left(\begin{smallmatrix}\omega^{-1}&0\cr0&\omega\end{smallmatrix}\right)$;

\smallskip
$U =  \left(\begin{smallmatrix}0&1\cr-1&0\end{smallmatrix}\right)$.

\subsection{Presentations for $\SL(2, q)$ for $e > 1$}
We use presentations from \cite{CRW} rewritten on 
the generating set $\{\tau = \tau(1), \delta, U\}$.
A complication arises from the fact that
$\tau(\alpha)^\delta=\tau(\alpha\omega^2)$, 
so $\tau(\omega^i)$ is treated differently
according to the parity of $i$.  
In the presented groups given below, $\tau$ is
a generator, and $\tau_1$, standing for $\tau(\omega)$, 
is defined as a product of conjugates of $\tau$ 
by powers of $\delta$.  
Now $\tau_i$ may be defined for $0 \le i \le e$ by 
$\tau_0=\tau$ and $\tau_i = \tau_{i-2}^{\delta}$ if $i>1$,
so $\tau_i$ stands for $\tau(\omega^i)$.  
If $f(t) = \sum_{i=0}^e u_i t^i$ is a polynomial over $\GF(p)$ of degree $e$,
then we write $\tau^{f(t)}$ for $\prod_i\tau_i^{u_i}$.  

\begin{theorem} \label{CRW-PSL-odd} 
Let $q = p^e$ for an odd prime $p$ and $e> 1$.
Let $f(t)$ be the minimal polynomial of $\omega$ over $\GF(p)$. 
Write $\omega=\sum_{i=0}^{e-1}a_i\omega^{2i}$.
Let $m$ satisfy the equation 
$\omega^{2m}=1+\omega$ if $1+\omega \in \GF(q)^2$, 
and $\omega^{2m+1}=1+\omega$ otherwise.
 Then $\PSL(2, q)$ has the following presentation:
\begin{eqnarray*}
\{ \tau,\delta,U & \vert & 
\tau_1 := \prod_i\tau^{a_i\delta^i}\,,\,(\tau U)^3=(U\delta)^2=U^2=(\tau_1U\delta)^3=
\delta^{(q-1)/2}=\tau^p=1, \\
& & [\tau,\tau_1]=[\tau_1,\tau^\delta]=\tau^{f(t)}=1; 
\end{eqnarray*}
\hspace*{0.7cm} if $1+\omega\in\GF(q)^2$, then 
$\tau^{\delta^m}=\tau\tau_1,\tau_1^{\delta^m}=\tau_1\tau^\delta $; 
otherwise $\tau_1^{\delta^m}=\tau\tau_1,
\tau^{\delta^{m+1}}=\tau_1\tau^{\delta}\}$.
\end{theorem}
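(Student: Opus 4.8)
The plan is to verify that the stated relations hold in $\PSL(2,q)$ and then to prove that the presented group $G$ is a quotient of $\SL(2,q)$ of the correct order, so that after identifying the centre we obtain $\PSL(2,q)$. Since the paper states these are presentations from \cite{CRW} rewritten onto the generating set $\{\tau,\delta,U\}$, the essential content is the translation: the original presentation uses the root elements $\tau(\omega^i)$ directly, whereas here only $\tau=\tau(1)$ is a genuine generator and the others must be recovered from $\tau$ and $\delta$. First I would record that, with the auxiliary element $\tau_1$ \emph{defined} by $\tau_1 := \prod_i\tau^{a_i\delta^i}$ (using $\omega=\sum_i a_i\omega^{2i}$), the map $\tau_i\mapsto\tau(\omega^i)$ is consistent: one checks $\tau(1)=\tau$, $\tau(\omega)=\tau_1$, and that $\tau_i^\delta=\tau_{i+2}$ matches $\tau(\omega^i)^\delta=\tau(\omega^{i+2})$, so the recursive definitions are the faithful images of the actual root elements.

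The second step is to show the additive structure of the root subgroup $T=\{\tau(\alpha):\alpha\in\GF(q)\}$ is correctly presented. The relations $\tau^p=1$, $[\tau,\tau_1]=[\tau_1,\tau^\delta]=1$, together with their conjugates under powers of $\delta$, should force $T$ to be elementary abelian of order $q=p^e$; the relation $\tau^{f(t)}=1$ (where $f$ is the minimal polynomial of $\omega$) encodes the single $\GF(p)$-linear dependence $\sum u_i\omega^{2i}=0$ that collapses the free abelian group on $\tau_0,\ldots,\tau_e$ down to dimension $e$. Here I would argue that the commuting relations among consecutive $\tau_i$ propagate by $\delta$-conjugation to make \emph{all} the $\tau_i$ pairwise commute, exactly as in the symmetric-group argument of Theorem \ref{signedsymmetricodd} where commutativity spreads cyclically; and that $\tau^{f(t)}=1$ together with the definition of $\tau_1$ gives the two-dimensional module structure needed for $\delta$ to act as scalar multiplication by $\omega^2$.

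The third step handles the two exceptional families of relations $\tau^{\delta^m}=\tau\tau_1$ etc., which encode the additive identity $\omega^{2m}=1+\omega$ (or $\omega^{2m+1}=1+\omega$). These relations record that the specific field element $1+\omega$ decomposes correctly in the chosen basis, and they are precisely what is needed so that the Steinberg/Chevalley relation $(\tau_1 U\delta)^3=1$ can be interpreted: this relation is the rewriting of the $\SL(2,q)$ relation asserting that $\delta$, $U$, and the root groups fit together into a group with the right Weyl-group action and torus. I would verify each such relation by direct matrix computation in $\SL(2,q)$, checking that both sides give the same $2\times2$ matrix.

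The main obstacle, and the heart of the proof, is the final step: showing that these relations suffice, i.e.\ that the presented group $G$ is no larger than $\PSL(2,q)$. Rather than re-deriving this from scratch, the cleanest route is to invoke the correctness of the \cite{CRW} presentation and argue that the rewriting is a Tietze transformation: I would introduce the $\tau_i$ as auxiliary generators defined from $\tau$ and $\delta$, show the \cite{CRW} relators are all consequences of the relators listed here (using the additive relations and commutativity established above to express every $\tau(\alpha)$ as a word in the generators), and conversely that our relators hold in $\PSL(2,q)$. The delicate point is bookkeeping the parity split: because $\delta$ conjugation multiplies the field parameter by $\omega^2$ rather than $\omega$, the even and odd powers of $\omega$ lie in separate $\delta$-orbits, so one must check that the definition of $\tau_1$ and the exceptional relations correctly bridge these two orbits and generate the whole root subgroup; confirming that the single relation $\tau^{f(t)}=1$ plus the $1+\omega$ relation genuinely pin down the $e$-dimensional $\GF(p)$-space is where the real care is required.
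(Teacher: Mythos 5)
Your proposal is correct and takes essentially the same route as the paper: the paper's own justification is precisely the reduction you describe, namely a dictionary between the generators here and the generators $w,x,y,z$ of \cite[Theorem 2.2]{CRW} (via $w=U\tau^{-1}$, $x=\tau$, $y=\tau_1$, $z=\tau\delta^{-1}$), followed by checking that each CRW relator is a consequence of the listed relators or is redundant, with the commutativity of the $\tau_i$ carried over unchanged from the CRW argument. One small slip in your second step: since $f$ is the minimal polynomial of $\omega$ and $\tau_i$ stands for $\tau(\omega^i)$, the relation $\tau^{f(t)}=\prod_i\tau_i^{u_i}=1$ encodes the dependence $\sum_i u_i\omega^{i}=0$, not $\sum_i u_i\omega^{2i}=0$; this is in fact what lets the single relation bridge the two $\delta$-orbits (even and odd powers of $\omega$) that you rightly identify as the delicate point.
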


This is essentially due to Campbell, Robertson, and Williams 
\cite[Theorem 2.2]{CRW}.
We have replaced their 
generators $w,x,y,z$ by
$U\tau^{-1}, \tau,\tau_1,$ and $\tau\delta^{-1}$. 
The proof that the elements $\tau_i$ in the presented group $G$ commute 
goes through unchanged with our presentation.
They have relations equivalent to $\tau^p=1$ and $\tau_1^p=1$.  
The second of these is redundant, as $\tau_1$ is
a product of commuting conjugates of $\tau$.  
The relation $\tau^{f(t)}=1$ gives 
the action of $\delta$ on the
irreducible module generated by the $\tau_i$; 
their equivalent relation with 
$\tau$ replaced by $\tau_1$ is also redundant.  
Their relation $z^{(q-1)/2}=1$ translates to 
$(\tau\delta^{-1})^{(q-1)/2}=1$ in our notation, and we have 
replaced it by the relation $\delta^{(q-1)/2}=1$.
Given the action of $\delta$ on the module generated 
by the $\tau_i$, these relations are equivalent.
Finally we have replaced their relation $(wyz)^3=1$, which 
translates to $(U\tau^{-1}\tau_1\tau\delta^{-1})^3=1$,
by the relation $(\tau_1U\delta)^3=1$.  These are equivalent, 
since both $[\tau,\tau_1]=1$ and $U\delta=\delta^{-1}U$ hold in $G$.

\begin{theorem}\label{CRW-SL-odd}
A presentation for $\SL(2,q)$, where $q=p^e$ is odd and $e>1$, 
is obtained from the presentation for $\PSL(2,q)$ in 
Theorem $\ref{CRW-PSL-odd}$ by replacing the relations 
$$(\tau U)^3=1, \q (U\delta)^2=1, \q U^2=1, \q
\delta^{(q-1)/2}=1$$ by the relations 
$$(\tau U^{-1})^3=U^2, \q
(U\delta)^2=U^2, \q U^4=1, \q \delta^{(q-1)/2}=U^2.$$
\end{theorem}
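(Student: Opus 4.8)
The plan is to show that the group $G$ defined by the modified presentation is isomorphic to $\SL(2,q)$, with the central involution $z := U^2$ playing the role of $-I$. The two ingredients are a surjection $G \twoheadrightarrow \SL(2,q)$ and the order estimate $|G| \le 2\,|\PSL(2,q)| = |\SL(2,q)|$; together with Theorem \ref{CRW-PSL-odd} these force $G \cong \SL(2,q)$.

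First I would check that the natural generators $\tau,\delta,U$ of $\SL(2,q)$ satisfy every relation of the modified presentation. A routine matrix computation shows the common (unchanged) relations all evaluate to the identity, while the four changed relations each evaluate to $-I$: one has $(\tau U^{-1})^3 = -I$, $(U\delta)^2 = -I$, and, using $\omega^{(q-1)/2} = -1$, $\delta^{(q-1)/2} = \diag(-1,-1) = -I$, together with $U^4 = I$ and $U^2 = -I$. Since these matrices generate $\SL(2,q)$, this yields a surjection $\pi\colon G \to \SL(2,q)$ with $\pi(z) = -I$. In particular $z \ne 1$ in $G$, while $U^4 = 1$ gives $z^2 = 1$; hence $z$ has order exactly $2$.

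The heart of the argument, and the step I expect to be the main obstacle, is to prove that $z$ is central in $G$. It commutes with $U$ by definition, and with $\delta$ because the relation $\delta^{(q-1)/2} = U^2$ exhibits $z$ as a power of $\delta$. For commutation with $\tau$ I would reuse the analysis underlying Theorem \ref{CRW-PSL-odd}: the relations $\tau^p = [\tau,\tau_1] = [\tau_1,\tau^\delta] = \tau^{f(t)} = 1$ together with the conditional relations $\tau^{\delta^m} = \tau\tau_1$, etc., are all among the unchanged relations, and they are exactly the relations used there to show that $\langle \tau_0,\ldots,\tau_{e-1}\rangle$ is elementary abelian of order $q$, identified with $(\GF(q),+)$ via $\tau_i \leftrightarrow \omega^i$, with $\delta$ acting by conjugation as $\tau(\alpha)\mapsto\tau(\omega^2\alpha)$. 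None of these relations involves $U$, so this module structure survives verbatim in $G$. Consequently $\delta^{(q-1)/2}$ acts as multiplication by $(\omega^2)^{(q-1)/2} = \omega^{q-1} = 1$, i.e.\ trivially, so it fixes $\tau = \tau_0$; thus $[z,\tau] = 1$ and $z$ is central.

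Finally I would identify the quotient. Since $z$ is central of order $2$, its normal closure is $\langle z\rangle$, and $G/\langle z\rangle$ is presented by the modified relations together with $U^2 = 1$. Imposing $U^2 = 1$ turns $(\tau U^{-1})^3 = U^2$ into $(\tau U)^3 = 1$ (as then $U^{-1} = U$), turns $(U\delta)^2 = U^2$ and $\delta^{(q-1)/2} = U^2$ into the corresponding relations with right-hand side $1$, and renders $U^4 = 1$ redundant; the common relations are untouched. What remains is precisely the presentation of Theorem \ref{CRW-PSL-odd}, so $G/\langle z\rangle \cong \PSL(2,q)$. Hence $|G| = 2\,|\PSL(2,q)| = |\SL(2,q)|$, and the surjection $\pi$ between finite groups of equal order is an isomorphism, completing the proof. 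The only delicate point is the centrality of $z$ in the third paragraph; everything else is bookkeeping plus the verifications of the second paragraph.
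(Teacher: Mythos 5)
Your proposal is correct, and its skeleton (surjectivity onto $\SL(2,q)$, centrality of $z=U^2$, quotient by $\langle z\rangle$ recovering the presentation of Theorem \ref{CRW-PSL-odd}, then order counting) matches the paper's, which likewise reduces everything to showing that $U^2$ is central. Where you diverge is in the one step you correctly flag as the heart of the matter, $[U^2,\tau]=1$. The paper disposes of it with a one-line trick: the relation $(\tau U^{-1})^3=U^2$ exhibits $U^2$ as a power of $\tau U^{-1}$, so $U^2$ commutes with $\tau U^{-1}$; since it visibly commutes with $U$, it commutes with $\tau=(\tau U^{-1})U$. (The same trick with $(U\delta)^2=U^2$ handles $\delta$, though your observation that $U^2$ is a power of $\delta$ works just as well.) You instead route through the $\GF(q)$-module structure of the root subgroup $\langle\tau_0,\dots,\tau_{e-1}\rangle$ inherited from the Campbell--Robertson--Williams analysis, concluding that $\delta^{(q-1)/2}$ acts on it as multiplication by $(\omega^2)^{(q-1)/2}=1$. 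This is valid, but it is heavier and carries a hidden dependency: you must know that the derivation of that module structure in \cite{CRW} uses only the relations not involving $U$ (and not $\delta^{(q-1)/2}=1$), which is true but is asserted rather than proved in this paper and would need checking against the source. The paper's argument is entirely self-contained and independent of the internal structure of the root subgroup, which is why it is the preferable route; but nothing in your version is wrong.
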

\begin{proof}
The given relations all hold in $\SL(2,q)$. 
It remains to prove that $U^2$ is central 
in the presented group.  
The relations $(\tau U^{-1})^3=U^2$ and $(U\delta)^2=U^2$ 
imply that $\tau$ and $\delta$ commute with $U^2$, and the result follows.
\end{proof}

These presentations may be significantly simplified if $q\equiv 3\bmod 4$.
\begin{theorem}\label{CRW-PSL-3mod4}
With the notation of Theorem $\ref{CRW-PSL-odd}$, 
if $q\equiv 3\bmod 4$ and $e>1$,
then $\PSL(2,q)$ has the following presentation:
 $$\{ \tau,\delta,U\;\vert\;
(\tau U)^3=(U\delta)^2=U^2=[\tau,\tau^{\delta^{(q+1)/4}}]=\tau^{f(t)}=1, 
\delta^{(q-1)/2}=\tau^p,
\tau^{\delta^m}=[\tau^{-1}, \delta^r] \},$$
where $r=(q+1)/4$ if $1+\omega\in\GF(q)^2$, and $r= (q-3)/4$ otherwise.
\end{theorem}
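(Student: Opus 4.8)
The plan is to show that the displayed relations are equivalent to those of Theorem \ref{CRW-PSL-odd}, which already presents $\PSL(2,q)$. Writing $G$ for the group defined by the new presentation, I would establish two epimorphisms. First, a direct matrix check that every new relator holds in $\PSL(2,q)$ gives $G \twoheadrightarrow \PSL(2,q)$. The only substantive point is the relation $\tau^{\delta^m}=[\tau^{-1},\delta^r]$: since $q\equiv 3\bmod 4$ gives $\omega^{(q-1)/2}=-1$, one has $\omega^{(q+1)/2}=-\omega$ and $\omega^{(q-3)/2}=-\omega^{-1}$, so $[\tau^{-1},\delta^r]=\tau(1-\omega^{2r})$ equals $\tau(\omega^{2m})=\tau^{\delta^m}$ in both cases by the defining property of $m$; the relation $\delta^{(q-1)/2}=\tau^p$ and the other relators are immediate. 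Second, and conversely, I would derive each relator of Theorem \ref{CRW-PSL-odd} inside $G$, giving $\PSL(2,q)\twoheadrightarrow G$; as $\PSL(2,q)$ is finite, the two epimorphisms force $G\cong\PSL(2,q)$.

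For the reverse direction the key identification is $\tau^{\delta^{(q+1)/4}}=\tau(\omega^{(q+1)/2})=\tau(-\omega)=\tau_1^{-1}$, which turns the single new commutator relation $[\tau,\tau^{\delta^{(q+1)/4}}]=1$ into $[\tau,\tau_1]=1$, and rewrites $\tau^{\delta^m}=[\tau^{-1},\delta^r]$ as the addition relation $\tau^{\delta^m}\tau^{\delta^r}=\tau$ of Theorem \ref{CRW-PSL-odd}. The remaining relators there, namely $[\tau_1,\tau^\delta]=1$, the Steinberg relation $(\tau_1 U\delta)^3=1$, and the split pair $\delta^{(q-1)/2}=1$, $\tau^p=1$, would then be recovered by conjugating these identities by powers of $\delta$ and $U$ and invoking the already-present relators $\tau^{f(t)}=1$ and $(\tau U)^3=(U\delta)^2=U^2=1$.

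The main obstacle is that the identification $\tau^{\delta^{(q+1)/4}}=\tau_1^{-1}$, and more generally the collapse of the root elements to a single elementary abelian group, are not formal: they require that the $\tau_i$ pairwise commute, and the new presentation supplies only one commutator relation. I would prove commutativity by spreading $[\tau,\tau^{\delta^{(q+1)/4}}]=1$ over all pairs. Conjugation by $\delta^k$ gives $[\tau^{\delta^k},\tau^{\delta^{k+c}}]=1$ with $c=(q+1)/4$; since $q\equiv 3\bmod 4$ forces $2c\equiv 1\bmod (q-1)/2$, the step $c$ is a unit modulo $(q-1)/2$, so after relabelling $\rho_j=\tau^{\delta^{jc}}$ the element $\delta^c$ cycles the root generators transitively while consecutive generators commute. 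To upgrade ``consecutive commute'' to ``all commute'' I would feed in the conjugates $\tau^{\delta^{m+k}}\tau^{\delta^{r+k}}=\tau^{\delta^k}$ of the addition relation: if $\tau$ centralises two generators whose product is a third, it centralises the third, and this closure property together with transitivity of the $\delta^c$-action propagates commutativity throughout the root subgroup.

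Once the root subgroup is abelian, the relation $\tau^{f(t)}=1$ and its $\delta$-conjugates identify it with the additive group of $\GF(q)$, whence $\tau^p=1$ and then $\delta^{(q-1)/2}=\tau^p=1$ splits the combined power relation. A cleaner route to $z:=\delta^{(q-1)/2}=\tau^p=1$ is to note that the preceding steps exhibit $G$ as a central extension of $\PSL(2,q)$ with kernel $\langle z\rangle$ of order dividing the Schur multiplier $2$; since in $\SL(2,q)$ one has $\tau^p=I$ while $\delta^{(q-1)/2}=-I$, the relation $\delta^{(q-1)/2}=\tau^p$ fails in the nontrivial cover, ruling it out and forcing $z=1$. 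I expect the commutativity propagation to be the delicate step, the rest being bookkeeping.
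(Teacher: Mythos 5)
Your overall strategy---two epimorphisms, reduction towards Theorem \ref{CRW-PSL-odd}, the identification $\tau^{\delta^{(q+1)/4}}=\tau(-\omega)=\tau_1^{-1}$, and the rewriting of $\tau^{\delta^m}=[\tau^{-1},\delta^r]$ as $\tau^{\delta^m}\tau^{\delta^r}=\tau$---is exactly the route the paper takes; the difference is that the paper outsources the hard part to \cite[Theorem 2.4]{CRW} (and remarks that the proof that the $\tau_i$ commute ``goes through unchanged''), whereas you try to reprove it. Measured against that, your commutativity step is a genuine gap. The closure implication you extract from the conjugated addition relation runs in the descending direction (if $m+k$ and $r+k$ lie in $S=\{k:[\tau,\tau^{\delta^k}]=1\}$ then $k\in S$), and your only seeds are $0,\pm(q+1)/4$ together with the consecutive-commuting chain $[\tau^{\delta^k},\tau^{\delta^{k+c}}]=1$. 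Nothing in your sketch shows these generate everything; whether they do depends on the arithmetic of $m$ and $r$, and this is precisely the content of the Campbell--Robertson--Williams argument. You correctly flag this as the delicate step, but flagging it is not proving it. Note that your consecutive chain is exactly the pair of relations $[\tau,\tau_1]=[\tau_1,\tau^\delta]=1$ of Theorem \ref{CRW-PSL-odd} (conjugate $[\tau,\tau^{\delta^{(q+1)/4}}]=1$ by powers of $\delta^{(q+1)/4}$ and use $\tau(-\alpha)=\tau(\alpha)^{-1}$), so the economical fix is to invoke that theorem's commutativity argument rather than invent a new one.

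The recovery of $\tau^p=1$ is also flawed on both of your routes. The relation $\tau^{f(t)}=1$ and its $\delta$-conjugates cannot by themselves ``identify the root subgroup with the additive group of $\GF(q)$'': $\mathbb{Z}[t]/(f(t))$ is a free $\mathbb{Z}$-module of rank $e$, so no $p$-torsion is forced without an explicit exponent-$p$ relation, which is exactly what is missing here. The Schur-multiplier route is circular: to run it you need $z=\delta^{(q-1)/2}$ to be central, but $(U\delta)^2=U^2=1$ gives $z^U=z^{-1}$, so centrality of $z$ already amounts to $z^2=1$. The paper's derivation is short and you should adopt it: since $\tau^p=\delta^{(q-1)/2}$ commutes with $\delta$, one has $(\tau^{\delta^m})^p=(\tau^p)^{\delta^m}=\tau^p$ and likewise for $r$; once $\tau^{\delta^m}$ and $\tau^{\delta^r}$ are known to commute, raising $\tau^{\delta^m}\tau^{\delta^r}=\tau$ to the $p$-th power yields $\tau^p\tau^p=\tau^p$, hence $\tau^p=1$ and then $\delta^{(q-1)/2}=1$. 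Your verification that the relators hold in $\PSL(2,q)$, including the case analysis on $r$, is correct.
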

 
This is essentially 
\cite[Theorem 2.4]{CRW}.  We have replaced the relation that  
translates to $(\tau\delta^{-1})^{(q-1)/2}=1$ by the 
simpler relation $\delta^{(q-1)/2}=1$, as in Theorem \ref{CRW-PSL-odd}.
The relation $\tau^p=1$ may be retrieved as follows.  
We have the relations $\delta^{(q-1)/2}=\tau^p$ and
$\tau^{\delta^m} \tau^{\delta^r} = \tau$, where the latter is 
a rewriting of $\tau^{\delta^m}=[\tau^{-1}, \delta^r]$.
Since $\tau^p$ commutes with $\delta$ it follows 
that $\tau^p \tau^{p} = \tau^p$.

Note that $\tau^{\delta^{(q+1)/4}}$ is $\tau(-\omega)$, and plays the 
role of $\tau_1=\tau(\omega)$ in Theorem \ref{CRW-PSL-odd}.  
It is because we can write this element as a conjugate of $\tau$ that 
we have a simpler presentation when $q\equiv3\bmod4$.

\begin{theorem}\label{CRW-SL-3mod4}
 If $q\equiv3\bmod4$ and $e > 1$, then a presentation 
 for $\SL(2,q)$
is obtained from the presentation for
 $\PSL(2,q)$ in Theorem $\ref{CRW-PSL-3mod4}$ by replacing the relations 
$$(\tau U)^3=1, \q (U\delta)^2=1, \q U^2=1, \q \delta^{(q-1)/2}=\tau^p$$ 
by the relations 
 $$(\tau U^{-1})^3=U^2, \q (U\delta)^2=U^2, \q U^4=1, \q 
\delta^{(q-1)/2}=\tau^pU^2.$$
 \end{theorem}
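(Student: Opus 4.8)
The plan is to follow the proof of Theorem \ref{CRW-SL-odd}: I will exhibit the presented group $G$ as a central extension of $\PSL(2,q)$ by the subgroup generated by $U^2$, and then pin down $|G|$ by a counting argument. First I would verify that every relator of the new presentation evaluates to $I$ on the matrices $\tau,\delta,U\in\SL(2,q)$, so that there is a surjection $\phi\colon G\twoheadrightarrow\SL(2,q)$. For the four new relators this is a direct computation: since $\tau$ has order $p$ one has $\tau^p=I$, one checks $(\tau U^{-1})^3=(U\delta)^2=U^2=-I$ and $U^4=I$, and $\omega^{(q-1)/2}=-1$ gives $\delta^{(q-1)/2}=-I=U^2=\tau^pU^2$. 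The unchanged relators of Theorem \ref{CRW-PSL-3mod4} hold in $\SL(2,q)$ automatically: each is a relation among upper-unipotent matrices $\tau(\cdot)$ --- visibly so for the commutator and for $\tau^{f(t)}$, and for $\tau^{\delta^m}=[\tau^{-1},\delta^r]$ after reducing both sides to a single such matrix --- and since $\SL(2,q)$ and $\PSL(2,q)$ agree on the unipotent subgroup, these relators carry over unchanged.

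The crucial step is to prove that $z:=U^2$ is central in $G$. Setting $a:=\tau U^{-1}$, the relation $(\tau U^{-1})^3=U^2$ reads $z=a^3$, so $z$ commutes with $a$; as $z=U^2$ it also commutes with $U$, and hence with $\tau=aU$. The relation $(U\delta)^2=U^2$ gives $\delta U\delta=U$, whence $\delta^U=\delta^{-1}$ and therefore $\delta^{U^2}=\delta$; thus $z$ commutes with $\delta$. Commuting with all three generators, $z$ is central.

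With $z$ central I would pass to $G/\langle z\rangle$, where $U^2=1$ and so $U=U^{-1}$. The four modified relators then collapse to their counterparts in Theorem \ref{CRW-PSL-3mod4}: $(\tau U^{-1})^3=z$ becomes $(\tau U)^3=1$, $(U\delta)^2=z$ becomes $(U\delta)^2=1$, $U^4=1$ with $z=1$ gives $U^2=1$, and $\delta^{(q-1)/2}=\tau^pU^2$ becomes $\delta^{(q-1)/2}=\tau^p$. Hence $G/\langle z\rangle$ satisfies the presentation of Theorem \ref{CRW-PSL-3mod4}, so it is a quotient of the simple group $\PSL(2,q)$ (simple since $q>3$); as $\phi$ induces a surjection $G/\langle z\rangle\twoheadrightarrow\PSL(2,q)$, this quotient must be $\PSL(2,q)$ itself. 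Finally $z^2=U^4=1$ gives $|\langle z\rangle|\le2$, so $|G|\le2\,|\PSL(2,q)|=|\SL(2,q)|$; combined with the surjection $\phi$ this forces $\phi$ to be an isomorphism. I expect no serious obstacle: the proof is the $q\equiv3\bmod4$ analogue of the odd-case template, and the only point requiring care is the bookkeeping in the quotient --- confirming that substituting $U^{-1}$ for $U$ and carrying the extra factor $U^2$ become harmless precisely modulo $\langle z\rangle$ --- once the centrality of $U^2$ is in hand.
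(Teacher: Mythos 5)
Your proposal is correct and takes essentially the same approach as the paper: the paper's proof of the analogous Theorem~\ref{CRW-SL-odd} consists of checking the relations hold in $\SL(2,q)$ and deducing centrality of $U^2$ from $(\tau U^{-1})^3=U^2$ and $(U\delta)^2=U^2$, with the present case declared ``similar''. Your write-up merely makes explicit the order-counting conclusion (that $G/\langle U^2\rangle$ satisfies the $\PSL(2,q)$ presentation and $|\langle U^2\rangle|\le 2$, so the surjection onto $\SL(2,q)$ is an isomorphism), which the paper leaves as ``the result follows''.
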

\noindent 
The proof is similar to that of Theorem \ref{CRW-SL-odd}.

Similar results hold for characteristic 2. 
Translated into our generating system, 
\cite[Theorem 3.2]{CRW} states the following.
\begin{theorem}\label{CRW-SL-even}
If $e>1$, and $m$ satisfies the equation $\omega^{2m}=1+\omega^2$, 
and the minimum polynomial of $\omega^2$ over $\GF(2)$
is $\sum_i u_i t^i$, 
then $\SL(2,2^e)$ has the following presentation:
$$\{\tau,\delta,U\;\vert\;(U\tau)^3=U^2 = (U\delta)^2=(\tau\delta)^{q-1}=\tau^2 = 1\;,\;
\tau^{\delta^m}=[\tau,\delta]\;,\; 
\prod_i\tau^{u_i\delta^i}=1\}.$$
\end{theorem}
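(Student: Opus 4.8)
The plan is to obtain this presentation by translating \cite[Theorem 3.2]{CRW} from the generators used there into our generating set $\{\tau,\delta,U\}$, exactly as was done for the odd characteristic presentations in Theorems \ref{CRW-PSL-odd} and \ref{CRW-SL-odd}. First I would record the Campbell--Robertson--Williams presentation on their original generators, and then fix the substitution expressing those generators in terms of $\tau,\delta,U$, analogous to the odd-characteristic replacement $w,x,y,z\mapsto U\tau^{-1},\tau,\tau_1,\tau\delta^{-1}$. In characteristic $2$ this dictionary is simpler: since $x\mapsto x^2$ is an automorphism of $\GF(2^e)$, the element $\omega^2$ is again primitive and has degree $e$ over $\GF(2)$, so the $\delta$-conjugates $\tau^{\delta^i}=\tau(\omega^{2i})$ already span the root subgroup additively. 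Consequently there is no need for a separate generator $\tau_1$, nor for the parity-dependent relations of the odd case, which is why the displayed presentation is shorter.

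Next I would verify, one relation at a time, that each relation in the displayed presentation corresponds under this substitution to a relation of \cite{CRW}, or is Tietze-equivalent to one in the presence of the others. The relations $U^2=1$ and $(U\delta)^2=1$ are immediate matrix identities once one notes that in characteristic $2$ we have $U=\left(\begin{smallmatrix}0&1\cr1&0\end{smallmatrix}\right)$ and $U\delta=\left(\begin{smallmatrix}0&\omega\cr\omega^{-1}&0\end{smallmatrix}\right)$; the relation $\tau^2=1$ holds because $\tau(1)^2=\tau(2)=\tau(0)$; and $(U\tau)^3=1$ is the usual $\SL(2,q)$ relation. The remaining three relations encode the arithmetic of $\GF(2^e)$ through the identification $\tau^{\delta^i}=\tau(\omega^{2i})$: since $[\tau,\delta]=\tau^{-1}\tau^{\delta}=\tau(-1)\tau(\omega^2)=\tau(\omega^2-1)=\tau(1+\omega^2)$ in characteristic $2$, the relation $\tau^{\delta^m}=[\tau,\delta]$ is exactly the condition $\omega^{2m}=1+\omega^2$; the relation $\prod_i\tau^{u_i\delta^i}=1$ reads $\tau\bigl(\sum_i u_i\omega^{2i}\bigr)=1$, which holds precisely because $\sum_i u_i t^i$ is the minimal polynomial of $\omega^2$; and $(\tau\delta)^{q-1}=1$ records the order of $\tau\delta$, which is diagonalisable to $\diag(\omega^{-1},\omega)$ and hence has order $q-1$.

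Finally I would confirm that the two relation sets present the same group, not merely that our relations hold in $\SL(2,2^e)$. The content here is sufficiency, which is already supplied by \cite{CRW}: I only need to check that the substitution is invertible, so that the CRW generators are recovered as words in $\tau,\delta,U$ and conversely, and that the displayed relations imply the CRW relations under this dictionary, making the two finite presentations Tietze-equivalent. I expect the main obstacle to be exactly this bookkeeping: pinning down which CRW relation each displayed relation replaces, and checking that the characteristic-$2$ simplifications (elimination of $\tau_1$, and of the redundant minimal-polynomial and torus-order relations already exploited in the odd case) discard no information needed to recover the full presentation. Once the dictionary is fixed, each individual equivalence is a short computation in the root subgroup and the torus, and the result follows.
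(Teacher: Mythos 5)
Your proposal is correct and takes essentially the same route as the paper: the paper's entire proof consists of asserting that this presentation is \cite[Theorem 3.2]{CRW} translated into the generating set $\{\tau,\delta,U\}$ (mirroring the substitution used in the odd-characteristic case), which is exactly your plan of fixing the dictionary and checking Tietze equivalence relation by relation. Your characteristic-$2$ verifications --- $[\tau,\delta]=\tau^{-1}\tau^{\delta}=\tau(1+\omega^2)$ so that $\tau^{\delta^m}=[\tau,\delta]$ encodes $\omega^{2m}=1+\omega^2$, the identity $\prod_i\tau^{u_i\delta^i}=\tau\bigl(\sum_i u_i\omega^{2i}\bigr)=1$ from the minimal polynomial of the (still primitive) element $\omega^2$, and the matrix checks for $U^2$, $(U\delta)^2$, $(U\tau)^3$, $\tau^2$ and $(\tau\delta)^{q-1}$ --- are all correct and simply make explicit what the paper leaves implicit.
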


\subsection{Presentations for $\SL(2, p)$} 
We use the following presentations from \cite{Cam-Rob80} rewritten on 
the generating set $\{\tau, U\}$.
\begin{theorem}
Let $p$ be an odd prime and let $k = \lfloor p / 3 \rfloor$.
If $p \equiv 1 \bmod 3$ then 
$\SL(2,p)$ has presentation
$$\{ \tau, U \; | \; 
U^{2} = (U \tau U^2)^3, {(U (\tau U^2)^4  U (\tau U^2)^{(p + 1) / 2})}^2 (\tau U^2)^p U^{2k}  = 1\},$$
else 
$$\{ \tau, U \; | \; 
U^{-2} = (U^{-1} \tau)^3,
{(U^{-1} \tau^4 U^{-1} \tau^{(p + 1) / 2})}^2  \tau^p  U^{-2k} = 1 \}.$$
\end{theorem}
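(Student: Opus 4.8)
The plan is to recognise this presentation as the deficiency-zero presentation of $\SL(2,p)$ due to Campbell and Robertson \cite{Cam-Rob80}, merely rewritten on $\{\tau,U\}$; the genuinely hard content---that two relations suffice to \emph{define} $\SL(2,p)$---is imported from that paper, so the work here is the translation together with the verification that the displayed relations do hold. First I would record the Campbell--Robertson presentation in the two-generator, two-relator form $\langle a,b \mid a^2=(ab)^3,\ (ab^4ab^{(p+1)/2})^2 b^p a^{2k}=1\rangle$ with $k=\lfloor p/3\rfloor$, together with the isomorphism onto $\SL(2,p)$ that they establish. Note that over the prime field the torus element $\delta$ lies in $\langle \tau,U\rangle$, so two generators already suffice.

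Next I would pin down the identification of $a,b$ with words in the standard generators. A short matrix computation gives $U^2=-I$ (the generator of the centre) and, more generally, that an element $g\in\SL(2,p)$ satisfies $g^3=-I$ exactly when $\tr(g)=1$. Hence the defining relation $a^2=(ab)^3=-I$ holds for precisely the two assignments $(a,b)=(U,\tau U^2)$ and $(a,b)=(U^{-1},\tau)$, these being the only choices with $a\in\{U,U^{-1}\}$ and $b\in\{\tau,\tau U^2\}$ making $\tr(ab)=1$. Each assignment is invertible---$U=a$, $\tau=b\,a^{-2}$ in the first and $U=a^{-1}$, $\tau=b$ in the second---so $\{\tau,U\}$ and $\{a,b\}$ are interchangeable and the presentation on $\{a,b\}$ transfers verbatim. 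Substituting the first assignment reproduces the relations displayed for $p\equiv 1\bmod 3$, and the second reproduces those for $p\not\equiv 1\bmod 3$.

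It then remains to verify that the long relation evaluates to the identity in $\SL(2,p)$ under the assignment attached to each congruence class. The crux is the central factor $b^p a^{2k}$: since $U^2=-I$, one has $b^p=(\tau U^2)^p=-I$ in the first assignment but $b^p=\tau^p=I$ in the second, while $a^{2k}=(-I)^k$ with $k=\lfloor p/3\rfloor$ of opposite parity in the two cases. A direct computation shows that $(ab^4ab^{(p+1)/2})^2$ is a central element, equal to $\pm I$, and the case split is arranged precisely so that the signs contributed by $b^p$ and $a^{2k}$ cancel this value, making the product trivial.

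I expect the main obstacle to be this last verification: the second relation is long, and although each factor is routine to evaluate, one must carefully track the resulting element of $\{\pm I\}$ through $b^p$ and $a^{2k}$. The point to get right is that it is exactly the condition $p\equiv 1\bmod 3$ that forces the transvection-like generator to be $\tau U^2=-\tau$ (paired with $a=U$) rather than $\tau$ (paired with $a=U^{-1}$); equivalently, whether the order-dividing-$6$ element $ab$ is realised as $-U\tau$ or as $U^{-1}\tau$, which is governed by the presence of nontrivial cube roots of unity in $\GF(p)$.
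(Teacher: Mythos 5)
Your proposal is correct and follows the paper's own route: the paper offers no proof of this theorem beyond attributing the presentations to Campbell and Robertson and rewriting them on $\{\tau,U\}$, which is precisely the translation-plus-verification you carry out, and your identification of the two assignments $(a,b)=(U,\tau U^2)$ and $(a,b)=(U^{-1},\tau)$, the invertibility of each substitution, and the sign bookkeeping through $(ab^4ab^{(p+1)/2})^2=-I$, $b^pa^{2k}=-I$ are all sound. One small slip in your closing remark: since $U^2=-I$ one has $-U\tau=U^{-1}\tau$, so the element $ab$ is the \emph{same} matrix in both cases and cannot detect the case split; what distinguishes the two cases is the factorisation of $ab$, namely the sign of $b^p$ together with the parity of $k=\lfloor p/3\rfloor$ (even exactly when $p\equiv 1\bmod 3$), exactly as in your preceding paragraph.
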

Note that $\SL(2,2)$ has presentation 
$\{ \tau,U \, \vert  \, (\tau U)^3=U^2=\tau^2=1\}$.
If $e = 1$ then we may take $\omega$ as an integer. 
Observe that 
$\delta = {(\tau^{\omega-\omega^2})}^{U} \tau^{\omega^{-1}} {(\tau^{\omega - 1})}^{U}\tau^{-1}$;
in particular, $\delta$ is the identity and $U^2$ for $p = 2, 3$ respectively.

A presentation for $\PSL(2,p)$ is obtained by imposing the 
additional relation $U^2=1$.

\subsection{Standard generators for $\SL(2, q)$}
In Table 1 of \cite{even,odd} the non-trivial standard generators for 
$\SL(2, q)$ are labelled $s, t, \delta$.
Observe that $s = U; t=\tau$; and the standard generator
$\delta$ is the inverse of the presentation generator $\delta$.

\section{A presentation for $\SU(3,q)$}\label{SU3q}

\subsection{Generators and notation}
Let $q=p^e$ for a prime $p$.
Let $\omega$ be a primitive element of $\GF(q^2)$, and 
let $\omega_0=\omega^{q+1}$, 
so $\omega_0$ is a primitive element of $\GF(q)$.

Let $\xi=1/(1+\omega^{q-1})$ if $q$ is even, 
and $\xi=-1/2$ if $q$ is odd, and let $\zeta=-\omega^{(q^2+q)/2}$.  
So $\xi$ has trace $-1$, and $\zeta^2=\omega_0$, and $\zeta$ has trace 0.
Note that $\zeta=\omega^{(q+1)/2}$ if $q$ is odd.

We work with respect to the basis
$(e_1,v,f_1)$ for the underlying vector space, 
where the hermitian form is defined by $e_1.f_1=f_1.e_1=v.v=1$, 
and the form vanishes on all other pairs of basis vectors.  
Let $K$ be the root group of order $q^3$
consisting of all matrices of the form
$$\myupsilon(\alpha,\beta)=\left(\begin{matrix}
1&\alpha&\beta\cr
0&1&-\alpha^q\cr
0&0&1\cr
\end{matrix}\right),$$
where $\alpha$ and $\beta$ lie in $\GF(q^2)$, 
and $\beta$ has trace $-\alpha^{q+1}$,
for example, $\beta=\xi\alpha^{q+1}$.

Let $\myupsilon=\myupsilon(1,\xi)$,
and let $\tau=\myupsilon(0,\zeta)$ if $q$ is odd, 
and $\tau=\myupsilon(0,1)$ if $q$ is even.
Define 
$$\Delta(\alpha)=\left(\begin{matrix}
\alpha&0&0\cr
0&\alpha^{q-1}&0\cr
0&0&\alpha^{-q}\cr
\end{matrix}\right),$$
and let $\Delta=\Delta(\omega)$. 

Finally let $t=\left(\begin{matrix}
0&0&1\cr
0&-1&0\cr
1&0&0\cr
\end{matrix}\right)$.

If $q > 2$ then $\{\myupsilon,\tau,\Delta, t\}$ generates  
$\SU(3, q)$, but it generates only a subgroup of index 2 in $\SU(3, 2)$.

\subsection{A presentation for $\SU(3, q)$ for $q > 2$} \label{main-su3}
Using the work of \cite{HulpkeSeress01}, 
Guralnick {\it et al.} \cite[Theorem 4.10]{JEMS} give
a short presentation for $\SU(3, q)$ for $q > 2$ 
on $\{\myupsilon, \Delta^{-q}, t \}$.

Here, for completeness, we provide a variation of their presentation 
on generators $\{\myupsilon, \tau, \Delta, t \}$.
Let $H=\langle\myupsilon,\tau, \Delta\rangle$, a Borel subgroup 
of order $(q^2-1)q^3$.  
If $q$ is even, then relation $R_1$(iii) given below defines $\tau$, 
otherwise our presentation is independent of the choice of
a non-trivial element of 
$\langle \tau \rangle^H$; so this difference between the 
presentations is insignificant.
Following the existing approach, 
we first construct a presentation for $H$, and then
identify explicitly the additional relations implied 
by \cite{HulpkeSeress01} to obtain one for $\SU(3, q)$.


\subsubsection{A presentation for $H$}
\begin{lemma}\label{field_elts}
If $q\ne 2,3,5$ then there exist integers $x$ and $y$ such that
\begin{enumerate}
\item[(i)] $\omega^{x(q-2)}+\omega^{y(q-2)}=1$;
\item[(ii)] $\omega^{-x(q+1)}+\omega^{-y(q+1)}=1$;
\item[(iii)] $\GF(p)[\omega^{x(q+1)}] = \GF(q)$;
\item[(iv)] $\GF(p)[\omega^{x(q-2)}]$ is
equal to $\GF(q^2)$ or $\GF(q)$, according as $p$ is odd or even.
\end{enumerate}
\end{lemma}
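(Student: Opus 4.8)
The plan is to collapse the four conditions onto a single unknown $a:=\omega^{x(q-2)}\in\GF(q^2)$ and then count. Writing also $b:=\omega^{y(q-2)}$ and recalling $\omega_0=\omega^{q+1}$, a one-line computation of exponents gives $\mathrm N(a):=a^{q+1}=\omega_0^{-x}$ and $\mathrm N(b)=\omega_0^{-y}$; thus (i) is $a+b=1$ and (ii) is $\mathrm N(a)+\mathrm N(b)=1$. Substituting $b=1-a$ and expanding $\mathrm N(1-a)=1-\tr(a)+\mathrm N(a)$ turns (ii) into $\tr(a)=2\,\mathrm N(a)$; dividing by $\mathrm N(a)$, this reads $\tr(1/a)=2$ when $p$ is odd, and $\tr(a)=0$, i.e.\ $a\in\GF(q)$, when $p$ is even. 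In the same variables (iii) says $\GF(p)[\mathrm N(a)]=\GF(q)$ and (iv) says $\GF(p)[a]$ equals $\GF(q^2)$ for $p$ odd and $\GF(q)$ for $p$ even; pleasingly, these are exactly the fields that the reduced form of (ii) forces, which is a useful consistency check. The only constraint left over from the requirement that $x,y$ be integers is that $a$ and $1-a$ both lie in $S:=\langle\omega^{q-2}\rangle$.

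I would first pin down $S$. From $q^2-1=(q-2)(q+2)+3$ we get $\gcd(q-2,q^2-1)=\gcd(q-2,3)$, so $S=\GF(q^2)^{\times}$ when $q\not\equiv2\pmod3$ and $S$ is the group of nonzero cubes when $q\equiv2\pmod3$. The even case is then immediate: choose $a\in\GF(q)^{\times}$ with $a\neq1$ generating $\GF(q)$ over $\GF(2)$; both $a$ and $1-a=1+a$ lie in $S$ (in the cube case one checks $\GF(q)^{\times}\subseteq S$, since $3\mid q+1$), and (iii),(iv) hold because $\GF(2)[\mathrm N(a)]=\GF(2)[a^2]=\GF(2)[a]$. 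Such an $a$ exists for every $q>2$. For $p$ odd with $q\not\equiv2\pmod3$ there is no cube constraint: the condition $\tr(1/a)=2$ confines $c:=1/a$ to an affine line of $q$ elements, whose only point in $\GF(q)$ is $c=1$, so the candidates automatically avoid $\GF(q)$; discarding the $O(\sqrt q)$ of them that fall into a smaller subfield (failing (iv)) or whose norm lies in a proper subfield of $\GF(q)$ (failing (iii)) leaves a nonempty set once $q$ is large.

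The one genuinely delicate case is $p$ odd with $q\equiv2\pmod3$, where $S$ is the cubes and I must make $a$ and $1-a$ cubes as well. Here I would exploit that $c-1$ has trace $0$ whenever $\tr(c)=2$: the nonzero trace-zero elements form a single coset of $\GF(q)^{\times}$, which consists of cubes, and that coset is itself the cube coset because $q\equiv5\pmod6$. Hence $c-1$ is automatically a cube, the condition on $1-a$ is free, and only $c$ (equivalently $a$) must be a cube. Counting cubes on the trace-$2$ line with a cubic character $\chi$ of $\GF(q^2)^{\times}$ gives main term $(q-1)/3$ from the principal term, while each nonprincipal term reduces, via $(1+s\theta)^{q-1}=(1-s\theta)/(1+s\theta)$ for a fixed trace-zero $\theta$ and $s\in\GF(q)$, to the sum of a fixed cubic character over the values of a M\"obius transform, which the Weil bound controls by $O(\sqrt q)$. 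Subtracting the $O(\sqrt q)$ candidates that fail (iii) or (iv)---note $\mathrm N(1+s\theta)=1+s^2\mathrm N(\theta)$ is quadratic in $s$, so each norm value has at most two preimages---the count is $(q-1)/3+O(\sqrt q)$, positive for all large $q$.

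The main obstacle is thus not the structure but the smallest admissible $q$: in the cube case the main term $(q-1)/3$ is so modest that for the first few moduli $q\equiv2\pmod3$ the Weil error is of comparable size, and positivity is not delivered by the estimate alone. I would therefore establish the inequality for $q$ beyond an explicit threshold and verify the finitely many remaining values directly---this is consistent with the explicit hypothesis $q\neq2,3,5$ and with the computational checks reported in Section~\ref{verify}. A secondary, purely routine, point is to make the subfield count for (iii) and (iv) uniform over the divisors $d\mid 2e$; this is painless once one records that the trace-$2$ line already meets $\GF(q)$ only at $1$.
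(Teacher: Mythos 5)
Your argument is correct in substance but takes a genuinely different route from the paper, whose entire proof is a one-line citation of \cite[Lemma 4.23]{GKKL} after the substitution $a=\omega^{-x}$, $b=\omega^{-y}$; the proof cited there parametrises the solutions of (i) and (ii) by the explicit rational function $c_t$ that the paper reproduces immediately after the lemma, and counts admissible $t$ via a Weil-type bound $q-5-6(e-1)q^{1/2}$. You instead set $a=\omega^{x(q-2)}$, which turns (i) into $a+b=1$ and (ii) into $N(a)+N(b)=1$, hence into the single linear condition $\tr(1/a)=2$: this replaces the rational curve by an affine line, and your observation that $c-1$ then has trace zero, so lies in the coset $\theta\,\GF(q)^{\times}$ of cube elements (as $3\mid q+1$), disposes of half the cube obstruction for free. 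Three remarks. First, your cubic character sum needs no Weil bound: since $\chi$ is trivial on $\GF(q)^{\times}$ it factors as $\chi(x)=\lambda(x^{q-1})$, and $s\mapsto(1-s\theta)/(1+s\theta)$ is a bijection from $\GF(q)\cup\{\infty\}$ onto the norm-one circle, so $\sum_{s}\chi(1+s\theta)=-\lambda(-1)=-1$ exactly and the number of cubes on the trace-$2$ line is exactly $(q-2)/3$. Second, the elements failing (iv) lie in subfields $\GF(p^{d})$ with $d\mid 2e$, $d\nmid e$, hence $d\le 2e/3$, which gives a bound of order $q^{2/3}\log q$ rather than your claimed $O(\sqrt q)$; this is still $o(q)$ and harmless, but should be stated correctly. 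Third, the genuinely unfinished step is the finite list of small odd $q$ for which the main term does not yet dominate the subfield losses: your write-up only promises to check these, and that check is necessary (the bound quoted in the paper is likewise negative for, e.g., $q=9$, and positivity there is asserted separately), so you should either make the threshold explicit and verify the remaining $q$ or import that verification. On balance your route is self-contained and more transparent about where the residue class of $q$ modulo $3$ and the excluded values enter, while the paper's citation buys brevity together with the parametrisation $c_t$ that it needs anyway to compute $x$ and $y$ algorithmically.
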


\begin{proof}
This is \cite[Lemma 4.23]{GKKL}, with $a$ and $b$ replaced by $\omega^{-x}$
and $\omega^{-y}$ respectively, as can be seen by raising (i) to the power $q$.
\end{proof}

The proof of this lemma in \cite{GKKL} allows us to obtain $x$ and $y$ readily. 
If $q$ is even, then $\omega^{-x} = \omega_0$ and $\omega^{-y} = 1-\omega_0$.
Hence $x = -(q + 1)$, and $y = -c(q + 1)$ where we solve the equation 
$\omega_0^{c} = 1 - \omega_0$ for $c$ via a discrete log computation 
in $\GF (q)$.
Otherwise, define $\psi := \omega^{(q+1)/2}$
and $c_t := t(t^2+3\psi^2)(t+\psi)(t^2-\psi^2)^{-2}$ for $t\in\GF(q^2)$.
If $x$ and $y$ satisfy conditions (i) and (ii), 
then $\omega^{-x} = c_t$ and $\omega^{-y} = c_t - c_t^{q-1}$ for 
some $t\in\GF(q^2)$.
The number of such $t$ in $\GF(q^2)$ is always positive and is 
at least $q - 5 - 6(e-1)q^{1/2}$. 
Hence we determine $t$ by a random search, and so $x$ and $y$
via a discrete log computation in $\GF(q^2)$.


We repeatedly use the following which is 
\cite[Lemma 4.5]{GKKL}.
\begin{lemma} \label{trick}
Let $U_0$ and $W_0$ be subgroups of a group $G$, and let
$u,w,a,b$ be elements of $G$ satisfying the following conditions:
\begin{enumerate}
\item[(i)] $[a,b]=1$;
\item[(ii)] $\langle u,u^a,U_0\rangle = \langle u,u^b,U_0\rangle = \langle u^a,u^b,U_0\rangle$;
\item[(iii)] $\langle w,w^a,W_0\rangle = \langle w,w^b,W_0\rangle = \langle w^a,w^b,W_0\rangle$;
\item[(iv)] $[u^a,w]=[u^b,w]=1$;
\item[(v)] $U_0$ and $W_0$ are normalised by $\langle a,b\rangle$;
\item[(vi)] $[U_0,w] = [u,W_0] = 1$.
\end{enumerate}
Then $[\langle\, \{ u^c: c \in \langle a,b\rangle \} \,\rangle, \;
\langle\, \{ w^c : c \in \langle a,b\rangle \, \} \rangle]=1$.
\end{lemma}


We build up our presentation for $H$ on the generating 
set $\{ \myupsilon,\tau, \Delta \}$ in steps.
Let $G$ be the presented group, so that $G$ changes as relations are imposed.

Let $a=\Delta^{x}$ and $b=\Delta^{y}$, with $x$ and $y$ as 
in Lemma \ref{field_elts}.
When $e>1$, we choose $x$ such that $\gcd(x, q^2-1)=1$ whenever possible.
This guarantees conditions (iii) and (iv) of the lemma.

Let $U$ be the normal closure in $H$ of 
$\langle\myupsilon,\tau\rangle$, so $U$ is the group
of upper unitriangular matrices in $H$. 
Let $W$ be the normal closure of $\langle\tau\rangle$ in $H$, so 
$W$ is the centre of $U$.  

We use three sets of relations to present $H$. 
The first $R_1$ is the following. 
\begin{enumerate}
\item[(i)] $\Delta^{q^2-1} = 1$;
\item[(ii)] $a = \Delta^{x}$ and $b = \Delta^{y}$;
\item[(iii)] $\myupsilon^p = 1$  for odd $p$, and $\myupsilon^2=\tau$ for even $p$;
\item[(iv)] $\tau^p=1$.
\end{enumerate}

Suppose that $m(s)=\sum_ia_is^i$ is a polynomial over $\GF(p)$.  If $w\in W$ and $h\in\langle\Delta\rangle$,
we define $w^{m(h)}$ to be $\prod_i(w^{a_i})^{h^i}$, 
and if $u\in U$ we define $u^{m(h)}$ similarly, but here
the order in which the terms are multiplied is relevant.  Any fixed order, such as the natural order, may be used.

We now introduce a set $R_2$ of relations which, as we shall see, 
determines, with $R_1$, 
the structure of $W = \langle\{\tau^\delta : \delta \in \langle \Delta\rangle\}\rangle$.
\begin{enumerate}
\item[(i)] $\tau= \tau^a\tau^b$, and, if $p$ is odd, then $\tau = \tau^b\tau^a$;
\item[(ii)] if $e>1$ then $\tau^{m_1(a)} = 1$, 
where $m_1$ is the minimum polynomial of $\omega_0^{-x}$ over $\GF(p)$;
\item[(iii)] if $e=1$, or $\gcd(x,q^2-1) > 1$, then $\tau^{\Delta} = \tau^{m_2(a)}$, where $m_2$
is a polynomial of degree at most $e-1$ over $\GF(p)$, and 
$\omega_0^{-1} = m_2(\omega_0^{-x})$.
\end{enumerate}
Relation (i) is satisfied in $H$,
since $\omega^{-x(q+1)}+\omega^{-y(q+1)}=1$, 
Note that $\tau = \tau^b\tau^a$ 
also holds in $G$ in the case of even $p$, by $R_1$(iv) and $R_2$(i).
Observe that $[\tau,\tau^a]=[\tau,\tau^b]=1$,
since $\tau = \tau^a \tau^b$ and $[\tau^a,\tau^b] = 1$.

To see that $R_1$(ii), $R_2$(i), and this last observation 
imply that $X := \langle\{\tau^c:c\in\langle a,b\rangle\}\rangle$
is abelian, apply Lemma \ref{trick}, with
$U_0=W_0=\langle1\rangle$, and $u=w=\tau$.  
Since $a$ and $b$ commute, $R_2$(i) implies that
$X = \langle\{\tau^c:c\in\langle a\rangle\}\rangle$.
If $e=1$ then (iii) asserts that 
$\Delta$ normalises $\langle\tau\rangle$, so this group is equal to $W$,
as required.  If $e>1$ then (ii) implies that $X$ has 
order $p^e$.
If also $\gcd(x,q^2-1)=1$
then $\Delta\in\langle a \rangle$, so $\Delta$ normalises $X$, 
which is hence equal to $W$, as required.
But if $\gcd(x, q^2-1)>1$ then (iii) implies that $X$ is normalised by $\Delta$, as required.

We now introduce a set $R_3$ of relations that imply, 
with $R_1$ and $R_2$, that
$U/W$ is elementary abelian of order $q^2$.
\begin{enumerate}
\item[(i)] $\myupsilon = \myupsilon^a\myupsilon^b w_1$ for some $w_1\in W$, 
and if $p$ is odd then 
$\myupsilon = \myupsilon^b\myupsilon^a w_2$ for some $w_2\in W$;
\item[(ii)] $[\myupsilon^a,\tau]=[\myupsilon^b,\tau]=1$;
\item[(iii)] $[\myupsilon,\myupsilon^a] = w_3$ for some $w_3\in W$;
\item[(iv)] if $p$ is even, so $\GF(p)[\omega^{x(q-2)}] = \GF(q)$, 
and $e>1$, then 
$[\myupsilon^{\Delta},\myupsilon^a] = w_4$ 
and $[\myupsilon^{\Delta},\myupsilon^b] = w_5$ 
for some $w_4$ and $w_5\in W$;
\item[(v)] $\myupsilon^{m_3(a)} = w_6$ for some 
$w_6\in W$, where $m_3$ is the minimum polynomial of 
$\omega^{x(q-2)}$ over $\GF(p)$; 
\item[(vi)]  if $p$ is odd and $\gcd(x,q^2-1)>1$
then $\myupsilon^{\Delta}=\myupsilon^{m_4(a)}w_6$ for some $w_6\in W$, 
where $m_4$ is the
polynomial of degree at most $2e-1$ over $\GF(p)$ satisfying 
$\omega^{q-2} = m_4(\omega^{x(q-2)})$;
otherwise, if $p$ is even, then
$\myupsilon^{\Delta^2}=\myupsilon^{m_5(a)}\myupsilon^{\Delta m_6(a)}w_6$ for 
some $w_6 \in W$, 
where $m_5$ and $m_6$ are the polynomials 
of degree at most $e-1$ over $\GF(p)$ satisfying 
$\omega^{2q-4} = m_5(\omega^{x(q-2)}) + \omega^{q-2}m_6(\omega^{x(q-2)})$.
\end{enumerate}
Observe that (i) is satisfied in the matrix group since
$\omega^{x(q-2)}+\omega^{y(q-2)}=1$.
The elements $w_i$ need to be determined
by calculation in the matrix group.  

We first show that these relations imply that $U/W_1$ is elementary 
abelian, of order $q^2$, where $W_1=\langle\{\tau^g:g\in G\}\rangle$.
Let $Y=\langle\{\langle\myupsilon,\tau\rangle^c:
c\in\langle a,b\rangle\}\rangle<G$, and $Z = YW_1/W_1$.
Suppose first that $p$ is odd.
The fact that $Z$ is abelian
follows from Lemma \ref{trick} with $u=w=\myupsilon$, and $U_0=W_0=W_1$.
Since $a$ and $b$ commute, it follows from (i) that $Z=\langle\{\myupsilon^c:c\in\langle a\rangle\}\rangle W_1/W_1$.
The fact that $Z$ is elementary abelian follows from $R_1$(iii).
It now follows from (v) that $Z$ has order at most $q^2$, 
and hence has order exactly $q^2$.
Since $\Delta$ commutes with $a$, it follows from (vi) that 
$Y$ is normalised by $\Delta$.
So $Y=U$, and $U/W_1$ has order $q^2$, as required.
If $p$ is even, then we need to prove that
$Z$ has order $q$, and that $U/W_1$ is the direct sum of $Z$ and $Z^\Delta$.
The proof that $Z$ is elementary abelian goes through as 
in the case when $p$ is odd.
The fact that $Z$ has order $q$ follows from (v).
The fact that $Z$ and $Z^\Delta$ commute 
follows from Lemma \ref{trick} with $u=\myupsilon$, 
and $w=\myupsilon^{\Delta}$, and $U_0=W_0=W_1$.
The fact that the direct sum of these two groups is 
normalised by $\Delta$ follows from (vi).  
It remains to prove that $W_1=W$; in other words, that
$W$ is normalised by $U$.  
Since $W$ is normalised by $\Delta$, 
it suffices to prove that it is normalised by
$\myupsilon$, or, equivalently, by $\myupsilon^a$.  
We have seen that $W=\langle\{\tau^c:c\in\langle a,b\rangle\}\rangle$.
But $[\langle\{\tau^c:c\in\langle a,b\rangle\}\rangle, \langle\{\myupsilon^c:c\in\langle a,b\rangle\}\rangle]=1$ by Lemma \ref{trick},
with $u=\tau$, and $w=\myupsilon$, and $U_0=\langle1\rangle$, and $W_0=W$.

It follows that $G$ has order at most $q^3(q^2-1)$, and hence 
has precisely this order. 
Our analysis implies the following.
\begin{theorem} \label{Hpres}
If $H = \langle \Delta,\myupsilon,\tau\vert R_1\cup R_2\cup R_3\rangle$,
then $H$ is isomorphic to a Borel subgroup of $\SU(3,q)$.
\end{theorem}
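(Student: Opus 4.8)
The plan is to prove the isomorphism by a two-sided squeeze: exhibit a surjection from the presented group $G$ onto a genuine Borel subgroup $B$ of $\SU(3,q)$, and separately bound $|G|$ above by $|B| = q^3(q^2-1)$. Since a surjection between finite groups of equal order is an isomorphism, this suffices, and the substance lies entirely in the order bound, which the analysis preceding the statement has already carried out.

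First I would verify that every relation in $R_1\cup R_2\cup R_3$ holds when $\myupsilon,\tau,\Delta$ are read as the matrices defined in Section \ref{SU3q}. The relations of $R_1$ are immediate from the matrix forms. The identities $R_2$(i) and $R_3$(i) hold because of the defining equations $\omega^{-x(q+1)}+\omega^{-y(q+1)}=1$ and $\omega^{x(q-2)}+\omega^{y(q-2)}=1$ supplied by Lemma \ref{field_elts}, while the minimum-polynomial relations $R_2$(ii), $R_3$(v) and the normalising relations $R_2$(iii), $R_3$(vi) hold by the chosen definitions of the polynomials $m_i$; the central correction terms $w_i\in W$ appearing in $R_3$ are by construction the values computed in the matrix group, so those relations hold there tautologically. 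Since $\{\myupsilon,\tau,\Delta\}$ generates $B$, this yields a surjection $\pi\colon G\twoheadrightarrow B$.

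For the order bound I would simply assemble the structural analysis above. Three applications of Lemma \ref{trick} do the work: with $u=w=\tau$ and $U_0=W_0=\langle 1\rangle$ it shows $W=\langle\{\tau^c:c\in\langle a,b\rangle\}\rangle$ is abelian, after which $R_2$(ii)/(iii) force $|W|\le q$ and make $W$ invariant under $\Delta$; with $u=w=\myupsilon$ and $U_0=W_0=W_1$ it shows that $U$ modulo the normal closure $W_1$ of $\tau$ is elementary abelian, which $R_3$ cuts down to order $q^2$; and a third application with $u=\tau$, $w=\myupsilon$, $U_0=\langle 1\rangle$, $W_0=W$ shows $W$ is normalised by $U$, so $W_1=W$. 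Hence the normal closure $U$ of $\langle\myupsilon,\tau\rangle$ has order at most $q^3$, and as $G=U\langle\Delta\rangle$ with $|\langle\Delta\rangle|\le q^2-1$ by $R_1$(i), we get $|G|\le q^3(q^2-1)=|B|$. Combined with $\pi$, this forces $|G|=|B|$ and makes $\pi$ an isomorphism.

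The main obstacle is not the assembly but verifying, from the imposed relations alone and with no recourse to the matrix realisation, that the hypotheses of Lemma \ref{trick} are met in each application and that $R_2,R_3$ genuinely pin down the module structure across all the case distinctions. The delicate point is even characteristic, where $U/W$ must be shown to split as the direct sum of $Z$ and $Z^{\Delta}$ and where $\tau$ is defined by the relation $R_1$(iii) rather than chosen independently; checking that $R_3$(iv) and (vi) correctly force $\Delta$-invariance of this decomposition, together with the sub-case $\gcd(x,q^2-1)>1$ in which $\Delta\notin\langle a\rangle$ and one must invoke the normalising relations $R_2$(iii) and $R_3$(vi) rather than inclusion in $\langle a\rangle$, is where the care concentrates.
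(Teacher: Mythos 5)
Your proposal is correct and follows essentially the same route as the paper: the order bound $|G|\le q^3(q^2-1)$ is obtained via the same three applications of Lemma \ref{trick} (with the same choices of $u$, $w$, $U_0$, $W_0$ in each), and the paper likewise concludes by noting that the matrix Borel subgroup is a quotient of the presented group of that exact order. Your closing paragraph correctly locates where the real work lies, namely verifying the hypotheses of Lemma \ref{trick} purely from the relations and handling the even-characteristic splitting $U/W = Z \oplus Z^{\Delta}$ and the $\gcd(x,q^2-1)>1$ sub-case, which is precisely the content of the paper's case analysis following the statements of $R_2$ and $R_3$.
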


\subsubsection{Completing the presentation for $\SU(3,q)$}
We follow \cite{HulpkeSeress01} with some variations.
Our presentation  for $G$ 
is $$\{ \myupsilon,\tau, \Delta, t \; \vert  \; 
R(1),R(2),\Delta^t=\Delta^{-q}, t^2 = 1\},$$
where $R(1)$ are the relations in a presentation for
$H$ on $\{\myupsilon,\tau, \Delta\}$, and $R(2)$ is to be specified.  
We omit the relation $\Delta^{q^2 - 1} = 1$ from $R(1)$: it is 
an obvious consequence of $\Delta^t = \Delta^{-q}$
and $t^2=1$.
Since our defining relations for $H$ are satisfied in
$\SU(3,q)$, we may take $H$ to be a subgroup of $G$.  
The relations $R(2)$ are of the form 
$u^t=u_Ldtu_R$ 
for $u, u_L, u_R \in U$, 
and $d$ in $D := \langle\Delta\rangle$; 
in more detail, if $u=\myupsilon(\alpha,\beta)$ and $\beta\ne0$, so $u\ne1$, then 
$d=\diag(\beta^{-q}, \beta^{q-1}, \beta)$, 
$u_L=\myupsilon(-\alpha\beta^{-q},\beta^{-1})$,  
and $u_R=\myupsilon(-\alpha\beta^{-1},\beta^{-1})$.
Note that $\SU(3,q)$ is the disjoint union of 
$UDtU$
 and $H$. 
Thus 
$UDtU$ 
is the set of elements that move the isotropic projective point
$[0,0,1]$, and $H$ is the set of elements that
fix $[0,0,1]$.  Hence it suffices to find relations $R(2)$ that imply 
that every element of $G$ lies in 
$UDtU\cup H$, 
where we abuse notation 
by using the same letter for $t$ as an
element of $G$, and as a matrix.  
We denote the relation 
$u^t=u_Ldtu_R$ 
by $P(u)$.  

Let $\UU$ denote the set of $u \in U\setminus 1_G$ for which the relation $P(u)$
may be deduced from the above presentation for a given choice of $R(2)$.

\begin{lemma} If $\UU = U\setminus\{1\}$
then $G$ is isomorphic to $\SU(3,q)$.
\end{lemma}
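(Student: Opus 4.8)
The plan is to exhibit a Bruhat-type decomposition of the presented group $G$ and then count cosets. First I would record the natural surjection $\pi\colon G \to \SU(3,q)$: every defining relation (those in $R(1)$ and $R(2)$, together with $\Delta^t=\Delta^{-q}$ and $t^2=1$) holds among the matrices $\myupsilon,\tau,\Delta,t$, so $\pi$ is a well-defined homomorphism, and it is onto because these four matrices generate $\SU(3,q)$ for $q>2$. Thus $|G|\ge|\SU(3,q)| = q^3(q^3+1)(q^2-1)$, and it remains only to prove the reverse inequality $|G|\le q^3(q^3+1)(q^2-1)$; equality then forces $\pi$ to be injective, giving $G\cong\SU(3,q)$.

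For the upper bound I would set $S := H\cup UDtU$, where $H=\langle\myupsilon,\tau,\Delta\rangle$ is the Borel subgroup of order $q^3(q^2-1)$ and $U\trianglelefteq H$ is the unitriangular subgroup of order $q^3$, both supplied by Theorem \ref{Hpres}; note $UDtU = HtU$ since $H=UD$. As a union of the coset $H$ together with the at most $|U|=q^3$ right cosets $Htu$ of $H$, the set $S$ satisfies $|S|\le (1+q^3)|H| = q^3(q^2-1)(q^3+1) = |\SU(3,q)|$. Hence it is enough to prove $S=G$, and for this it suffices to show that $S$ is closed under right multiplication by the symmetric generating set $H\cup\{t\}$ (note $t=t^{-1}$): then $g = 1\cdot g\in S$ for every $g\in G$, since every such $g$ is a product of elements of $H\cup\{t\}$.

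Closure under right multiplication by $H$ is routine. For $h'tu\in HtU$ and $h\in H$, I would write $uh = d u'$ with $d\in D=\langle\Delta\rangle$ and $u'\in U$ (using $H=DU$), and then use $td = d^{t^{-1}} t$, with $d^{t^{-1}}\in D$ because $\Delta^t=\Delta^{-q}$ shows that $t$ normalises $D$, to obtain $h'tuh = (h'd^{t^{-1}})\,t\,u'\in HtU$. The decisive step is closure under right multiplication by $t$. If $s=h'\in H$ then $h't\in Ht\subseteq HtU$; if $s=h'tu$ with $u=1$ then $st=h't^2=h'\in H$. The remaining case $s=h'tu$ with $u\ne1$ is the only one that invokes the hypothesis: here $st = h'(tut)$, and since $t^{-1}=t$ the relation $P(u)$ reads $tut = u^t = u_L\,d\,t\,u_R$ with $u_L,u_R\in U$ and $d\in D$, whence $st = (h'u_Ld)\,t\,u_R\in HtU$. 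This deduction needs $P(u)$ to be available for every nontrivial $u$, which is exactly the hypothesis $\UU = U\setminus\{1\}$.

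The hard part will be organising this last step so that everything genuinely closes up. One must check that the right side of $P(u)$ has the single-$t$ normal form $u_L\,d\,t\,u_R$, so that absorbing $u_L$ and $d$ into $H$ leaves an element of $HtU$ rather than producing a longer $t$-alternating word; and one must know that $t$ normalises the torus $D$, which is precisely what makes the $H$-closure computation terminate inside $HtU$. Granting these, together with the two orders $|H|=q^3(q^2-1)$ and $|U|=q^3$ from Theorem \ref{Hpres}, the coset count gives $|G|\le|\SU(3,q)|$, and combined with the surjectivity of $\pi$ this yields $G\cong\SU(3,q)$.
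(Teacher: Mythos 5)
Your proposal is correct and follows essentially the same route as the paper: the paper's (very terse) proof likewise shows that $H\cup UDtU$ is closed under multiplication, hence is a subgroup containing the generators and so equals $G$, and then identifies $G$ with $\SU(3,q)$ via the counting and the evident surjection. Your version merely makes explicit the coset count $|S|\le(1+q^3)|H|=|\SU(3,q)|$ and organises the closure argument as right multiplication by the symmetric generating set $H\cup\{t\}$, which is a clean way to carry out the same idea.
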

\begin{proof} It is easy to see from these relations that the product of two 
elements of $G$ of the form
$u_Ldtu_R$
 can again be written in this form, or as an element of $H$.  
Thus the union of these subsets of $G$ is a subgroup that contains 
the generators of $G$.  
Hence $G$ is equal to this union, and so is isomorphic to $\SU(3,q)$.
\end{proof}

\begin{lemma}\label{conj} $\UU$ is closed under conjugation by $\Delta$.
If $u$ and $v$ lie in $\UU$ then $uv\in \UU$ if and only if $u_Rv_L\in \UU$.
\end{lemma}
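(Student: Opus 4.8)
The plan is to treat both assertions by direct manipulation of the relation $P(u)\colon u^t = u_L d_u t u_R$, using only the supplementary relations $\Delta^t=\Delta^{-q}$ and $t^2=1$ together with the already-established structure of the Borel subgroup $H$ (Theorem~\ref{Hpres}). Throughout, I note that every $u\in U\setminus\{1\}$ has the form $\myupsilon(\alpha,\beta)$ with $\beta\ne0$ (since $\beta$ has trace $-\alpha^{q+1}$, so $\beta=0$ forces $\alpha=0$); thus $P(u)$ is always meaningful, and its data $d_u\in D$, $u_L,u_R\in U$ are the specific words prescribed by the matrix formulae.

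For the first assertion I would conjugate $P(u)$ by $\Delta$. Since $t=t^{-1}$ and $\Delta^t=\Delta^{-q}$, one checks $(u^\Delta)^t=\Delta^q\,u^t\,\Delta^{-q}$; substituting $P(u)$ and pushing the factors $\Delta^{\pm q}$ inwards---across $d_u$ (which commutes), across $t$ via $t\Delta^{-q}=\Delta t$, and across the $U$-factors (which $\Delta$ normalises in $H$)---rewrites $(u^\Delta)^t$ in the canonical shape $U\cdot D\cdot t\cdot U$. It then remains to confirm that the resulting $U$- and $D$-components are exactly $(u^\Delta)_L$, $d_{u^\Delta}$, $(u^\Delta)_R$. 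As these are equalities between elements of $U$ and of $D$, they hold in $H$---where the multiplication is that of the genuine Borel subgroup by Theorem~\ref{Hpres}---and so are deducible; hence $P(u^\Delta)$ is deducible and $u^\Delta\in\UU$.

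For the second assertion, set $w:=u_Rv_L\in U$. Using $(uv)^t=u^tv^t$ (an automorphism identity) together with $P(u)$, $P(v)$, and the single application $t\,w\,d_v\,t=w^t d_v^t$ of $t^2=1$, I obtain the key relation
$$(uv)^t \;=\; u_L\,d_u\,w^t\,d_v^t\,v_R,$$
which is deducible from $P(u)$ and $P(v)$; here $d_v^t\in D$ by $\Delta^t=\Delta^{-q}$. This exhibits $(uv)^t$ and $w^t$ as differing only by a fixed left factor in $U\cdot D$ and a fixed right factor in $D\cdot U$. Consequently, if $P(w)$ is deducible I may substitute $w^t=w_L d_w t w_R$ and move the surviving $D$-factors to the left of $t$ (again via $t\Delta^{-q}=\Delta t$ and the normality of $U$ in $H$), landing $(uv)^t$ in canonical form and so deducing $P(uv)$; running the same chain of invertible steps backwards derives $P(w)$ from $P(uv)$. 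The matching of components to the prescribed data once more reduces to relations inside $H$. The degenerate case is consistent: the displayed identity shows $w=1\iff uv=1$, and then neither $uv$ nor $u_Rv_L$ lies in $\UU$.

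The main obstacle is not the formal algebra but the insistence on the \emph{specific} relation $P(\cdot)$ rather than mere membership in some $U\,D\,t\,U$ form---and the fact that, at this stage, we do not yet know $G\cong\SU(3,q)$, so uniqueness of the $UDtU$-decomposition cannot be invoked in $G$ directly. The resolution is to observe that every component-matching identity involves only elements of $U$ and $D$ (interpreting each $d^t$ as the appropriate power of $\Delta$), and therefore lives entirely inside the Borel subgroup $H$, whose multiplication is already pinned down by Theorem~\ref{Hpres}; there the required identities are the honest matrix identities valid in $\SU(3,q)$.
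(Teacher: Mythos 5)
Your argument is correct and is essentially the paper's own: the paper likewise handles conjugation via $(u^\Delta)^t=(u^t)^{\Delta^t}$ with $\Delta^t=\Delta^{-q}$ and $t^\Delta=(\Delta^{-1}\Delta^t)t$, and disposes of the product statement as "a rearrangement of terms", which is precisely your identity $(uv)^t=u_L d_u w^t d_v^t v_R$. You have merely made explicit the bookkeeping (and the reliance on Theorem~\ref{Hpres} to match components inside $H$) that the paper leaves implicit.
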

\begin{proof} Observe that $(u^\Delta)^t=(u^t)^{(\Delta^t)}$, 
and $\Delta^t=\Delta^{-q}$.  
Also $t^\Delta= (\Delta^{-1}\Delta^t)t$.  The result follows.
\end{proof}

\begin{lemma}\label{prod} If $u$ and $v$ lie in $\UU$ then $uv\in \UU$ if and only if $u_Rv_L\in \UU$.
\end{lemma}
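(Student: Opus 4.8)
The plan is to exploit the fact that conjugation by $t$ is an automorphism of $G$, so that $(uv)^t=u^tv^t$, and to track how the canonical big-cell relation $P(\cdot)$ propagates through this product. First I would record a master identity valid in $G$ using only $P(u)$, $P(v)$ and the defining relations $t^2=1$ and $\Delta^t=\Delta^{-q}$. Writing $w:=u_Rv_L$ and substituting $P(u)$, $P(v)$ gives $(uv)^t=u_L\,d_u\,t\,w\,d_v\,t\,v_R$. Since $t^2=1$ (so $t=t^{-1}$) and $t$ normalises $D=\langle\Delta\rangle$ (because $\Delta^t=\Delta^{-q}$), the central block collapses as $t\,w\,d_v\,t=(wd_v)^t=w^t d_v^t$ with $d_v^t\in D$, yielding
$$(uv)^t=u_L\,d_u\,w^t\,d_v^t\,v_R.$$
As $u_Ld_u\in UD=H$ and $d_v^t v_R\in DU=H$, this exhibits $(uv)^t$ and $w^t$ in the same $H$--$H$ double coset, which is the structural reason behind the claimed equivalence.

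For the implication $w\in\UU\Rightarrow uv\in\UU$, I would substitute the relation $P(w)$, namely $w^t=w_Ld_wt w_R$, into the master identity and collect. Pushing torus factors through unipotent ones (using that $D$ normalises $U$) and using $t\,d_v^t=d_v t$, the right-hand side rewrites in $G$ as a product $(UD)\,t\,(U)$, that is, in canonical form $u'\,d'\,t\,u''$ with $u',u''\in U$ and $d'\in D$. Its image in $\SU(3,q)$ then lies in the big cell $UDtU$, which is disjoint from $H$; hence $uv\neq1$. Finally, since every relation used holds in $\SU(3,q)$, and the natural map $\pi\colon G\to\SU(3,q)$ restricts to an isomorphism on $H$ by Theorem \ref{Hpres}, I would invoke uniqueness of the Bruhat expression $u_Ldtu_R$ of a big-cell element to identify the assembled factorisation, factor by factor, with the canonical triple defining $P(uv)$. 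This yields $P(uv)$, so $uv\in\UU$.

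The reverse implication is symmetric: assuming $P(uv)$, I would equate it with the master identity and solve for $w^t$, obtaining
$$w^t=(u_Ld_u)^{-1}(uv)_L\,d_{uv}\,t\,(uv)_R\,(d_v^t v_R)^{-1},$$
and then collect exactly as before into canonical form $(UD)\,t\,(U)$. The same uniqueness argument identifies this with the triple defining $P(w)$, giving $w\in\UU$ once we know $w\neq1$.

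It remains to dispose of the degenerate case and to note where the real work lies. If $w=1$ the master identity gives $(uv)^t=u_L\,d_u\,d_v^t\,v_R\in H$; but $(uv)^t\in t^{-1}Ut$, and $t^{-1}Ut\cap H=\{1\}$, so $uv=1$. Conversely $uv=1$ forces $w^t\in H\cap t^{-1}Ut=\{1\}$, so $w=1$. Thus $w=1\iff uv=1$, and in that case neither $uv$ nor $w$ lies in $\UU\subseteq U\setminus\{1\}$, so the equivalence holds vacuously, while the two main implications cover the case $w,uv\neq1$. I expect the genuine obstacle to be not the algebra, since the collecting steps are routine applications of $t^2=1$, $\Delta^t=\Delta^{-q}$ and the fact that $D$ normalises $U$, but rather the bookkeeping that converts ``$P(\cdot)$ is deducible'' into honest equalities in $G$, together with the clean invocation of Bruhat uniqueness (transported through $\pi$) needed to match the assembled $(UD)\,t\,(U)$ factorisation with the specific matrix-defined triple $(\,\cdot_L,d_\cdot,\cdot_R)$ rather than merely with some big-cell expression.
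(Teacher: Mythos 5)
Your proposal is correct and is essentially the paper's argument written out in full: the paper disposes of this lemma with the single remark that it ``is simply a rearrangement of terms,'' and your master identity $(uv)^t=u_L d_u w^t d_v^t v_R$, the collection into the form $(UD)\,t\,(U)$, and the identification of the resulting triple via the embedding of $H$ into $\SU(3,q)$ are exactly that rearrangement made explicit. The extra care you take with Bruhat uniqueness and the degenerate case $u_Rv_L=1$ is sound and consistent with what the paper leaves implicit.
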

\begin{proof}
This is simply a rearrangement of terms.
\end{proof}

\begin{lemma}\label{gamma}
Let $\beta\in\GF(q^2)$ and let $\eta\in\GF(q^2)$ with $\tr(\eta)\ne0\ne\tr(\beta)$.
There exists
$\gamma\in\GF(q^2)^{\times}$ with $\tr(\gamma)=0$, and $\beta+\gamma\in\GF(q)^{\times}\eta$.
\end{lemma}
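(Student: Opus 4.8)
The plan is to reduce the two requirements on $\gamma$ to a single linear condition and then read off the answer from a direct-sum decomposition of $\GF(q^2)$ as a $\GF(q)$-space. Write $T_0 := \{x \in \GF(q^2) : \tr(x) = 0\}$ for the kernel of the relative trace. Since $\tr$ is $\GF(q)$-linear and surjective, $T_0$ is a $1$-dimensional $\GF(q)$-subspace. Because $\tr(\eta) \neq 0$, the element $\eta$ lies outside $T_0$, and hence $\GF(q^2) = T_0 \oplus \GF(q)\eta$ as $\GF(q)$-spaces.

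First I would decompose $\beta = \beta_0 + c\eta$ in this direct sum, with $\beta_0 \in T_0$ and $c \in \GF(q)$ uniquely determined. Taking traces gives $\tr(\beta) = c\,\tr(\eta)$, so $c = \tr(\beta)\tr(\eta)^{-1}$; since $\tr(\beta) \neq 0$, this forces $c \in \GF(q)^{\times}$. Now set $\gamma := -\beta_0$. Then $\tr(\gamma) = -\tr(\beta_0) = 0$ by construction, and $\beta + \gamma = \beta - \beta_0 = c\eta \in \GF(q)^{\times}\eta$ because $c \neq 0$. Thus both the trace condition and the membership $\beta + \gamma \in \GF(q)^{\times}\eta$ hold; equivalently, one checks that $\gamma = c\eta - \beta$ is the unique candidate, the value of $c$ being forced by imposing $\tr(\gamma) = 0$, and then $c \in \GF(q)^{\times}$ comes for free from $\tr(\beta) \neq 0$.

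The only substantive point, and the step I expect to be the main obstacle, is verifying that $\gamma \neq 0$, i.e.\ that $\gamma \in \GF(q^2)^{\times}$. Since $\gamma = -\beta_0$, we have $\gamma = 0$ precisely when $\beta_0 = 0$, that is, when $\beta = c\eta \in \GF(q)\eta$. So the construction succeeds exactly when $\beta$ is not a $\GF(q)$-multiple of $\eta$: in the degenerate case $\beta \in \GF(q)\eta$ the trace constraint forces $\gamma = 0$, and no admissible $\gamma$ exists. I would therefore either invoke the hypothesis (or the context in which the lemma is applied, where the relevant $\beta$ and $\eta$ should be non-proportional over $\GF(q)$) to exclude $\beta \in \GF(q)\eta$, or record explicitly that non-proportionality of $\beta$ and $\eta$ over $\GF(q)$ is exactly the condition under which a nonzero trace-zero $\gamma$ with $\beta + \gamma \in \GF(q)^{\times}\eta$ is available.
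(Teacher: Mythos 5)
Your construction is exactly the paper's: the published proof is the single line ``let $t=\tr(\beta)\tr(\eta)^{-1}$ and $\gamma=t\eta-\beta$'', which is your $\gamma=c\eta-\beta_0-c\eta+c\eta = -\beta_0$ with $c=t$. So the approach is identical, and your verification of the trace condition and of $\beta+\gamma\in\GF(q)^{\times}\eta$ matches what the authors intend.

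The point you raise about $\gamma\ne 0$ is not a defect of your argument but a genuine imprecision in the lemma as stated: since taking traces of $\beta+\gamma=s\eta$ forces $s=\tr(\beta)\tr(\eta)^{-1}$, the candidate $\gamma$ is unique, and it vanishes precisely when $\beta\in\GF(q)^{\times}\eta$. In that degenerate case no admissible $\gamma$ exists, so the lemma really does need the implicit hypothesis that $\beta$ and $\eta$ are not proportional over $\GF(q)$; the paper's one-line proof silently ignores this. You are also right that the applications are safe: in Theorems \ref{PSU3qa} and \ref{PSU3qb} the lemma is invoked with $\beta_0=\omega\zeta$ and $\eta=\omega^{-i}\zeta$ for $1\le i\le q-1$, and proportionality would force $\omega^{i+1}\in\GF(q)$, i.e.\ $(q+1)\mid(i+1)$, which is impossible in that range. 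So your write-up is correct and, by recording the non-proportionality condition explicitly, is slightly more careful than the original.
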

\begin{proof}
Let $t=\tr(\beta)\tr(\eta)^{-1}$, and $\gamma=t\eta-\beta$.
\end{proof}

\begin{theorem}\label{PSU3qa} Suppose that $q\not\equiv2\bmod3$. 
Let $\beta_0=\omega\zeta=\omega^{1+(q^2+q)/2}$. 
Pick $\alpha_0$ such that $\alpha_0^{q+1}=-\tr(\beta_0)$.
By Lemma $\ref{gamma}$ there exists $\gamma_0$ with $\tr(\gamma_0)=0$ and 
$\beta_0+\gamma_0\in\GF(q)^{\times}\omega^{-1}\zeta$.
Let $\myupsilon_0 = \myupsilon(\alpha_0,\beta_0)$, 
let $\tau_0=\myupsilon(0,\gamma_0)$,
and let $U_0=\{\myupsilon_0,\tau_0,\myupsilon_0\tau_0\}$.
If $R(2) = \{P(u):u\in U_0\}$ then $G$ is isomorphic to $\SU(3,q)$.
\end{theorem}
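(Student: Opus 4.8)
The plan is to reduce everything to the set-theoretic statement $\UU = U\setminus\{1\}$, which by the first (unnamed) lemma of this subsection forces $G\cong\SU(3,q)$. By the choice $R(2)=\{P(u):u\in U_0\}$, the three seeds $\myupsilon_0$, $\tau_0$, $\myupsilon_0\tau_0$ lie in $\UU$ by definition, so the whole proof consists of propagating membership in $\UU$ from these seeds using the two closure operations already available: conjugation by $\Delta$ (Lemma \ref{conj}) and the conditional product rule (Lemma \ref{prod}). First I would record the $\Delta$-action explicitly: a direct matrix computation gives $\myupsilon(\alpha,\beta)^{\Delta}=\myupsilon(\alpha\,\omega^{q-2},\,\beta\,\omega^{-(q+1)})$, so $\Delta$ scales the first coordinate by $\omega^{q-2}$ and the second by $\omega_0^{-1}$.

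Next I would dispose of the centre $W$. The element $\tau_0=\myupsilon(0,\gamma_0)$ has $\tr(\gamma_0)=0$, and the trace-$0$ elements of $\GF(q^2)$ form the one-dimensional $\GF(q)$-space $\GF(q)\,\zeta$. Since $\Delta$ multiplies the second coordinate by the generator $\omega_0^{-1}$ of $\GF(q)^{\times}$, the $\Delta$-orbit of $\tau_0$ (contained in $\UU$ by Lemma \ref{conj}) sweeps out every nonzero trace-$0$ value, giving $W\setminus\{1\}\subseteq\UU$.

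The key use of the hypothesis $q\not\equiv 2\pmod 3$ is to reach every non-central fibre $\{\myupsilon(\alpha,\cdot):\alpha\neq 0\}$. I would check $\gcd(q-2,q^2-1)=1$: one has $\gcd(q-2,q-1)=1$ and $\gcd(q-2,q+1)=\gcd(q-2,3)$, which is $1$ exactly when $q\not\equiv 2\pmod 3$, so no prime divides $\gcd(q-2,(q-1)(q+1))$. Hence $\omega^{q-2}$ is primitive in $\GF(q^2)^{\times}$, and the $\Delta$-orbits of $\myupsilon_0$ and of $\myupsilon_0\tau_0$ each meet every non-central fibre exactly once. This places \emph{two} elements of $\UU$ in each such fibre, with second coordinates differing by the nonzero trace-$0$ element $\gamma_0\,\omega_0^{-k}$, hence by a central element already known to lie in $\UU$.

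Finally, having the whole centre together with two points in each non-central fibre, I would fill each fibre completely. For a target $\myupsilon(\alpha,\beta)$ I want to write it as $\myupsilon(\alpha,\beta')\,\myupsilon(0,\beta-\beta')$ with $\myupsilon(\alpha,\beta')\in\UU$ already known and $\myupsilon(0,\beta-\beta')\in W\setminus\{1\}\subseteq\UU$, and then invoke Lemma \ref{prod}. The main obstacle is precisely that Lemma \ref{prod} is \emph{conditional}: $uv\in\UU$ holds if and only if $u_Rv_L\in\UU$, and for $u=\myupsilon(\alpha,\beta')$, $v=\myupsilon(0,\gamma)$ a short computation gives $u_Rv_L=\myupsilon(-\alpha\beta'^{-1},\,\beta'^{-1}+\gamma^{-1})$, an auxiliary non-central element whose membership in $\UU$ is not evident a priori. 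The heart of the argument is therefore a bootstrapping step showing that these auxiliary elements are themselves orbit- or product-reachable; this is exactly where the explicit values $\beta_0=\omega\zeta$ and $\beta_0+\gamma_0\in\GF(q)^{\times}\,\omega^{-1}\zeta$ are engineered, so that the auxiliary second coordinates land back among the elements already placed in $\UU$. Once every fibre is filled we obtain $\UU=U\setminus\{1\}$, and $G\cong\SU(3,q)$ follows.
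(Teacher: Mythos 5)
Your setup through the placement of two elements in each non-central fibre is sound and closely parallels the paper: your $W\setminus\{1\}$ is the paper's $U_0$, and the $\Delta$-orbits of $\myupsilon_0$ and $\myupsilon_0\tau_0$ are the paper's $U_q$ and $U_1$ (the sets $U_i=\{\myupsilon(\alpha,\beta):\beta\in\GF(q)^{\times}\omega^{-i}\zeta,\ \alpha^{q+1}=-\tr(\beta)\}$), with your $\gcd(q-2,q^2-1)=1$ computation being exactly where $q\not\equiv2\bmod3$ enters. But the final step is a genuine gap, and the plan you sketch for it would not work as stated. You propose to obtain a target $uv=\myupsilon(\alpha,\beta)$ from $u=\myupsilon(\alpha,\beta')\in\UU$ and $v=\myupsilon(0,\gamma)\in\UU$ by verifying that the auxiliary element $u_Rv_L$ is ``already placed in $\UU$''. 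It is not: its second coordinate is $\beta'^{-1}+\gamma^{-1}=\beta/(\beta'\gamma)$, and since $\beta'\gamma\in\GF(q)^{\times}\omega$, the element $u_Rv_L$ lies in $U_{i+1}$ whenever the target lies in $U_i$. At the stage you describe, only $U_0$, $U_1$ and $U_q$ are known to lie in $\UU$, so for $2\le i\le q-1$ the auxiliary element sits in an orbit about which nothing is yet known, and your conditional application of Lemma \ref{prod} cannot close.

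The paper resolves this by running Lemma \ref{prod} in the opposite direction and inducting on the second coordinate's coset rather than filling fibres. Assuming $U_i\subseteq\UU$, it chooses $\gamma$ of trace $0$ with $\beta_0+\gamma\in\GF(q)^{\times}\omega^{-i}\zeta$, so that $u=\myupsilon(\alpha_0,\beta_0)$, $v=\myupsilon(0,\gamma)$ and $uv=\myupsilon(\alpha_0,\beta_0+\gamma)$ are all \emph{already} in $\UU$; Lemma \ref{prod} then \emph{produces} the new element $u_Rv_L\in U_{i+1}$, and transitivity of $\langle\Delta\rangle$ on $U_{i+1}$ (Lemma \ref{conj}, again using $q\not\equiv2\bmod3$) yields $U_{i+1}\subseteq\UU$. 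To repair your argument you would need to reorganise the last step along these lines: treat the $U_i$ as the units of the induction, not the fibres, and use the product rule generatively rather than as a membership test.
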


\begin{proof}
Note that $\beta_0$ is chosen so that $\beta_0\zeta\in\GF(q)^{\times}\omega$, 
so $\beta_0\gamma\in\GF(q)^{\times}\omega$
for any $\gamma$ in $\GF(q^2)$ of trace 0, since $\tr(\zeta)=0$.

It suffices to prove that this choice of $R(2)$ implies that  $\UU=U\setminus\{1\}$.

Let $U_i=\{\myupsilon(\alpha,\beta) \, | \, \beta\in\GF(q)^{\times} \omega^{-i}\zeta, 
\alpha^{q+1}=-\tr(\beta)\}$.
So $U\setminus\{1\}$ is the disjoint union of the $U_i$, as $i$ runs from $0$ to $q$.  
Moreover $U_0$ contains $q-1$ elements,
and $U_i$ contains $q^2-1$ elements for $1\le i\le q$.  
Each $U_i$ is normalised by $\langle\Delta\rangle$, the
action being transitive in all cases.

We prove, by induction on $i$, that
$U_i$ is contained in $\UU$ for $0\le i\le q$. 
The case $i=0$ holds, since $\tau_0\in \UU$, 
and the case $i=1$ holds, since 
$\myupsilon_0\tau_0= \myupsilon(\alpha_0,\beta_0+\gamma_0)\in \UU$.  
Now assume inductively that $U_i$ is contained in $\UU$, where $0<i<q$.
By Lemma \ref{gamma} there exists $\gamma\in\GF(q^2)$ such 
that $\tr(\gamma)=0$ and $\beta_0+\gamma\in\GF(q)^{\times}\omega^{-i}\zeta$;
for, since $\tr(\zeta)=0$, the restriction on $i$ implies that the elements of 
this coset of $\GF(q)^{\times}$ do not have trace 0.
So $\myupsilon(\alpha_0, \beta_0+\gamma)$ lies in $\UU$, 
by the induction hypothesis, 
as do $\myupsilon(\alpha_0,\beta_0)$
and $\myupsilon(0,\gamma)$.  
Lemma \ref{prod} implies 
that $\myupsilon(-\alpha\beta_0^{-q}, \beta_0^{-1}+\gamma^{-1})\in \UU$.
But $\beta_0^{-1}+\gamma^{-1}=
(\beta_0+\gamma)/\beta_0\gamma\in\GF(q)^{\times}\omega^{-i}\zeta\omega^{-1}
=\GF(q)^{\times}\omega^{-(i+1)}\zeta$.
(Note that if $\tr(\gamma)=0$ then $\tr(\gamma^{-1})=0$.)
The result follows.
\end{proof}

\begin{theorem}\label{PSU3qb} Suppose that $q\equiv2\bmod3$. 
Let $\beta_0=\omega^{1+(q^2+q)/2}$. For $0\le i\le 2$ pick 
$\alpha_i\in\omega^i\left(\GF(q^2)^{\times}\right)^3$ 
such that $\alpha_i^{q+1}=-\tr(\beta_0)$.
By Lemma $\ref{gamma}$ there exists $\gamma_0$ with $\tr(\gamma_0)=0$ and 
$\beta_0+\gamma_0\in\GF(q)^{\times}\omega^{-1}\zeta$.
Let $\myupsilon_i = \myupsilon(\alpha_i,\beta_0)$, 
let $\tau_0=\myupsilon(0,\gamma_0)$,
and let $U_0=\{\myupsilon_i,\tau_0,\myupsilon_i\tau_0:0\le i\le2\}$.
If $R(2) = \{P(u):u\in U_0\}$ then $G$ is isomorphic to $\SU(3,q)$.
\end{theorem}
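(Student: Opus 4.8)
The plan follows the same strategy as the proof of Theorem \ref{PSU3qa}, but now the subtlety is that the cyclic action of $\langle\Delta\rangle$ on the ``initial'' coset $U_1$ (those $\myupsilon(\alpha,\beta)$ with $\beta\in\GF(q)^\times\omega^{-1}\zeta$) no longer sweeps out the full coset, because when $q\equiv 2\bmod 3$ the conjugation action of $\Delta$ does not act transitively on the relevant cube-class structure of the $\alpha$-coordinate. First I would record, exactly as in the previous theorem, that $U\setminus\{1\}$ is the disjoint union of the sets $U_i=\{\myupsilon(\alpha,\beta)\mid \beta\in\GF(q)^\times\omega^{-i}\zeta,\ \alpha^{q+1}=-\tr(\beta)\}$ for $0\le i\le q$, that each $U_i$ is normalised by $\langle\Delta\rangle$, and that $\UU$ is closed under conjugation by $\Delta$ (Lemma \ref{conj}) and under the product rule of Lemma \ref{prod}. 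As before, $\tau_0\in\UU$ gives $U_0\subseteq\UU$.

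The heart of the argument is establishing the base case $U_1\subseteq\UU$, and this is precisely where $q\equiv2\bmod3$ forces the extra work. When $3\mid q+1$, the norm map $\alpha\mapsto\alpha^{q+1}$ from $\GF(q^2)^\times$ to $\GF(q)^\times$ has kernel of order $q+1$ divisible by $3$, so the fibre $\{\alpha:\alpha^{q+1}=-\tr(\beta_0)\}$ is a single coset of the $(q+1)$-st roots of unity and meets each of the three cosets $\omega^i(\GF(q^2)^\times)^3$ nontrivially; moreover $\langle\Delta\rangle$ acting by $\alpha\mapsto\alpha\omega^{\,q-2}$ (reading off the top row of $\Delta$) multiplies $\alpha$ by a cube, so it cannot move between these three cube-classes. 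This is why three separate seed elements $\myupsilon_0,\myupsilon_1,\myupsilon_2$ with $\alpha_i\in\omega^i(\GF(q^2)^\times)^3$ are supplied rather than one: I would show that together with their $\Delta$-conjugates they cover all of $U_1$. Concretely, $\tau_0\in\UU$ forces each $\myupsilon_i\tau_0=\myupsilon(\alpha_i,\beta_0+\gamma_0)$ into $\UU$, and since $\beta_0+\gamma_0\in\GF(q)^\times\omega^{-1}\zeta$ these three elements lie in $U_1$; their $\langle\Delta\rangle$-orbits then exhaust the three cube-classes of $\alpha$-values in $U_1$, giving $U_1\subseteq\UU$.

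With the base case $U_0,U_1\subseteq\UU$ in hand, the inductive step is \emph{verbatim} the one from Theorem \ref{PSU3qa}. Assuming $U_i\subseteq\UU$ for some $0<i<q$, I invoke Lemma \ref{gamma} to produce $\gamma$ of trace $0$ with $\beta_0+\gamma\in\GF(q)^\times\omega^{-i}\zeta$, note that $\myupsilon(\alpha_0,\beta_0+\gamma)$, $\myupsilon(\alpha_0,\beta_0)$ and $\myupsilon(0,\gamma)$ all lie in $\UU$ (the first by induction, the latter two from the seed set and its $\Delta$-orbit), and apply Lemma \ref{prod} to deduce $\myupsilon(-\alpha\beta_0^{-q},\beta_0^{-1}+\gamma^{-1})\in\UU$. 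The same field computation $\beta_0^{-1}+\gamma^{-1}=(\beta_0+\gamma)/\beta_0\gamma\in\GF(q)^\times\omega^{-(i+1)}\zeta$, using $\beta_0\gamma\in\GF(q)^\times\omega$ and $\tr(\gamma^{-1})=0$, advances the induction to $U_{i+1}$. Since the $U_i$ for $0\le i\le q$ exhaust $U\setminus\{1\}$, this yields $\UU=U\setminus\{1\}$, and the first lemma of this subsection then gives $G\cong\SU(3,q)$.

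The main obstacle is the base case: verifying that the three cube-class seeds, under the $\langle\Delta\rangle$-action, genuinely cover $U_1$ rather than leaving a gap. I expect this to reduce to checking that the subgroup generated by the cube of the relevant power of $\omega$ (the diagonal entry governing the $\alpha$-action) together with the $(q+1)$-st roots of unity hits every admissible $\alpha$ with $\alpha^{q+1}=-\tr(\beta_0)$ across all three cosets $\omega^i(\GF(q^2)^\times)^3$; once that index-$3$ bookkeeping is confirmed, the remainder of the proof is a transcription of the $q\not\equiv2\bmod3$ case.
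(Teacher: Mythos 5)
Your analysis of the base case is exactly the paper's: since $q\equiv2\bmod3$, the exponent $q-2$ is divisible by $3$, so conjugation by $\Delta$ multiplies the $\alpha$-coordinate by the cube $\omega^{q-2}$ and therefore splits each $U_i$ with $i\ge1$ into three $\langle\Delta\rangle$-orbits indexed by the cosets of $\left(\GF(q^2)^{\times}\right)^3$; the three seeds $\myupsilon_j\tau_0=\myupsilon(\alpha_j,\beta_0+\gamma_0)$, with the $\alpha_j$ in distinct cube-cosets, then cover $U_1$. That part is correct and matches the paper's observation that $\alpha$, $\alpha\omega^{q-1}$, $\alpha\omega^{2(q-1)}$ may be taken as the $\alpha_j$.

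The gap is in the inductive step, which you describe as ``verbatim'' the one from Theorem \ref{PSU3qa} and carry out only with $\alpha_0$. Producing the single element $\myupsilon(-\alpha_0\beta_0^{-q},\beta_0^{-1}+\gamma^{-1})$ of $U_{i+1}$ and closing under conjugation by $\Delta$ yields only one of the three $\langle\Delta\rangle$-orbits of $U_{i+1}$: the obstruction you correctly diagnosed for $U_1$ recurs at every level $i+1\ge2$, since $\langle\Delta\rangle$ is not transitive on any $U_k$ with $k\ge1$. So the conclusion $U_{i+1}\subseteq\UU$ does not follow as written, and your closing claim that the base case is the only place where extra work is needed is not right. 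The repair is immediate, and it is precisely why the theorem places all of $\myupsilon_0,\myupsilon_1,\myupsilon_2$ and $\myupsilon_j\tau_0$ (not just $j=0$) into the relation set $U_0$: run the argument of Lemma \ref{prod} with $u=\myupsilon(\alpha_j,\beta_0)$ for each $j=0,1,2$. The resulting elements $\myupsilon(-\alpha_j\beta_0^{-q},\beta_0^{-1}+\gamma^{-1})$ have first coordinates in the three distinct cube-cosets, because multiplication by the fixed nonzero scalar $-\beta_0^{-q}$ permutes those cosets; hence their $\langle\Delta\rangle$-orbits exhaust $U_{i+1}$. With that amendment your argument coincides with the paper's, which after recording the three-orbit decomposition of each $U_i$ simply says that the proof follows that of Theorem \ref{PSU3qa}.
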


\begin{proof}
Note that $\omega^{q-1}$ is an element of $\GF(q^2)^\times$ that has 
norm 1 and is not a cube.
If $\alpha^{q+1}=-\tr(\beta_0)$ then $\alpha$ and $\alpha\omega^{q-1}$ and 
$\alpha\omega^{2(q-1)}$ all have norm equal to $-\tr(\beta_0)$, and
lie in different cosets of 
$\left(\GF(q^2)^{\times}\right)^3$ in $\GF(q^2)^\times$; 
so they may be taken to be the elements $\alpha_i$ of the theorem.

Define $U_i$, where $0\le i\le q$, as in the proof of Theorem \ref{PSU3qa}.  
Again $\langle\Delta\rangle$ acts transitively
on $U_0$, but partitions $U_i$, for $1\le 1\le q$, into three orbits, 
the orbit to which a given element $\myupsilon(\alpha,\beta)$ of
$U_i$ belongs depending on the coset of 
$\left(\GF(q^2)^{\times}\right)^3$ in $\GF(q^2)^\times$ to 
which $\alpha$ belongs.

The proof now follows that of Theorem \ref{PSU3qa}.
\end{proof}

\subsubsection{The remaining cases}

A presentation for $\SU(3, 3)$ is the following:
\begin{eqnarray*}
\{ \myupsilon, \tau, \Delta, t & | & 
   \myupsilon^3 = 1, \,
    t^2 = 1, \,
 \Delta^{-1} \myupsilon^{-1} \Delta^{-1} \tau^{-1} \myupsilon \Delta^2 \myupsilon^{-1} = 1 , \,
\Delta^{-1} \myupsilon \Delta^{-1} \tau^{-1} \myupsilon^{-1} \Delta^2 \myupsilon = 1,  \\
& & 
 \tau t \Delta^{-2} (\tau t)^2 = 1, \, 
 \Delta \tau^{-1} \myupsilon^{-1} t \Delta \tau^{-1} \myupsilon         
\Delta t \myupsilon \tau^{-1} t = 1 \}. 
\end{eqnarray*}

A presentation for $\SU(3, 5)$ is the following:
\begin{eqnarray*}
\{ \myupsilon, \tau, \Delta, t & | & 
    \myupsilon^5 = 1, \,
    \Delta \tau^{2} \Delta^{-1} \tau = 1, \, 
    \Delta^{2} \myupsilon^2 \Delta^{-2} \myupsilon^{-1} = 1, \, 
    \Delta^5 t \Delta t = 1, \, 
    \Delta^{-1} \myupsilon t \myupsilon^{-2} t \myupsilon \Delta t = 1, \\
& & 
    \Delta \myupsilon \Delta^{-1} \tau^{-1}  
    \myupsilon^{-1} \Delta \myupsilon^{-1} \Delta^{-1} \myupsilon = 1, \, 
    \myupsilon^{-1} t \tau^{-1} \myupsilon^{-1} t \Delta^{-1} 
     \myupsilon^2 t \Delta \myupsilon^{-1} t \Delta \tau^{-1} = 1 \}. 
\end{eqnarray*}
Their correctness is readily established using coset enumeration.

\subsection{A presentation for $\SU(3,2)$}\label{SU32}
We use defining relations for this (soluble) group in our 
presentations for $\SU(2n+1,2)$ when $n>1$.
Our chosen generating set is 
$\{\myupsilon = \myupsilon(1,\omega), \myupsilon' = \myupsilon(\omega^2,\omega), 
\Delta = \Delta(\omega),t\}$ where $\omega$ is a primitive element for $\GF(4)$.
Observe that 
 $\SU(3,2)$ is a split extension of a normal subgroup isomorphic to 
the group 
of order 27 and exponent 3 
by $Q_8$. This structure motivates the following presentation:
\begin{eqnarray*}
\{ \myupsilon,\myupsilon', \Delta, t & | & 
a := [\myupsilon,t], b := a^{2\myupsilon}, 
 a^{\myupsilon}=b^{-1}, b^{\myupsilon}=a\Delta, a^{\myupsilon'}=a b a,
 b^{\myupsilon'}=a b \Delta,  \\
& &   \myupsilon^2=\myupsilon'^2=[\myupsilon,\myupsilon'], t = \myupsilon^2 a^2 b\}.
\end{eqnarray*}
Its correctness is readily established using coset enumeration.

%
%
%
%

\subsection{A presentation for $\PSU(3, q)$}
The centre of $\SU(3,q)$ has order $\gcd (q + 1,3)$; 
if it is non-trivial, then it is generated by $\Delta^{(q^2-1) / 3}$. 

 \subsection{Standard generators for $\SU(3,q)$}
In Table 1 of \cite{even,odd} the non-trivial standard generators for 
$\SU(3, q)$ are labelled $s, t, \delta, x, y$.
Observe that $t = \tau; \delta = \Delta^{q + 1};
x = \myupsilon$; $y = \Delta$;
if $q$ is odd, then the standard generator $s = \Delta^{(q+1)/2}t$, 
else $s = t$.
If $q$ is odd, then the presentation generator 
$t = y^{-(q+1)/2}s$, else $t = s$.

\section{A presentation for $\SL(d,q)$ for $d>2$}
 
\subsection{Generators and notation}
Let $q=p^e$ for a prime $p$. Let $\omega$ be a primitive element of $\GF(q)$. 

Let $d>2$. 
We take a basis $(e_1,e_2,\ldots,e_d)$ of the natural module.  
We define the following elements of $\GL(d,q)$, where 
$1 \leq i, j \leq d$ and $i \ne j$, and $s\in\GF(q)$.

$\tau_{ij}(s)=(e_i\mapsto e_i+se_j)$; 

$\tau_{i,-j}(s)=\tau_{-i,j}(s)=\tau_{ij}(-s)$; 

$\tau_{-i,-j}(s)=\tau_{ij}(s)$; 

$\delta_{ij}=(e_i\mapsto\omega^{-1}e_i, e_j\mapsto\omega e_j)$; 

$U=(e_1,e_2)$;

$V=(e_1,e_2,\ldots,e_d)$;

$U'=(e_1,e_2)^-$;

$V'=(e_1,e_2,\ldots,e_d)^{\epsilon_d}$. 

\smallskip
All these elements of $\GL(d, q)$ centralise those basis 
elements that they are not stated to move.
All lie in $\SL(d,q)$ except for $U$ when $q$ is odd, 
and $V$ when $q$ is odd and $d$ is even.
If $q$ is even, then $U'=U$ and $V'=V$.

\subsection{A presentation for $\SL(d,q)$ for $e > 1$} 
We give a presentation for $\SL(d,q)$  on the generating set
$\{\tau=\tau_{12}(1),\delta=\delta_{12}, U', V'\}$.

Let $N$ be the subgroup of $\SL(d,q)$ generated by $\{\delta, U',V'\}$.
So $N$ is an extension of the direct product of $d-1$ copies of $C_{q-1}$, 
generated by
$\{\delta_{12},\delta_{23},\ldots,\delta_{d-1,d}\}$, by a 
copy of $S_d$, and has order $(q-1)^{d-1}d!$.

\begin{theorem}\label{SLN}
Let $G$ be the group generated by $\{\delta, U',V'\}$ subject to the 
relations given below. 
Then $G$ is isomorphic to $N$.
\begin{enumerate}
\item[(i)] Defining relations for $\langle U',V'\rangle$ as signed permutations 
of $\{1,2,\ldots,d\}$ if $q$ is odd,
or as unsigned permutations if $q$ is even.
\item[(ii)] $[\delta,U'^{V'^2}]=1$ if $d>3$, and $[\delta,V'U'U'^{V'}]=1$ if $d>4$.
\item[(iii)] $\delta\delta^{V'}=\delta^{V'U'}$.
\item[(iv)] $\delta^{U'}=\delta^{-1}$.
\item[(v)] $[\delta,\delta^{V'}]=1$, and if $d>3$ then $[\delta,\delta^{V'^2}]=1$.
\item[(vi)] If $q$ is even, then $\delta^{q-1}=1$, else $\delta^{(q-1)/2}=U'^2$.
\end{enumerate}
\end{theorem}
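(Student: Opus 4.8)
The plan is to show that the presented group $G$ maps onto $N$ with the abstract order forcing an isomorphism. First I would verify that all six families of relations hold in the matrix group $N \le \SL(d,q)$: relations (i) are the signed/unsigned permutation presentation already established in Theorems \ref{signedsymmetricodd}, \ref{signedsymmetriceven}, and Theorem \ref{symmetricpres}; relations (ii)--(vi) are direct computations with the diagonal matrices $\delta_{ij}$ and the permutation generators. Since the relations hold in $N$ and $\{\delta,U',V'\}$ generate $N$, there is a surjection $G \twoheadrightarrow N$, and it suffices to prove $|G| \le (q-1)^{d-1}d!$.

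The structural strategy is to exhibit inside $G$ an abelian normal subgroup $D$ playing the role of the diagonal torus, with $G/D$ a quotient of $S_d$ (respectively the signed group). The key step is to introduce the conjugates $\delta_i := \delta^{V'^{\,i-1}}$ for $1 \le i \le d-1$, so that $\delta_1 = \delta$, and let $D = \langle \delta_1,\ldots,\delta_{d-1}\rangle$. Relations (v), together with their conjugates under powers of $V'$, should force all the $\delta_i$ to commute pairwise, so $D$ is abelian; relation (vi) bounds the order of each $\delta_i$ by $q-1$ (in the odd case, modulo the central element $U'^2$, which I treat separately as in Theorem \ref{CRW-SL-odd}). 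The heart of the argument is to show $D$ is normalised by $U'$ and $V'$: conjugation by $V'$ permutes the generators $\delta_i$ up to the relation that expresses $\delta^{V'^{\,d-1}}$ in terms of the others, and relation (iv) together with (iii) controls the action of $U'$ on $D$. Concretely, (iii) rewritten as $\delta^{V'U'} = \delta\,\delta^{V'}$ encodes how the transposition $U'$ acts on adjacent diagonal entries, matching the relation $\delta_{12}\delta_{23} = \delta_{13}$ and its conjugate form in $N$.

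Once $D$ is shown to be an abelian normal subgroup generated by the $d-1$ elements $\delta_i$, each of order dividing $q-1$, we obtain $|D| \le (q-1)^{d-1}$. Relations (i) guarantee that $G/D$ is a quotient of the relevant permutation group of order $d!$ (the signed group modulo its diagonal part collapses onto $S_d$, exactly as in the proof of Theorem \ref{signedsymmetricodd} where the normal closure of $U'^2$ was factored out). Combining these two bounds yields $|G| \le (q-1)^{d-1}d! = |N|$, and with the surjection above this forces $G \cong N$.

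The main obstacle I anticipate is establishing that $D$ is genuinely normal and has the full set of $d-1$ independent generators, rather than collapsing. Relations (ii) are present precisely to ensure $\delta$ commutes with the permutations fixing $\{1,2\}$ (via Theorem \ref{subgp}, since $U'^{V'^2}$ and $V'U'U'^{V'}$ generate a group fixing $1$ and $2$), so that $\delta = \delta_{12}$ commutes with the correct conjugates; verifying that these centraliser relations, combined with (iii), (iv), (v), propagate to give commutativity among \emph{all} the $\delta_i$ and the correct $S_d$-action without introducing unintended relations that would shrink $D$ below order $(q-1)^{d-1}$ is the delicate bookkeeping step. I would handle this by carefully conjugating the base relations by powers of $V'$ and invoking Theorem \ref{subgp} to identify the stabiliser subgroup acting trivially on $\delta$.
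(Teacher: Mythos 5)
Your proposal is correct and takes essentially the same route as the paper's proof: verify the relations in $N$ to obtain a surjection $G\twoheadrightarrow N$, use (ii) together with Theorem \ref{subgp} (and (v), (vi)) to identify the centraliser of $\delta$ in $\langle U',V'\rangle$ and hence consistently define the conjugates of $\delta$, show that these generate an abelian normal subgroup of order at most $(q-1)^{d-1}$ whose quotient is an image of $S_d$, and compare orders. The one statement to tighten is that conjugating relation (v) by powers of $V'$ alone does not yield commutativity of all pairs of your generators once $d\ge 6$ (e.g.\ the pair $\delta_{12},\delta_{45}$ is missed); you must conjugate by the full stabiliser of $\{1,2\}$ supplied by (ii), exactly as you indicate in your final paragraph.
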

\begin{proof}
Since $q \geq 4$, the elements $\delta_{ij}$ are distinct.
The centraliser of $\delta$ in $\langle U',V'\rangle$ is 
generated by the subgroup 
that, as signed permutation group, fixes $1$ and $2$,
and by ${U'}^2$ and ${U'}^{2V'}$ if
$q$ is odd. 
For these elements centralise $\delta$, and generate a subgroup 
of $\langle U',V'\rangle$ of the
appropriate index.  
But ${U'}^2\in\langle\delta\rangle$, and ${U'}^{2V'}\in\langle\delta^{V'}\rangle$, 
so, by (ii) and (v), 
we may define $\delta_{ij}$ in $G$, 
these elements being permuted by the natural action of
$\langle U',V'\rangle$ acting as unsigned permutations.
By (v) $\delta_{12}$ commutes with $\delta_{23}$, 
and with $\delta_{34}$ if $d>3$; so all $\delta_{ij}$ commute 
with each other.
By (iv) $\delta_{12}=\delta_{21}^{-1}$, 
so $\delta_{ij}=\delta_{ji}^{-1}$ for all $i\ne j$.
By (iii) $\delta_{12}\delta_{23}=\delta_{13}$, 
so $\delta_{ij}\delta_{jk}=\delta_{ik}$ for all
$i\ne j\ne k\ne i$.
By (vi) $\delta_{12}^{q-1}=1$, so $\delta_{ij}^{q-1}=1$ for all $i\ne j$.
Thus $G$ has order at most $(q-1)^{d-1}d!$.  
But $N$ is a homomorphic image of $G$ of this order,
so $G$ is isomorphic to $N$, as required.
\end{proof}

\begin{theorem}\label{SLCent}
The centraliser of $\tau= \tau_{12}(1)$ in $N$ 
has index $(q-1)d(d-1)$ in $N$ and is generated by the following elements:
\begin{enumerate}
\item[(i)] ${U'}^{V'^2}$ if $d>3$;
\item[(ii)] $V'U' U'^{V'}$ if $d>4$;
\item[(iii)] $\delta^{V'^2}$ if $d>3$;
\item[(iv)] $\delta\delta^{2V'}$.
\end{enumerate}
\end{theorem}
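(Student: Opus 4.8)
The plan is to first pin down $|C_N(\tau)|$ from the size of the $N$-conjugacy class of $\tau$, and then to prove generation by an order count. I would begin by recalling (as established before and in the proof of Theorem \ref{SLN}) that $N$ is exactly the group of monomial matrices of determinant $1$, with diagonal torus $T=\langle\{\delta_{ij}\}\rangle\cong C_{q-1}^{\,d-1}$ and $N=T\langle U',V'\rangle$. Conjugation by a monomial matrix carries $\tau_{ij}(s)$ to $\tau_{\sigma(i)\sigma(j)}(s')$, where $\sigma$ is the underlying permutation and $s'\in\GF(q)^\times$. Since $\langle U',V'\rangle$ maps onto $S_d$ and is transitive on ordered pairs, and because $d>3$ the torus realises every scaling on $\tau_{12}$ (for instance $\tau_{12}(s)^{\delta_{13}}=\tau_{12}(\omega^{-1}s)$), I would conclude that the $N$-class of $\tau$ is $\{\tau_{ij}(s):i\ne j,\ s\in\GF(q)^\times\}$, a set of $d(d-1)(q-1)$ distinct transvections. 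This gives $[N:C_N(\tau)]=d(d-1)(q-1)$, the claimed index, and hence $|C_N(\tau)|=(q-1)^{d-2}(d-2)!$.

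Next I would observe that an element $g\in N$ acting by $e_i\mapsto\mu_i e_{\sigma(i)}$ centralises $\tau$ precisely when $\sigma(1)=1$, $\sigma(2)=2$ and $\mu_1=\mu_2$. Using Theorem \ref{subgp}, generators (i), (ii) are the signed permutations $(3,4)^-$ and $(1)^-(2)^-(3,4,\dots,d)^{\epsilon_d}$, while a direct computation gives (iii)$=\delta_{34}=\diag(1,1,\omega^{-1},\omega,1,\dots,1)$ and (iv)$=\delta_{12}\delta_{23}^{2}=\diag(\omega^{-1},\omega^{-1},\omega^{2},1,\dots,1)$. Each fixes the indices $1,2$ and has $\mu_1=\mu_2$, so the subgroup $C$ they generate lies in $C_N(\tau)$.

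To prove $C=C_N(\tau)$, I would set $T_0=C_N(\tau)\cap T=\{\diag(\omega^{a_1},\dots,\omega^{a_d}):a_1\equiv a_2,\ \sum_i a_i\equiv0\}$, a normal subgroup of order $(q-1)^{d-2}$ with $C_N(\tau)/T_0\cong S_{d-2}$ acting on $\{3,\dots,d\}$. Modulo $T$, generators (i), (ii) induce the transposition $(3,4)$ and the $(d-2)$-cycle $(3,4,\dots,d)$, which generate $S_{d-2}$, so $C$ surjects onto $C_N(\tau)/T_0$. For the torus part, conjugating (iii)$=\delta_{34}$ by this permutation action produces every $\delta_{kl}$ with $3\le k,l\le d$, which generate the ``block'' torus $\{a_1=a_2=0,\ \sum_{k\ge3}a_k=0\}$ of order $(q-1)^{d-3}$, while (iv) has order-$(q-1)$ image in the cyclic quotient of $T_0$ by that block torus. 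Hence $T_0\le C$, and the order count $|C|=|T_0|\cdot|S_{d-2}|=(q-1)^{d-2}(d-2)!=|C_N(\tau)|$ forces equality.

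I expect the main obstacle to be this final torus computation: showing, with correct handling of the signs and of the determinant-$1$ constraint, that (iv) contributes exactly the extra cyclic factor $C_{q-1}$ (the common scalar $\mu_1=\mu_2$ on $e_1,e_2$) beyond the block torus generated by the conjugates of $\delta_{34}$, so that all of $T_0$ is recovered rather than its index-$(q-1)$ subgroup. I would also check the degenerate cases $d=3,4$ separately — where generators (i), (ii), (iii) successively drop out — against the centraliser order $(q-1)^{d-2}(d-2)!$.
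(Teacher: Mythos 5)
Your argument is correct and follows essentially the same route as the paper: orbit--stabiliser on the $d(d-1)(q-1)$ transvections $\tau_{ij}(s)$ gives the index, the listed generators are identified as the explicit matrices $(3,4)^-$, $(1)^-(2)^-(3,4,\ldots,d)^{\epsilon_d}$, $\diag(1,1,\omega^{-1},\omega,1,\ldots,1)$ and $\diag(\omega^{-1},\omega^{-1},\omega^2,1,\ldots,1)$, and an order count of the subgroup they generate forces equality with the centraliser. The paper merely asserts that this subgroup has order $(q-1)^{d-2}(d-2)!$, whereas you supply the $T_0$-versus-$S_{d-2}$ decomposition justifying it; the only blemishes are cosmetic (the torus scaling argument needs only $d>2$, not $d>3$, and $\tau_{12}(s)^{\delta_{13}}=\tau_{12}(\omega s)$ rather than $\tau_{12}(\omega^{-1}s)$).
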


\begin{proof}
Clearly $N$ acts transitively by conjugation on 
$\{\tau_{ij}(s) \, | \, s \in \GF(q)^\times \}$ since $d>2$.  Moreover
${U'}^{V'^2}$ (if $d>3$) and $V'U'U'^{V'}$ (if $d>4$) stand for the signed 
permutation matrices $(3,4)^-$ and
$(1)^-(2)^-(3,4,\ldots,d)^{\epsilon_d}$ respectively, and 
 $\delta\delta^{2V'}$ and $\delta^{V'^2}$ (if $d>3$) stand for the matrices 
 $\diag(\omega^{-1},\omega^{-1},\omega^2,1,\ldots,1)$ 
and $\diag(1,1,\omega^{-1},\omega,1,\ldots,1)$
 respectively.   These elements centralise $\tau$, 
and generate a subgroup of $N$ of order
 $(q-1)^{d-2}(d-2)!$. 
\end{proof}

\begin{theorem}\label{SLq}
Let $q=p^e$ for a prime $p$ and $e > 1$, and let $d>2$. 
Let $G$ be the group generated by $\{\tau,\delta, U',V'\}$
subject to the relations given below.  Then $G$ is isomorphic to $\SL(d,q)$.
\begin{enumerate}
\item[(i)] Defining relations for $N=\langle \delta, U',V'\rangle$ as 
in Theorem $\ref{SLN}$, 
but omitting relations (iv) and (vi) of that theorem. 

\item[(ii)] 
Relations that present $\SL(2, q)$ on $\{\tau, \delta, U'\}$.

\item[(iii)] Relations that state that the elements listed in 
Theorem $\ref{SLCent}$ centralise $\tau$.

\item[(iv)] The following instances of Steinberg relations:
\begin{enumerate}
\item[(a)] $[\tau,\tau^{V'}]=\tau^{U'^{V'}}$; \q  $([\tau_{12}(1),\tau_{23}(1)]=\tau_{13}(1))$;
 \item[(b)] $[\tau,\tau^{U'^{V'}}]=1$; \q $([\tau_{12}(1),\tau_{13}(1)]=1)$;
 \item[(c)] $[\tau,\tau^{U'V'}]=1$;  \q $([\tau_{12}(1),\tau_{32}(-1)]=1)$;
 \item[(d)] if $d>3$ then 
$[\tau,\tau^{V'^2}]=1$; \q $([\tau_{12}(1),\tau_{34}(1)]=1)$;
\item[(e)] if $q=4$ and $d=3$ then $[\tau,\tau^{\delta V'}]=\tau^{\delta U'^{V'}}$; 
\q $([\tau_{12}(1), \tau_{23}(\omega^2)]=\tau_{13}(\omega^2))$;
\item[(f)] if $q=4$ and $d=3$ then $[\tau,\tau^{\delta U'^{V'}}]=1$; \q $([\tau_{12}(1),\tau_{13}(\omega^2)]=1)$;
\item[(g)] if $q=4$ and $d=3$ then $[\tau,\tau^{\delta U'V'}]=1$; \q 
$([\tau_{12}(1), \tau_{32}(\omega^2)]=1)$. 
\end{enumerate}
\end{enumerate}
\end{theorem}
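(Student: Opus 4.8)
The plan is to exhibit the surjection $G\twoheadrightarrow\SL(d,q)$ coming from the fact that all the listed relations hold in $\SL(d,q)$, and then to prove it is an isomorphism by showing that $G$ is generated by a full set of root elements satisfying all of Steinberg's relations; the sufficiency of those relations for the simply connected group of type $A_{d-1}$, namely $\SL(d,q)$ itself, is \cite{Steinberg62,Steinberg81}. Since the images of $\tau,\delta,U',V'$ generate $\SL(d,q)$, there is a surjection $\pi\colon G\to\SL(d,q)$, and it suffices to produce inside $G$ the data of a Steinberg presentation that maps isomorphically under $\pi$.

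First I would recover the two structural subgroups. The relations (i) of Theorem~\ref{SLq} are those of Theorem~\ref{SLN} \emph{minus} (iv) and (vi); but (iv), $\delta^{U'}=\delta^{-1}$, and (vi), the central relation $\delta^{(q-1)/2}=U'^2$, both hold in the copy of $\SL(2,q)$ on $\{1,2\}$ and are therefore consequences of the relations (ii) presenting $\SL(2,q)$ on $\{\tau,\delta,U'\}$. Hence $\bar N:=\langle\delta,U',V'\rangle$ satisfies every relation of Theorem~\ref{SLN}, so is a quotient of $N$; as $\pi$ maps it onto $N$, we get $\bar N\cong N$ and may name its torus elements $\delta_{ij}$ and its signed permutations exactly as in the matrix group. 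Likewise $\langle\tau,\delta,U'\rangle$ is a quotient of $\SL(2,q)$ mapping onto the $\{1,2\}$-copy, hence is isomorphic to it; this yields all $\tau_{12}(s)$ for $s\in\GF(q)$ together with additivity $\tau_{12}(s)\tau_{12}(s')=\tau_{12}(s+s')$.

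Next I would define $\tau_{ij}(s)$ as the appropriate $\bar N$-conjugate of $\tau_{12}(s)$. The point of relations (iii), which assert that the generators of the centraliser of $\tau$ described in Theorem~\ref{SLCent} do commute with $\tau$, is to guarantee that this conjugate depends only on the image of the conjugating element in $\bar N/C_{\bar N}(\tau)$, so the root elements are well defined and match those of $\SL(d,q)$. Since the Weyl elements $U',V'$ and the torus $\delta$ all lie in subgroups generated by root elements, $G$ is generated by the $\tau_{ij}(s)$. It then remains to verify the Steinberg relations. Additivity within each root group transports from the $\{1,2\}$-copy by conjugation; for the commutator relations I would use that $\bar N$ acts on unordered pairs of root subgroups with only finitely many orbits, and that the parenthetical translations show (iv)(a)--(d) to be the four Chevalley commutator types $[\tau_{12}(1),\tau_{23}(1)]=\tau_{13}(1)$, $[\tau_{12},\tau_{13}]=1$, $[\tau_{12},\tau_{32}]=1$ and $[\tau_{12},\tau_{34}]=1$. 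Conjugating each representative through $\bar N$ and invoking bilinearity, which is legitimate because the commutator value commutes with both factors by (b)--(d), should recover the relation for all field arguments, the scaling being governed by $\tau_{ij}(s)^{t}=\tau_{ij}(s\,t_i/t_j)$ for diagonal $t=\diag(t_1,\dots,t_d)$.

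The hard part will be the field-element bookkeeping in $[\tau_{12}(s),\tau_{23}(t)]=\tau_{13}(st)$, and in particular the degenerate case $q=4$, $d=3$ that forces the extra relations (e)--(g). Conjugating $[\tau_{12}(1),\tau_{23}(1)]=\tau_{13}(1)$ by $\delta_{12}^{a}\delta_{23}^{b}$ yields $[\tau_{12}(\omega^{-2a+b}),\tau_{23}(\omega^{a-2b})]=\tau_{13}(\omega^{-a-b})$, so the reachable pairs of exponents form the image of $(a,b)\mapsto(-2a+b,\,a-2b)$, a map of determinant $3$. For $d=3$ no further torus is available, and when $q-1=3$ this map collapses to $u\equiv w\pmod 3$, so the pair $[\tau_{12}(1),\tau_{23}(\omega^2)]$ is unreachable; this is precisely what (e)--(g) supply, while for $d>3$ the richer set of roots through a fourth index removes the deficiency. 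Once every Steinberg relation is seen to hold and $G$ is generated by root elements, $G$ is a quotient of the Steinberg-presented $\SL(d,q)$, and together with $\pi$ this forces $G\cong\SL(d,q)$.
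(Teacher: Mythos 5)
Your proposal is correct and follows essentially the same route as the paper: recover $N$ and the $\SL(2,q)$ subgroup, define the $\tau_{ij}(s)$ as $N$-conjugates using the centraliser relations, and derive the general Steinberg relations from the listed instances by combining the torus action with additivity of the root groups, invoking Steinberg's theorem at the end. The only point needing slightly more care is your closing paragraph: the index-$3$ image of your determinant-$3$ map occurs for \emph{every} $q\equiv 1\bmod 3$ when $d=3$, and what singles out $q=4$ is that the additive closure argument (the set of reachable coefficients is a $\GF(p)[\omega^3]$-subspace) rescues all cases except the one where $\GF(p)[\omega^3]$ is the prime field, namely $q=4$ --- which is exactly how the paper isolates the need for relations (e)--(g).
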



\begin{proof}
The omitted relations follow from (ii). 

By (iii) the elements $\tau_{ij}(s)$ (for $s\ne0$) may be uniquely defined in $G$ as 
$N$-conjugates of $\tau$.

The Steinberg relations for $\SL(d,q)$ are as follows, 
where $i$, $j$, $k$ and $l$ are distinct:
\begin{enumerate}
\item[(1)] $\tau_{ij}(s)\tau_{ij}(t)=\tau_{ij}(s+t)$;  $\tau_{ij}(s)^p=1$;
\item[(2)] if $d>3$ then $[\tau_{ij}(s),\tau_{kl}(t)]=1$;
\item[(3)] $[\tau_{ij}(s),\tau_{ik}(t)] = 1$;
\item[(4)] $[\tau_{ij}(s),\tau_{kj}(t)] = 1$;
\item[(5)] $[\tau_{ij}(s),\tau_{jk}(t)]=\tau_{ik}(st)$.
\end{enumerate}

Relations (1) hold in $G$ for $i=1$ and $j=2$ from (ii), 
and for other suffices by conjugation in $N$.

In $\SL(d,q)$, if $d>4$, or if $d=4$ and $q$ is even, 
then $N$ acts transitively by conjugation 
on the set of ordered pairs $(\tau_{ij}(s),\tau_{kl}(t))$, where
$i,j,k,l$ are distinct and $s\ne0\ne t$.  So in our presentation, 
for these values of $d$ and $q$, we need one relation of the
form $[\tau_{ij}(s),\tau_{kl}(t)]=1$ with suffices satisfying 
these conditions.  
Relation (d) is one such. 
Thus relations (2) hold in $G$ if $d>4$, or if $q$ is even.
If $d=4$ and $q$ is odd then the action of $N$ on these pairs gives rise
to two orbits, one containing the pairs $(\tau_{ij}(s),\tau_{kl}(t))$ 
where $st$ is a square in $\GF(q)^\times$, and the
other containing the pairs where $st$ is not a square.  
Now for fixed $i,j,k,l,t$, the set of $s\in\GF(q)$  
for which $[\tau_{ij}(s),\tau_{kl}(t)]=1$ in $G$ is closed under addition 
(by (a)),  and under multiplication by $\omega^2$ 
(by conjugation with $\delta_{ij}$). 
Since $\omega^2$ generates $\GF(q)$ as a field, 
$[\tau_{ij}(s),\tau_{kl}(t)]=1$ 
for all $s$ in $\GF(q)$ provided that this holds 
for one $s\in\GF(q)^{\times}$, and so (2) holds in $G$.

Similarly, if $d>3$ then $N$ acts transitively by conjugation on the set of 
pairs $(\tau_{ij}(s),\tau_{ik}(t))$, 
where $i,j,k$ are distinct and
$s\ne0\ne t$; and we have the relation (b), standing for 
$[\tau_{12}(1),\tau_{13}(1)]=1$.  But if $d=3$ then the 
setwise stabiliser 
of the set of pairs 
$(\tau_{12}(s),\tau_{13}(t))$, with $s\ne 0\ne t$, is the group of 
diagonal matrices in $\SL(d,q)$,
and this group, which has order $(q-1)^2$, acts faithfully 
unless $q\equiv1\bmod 3$, when there
is a kernel of order $3$.
So if $q\equiv1\bmod3$ then we have
three orbits of $N$ on these pairs.  
For fixed $s\in\GF(q)^{\times}$, let $S$ be the set of $t\in\GF(q)$
such that $[\tau_{12}(s),\tau_{13}(t)]=1$ in $G$.
Now $S$ is closed under addition, 
and (by conjugation with $\delta_{12}\delta_{23}^2$) is 
closed under multiplication by $\omega^3$, and hence is 
a vector space over the
field generated by $\omega^3$.  If $q\ne 4$ then this field is $\GF(q)$, 
and a similar result 
holds with the roles of $s$ and $t$ interchanged.  
Thus, in this case, the relations (3) may be deduced from a single case.  
But if $q=4$ the field generated by $\omega^3$ is the prime field, and we need 
relation (f), which asserts that $[\tau_{12}(1),\tau_{13}(\omega^2)]=1$. 
Hence $[\tau_{12}(1),\tau_{13}(1+\omega^2)]=1$, 
so we have representatives of the three orbits of $N$, as above, 
thus (3) holds in all cases.

By a similar argument, relations (4) may be deduced, 
using relations (c) and (g).  

The proof of (5) is similar. If $q\not\equiv1\bmod3$, or $d>3$, 
then the result follows from (a) by conjugation 
with $N$.  Otherwise, we first need to prove that in
$G$, for given $s\in\GF(q)^\times$, the 
set $S$ consisting of those $t$ for which 
$[\tau_{12}(s),\tau_{23}(t)] = \tau_{13}(st)$ is closed under addition, 
and under multiplication by $\omega^3$.
This follows from the identity $[a,bc]=[a,c][a,b]^c$,
using the relation (a), and by conjugation with $\delta_{12}\delta_{23}^2$.  
This completes the proof
of (5), except in the case when $q=4$ and $d=3$, when we 
use relation (e).
 
Finally one must check that the given generators lie in the group 
generated by the $\tau_{ij}(s,t)$.
This holds for $\tau$, $\delta$ and $U'$, thus for 
all $N$-conjugates of $U'$, and so for $V'$.
\end{proof}
Coset enumeration shows that relations (f) and (g) are redundant.

\subsection{A presentation for $\SL(d, p)$} \label{pres-sldp}
We give a presentation for $\SL(d, p)$ on the generating set $\{\tau,U',V'\}$. 
Clearly $\langle U',V'\rangle$ acts transitively by conjugation 
on the set of elements $\tau_{ij}(\pm1)$, a set of size
$2d(d-1)$ if $p$ is odd, and of size $d(d-1)$ if $p=2$.  
If $p$ is odd, then the centraliser of $\tau$ in $\langle U',V'\rangle$ 
is the direct product of the cyclic group of order 2 
generated by $U'^2$ with the subgroup 
of $\langle U',V'\rangle$
that, as signed permutation group, fixes $1$ and $2$.  
This subgroup has index $2d(d-1)$, as required.
If $p=2$, then this centraliser is the subgroup 
of $\langle U,V\rangle$ that fixes $1$ and $2$, and has index  
$d(d-1)$, as required.

\begin{theorem}\label{SLp}
Let $p$ be a prime and let $d>2$. 
Let $G$ be the group generated by $\{\tau, U',V'\}$
subject to the relations given below.  Then $G$ is isomorphic to $\SL(d,p)$.
\begin{enumerate}
\item[(i)] Defining relations for $\langle U',V'\rangle$ as signed permutations 
of $\{1,2,\ldots,d\}$ if $p$ is odd, or as unsigned permutations if $p=2$.
\item[(ii)] 
Relations that present $\SL(2, p)$ on $\{\tau, U'\}$.
\item[(iii)] If $d>3$ then $[\tau,U'^{V'^2}]=1$, 
and if $d>4$ then $[\tau,V' U'^{-1} (U'^{-1})^{V'}]=1$.
\item[(iv)] The Steinberg relations (iv)(a) to (iv)(d) of Theorem $\ref{SLq}$.
\end{enumerate}
\end{theorem}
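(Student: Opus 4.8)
The plan is to follow the blueprint already established in the proof of Theorem \ref{SLq}, since the statement for $\SL(d,p)$ is the prime-field specialisation of the general case. The strategy is to show that the presented group $G$ has order at most $|\SL(d,p)|$, while $\SL(d,p)$ is evidently a homomorphic image of $G$ (all listed relations hold in $\SL(d,p)$), forcing $G\cong\SL(d,p)$. First I would use relations (i) to identify $\langle U',V'\rangle$ with the group of signed permutation matrices of determinant $1$ (for odd $p$) or with $S_d$ (for $p=2$), invoking Theorems \ref{signedsymmetricodd}, \ref{signedsymmetriceven}, and \ref{symmetricpres}. Relations (ii) present the base case $\SL(2,p)$ on $\{\tau,U'\}$, giving control of the root subgroup $\langle\tau\rangle$ and its interaction with $U'$.

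Next I would define the root elements $\tau_{ij}(s)$ as $\langle U',V'\rangle$-conjugates of $\tau$. Here relation (iii) plays the role that Theorem \ref{SLCent} played in the $e>1$ case: since $\tau$ is to be fixed by the appropriate stabiliser, I must check that the elements ${U'}^{{V'}^2}$ and $V'{U'}^{-1}({U'}^{-1})^{V'}$, which by Theorem \ref{subgp} generate the signed-permutation subgroup fixing $1$ and $2$, indeed centralise $\tau$; together with the centraliser analysis in the paragraph preceding the theorem, this shows the stabiliser of $\tau$ in $\langle U',V'\rangle$ has the right index $2d(d-1)$ (or $d(d-1)$ when $p=2$), so the $\tau_{ij}(\pm1)$ are well defined and correctly permuted. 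Crucially, over the prime field every nonzero field element is an integer multiple of $1$, so each $\tau_{ij}(s)=\tau_{ij}(1)^s$ is automatically a power of a single conjugate of $\tau$; there is no need to generate the field additively by conjugating with a diagonal element $\delta$, which is precisely why $\delta$ is absent from the generating set.

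I would then verify the full Steinberg relations (1)--(5) from the proof of Theorem \ref{SLq}. Relations (1) come from (ii) together with conjugation in $\langle U',V'\rangle$. For (2)--(5), the key simplification over the prime field is that $\GF(p)$ has no proper subfields, so the ``vector space closed under multiplication by $\omega^2$ or $\omega^3$'' arguments of the $e>1$ proof collapse: once a commutator relation such as $[\tau_{ij}(1),\tau_{kl}(1)]=1$ or $[\tau_{ij}(1),\tau_{jk}(1)]=\tau_{ik}(1)$ holds, additivity (via the identity $[a,bc]=[a,c][a,b]^c$) propagates it to all field values, and $\langle U',V'\rangle$-conjugacy propagates it across all index configurations. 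This is exactly why only relations (iv)(a)--(iv)(d) of Theorem \ref{SLq} are needed, and the exceptional relations (e)--(g), forced by the field $\GF(4)$ having a proper subfield, disappear. The single transitivity subtlety remaining is the $d=3$ case, where the diagonal torus stabilises the relevant pairs; but since the torus here is $(C_{p-1})^2$ acting through $\delta_{ij}$-conjugation and $p$ is prime, no extra orbits arise, so a single instance suffices.

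The main obstacle I anticipate is not the Steinberg relations themselves but confirming the \emph{order bound}, i.e.\ that relations (i)--(iv) genuinely force $G$ to have order at most $|\SL(d,p)|$ and nothing larger slips through the absence of $\delta$. Concretely I would argue that the $\tau_{ij}(s)$ generate a subgroup $U^+$ mapping onto the full unipotent radical, that $\langle U',V'\rangle$ supplies the Weyl-group and torus-sign data, and that the $\SL(2,p)$ relations of (ii) reconstruct the diagonal torus elements $\delta_{ij}$ as words in $\tau$ and $U'$ — recall from Section \ref{CRW} that over the prime field $\delta$ is itself expressible in terms of $\tau$ and $U$. Once the diagonal torus is recovered inside $G$, the standard $BN$-pair counting (union of the big Bruhat cell and lower-rank pieces, as in the $\SU(3,q)$ argument) bounds $|G|$, and the final check that $V'$ lies in the group generated by the $\tau_{ij}(s)$ closes the argument exactly as in Theorem \ref{SLq}.
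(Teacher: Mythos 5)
Your proposal is correct and follows essentially the same route as the paper: use (ii) to see that $U'^2$ centralises $\tau$, combine with (i), (iii) and Theorem \ref{subgp} to define the $\tau_{ij}(\pm1)$ as $\langle U',V'\rangle$-conjugates of $\tau$ (with $\tau_{ij}(s)=\tau_{ij}(1)^s$ over the prime field), and then verify the Steinberg relations, which simplify because additivity alone generates $\GF(p)$. Your final worry about a separate order bound is unnecessary — as in Theorem \ref{SLq}, once all Steinberg relations hold the subgroup generated by the $\tau_{ij}(s)$ is isomorphic to $\SL(d,p)$ by the sufficiency of the Steinberg presentation, and one only needs to check the given generators lie in it.
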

\begin{proof}
That $U'^2$ centralises $\tau$ follows from (ii).
Hence, by virtue of (i) and (iii), if $p$ is odd then we may define
$\tau_{ij}(\pm 1)$ as elements of $G$ by conjugation with 
$\langle U',V'\rangle$.
The proof is concluded as in Theorem \ref{SLq}.
\end{proof}
\noindent
Observe that 
$\delta = {(\tau^{\omega-\omega^2})}^{U'} \tau^{\omega^{-1}} {(\tau^{\omega - 1})}^{U'}\tau^{-1}$.

\subsection{A presentation for $\PSL(d,q)$}
The centre of $\SL(d,q)$ has order $\gcd(q-1,d)$; if it is non-trivial, then   
it is generated by $(\delta U' {V'}^{-1})^{(d-1)(q-1)/\gcd(q-1,d)}$.
For $(\delta U'{V'}^{-1})^{d-1}$ is a diagonal matrix whose entries, with
one possible exception, equal $-\epsilon_d\omega$. 

\subsection{Standard generators for $\SL(d, q)$}
In Table 1 of \cite{even,odd} the non-trivial standard generators for 
$\SL(d, q)$ are labelled 
$s, t, \delta, v$ where, for ease of exposition,
we make one uniform choice for $v$: namely, 
$$v := \left(\begin{matrix} 0 & 1 \cr -I_{d - 1} & 0 \cr \end{matrix}
\right).$$
Observe that $s = U'$; $t=\tau$; the standard generator
$\delta$ is the inverse of the presentation generator
$\delta$; and 
$v=(V'U'^{-1})^{d-1}V'^{-1}$ if $d$ and $q$ are 
both odd, else $v=V'^{d-1}$. 
The presentation generator $V'=v^{-1}\left(s^{-1}v\right)^{d-1}$
if $d$ and $q$ are odd, else $V'=v^{d-1}$. 






\section{A presentation for $\Sp(2n,q)$ for $n > 1$}

\subsection{Generators and notation}
Let $q=p^e$ for a prime $p$. Let $\omega$ be a primitive element of $\GF(q)$. 

Let $n>1$. 
We take a hyperbolic basis $(e_1,f_1,\ldots,e_n,f_n)$ of the natural module,  
and define the following elements of $\Sp(2n,q)$, 
where $1\le i,j\le n$ and $i\ne j$, and $s\in\GF(q)$.

$Z_i=(e_i\mapsto f_i, f_i\mapsto -e_i)$;

$\delta_i=(e_i\mapsto\omega^{-1}e_i, f_i\mapsto\omega f_i)$;

$\tau_i(s)=(e_i\mapsto e_i+sf_i)$;

$\tau_{-i}(s) =(f_i\mapsto f_i-se_i) = \tau_i(s)^{Z_i}$;

$\sigma_{ij}(s)=(e_i\mapsto e_i+se_j, f_j\mapsto f_j-sf_i)$;

$\sigma_{-i,j}(s)= (f_i\mapsto f_i+se_j, f_j\mapsto f_j+se_i) =\sigma_{ij}(s)^{Z_i}$;

$\sigma_{i,-j}(s) =(e_i\mapsto e_i+sf_j, e_j\mapsto e_j+sf_i) =\sigma_{ij}(s)^{Z_j}$;

$\sigma_{-i,-j}(s) =(f_i\mapsto f_i+sf_j, e_j\mapsto e_j-se_i)=\sigma_{ji}(-s) =\sigma_{ij}(s)^{Z_iZ_j}$;

$U=(e_1,e_2)(f_1,f_2)$;

$V=(e_1,e_2,\ldots,e_n)(f_1,f_2,\ldots,f_n)$ if $n>2$.

\smallskip
All these elements of $\Sp(2n,q)$ centralise those basis elements that they 
are not stated to move.  
Note that $\sigma_{-i,j}^{Z_i}(s) = \sigma_{i,-j}^{Z_j}(s) = \sigma_{ij}(-s)$.

\subsection{A presentation for $\Sp(2n, q)$ for $e > 1$}
We give a presentation for $\Sp(2n,q)$  on the generating set
$\{ \sigma=\sigma_{12}(1), \tau=\tau_1(1), \delta=\delta_1, Z = Z_1, U,V\}$, 
omitting $V$ if $n=2$.

If $q$ is even, then the subgroup of $\Sp(2n,q)$ obtained by 
omitting $\tau$ is $\SO^+(2n,q)$.

Let $N_1$ be the subgroup of $\Sp(2n, q)$ generated by $\{ Z,U,V\}$,
omitting $V$ if $n=2$.  
It is isomorphic to $C_2\wr S_n$ if $q$ is even 
and to $C_4\wr S_n$ if $q$ is odd. 
The following is clear.
\begin{theorem}\label{SpN1}
Let $G$ be the group generated by $\{Z, U,V\}$, 
omitting $V$ if $n=2$,  
subject to the relations given below. 
Then $G$ is isomorphic to $N_1$.
\begin{enumerate}
\item[(i)] Defining relations for $S_n$ on $\{U,V\}$, omitting $V$ if $n=2$.
\item[(ii)] $Z^m=1$, where $m=2$ if $q$ is even, and $m=4$ if $q$ is odd.  
\item[(iii)] $[Z,U^V]=1$ if $n>2$, and $[Z,VU]=1$ if $n>3$.
\item[(iv)] $[Z,Z^U]=1$.
\end{enumerate}
\end{theorem}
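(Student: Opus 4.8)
The plan is to show that the presented group $G = \langle Z, U, V \mid \text{(i)--(iv)} \rangle$ is isomorphic to $N_1 = C_m \wr S_n$, where $m = 2$ if $q$ is even and $m = 4$ if $q$ is odd. Since the relations clearly hold in $N_1$ (they are verified by direct matrix computation, as $Z = Z_1$ has order $m$, and the permutation generators $U, V$ act on the copies of $\langle Z_i \rangle$ by permuting indices), there is a surjective homomorphism $G \to N_1$. It therefore suffices to prove $|G| \leq |N_1| = m^n \cdot n!$.

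First I would use relation (i) to identify the image of $\langle U, V \rangle$ in $G$ with $S_n$ (by Theorem \ref{symmetricpres}), giving a quotient $G / B \cong S_n$ where $B$ is the normal closure of $\langle Z \rangle$ in $G$. The main work is then to show that $B$ is an elementary abelian or homocyclic group of the expected shape: namely $B = \langle Z_1, \ldots, Z_n \rangle$ with each $Z_i := Z^{c_i}$ a conjugate of $Z$ by a suitable coset representative $c_i \in \langle U, V \rangle$ realising the permutation moving index $1$ to index $i$. Because $\langle U, V \rangle$ acts as $S_n$ and permutes these $n$ conjugates, it is enough to show the $Z_i$ pairwise commute and that $B = \prod_i \langle Z_i \rangle$ has order at most $m^n$.

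The commuting relations follow the pattern of Theorem \ref{signedsymmetricodd}: relation (iv), $[Z, Z^U] = 1$, asserts that $Z_1$ commutes with $Z_2$; relation (iii), $[Z, U^V] = 1$ (for $n > 2$) together with $[Z, VU] = 1$ (for $n > 3$), ensures that $Z = Z_1$ commutes with the relevant further conjugates, and conjugating these relations by powers of $V$ propagates commutativity so that $Z_i$ and $Z_j$ commute for all $i \neq j$. Relation (ii), $Z^m = 1$, then bounds each cyclic factor $\langle Z_i \rangle$ by order $m$. Hence $B$ is generated by $n$ pairwise-commuting elements each of order dividing $m$, so $|B| \leq m^n$, giving $|G| = |G/B| \cdot |B| \leq n! \cdot m^n = |N_1|$.

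The step I expect to be the main obstacle is verifying that the conjugates $Z^{U^V}$ and $Z^{VU}$ appearing in (iii), combined with their $V$-conjugates, genuinely suffice to force commutativity of \emph{all} pairs $Z_i, Z_j$ — that is, confirming that the chosen orbit representatives under the $S_n$-action cover every unordered pair of indices. This is the same combinatorial bookkeeping that underlies the proof of Theorem \ref{signedsymmetricodd}, where $[U', U'^{V'^j}] = 1$ for $2 \le j \le (d-1)/2$ was shown (by conjugation) to extend to all $j$; here the analogous argument shows that commutativity of $Z_1$ with $Z_2$ (from (iv)) and with the conjugates in (iii) extends by the $S_n$-action to all pairs, so that $B$ is abelian. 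Given this, the index count is routine, and since the relations are visibly satisfied in $N_1$ the theorem follows; this is why the authors describe the result as ``clear.''
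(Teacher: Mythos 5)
Your overall strategy is the right one (and is presumably what the authors intend, since the paper offers no proof beyond the remark ``The following is clear''): the relations visibly hold in $N_1$, so one only needs the bound $|G|\le m^n\,n!$, obtained by showing that the normal closure $B$ of $Z$ is abelian and generated by $n$ elements of order dividing $m$, while $G/B$ is a quotient of $S_n$.

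However, you have misread relation (iii). It reads $[Z,U^V]=1$ and $[Z,VU]=1$ --- commutators of $Z$ with the \emph{permutation elements} $U^V$ and $VU$, which stand for $(2,3)$ and $(2,3,\ldots,n)$ --- not commutators $[Z,Z^{U^V}]$ and $[Z,Z^{VU}]$ of $Z$ with further conjugates of itself, as your final paragraph suggests. This changes the shape of the argument. The actual role of (iii) is to show that the centraliser of $Z$ in $\langle U,V\rangle\cong S_n$ contains $\langle (2,3),(2,3,\ldots,n)\rangle$, the full stabiliser of the point $1$; that is precisely what justifies your otherwise unsupported claim that $B$ is generated by just $n$ conjugates $Z_1,\ldots,Z_n$, with $Z^g$ depending only on $1^g$ and permuted naturally by $S_n$. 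Once this is in place, the ``main obstacle'' you anticipate --- checking that enough commutator relations are present to cover every unordered pair of conjugates --- disappears: $S_n$ is $2$-transitive on $\{1,\ldots,n\}$, so the single relation $[Z,Z^U]=1$, i.e.\ $[Z_1,Z_2]=1$, conjugates to $[Z_i,Z_j]=1$ for all $i\ne j$. No bookkeeping of the kind needed in Theorem \ref{signedsymmetricodd} arises here; that theorem required a whole family of commutator relations precisely because the generator there is a signed transposition and the relevant action on pairs has many orbits. With (iii) read correctly, the rest of your argument (each $\langle Z_i\rangle$ has order dividing $m$ by (ii), so $|B|\le m^n$ and $|G|\le m^n\,n!=|N_1|$) goes through.
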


Let $N$ be the subgroup of $\Sp(2n,q)$ generated by 
$\{\delta,Z, U,V\}$.
Now $K=\langle \delta, Z\rangle$ is a copy of 
$D_{2(q-1)}$ if $q$ is even, 
and of $Q_{2(q-1)}$ if $q$ is odd; and $N$ is isomorphic to $K\wr S_n$.

\begin{theorem}\label{SpN}
Let $G$ be the group generated by $\{\delta,Z,U,V\}$ subject 
to the relations given below. 
Then $G$ is isomorphic to $N$.
\begin{enumerate}
\item[(i)] Defining relations for $\langle Z, U, V \rangle$, 
as in Theorem $\ref{SpN1}$.
\item[(ii)] $\delta^{q-1}=1$ if $q$ is even, and 
$\delta^{(q-1)/2}=Z^2$ if $q$ is odd, and $\delta^Z=\delta^{-1}$.  
\item[(iii)] If $n>2$ then $[\delta,U^V]=1$, and if $n>3$ then
$[\delta,VU]=1$.
\item[(iv)]  $[Z,\delta^U]=[\delta,\delta^U]=1$.
\end{enumerate}
\end{theorem}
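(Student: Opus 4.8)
The plan is to follow the counting strategy used in the proof of Theorem~\ref{SLN}: exhibit $G$ as a quotient of the wreath product $K\wr S_n$, where $K=\langle\delta,Z\rangle$, bound $|G|$ above by $|N|$, and then invoke the surjection $G\to N$ (which exists because all the listed relations hold among the defining matrices) to force $G\cong N$.

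First I would analyse the base. Relation (ii), combined with $Z^m=1$ from Theorem~\ref{SpN1}(ii), states exactly that $\langle\delta,Z\rangle$ is a homomorphic image of $D_{2(q-1)}$ when $q$ is even and of $Q_{2(q-1)}$ when $q$ is odd: indeed $\delta^{q-1}=1$, $\delta^Z=\delta^{-1}$, and either $Z^2=1$ or $Z^2=\delta^{(q-1)/2}$ are precisely the defining relations in our notation. Hence $|K|\le 2(q-1)$. By Theorem~\ref{SpN1} the subgroup $\langle Z,U,V\rangle$ of $G$ is isomorphic to $N_1$, so $\langle U,V\rangle$ maps onto a copy of $S_n$ acting on the $n$ hyperbolic pairs; for $g\in\langle U,V\rangle$ I set $\delta_i=\delta^{g}$ and $Z_i=Z^{g}$ whenever $g$ carries the first pair to the $i$-th.

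Next I would check these conjugates are well defined and that distinct factors commute. With the convention $x^g=g^{-1}xg$, the elements $U^V$ and $VU$ are the transposition $(2,3)$ and the $(n-1)$-cycle $(2,3,\dots,n)$, which generate the stabiliser $S_{n-1}$ of the first pair in $\langle U,V\rangle$. Relation (iii) together with Theorem~\ref{SpN1}(iii) says that both $\delta$ and $Z$ commute with this stabiliser, so $\delta_i,Z_i$ depend only on $i$ and $\langle U,V\rangle$ permutes the subgroups $K_i=\langle\delta_i,Z_i\rangle$ exactly as $S_n$ permutes the pairs. To see that $K_1$ and $K_2$ commute elementwise it suffices to kill the four commutators of generators: $[Z_1,Z_2]=[Z,Z^U]=1$ is Theorem~\ref{SpN1}(iv), while $[\delta_1,\delta_2]=[\delta,\delta^U]=1$ and $[Z_1,\delta_2]=[Z,\delta^U]=1$ are relation (iv); the remaining $[\delta_1,Z_2]=1$ follows by conjugating $[Z_1,\delta_2]=1$ by the involution $U$, which interchanges the first two pairs. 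Transitivity of the $S_n$-action then gives $[K_i,K_j]=1$ for all $i\ne j$.

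Assembling these facts, the base group $B=\langle K_1,\dots,K_n\rangle$ is a quotient of the direct product $K^n$ and is normalised by $\langle U,V\rangle$, so $G=B\langle U,V\rangle$ is a quotient of $K\wr S_n$. Therefore $|G|\le |K|^n\,n!\le (2(q-1))^n\,n!=|N|$. Since $N$ is a homomorphic image of $G$ of exactly this order, the surjection is an isomorphism and $G\cong N$. The step I expect to require the most care is the order bound: one must be sure that relations (ii)--(iv) genuinely force the base to be a quotient of the full direct power $K^n$ with no further collapse, and that the $S_n$ on top neither shrinks nor fuses with the base, so that the wreath-product bound is tight and matches $|N|$ on the nose.
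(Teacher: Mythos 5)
Your proof is correct and follows exactly the route the paper intends: the paper's own proof consists of the single observation that $U^V$ and $VU$ stand for $(2,3)$ and $(2,3,\ldots,n)$, leaving the standard wreath-product counting argument (base $K=\langle\delta,Z\rangle$ a quotient of $D_{2(q-1)}$ or $Q_{2(q-1)}$, commuting conjugates permuted by $S_n$, order bound matched by the surjection onto $N$) implicit. You have simply written out those details, including the correct derivation of $[\delta_1,Z_2]=1$ by conjugating relation (iv) by $U$.
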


\begin{proof}
Since $U^V$ and $VU$ stand for the permutations 
$(2,3)$ and $(2,3,\ldots,n)$ respectively, 
the result is clear.
\end{proof}

%
%
%
%
%
%
%
%
%
%
%
%
%
%
%
%
%
%
%

\begin{theorem}\label{SpCen}  
\mbox{}
The centraliser of $\sigma = \sigma_{12}(1)$ in $N$ is generated 
by the following elements:
\begin{enumerate}
\item[(i)] $ZUZ^{-1}$;
\item[(ii)] $U^{V^2}$ if $n>3$;
\item[(iii)] $VUU^V$ if $n>4$; 
\item[(iv)] $Z^{V^2}$ if $n>2$;
\item[(v)] $\delta^{V^2}$ if $n>2$;
\item[(vi)] $\delta\delta^V$.
\end{enumerate}
It is isomorphic to $K \times (K\wr S_{n-2})$.  

The centraliser of $\tau = \tau_1(1)$ in $N$ is generated by the following elements:
\begin{enumerate}
\item[(i)] $Z^U$; 
\item[(ii)] $U^V$ if $n>2$; 
\item[(iii)] $VU$ if $n>3$; 
\item[(iv)] $Z^2$ if $q$ is odd;
\item[(v)] $\delta^U$. 
\end{enumerate}
If $q$ is even it is isomorphic to $K\wr S_{n-1}$,
else to $\langle Z^2\rangle \times (K\wr S_{n-1})$. 
\end{theorem}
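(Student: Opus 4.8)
The plan is to verify both centraliser statements by the same strategy used throughout this section: first exhibit explicit matrices for the listed generators, check directly that each centralises $\sigma$ (respectively $\tau$), then count the order of the subgroup they generate and compare it against the index predicted by the orbit of $\sigma$ (respectively $\tau$) under the conjugation action of $N$. Since $N \cong K \wr S_n$, its conjugation action on root elements of a fixed type is governed by the permutation action of $S_n$ on the index set $\{1,\dots,n\}$ together with the $K$-action scaling the field parameter, so I would organise the count around the stabiliser of the relevant pair (or singleton) of indices.

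For $\sigma = \sigma_{12}(1)$, first I would identify the listed elements as signed permutations and diagonal matrices: $U^{V^2}$ and $VUU^V$ generate the copy of $S_{n-2}$ permuting $\{3,\dots,n\}$, the elements $Z^{V^2}$ and $\delta^{V^2}$ supply a copy of $K$ acting on the $\{3,\dots,n\}$ block in wreath fashion, and the remaining generators $ZUZ^{-1}$ and $\delta\delta^V$ together with their interaction account for the extra direct factor $K$ stabilising the $\{1,2\}$-block. The key point is that $\sigma_{12}(1)$ couples the indices $1$ and $2$, so any element of $N$ that centralises it must preserve $\{1,2\}$ setwise and act compatibly on the two coordinates; this cuts $S_n$ down to $S_2 \times S_{n-2}$ and restricts the torus action to the diagonal subgroup identified by $\delta\delta^V$ and $ZUZ^{-1}$. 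I would then confirm that the listed generators give precisely $K \times (K \wr S_{n-2})$, compute its order, and check this equals $|N|$ divided by the length of the $N$-orbit of $\sigma$, thereby forcing equality.

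For $\tau = \tau_1(1)$, the reasoning is analogous but simpler, since $\tau_1(1)$ involves only the single index $1$. Here I would note that centralising elements must fix $1$ and stabilise the remaining block $\{2,\dots,n\}$, giving the $K \wr S_{n-1}$ acting there (generated by $Z^U$, $U^V$, $VU$, $\delta^U$), and that the local factor at index $1$ contributes only its subgroup centralising $\tau_1(1)$: when $q$ is even this is all of $K$ absorbed into the wreath base, whereas when $q$ is odd the element $\delta_1$ inverts the parameter so only $\langle Z^2\rangle$ (the centre of $K$) survives as a separate direct factor. The structure descriptions $K \wr S_{n-1}$ and $\langle Z^2\rangle \times (K \wr S_{n-1})$ then fall out of the generator list.

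The main obstacle I expect is the $q$-odd bookkeeping for the local factor at the fixed index. Because $K \cong Q_{2(q-1)}$ has a nontrivial action of $\delta$ on $\tau$ (namely $\tau_1(s)^{\delta_1} = \tau_1(s\omega^2)$ and $\tau_1(s)^{Z_1}=\tau_{-1}(s)$ moves $\tau$ off its root group entirely), determining exactly which elements of the local $K$ centralise $\tau_1(1)$ — and why this collapses to $\langle Z^2\rangle$ in the odd case but to the full block action in the even case — requires care; the same subtlety governs whether the $\sigma$-centraliser's leading factor is a genuine direct $K$ or is entangled with the wreath base. Everything else is a routine matrix verification combined with the orbit–stabiliser count, so I would concentrate the written argument on pinning down these local factors precisely.
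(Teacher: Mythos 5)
Your overall strategy is the paper's own: verify that each listed element centralises the relevant root element, compute the order of the subgroup they generate from its visible structure, and force equality via orbit--stabiliser against the size of the $N$-orbit of the root element. Your decomposition of the $\sigma$-centraliser also matches the paper exactly, including the identification of the extra direct factor as $\langle \delta\delta^V, ZUZ^{-1}\rangle = \langle\delta_1\delta_2, ZUZ^{-1}\rangle \cong K$.

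There is, however, a genuine error at precisely the point you flag as the crux. For the $\tau$-centraliser you assert that the subgroup of the local factor $K=\langle\delta_1,Z_1\rangle$ at index $1$ centralising $\tau_1(1)$ is ``all of $K$'' when $q$ is even. It is trivial. By your own formula $\tau_1(s)^{\delta_1}=\tau_1(s\omega^2)$, the element $\delta_1^k$ centralises $\tau_1(1)$ if and only if $\omega^{2k}=1$, i.e.\ $(q-1)\mid 2k$; for $q$ even $q-1$ is odd, so this forces $\delta_1^k=1$, and no element of the coset $\langle\delta_1\rangle Z_1$ can centralise since it carries $\tau_1(1)$ into the opposite root group. (For $q$ odd the condition $(q-1)\mid 2k$ admits $k=(q-1)/2$, giving $\delta_1^{(q-1)/2}=Z_1^2$, which is exactly why $\langle Z^2\rangle$ and nothing more survives.) Your version would make the even-characteristic centraliser $2(q-1)$ times too large, contradicting both the theorem's claim of $K\wr S_{n-1}$ and the orbit count: $N$ is transitive on the $2n(q-1)$ elements $\tau_{\pm i}(s)$ when $q$ is even, so the centraliser has index $2n(q-1)$, not $n$. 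Relatedly, ``$\delta_1$ inverts the parameter'' should read ``$\delta_1$ multiplies the parameter by $\omega^2$''; the even/odd dichotomy in the theorem comes from whether $\omega^2$ generates all of $\GF(q)^\times$ or only the squares, which is also what splits the $N$-orbit of the $\tau_{\pm i}(s)$ into two halves when $q$ is odd --- a point your orbit count must record to get the index right in the odd case.
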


\begin{proof}
Observe that $N$ acts transitively by conjugation on the 
set of $2n(n-1)(q-1)$ short root elements 
$\sigma_{ij}(s)$, where $0<|i|\ne |j|\le n$
and $s\in\GF(q)^{\times}$, bearing in mind that
$\sigma_{ij}(s)=\sigma_{-j,-i}(-s)$ for any $i,j,s$.  
The centraliser of $\sigma$ in $N$ contains the 
direct product of 
a subgroup that is isomorphic to $K\wr S_{n-2}$ with another copy of $K$, 
namely $\langle \delta_1\delta_2, ZUZ^{-1} \rangle$.
Since this direct product has index $2n(n-1)(q-1)$ in $N$, 
it is the centraliser of $\sigma$.

Also, $N$ acts by conjugation on the 
set of $2n(q-1)$ long root elements $\tau_i(s)$,
where $0<|i|\le n$ and $s\in\GF(q)^{\times}$.  
The action is transitive if $q$ is even, and there
are two orbits of equal size if $q$ is odd, 
$\tau_i(s)$ falling into one or other orbit according
as $s$ is or is not a square.  
The elements $U$ and $V$ permute the suffices, positive or negative, of the 
root elements $\tau_i(s)$ in the natural way, and $Z_i$ interchanges the 
suffices $i$ and $-i$ and fixes all other suffices.
If $i>0$ then $\tau_i(s)^{\delta_i}=\tau_i(s\omega^2)$, 
and $\tau_{-i}(s)^{\delta_i}=\tau_{-i}(s\omega^{-2})$;
and $\delta_i$ centralises $\tau_j$ if $|j|\ne i$.  
Thus the stated elements centralise $\tau$, and 
generate a subgroup of $N$ of index $2n(q-1)$ if $q$ is even, and
$n(q-1)$ if $q$ is odd.
\end{proof}

\begin{theorem}\label{SpSL2}
Let $\Delta=[U,\delta]$; and $U'=UZ^2$ if $q$ is odd, $U'=U$ if $q$ is even.
The subgroup of $\Sp(2n,q)$ generated by 
$\{\sigma, \Delta, U'\}$ is isomorphic to $\SL(2,q)$.
\end{theorem}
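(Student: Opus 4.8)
The plan is to exhibit the subgroup $L := \langle \sigma, \Delta, U' \rangle$ as acting on the nondegenerate $4$-dimensional symplectic space $\langle e_1,f_1,e_2,f_2\rangle$, and to identify its action with the natural $\SL(2,q)$. First I would record the action of the three generators, observing that each fixes $e_i$ and $f_i$ for $i>2$. A short computation gives $\Delta=[U,\delta]=\delta_2^{-1}\delta_1$ (up to the commutator convention), so that $\Delta$ scales $e_1,e_2,f_1,f_2$ by $\omega^{-1},\omega,\omega,\omega^{-1}$ respectively; and $U'=UZ^2$ sends $e_1\mapsto -e_2$, $e_2\mapsto e_1$, $f_1\mapsto -f_2$, $f_2\mapsto f_1$ when $q$ is odd, while $U'=U$ simply swaps $e_1\leftrightarrow e_2$ and $f_1\leftrightarrow f_2$ when $q$ is even. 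In every case all three generators preserve the pair of totally isotropic planes $W_+:=\langle e_1,e_2\rangle$ and $W_-:=\langle f_1,f_2\rangle$, and $L$ acts block-diagonally with respect to the decomposition $W_+\oplus W_-\oplus\langle e_3,f_3,\ldots\rangle$.

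The key step is the homomorphism $\phi\colon L\to\GL(W_+)\cong\GL(2,q)$ given by restriction to $W_+$. Reading matrices in the basis $(e_1,e_2)$ gives $\phi(\sigma)=\left(\begin{smallmatrix}1&0\\1&1\end{smallmatrix}\right)$, $\phi(\Delta)=\left(\begin{smallmatrix}\omega^{-1}&0\\0&\omega\end{smallmatrix}\right)$, and $\phi(U')=\left(\begin{smallmatrix}0&1\\-1&0\end{smallmatrix}\right)$ (the last equal to $\left(\begin{smallmatrix}0&1\\1&0\end{smallmatrix}\right)$ in characteristic $2$). Each has determinant $1$ — this is exactly the purpose of the correction factor $Z^2$ when $q$ is odd — so $\phi(L)\leq\SL(2,q)$; and since a transvection, a diagonalisable torus element, and the Weyl element together generate $\SL(2,q)$, the map $\phi$ is onto. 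Up to relabelling the basis, these images are precisely the standard generators $\tau,\delta,U$ of Section \ref{CRW}.

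It remains to prove $\phi$ injective. Here I would use that $L\leq\Sp(2n,q)$ preserves the perfect pairing between $W_+$ and $W_-$ induced by the symplectic form, under which $W_+$ and $W_-$ are dual. Consequently the action of any $g\in L$ on $W_-$ is the contragredient (inverse-transpose) of its action on $W_+$: if $g|_{W_+}=A$ then $g|_{W_-}=(A^{-1})^{\mathrm{T}}$ in the dual bases $(e_i)$, $(f_i)$. Hence if $g\in\ker\phi$ acts trivially on $W_+$, it also acts trivially on $W_-$, and as $g$ already fixes $e_i,f_i$ for $i>2$ we conclude $g=1$. Therefore $\phi$ is an isomorphism $L\cong\SL(2,q)$.

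The computations are routine; the only points demanding care are the commutator evaluation $\Delta=[U,\delta]$ and the verification that $U'$ has determinant $1$ on $W_+$ (which is why $U$ must be twisted by $Z^2$ precisely when $q$ is odd). The one genuinely structural ingredient is the contragredient-duality observation that upgrades surjectivity to an isomorphism; everything else is direct matrix arithmetic together with the standard fact that a transvection, a nontrivial torus element, and the Weyl element generate $\SL(2,q)$.
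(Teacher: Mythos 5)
Your proof is correct and follows the same route as the paper: the paper's (one-line) argument is precisely ``restriction to the subspace $\langle e_1,e_2\rangle$, or $\langle f_1,f_2\rangle$, these two representations being connected by the inverse transpose automorphism'', which is your map $\phi$ together with the contragredient-duality step that gives injectivity. You have merely made explicit the matrix computations and the generation fact that the paper leaves to the reader.
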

\begin{proof}
The isomorphism is obtained by restriction to the subspace $\langle e_1,e_2\rangle$, 
or $\langle f_1,f_2\rangle$,
these two representations being connected by the inverse transpose automorphism.
\end{proof}

\begin{theorem}\label{Sp}
Let $q=p^e$ for a prime $p$ and $e>1$, and let $n\ge 2$.  
Let $G$ be the group generated by $\{\sigma, \tau,\delta,Z, U,V\}$, 
omitting $V$ if $n=2$, subject to the relations given below.  
Then $G$ is isomorphic to $\Sp(2n,q)$.  
\begin{enumerate}
\item[(i)] Defining relations for $N=\langle \delta, Z, U,V\rangle$, 
omitting $V$ if $n=2$, as in 
Theorem $\ref{SpN}$, but omitting relations (ii) of that theorem,
and relation (ii) of Theorem $\ref{SpN1}$.

\item[(ii)] Relations that state that the elements 
listed in Theorem $\ref{SpCen}$ centralise $\sigma$ and $\tau$, 
but omitting the relation $[\tau,VU]=1$ if $q$ is odd.

\item[(iii)] Relations that present $\SL(2,q)$ on $\{\sigma,\Delta,U'\}$
as in Theorem $\ref{SpSL2}$.

\item[(iv)] Relations that present $\SL(2,q)$ on $\{\tau, \delta,Z \}$.

\item[(v)] The following instances of Steinberg relations:
\begin{enumerate}
\item[(a)] if $n>2$ then $[\sigma,\sigma^V]=\sigma^{VU}$; \q$([\sigma_{ij}(s),\sigma_{jk}(t)]=\sigma_{ik}(st))$;
\item[(b)] if $n>2$ then $[\sigma,\sigma^{VU}]=1$; \q$([\sigma_{ij}(s),\sigma_{ik}(t)]=1)$;
\item[(c)] if $n>2$ then $[\sigma,\sigma^{UV}]=1$; \q$([\sigma_{ij}(s),\sigma_{kj}(t)]=1)$;
\item[(d)] if $n>3$ then $[\sigma,\sigma^{V^2}]=1$; \q$([\sigma_{ij}(s),\sigma_{kl}(t)]=1)$;
\item[(e)] if $q$ is odd then $[\sigma,\sigma^Z]=\tau^{2ZU}$, 
else $[\sigma,\sigma^Z]=1$; \q 
\\ $([\sigma_{ij}(s),\sigma_{-i,j}(t)]=\tau_{-j}(2st))$;
\item[(f)] $[\sigma,\tau]=1$; \q$([\sigma_{ij}(s),\tau_i(t)]=1)$;
\item[(g)] $[\sigma,\tau^U]=\sigma^{Z^U}\tau^{-1}$; \q$([\sigma_{ij}(s),\tau_j(t)]=\sigma_{i,-j}(st)\tau_i(-s^2t))$;
\item[(h)] if $n>2$ then $[\sigma,\tau^{V^2}]=1$; \q$([\sigma_{ij}(s),\tau_k(t)]=1)$;
\item[(j)] $[\tau,\tau^U]=1$; \q$([\tau_i(s),\tau_j(t)]=1)$. 
\end{enumerate}
The given relations are instances of the more general relations 
in parentheses, with $s=t=1$ and $(i,j,k,l)=(1,2,3,4)$.  
In the parenthetical version, the moduli of $i,j,k,l$ are distinct.
\end{enumerate}
\end{theorem}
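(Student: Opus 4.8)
The plan is to follow the template established in the proof of Theorem~\ref{SLq}. First I would check that every listed relation holds in $\Sp(2n,q)$, so that $\Sp(2n,q)$ is a quotient of the presented group $G$. Then I would show that inside $G$ one can recover the full system of root subgroups together with all the Chevalley commutator relations for the root system of type $C_n$; the sufficiency of the Steinberg relations \cite{Steinberg62,Steinberg81} then forces $G$ to be no larger than the simply connected group $\Sp(2n,q)$, and the two bounds combine to give $G\cong\Sp(2n,q)$. Since $e>1$, no exceptional Schur multiplier intervenes, so no extra relations are needed.

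First I would dispose of the omitted relations: the relation $Z^m=1$ of Theorem~\ref{SpN1}(ii) and the order and inversion relations for $\delta$ and $Z$ in Theorem~\ref{SpN}(ii) are all consequences of the $\SL(2,q)$ presentation (iv) on $\{\tau,\delta,Z\}$. Hence the full defining relations of $N=\langle\delta,Z,U,V\rangle$ hold, and by Theorem~\ref{SpN} the subgroup $\langle\delta,Z,U,V\rangle$ of $G$ is a faithful copy of $N$. Using the centraliser relations (ii) together with Theorem~\ref{SpCen}, which identifies $C_N(\sigma)$ and $C_N(\tau)$ exactly, the conjugate $\sigma^g$ (respectively $\tau^g$) depends only on the image of $g$ in $N$; this lets me define all short root elements $\sigma_{ij}(1)$ and long root elements $\tau_i(1)$ unambiguously as $N$-conjugates of $\sigma$ and $\tau$. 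The two $\SL(2,q)$ presentations then supply the full root subgroups: relation (iv) yields every $\tau_1(s)$ for $s\in\GF(q)$ (so both square and non-square parameters, bridging the two $N$-orbits in the odd case) together with the scaling $\tau_1(s)^{\delta}=\tau_1(s\omega^2)$, while relation (iii), via $\Delta=[U,\delta]$ and Theorem~\ref{SpSL2}, yields every $\sigma_{12}(s)$ and its diagonal action. Conjugating by $N$ then spreads these over all indices, giving $\sigma_{ij}(s)$ and $\tau_i(s)$ for all admissible suffices and all $s$.

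The substantive work is to deduce the full Chevalley commutator relations from the single instances (v)(a)--(j). The short--short relations (a)--(d) run parallel to relations (1)--(5) of Theorem~\ref{SLq}: $N$ acts transitively on each relevant orbit of ordered pairs apart from the low-rank degeneracies, and where the number of orbits is too large I would argue that, for a fixed parameter, the set of $s$ for which the relation holds is closed under addition (via the identity $[a,bc]=[a,c][a,b]^c$) and under multiplication by $\omega^2$ (via conjugation by a suitable $\delta_i$); since $e>1$ forces $\GF(p)[\omega^2]=\GF(q)$, this set is all of $\GF(q)$ once it is nonempty. The same bilinearity argument handles the long--long relation (j) and the short--long relations (f) and (h), whose structure constants are linear.

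The hard part will be the mixed-length relations (e) and (g), whose Chevalley structure constants are genuinely quadratic: relation (g) reads $[\sigma_{ij}(s),\tau_j(t)]=\sigma_{i,-j}(st)\tau_i(-s^2t)$ and, in odd characteristic, relation (e) reads $[\sigma_{ij}(s),\sigma_{-i,j}(t)]=\tau_{-j}(2st)$. Here the dependence on $s$ is not additive, so the closure-under-addition-and-scaling argument must be applied to the bilinear leading parameter while the quadratic correction $-s^2t$ is tracked and shown to remain consistent under the additive and scaling moves; the characteristic~$2$ collapse of (e) and the coefficient $2$ also need separate verification. Once all the Chevalley relations are established, and one confirms, exactly as at the close of the proof of Theorem~\ref{SLq}, that the generators $\sigma,\tau,\delta,Z,U,V$ themselves lie in the subgroup generated by the root elements, the sufficiency of the Steinberg relations bounds $|G|$ above by $|\Sp(2n,q)|$ and the isomorphism follows.
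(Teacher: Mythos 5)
Your overall strategy coincides with the paper's: recover the omitted torus-normaliser relations from the two $\SL(2,q)$ presentations, label the root elements as $N$-conjugates via the centraliser relations, propagate each given Steinberg instance by additivity in one parameter together with $\omega^{\pm 2}$-scaling under conjugation by the $\delta_i$, and close with the sufficiency of the Steinberg relations. There are, however, two concrete points where the plan as written does not go through.

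First, you assert that the relations in (ii) identify $C_N(\sigma)$ and $C_N(\tau)$ exactly, but item (ii) explicitly \emph{omits} the relation $[\tau,VU]=1$ when $q$ is odd, so the stated relations only force a proper subgroup of $C_N(\tau)$ to centralise $\tau$, and the labelling $\tau_i(s):=\tau^g$ is not yet well defined. The missing relation has to be recovered first, and the paper does so from the Steinberg instance (v)(e): since $[\sigma,\sigma^Z]=\tau^{2ZU}$, the subgroup of $\langle U,V\rangle$ fixing $1$ and $2$ (which does centralise $\sigma$ and $\sigma^Z$ by (ii)) centralises $\tau^2$, hence $\tau$ because $\tau$ has odd order $p$ by (iv); together with $U^V$ this subgroup generates the symmetric group on $\{2,\dots,n\}$, which contains $VU$. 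Second, for odd $q$ the group $N$ has \emph{two} orbits on the long root elements (squares versus non-squares of $s$), so ``conjugating by $N$ spreads these over all indices'' does not define $\tau_i(s)$ for non-square $s$ and $i\ne 1$ until one knows that the centraliser of $\tau_1(\omega)$ in $N$ has the expected index. The paper settles this by writing $\omega$ as a sum of squares, so that $\tau_1(\omega)$ is a product of elements $\tau_1(t^2)$ whose common centraliser is already known. Apart from these two points --- both short but genuinely needed --- your treatment of the remaining steps, including the quadratic structure constants in (e) and (g), matches the paper's argument.
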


\begin{proof}
The double suffices $(i,j)$ for $\sigma$ and the suffices $i$ for $\tau$ correspond to 
short and long roots respectively; and if $r$ is the root corresponding to $(i,j)$ 
or $i$, then $-r$ is the root corresponding 
to $(-i,-j)$ (equivalently to $(j,i)$) or $-i$ respectively.

The relations omitted from (i) follow from (iv).  

By (i) we may define the elements $\{\delta_i,Z_i, U,V\}$ in $G$, and 
hence also the elements $\Delta$ and $U'$.

By Theorem \ref{SpCen}, and using (i) and (ii), we may define 
$\{\sigma_{ij}(s) \, | \, 0<|i|\ne|j|\le n,s\in\GF(q)^\times\}$, as
a subset of $G$.

The relation $[\tau,VU]=1$, omitted from (ii) if $q$ is odd, 
may now be supplied as follows.  
It follows from (v)(e) that the subgroup of 
$\langle U,V\rangle$ that centralises 1 and 2, 
as permutations of $\{1,2,\ldots,n\}$, 
centralises $\tau$ as a subgroup of $G$.
But this subgroup, together with $U^V$ (see (ii)),
generates the symmetric group on
$\{2,3,\ldots,n\}$, and hence contains $VU$.

By Theorem \ref{SpCen}, and using (i) and (ii), we may define 
$\{\tau_i(s) \, | \, 0<|i|\le n,s\in\GF(q)^2\}$ as
a subset of $G$, and obtain the centralisers of these elements in $N$.  
Suppose now that $q$ is odd.
Using (iv) we may define $\tau_1(\omega)$ in $G$,
and writing $\omega$ as a sum of squares gives $\tau_1(\omega)$ as a product
of elements of the form $\tau_i(t^2)$ for $t\in\GF(q)^\times$.  But these elements all
have the same centraliser $C$ in $N$.  So $C$ also centralises $\tau_1(\omega)$;
since it has the expected index in $N$ it must be the full centraliser.
So we may define $\tau_i(s)$ for all $i$, and for all $s\in\GF(q)\setminus\GF(q)^2$,
as $N$-conjugates of $\tau_1(\omega)$.

We now turn to the Steinberg relations.

The relations $\sigma_{ij}(s+t) =\sigma_{ij}(s)\sigma_{ij}(t)$ and $\sigma_{ij}(s)^p=1$
follow from (ii) and (iii), and the
relations $\tau_i(s+t)=\tau_i(s)\tau_i(t)$ 
and $\tau_i(s)^p=1$ follow similarly from (ii) and (iv).

The remaining Steinberg relations are (a) to (j), and we prove that the general case, 
given in parentheses, is implied by the given relation.  

The general case of relations (a) to (d) clearly follow from the 
one given instance by conjugation with $N$.

For fixed $i$, $j$, and $s$ the set of values $t$ for which the general 
case of relation (f) holds is 
closed under addition and
multiplication by $\omega^2$ (conjugate by $\delta_i\delta_j$) and hence is either 
$\GF(q)$ or $\{0\}$, since $\omega^2$ generates $\GF(q)$ as a field.  
It follows, from conjugation by $N$, that all instances of (f) follow from the given instance.

Let $P$ be the set of pairs $(s,t)$ for which relation (e) holds for one 
(and hence every) value of $(i,j)$.  Conjugation by $\delta_i$ and $\delta_j$ shows that 
if $(s,t)\in P$, 
then $(\omega s,\omega^{-1}t)\in P$, and $(\omega^{-1}s,\omega t)\in P$; so $(\omega^2 s,t)\in P$.  
If $(s_1,t)\in P$ and $(s_2,t)\in P$ then $(s_1+s_2,t)\in P$, thanks to (f), 
which implies that 
$[\sigma_{ij}(s),\tau_{-j}(k)] =[\sigma_{-i,j}(s),\tau_{-j}(k)]=1$.  
As in the proof of (f), this suffices to prove the general case of (e).

Similarly the general cases of (h) and (j) follow from a single case.

To prove the general case of (g), note that $\tau_j(t)$ commutes 
with the RHS of this 
identity by (f) and (j), since $\sigma_{i,-j}(st)=\sigma_{j,-i}(-st)$.  
The proof is similar to that of (e).

Variants of (e), (f), and (g) may be obtained by changing signs of suffices,
but these follow from the given relations.  
For example, for $i$ and $j$ positive,
\begin{eqnarray*} 
[\sigma_{ij}(s),\tau_{-i}(t)] = [\sigma_{-i,j}(-s),\tau_i(t)]^{Z_i} & = & 
[\sigma_{-j,i}(s),\tau_i(t)]^{Z_i} 
 = [\sigma_{ji}(s),\tau_i(t)]^{Z_iZ_j} \\ 
& = & \sigma_{j,-i}(st)\tau_j(-s^2t)=\sigma_{i,-j}(-st)\tau_j(-s^2t).
\end{eqnarray*}

So all the Steinberg relations hold in $G$.
Thus the subgroup $H$ of $G$ generated by the Steinberg generators
is isomorphic to $\Sp(2n,q)$.  
Also, $Z$ and $\delta$ and $U'$ lie in this group by (iii) and (iv),
and hence $U$ lies in $H$.  But $H$ is normalised by $\langle U,V\rangle$, 
and $\langle U,V\rangle$ is isomorphic to $S_n$, by (i), so $V\in H$.  
Thus $G=H$, as required.
\end{proof}

\subsection{A presentation for $\Sp(2n, p)$}
We give a presentation for $\Sp(2n,p)$  on the generating set
$\{ \sigma, \tau, Z, U, V\}$, omitting $V$ if $n=2$.
Let $N_1$ be the subgroup of $\Sp(2n, p)$ generated by $\{ Z,U,V\}$,
omitting $V$ if $n=2$.  
\begin{theorem}\label{SpN1Cent}
The centraliser of $\sigma = \sigma_{12}(1)$ in $N_1$ is generated by 
the corresponding elements $(i)-(iv)$ listed in Theorem $\ref{SpCen}$, 
and $[Z^2,U]$ if $p$ is odd.
It is isomorphic to $C_2\times (C_2\wr S_{n-2})$ if $p=2$, and to 
$C_2\times C_2\times (C_4\wr S_{n-2})$ if $p$ is odd.

The centraliser of $\tau = \tau_1(1)$ in $N_1$ is generated by the 
corresponding elements $(i)-(iv)$ listed in Theorem $\ref{SpCen}$.
It is isomorphic to $C_2\wr S_{n-1}$ if $p=2$,
and to $C_2\times(C_4\wr S_{n-1})$ if $p$ is odd.
\end{theorem}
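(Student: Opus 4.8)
The plan is to exploit the explicit wreath-product description of $N_1$. By Theorem \ref{SpN1}, $N_1\cong C_m\wr S_n$ with $m=2$ if $p=2$ and $m=4$ if $p$ is odd; its base group is $\langle Z_1,\ldots,Z_n\rangle$ with $\langle Z_i\rangle\cong C_m$, and its top group $\langle U,V\rangle\cong S_n$ permutes the indices $1,\ldots,n$ without sign changes. I would first record the conjugation action on root elements: $U$ and $V$ permute the absolute indices, $Z_k$ reverses the sign of index $k$, and $Z_k^2$ acts as the scalar $-1$ on $\langle e_k,f_k\rangle$. The one point needing care is that, because $Z_k$ has order $4$ when $p$ is odd, $Z_k^2$ fixes $\tau_i(s)$ (being central on the block $\langle e_i,f_i\rangle$) but sends $\sigma_{ij}(s)\mapsto\sigma_{ij}(-s)$ whenever $k\in\{i,j\}$; so for short roots the sign of an index and the sign of $s$ are coupled. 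The whole argument then follows the orbit--stabiliser template of Theorem \ref{SpCen}: exhibit a centralising subgroup with the listed generators, identify its isomorphism type, and check that its order equals $|N_1|$ divided by the orbit size.

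For $\tau=\tau_1(1)$, supported on $\langle e_1,f_1\rangle$, every element whose base part lies in $\langle Z_2,\ldots,Z_n\rangle$ and whose permutation fixes $1$ commutes with $\tau$; together with the copy of $S_{n-1}$ on $\{2,\ldots,n\}$ generated by $U^V$ and $VU$ (elements (ii),(iii)) and the base generator $Z_2=Z^U$ (element (i)), these give a factor $C_m\wr S_{n-1}$. On the block of index $1$, from $\tau^{Z_1}=\tau_{-1}(1)\ne\tau$ and $\tau^{Z_1^2}=\tau$ the stabiliser inside $\langle Z_1\rangle$ is $\langle Z_1^2\rangle\cong C_2$ for $p$ odd (element (iv), $Z^2$) and is trivial for $p=2$. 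As $Z_1^2$ centralises all of $C_m\wr S_{n-1}$, the two pieces form a direct product, giving $C_2\times(C_4\wr S_{n-1})$ for $p$ odd and $C_2\wr S_{n-1}$ for $p=2$. Since $N_1$ cannot change $s$ on long roots, the orbit of $\tau$ is $\{\tau_{\pm k}(1):1\le k\le n\}$ of size $2n$, and the orders $2\cdot 4^{n-1}(n-1)!$ and $2^{n-1}(n-1)!$ agree with $|N_1|/2n$.

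For $\sigma=\sigma_{12}(1)$, supported on indices $1,2$, the subgroup on $\{3,\ldots,n\}$ centralises $\sigma$ and contributes $C_m\wr S_{n-2}$, with base generator $Z^{V^2}=Z_3$ (element (iv)) and permutation generators $U^{V^2},VUU^V$ (elements (ii),(iii)) generating the symmetric group on $\{3,\ldots,n\}$. The heart of the proof is the local stabiliser inside $\langle Z_1,Z_2,U\rangle$. Tracking the coupling above, $Z_1^{a_1}Z_2^{a_2}$ fixes $\sigma$ iff $a_1,a_2$ are even with $\lfloor a_1/2\rfloor+\lfloor a_2/2\rfloor$ even, and $Z_1^{a_1}Z_2^{a_2}U$ fixes $\sigma$ iff $a_1,a_2$ are odd with $\lfloor a_1/2\rfloor+\lfloor a_2/2\rfloor$ odd. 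For $p$ odd this yields exactly $\{1,\,Z_1^2Z_2^2,\,ZUZ^{-1},\,Z_1^2Z_2^2\cdot ZUZ^{-1}\}$; since $Z_1^2Z_2^2=[Z^2,U]$ is central in $\langle Z_1,Z_2,U\rangle$ and $ZUZ^{-1}$ (a conjugate of $U$) is an involution, this is $C_2\times C_2=\langle ZUZ^{-1},[Z^2,U]\rangle$. For $p=2$ only $\{1,ZUZ^{-1}\}\cong C_2$ survives. As $[Z^2,U]$ is central in the whole centraliser, the local part and the $C_m\wr S_{n-2}$ part again form a direct product, giving $C_2\times C_2\times(C_4\wr S_{n-2})$ for $p$ odd and $C_2\times(C_2\wr S_{n-2})$ for $p=2$. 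Transitivity of $N_1$ on the short root elements $\sigma_{ij}(\pm1)$ (there are $4n(n-1)$ of them for $p$ odd and $2n(n-1)$ for $p=2$) then makes the orders $4^{n-1}(n-2)!$ and $2^{n-1}(n-2)!$ match $|N_1|$ divided by the orbit size.

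The main obstacle I anticipate is purely bookkeeping: getting the order-$4$ coupling between the sign of an index and the sign of $s$ exactly right for short roots when $p$ is odd, so that the local stabiliser comes out as $C_2\times C_2$ with the specific generators $ZUZ^{-1}$ and $[Z^2,U]=Z_1^2Z_2^2$ rather than a larger or smaller group. Once that local computation is pinned down, identifying $Z^U,Z^{V^2},U^{V^2},VUU^V,U^V,VU$ with the evident base and permutation generators of the two wreath factors, and matching orders against the orbit sizes, is routine; the stated small-rank restrictions on the generators are exactly the conditions under which the corresponding wreath factors become nontrivial.
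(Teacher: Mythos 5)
Your argument is correct and is essentially the paper's own proof: both verify that the listed elements centralise the relevant root element, identify the subgroup they generate with the stated (direct products of) wreath products, and match its index in $N_1$ against the size of the $N_1$-orbit of $\tau_{\pm i}(1)$, respectively $\sigma_{\pm i,\pm j}(\pm 1)$. Your explicit computation of the order-four local stabiliser $\langle ZUZ^{-1},[Z^2,U]\rangle$ on the block $\langle e_1,f_1,e_2,f_2\rangle$ merely spells out what the paper dismisses with ``clearly''.
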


\begin{proof}
The listed elements clearly generate groups having the claimed isomorphism 
types. They generate
subgroups of $N_1$ of index $2n$ in the case of the centraliser of $\tau$; 
and of index $4n(n-1)$ if $p$ is odd, and index $2n(n-1)$ if $p=2$, 
in the case of the centraliser of $\sigma$.
This corresponds to the fact that $N_1$ permutes transitively the 
$2n$ elements $\tau_{\pm i}(1)$
with $1\le i\le n$, and the 
$4n(n-1)$ elements $\sigma_{\pm i,\pm j}(\pm 1)$ with
$1\le i\ne j\le n$ if $p$ is odd, 
and the $2n(n-1)$ elements $\sigma_{\pm i,\pm j}(1)$ with
$1\le i\ne j\le n$ if $p=2$. 
(Recall that, in either characteristic, these elements are equal in pairs.)
\end{proof}

\begin{theorem}\label{SpPrime}
Let $p$ be a prime and let $n \geq 2$. Let $G$ be the group 
generated by $\{\sigma, \tau, Z, U,V\}$, omitting $V$ if $n=2$,
subject to the relations given below.  Then $G$ is isomorphic to $\Sp(2n,p)$.  
\begin{enumerate}
\item[(i)] Defining relations for $N_1=\langle Z,U,V\rangle$ as in Theorem $\ref{SpN1}$.
\item[(ii)] Relations that state that the elements listed in Theorem $\ref{SpN1Cent}$ 
centralise $\sigma$ and $\tau$,
but omitting the relation $[\tau,VU]=1$ if $p$ is odd.  
\item[(iii)] Relations that present $\SL(2,p)$ on $\{\sigma,U'\}$. 
\item[(iv)] Relations that present $\SL(2,p)$ on $\{\tau,Z\}$. 
\item[(v)] The Steinberg relations (v) of Theorem $\ref{Sp}$.
\end{enumerate}
\end{theorem}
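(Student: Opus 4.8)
The plan is to follow the structure of the proof of Theorem~\ref{Sp}, taking advantage of two simplifications peculiar to the prime field. First, over $\GF(p)$ every parameter $s$ is an integer multiple of $1$, and since the root subgroups satisfy $\sigma_{ij}(s)\sigma_{ij}(t)=\sigma_{ij}(s+t)$ and $\tau_i(s)\tau_i(t)=\tau_i(s+t)$, we have $\sigma_{ij}(s)=\sigma_{ij}(1)^s$ and $\tau_i(s)=\tau_i(1)^s$; hence it suffices to pin down one root element of each type and take powers. Second, this means the ``closed under multiplication by $\omega^2$'' steps of Theorem~\ref{Sp}, needed there to sweep out the whole field by conjugating with $\delta_i\delta_j$, become unnecessary: closure under addition alone carries the base instance $s=t=1$ to every $s\in\GF(p)$. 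Because $\delta$ is no longer a generator, the relations of Theorems~\ref{SpN} and~\ref{SpCen} involving $\delta$ are simply dropped in favour of their $N_1$-counterparts in Theorems~\ref{SpN1} and~\ref{SpN1Cent}.

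Concretely, I would first invoke relations~(i) and Theorem~\ref{SpN1} to identify $N_1=\langle Z,U,V\rangle$ with $C_2\wr S_n$ or $C_4\wr S_n$, thereby defining the monomial elements $Z_i$, $U$, $V$, and $U'$ inside the presented group $G$. Relations~(iii) and~(iv) then identify the two base subgroups $\langle\sigma,U'\rangle$ and $\langle\tau,Z\rangle$ with $\SL(2,p)$, supplying $\sigma^p=1$, $\tau^p=1$, and the additivity noted above for the root subgroups indexed by $(1,2)$ and by $1$. Using Theorem~\ref{SpN1Cent} through relations~(ii)---which assert that the listed generators of the centralisers of $\sigma$ and of $\tau$ in $N_1$ do centralise them---I define $\sigma_{ij}(1)$ and $\tau_i(1)$ as the appropriate $N_1$-conjugates; their powers then give $\sigma_{ij}(s)$ and $\tau_i(s)$ for all $s\in\GF(p)$, and the centraliser of each such power agrees with that of its generator since $s$ is invertible. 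When $p$ is odd I recover the relation $[\tau,VU]=1$, omitted from~(ii), exactly as in Theorem~\ref{Sp}: Steinberg relation~(v)(e) expresses $\tau_{-2}(2)$ as $[\sigma_{12}(1),\sigma_{-1,2}(1)]$, so the pointwise stabiliser of $\{1,2\}$ in $\langle U,V\rangle$ centralises $\tau$, and together with $U^V$ this generates the symmetric group on $\{2,\ldots,n\}$, which contains $VU$.

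Next I would verify the Steinberg relations~(v)(a)--(j) in their general parenthetical form. Relations~(a)--(d) follow from the single stated instance by $N_1$-conjugation. For~(e),~(f),~(g),~(h), and~(j) I fix the suffices and argue that the set of admissible parameters is closed under addition; over $\GF(p)$ this at once yields every value from the base case, with the commutator identity $[a,bc]=[a,c][a,b]^c$ handling the bilinear relations~(e) and~(g) as in Theorem~\ref{Sp}, and the sign variants following by conjugation with the $Z_i$. Since a sufficient family of Steinberg relations now holds in $G$, the subgroup $H$ generated by the Steinberg generators is isomorphic to $\Sp(2n,p)$ by \cite{Steinberg62,Steinberg81}. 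Finally, relations~(iii) and~(iv) place $Z$, $U'$, and hence $U$ in $H$; as $H$ is normalised by $\langle U,V\rangle\cong S_n$ with $U\in H$, also $V\in H$, so $G=H\cong\Sp(2n,p)$.

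I expect no genuine obstacle, only one point of bookkeeping: confirming that the reduced centraliser generating sets of Theorem~\ref{SpN1Cent} really do determine all conjugates of $\sigma$ and $\tau$. The delicate case is the centraliser of $\sigma$ in $N_1$ for odd $p$, where the extra generator $[Z^2,U]=Z_1^2Z_2^2$ is required; omitting it would leave the conjugates $\sigma_{ij}(1)$ under-determined. Once this is in place, the absence of $\delta$ makes every remaining step strictly lighter than in the proof of Theorem~\ref{Sp}.
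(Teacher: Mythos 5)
Your proposal is correct and takes essentially the same route as the paper, whose proof of this theorem is simply the remark that it is similar to that of Theorem~\ref{Sp}; your expansion (defining the root elements via the $N_1$-centralisers, recovering $[\tau,VU]=1$ from relation (v)(e), and replacing the ``closure under multiplication by $\omega^2$'' steps by closure under addition over the prime field) is exactly the intended adaptation. Your closing observation that the extra centraliser generator $[Z^2,U]=Z_1^2Z_2^2$ is what makes the $N_1$-conjugates of $\sigma$ well defined for odd $p$ is also the right point to flag.
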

\begin{proof}
The proof is similar to that of Theorem \ref{Sp}.
\end{proof}
\noindent 
Observe that 
$\delta = {(\tau^{\omega-\omega^2})}^{Z} \tau^{\omega^{-1}} {(\tau^{\omega - 1})}^{Z}\tau^{-1}$.

\subsection{A presentation for $\PSp(2n,q)$}
If $q$ is even, then $\Sp(2n,q)$ is simple.  If $q$ is odd, then the centre of 
$\Sp(2n,q)$ has order 2, and is generated by $(ZV)^{2n}$. 

\subsection{Standard generators for $\Sp(2n,q)$}
In Table 1 of \cite{even,odd} the standard generators for 
$\Sp(d, q)$ are labelled $s, t, \delta, u, v, x$. 
Observe that $s = Z; t = \tau; u = U; v = V; x = \sigma^Z$; and 
the standard generator $\delta$ is the 
inverse of the presentation generator $\delta$. 

\section{A presentation for $\SU(2n,q)$ for $n > 1$}

\subsection{Generators and notation}\label{notation-SU2n}
Let $q=p^e$ for a prime $p$.  
Let $\omega$ be a primitive element of $\GF(q^2)$, 
and let $\omega_0=\omega^{q+1}$, so $\omega_0$ is a primitive element of $\GF(q)$. 
Define $\psi = \omega^{(q+1)/2}$ if $q$ is odd, and $\psi = 1$ if $q$ is even.  

Let $n>1$.  
We take a hyperbolic basis $(e_1,f_1,\ldots,e_n,f_n)$ of the natural module.  
We define the following elements of $\SU(2n,q)$, where $1\le i, j\le n$ 
and $i \not=j$, 
and $s\in\GF(q)$, and $\alpha\in\GF(q^2)$.

$Z_i=(e_i\mapsto -\psi f_i, f_i\mapsto \psi^{-1}e_i)$;  

$\Delta_{ij}=(e_i\mapsto\omega^{-1}e_i, f_i\mapsto\omega^q f_i, e_j\mapsto\omega e_j, f_j\mapsto\omega^{-q} f_j)$;

$\delta_i=(e_i\mapsto\omega_0^{-1}e_i, f_i\mapsto\omega_0 f_i)$;

$\tau_i(s)=(e_i\mapsto e_i-s\psi f_i)$;  

$\tau_{-i}(s)=
(f_i\mapsto f_i+s\psi^{-1}e_i)= \tau_i(s)^{Z_i}$;

$\sigma_{ij}(\alpha)=(e_i\mapsto e_i+\alpha e_j, f_j\mapsto f_j-\alpha^q f_i)$;

$\sigma_{-i,j}(\alpha)=
(f_i\mapsto f_i-\alpha\psi^{-1}e_j, f_j\mapsto f_j-\alpha^q\psi^{-1}e_i)  
=\sigma_{ij}(\alpha)^{Z_i}
=\sigma_{-j,i}(\alpha^q)$;

$\sigma_{i,-j}(\alpha)
=(e_i\mapsto e_i-\alpha\psi f_j, e_j\mapsto e_j-\alpha^q\psi f_i)  
=\sigma_{ij}(\alpha)^{Z_j}
=\sigma_{j,-i}(\alpha^q)$; 

$\sigma_{-i,-j}(\alpha) =(f_i\mapsto f_i+\alpha f_j, e_j\mapsto e_j-\alpha^q e_i)
=\sigma_{ij}(\alpha)^{Z_iZ_j} =\sigma_{ji}(-\alpha^q)$;  

$U=(e_1,e_2)(f_1,f_2)$;

$V=(e_1,e_2,\ldots,e_n)(f_1,f_2,\ldots,f_n)$ if $n>2$.

\smallskip
All these elements of $\SU(2n,q)$ centralise those basis elements that they 
are not stated to move.
As in the case of $\Sp(2n,q)$, 
$\sigma_{ij}(\alpha)^{Z_i^2}=\sigma_{ij}(-\alpha)=\sigma_{ij}(\alpha)^{Z_j^2}$.
Note that the change of basis that fixes $e_i$ and 
sends $f_i$ to $-\psi^{-1}f_i$ for all $i$ has the effect  
of replacing $\psi$ by 1 in these definitions.  
Thus the isomorphism type of the group generated by
these elements is independent of $\psi$.  
So the value of $\psi$ plays no role in the presentation
and proofs, apart from the fact that $\psi\ne0$.  
We choose $\psi$ to ensure that these elements preserve the given form.

\subsection{A presentation for $\SU(2n,q)$} 
We give a presentation for $\SU(2n,q)$  on the generating set
$\{ \sigma=\sigma_{12}(1), \tau=\tau_1(1), Z = Z_1, \delta = \delta_1, 
\Delta=\Delta_{12}, U=U_{12},V\}$, 
omitting $V$ if $n=2$.  Note that $\delta=\Delta\Delta^{-Z}$. 
If $q = 2$ then $\delta = 1$, permitting obvious simplifications
to the listed presentations.

Let $H$ be the subgroup of $\SU(2n, q)$ generated by $\{\Delta,U,V\}$, 
omitting $V$ if $n = 2$. 
Clearly $H$ has order $(q^2-1)^{n-1}n!$.

\begin{theorem}\label{SUDeltaUV}
Let $n \geq 2$ and let $G$ be the group generated by 
$\{\Delta, U, V\}$, omitting $V$ if $n = 2$, 
subject to the relations given below.
Then $G$ is isomorphic to $H$.
\begin{enumerate}
\item[(i)] Defining relations for $S_n$ on $\{U,V\}$. 
\item[(ii)] If $n>3$ then $[\Delta,U^{V^2}]=1$, and if $n>4$ then $[\Delta,VUU^V]=1$.
\item[(iii)] $\Delta^U=\Delta^{-1}$.
\item[(iv)] $\Delta\Delta^V=\Delta^{VU}$. 
\item[(v)] $[\Delta,\Delta^V]=1$, and if $n>3$ then $[\Delta,\Delta^{V^2}]=1$.
\item[(vi)] $\Delta^{q^2-1}=1$.
\end{enumerate}
We omit those relations that involve $V$ if $n=2$.  
\end{theorem}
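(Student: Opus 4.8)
The plan is to mirror the proof of Theorem \ref{SLN} almost verbatim, the present situation being simpler because $U$ and $V$ are honest (unsigned) permutation matrices, so none of the $U'^2$-bookkeeping of the $\SL$ case intervenes. First I would check that the defining matrices $\Delta$, $U$, $V$ of $H$ satisfy relations (i)--(vi); these are routine diagonal-and-permutation computations (for instance $\Delta^U=\Delta_{21}=\Delta^{-1}$ gives (iii), and $\Delta\Delta^V=\Delta_{12}\Delta_{23}=\Delta_{13}=\Delta^{VU}$ gives (iv)). This produces an epimorphism $G\to H$, so it remains only to prove the bound $|G|\le(q^2-1)^{n-1}n!=|H|$.

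To bound $|G|$, I would introduce conjugates $\Delta_{ij}$ inside $G$. By (i) the images of $U$ and $V$ generate a quotient of $S_n$, and $U^{V^2}$ and $VUU^V$ stand for the permutations $(3,4)$ and $(3,4,\ldots,n)$, which generate the pointwise stabiliser of $\{1,2\}$ in $S_n$ (the unsigned analogue of Theorem \ref{subgp}). Relation (ii) says these two elements centralise $\Delta$, so the entire stabiliser of the ordered pair $(1,2)$ centralises $\Delta$; hence for each ordered pair $(i,j)$ with $i\ne j$ one may unambiguously set $\Delta_{ij}:=\Delta^{g}$, where $g\in\langle U,V\rangle$ carries $(1,2)$ to $(i,j)$. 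The set $\{\Delta_{ij}\}$ is then permuted by $\langle U,V\rangle$ via the natural action on ordered pairs.

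Now let $T$ be the normal closure of $\Delta$ in $G$, so $T=\langle\{\Delta_{ij}\}\rangle$. Using $3$-transitivity of $S_n$, conjugating relation (iv) yields $\Delta_{ij}\Delta_{jk}=\Delta_{ik}$ for all distinct $i,j,k$, whence every $\Delta_{ij}$ is a word in $\Delta_{12},\Delta_{23},\ldots,\Delta_{n-1,n}$; relation (iii) gives $\Delta_{ji}=\Delta_{ij}^{-1}$; relation (v) and its conjugates show the generators $\Delta_{k,k+1}$ pairwise commute (adjacent pairs from $[\Delta,\Delta^V]=1$, and disjoint pairs, when $n>3$, from $[\Delta,\Delta^{V^2}]=1$); and (vi) forces $\Delta_{ij}^{q^2-1}=1$. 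Thus $T$ is abelian, generated by the $n-1$ commuting elements $\Delta_{k,k+1}$ of order dividing $q^2-1$, so $|T|\le(q^2-1)^{n-1}$. Since $G/T=\langle\bar U,\bar V\rangle$ is a quotient of $S_n$ by (i), we obtain $|G|\le(q^2-1)^{n-1}n!$, and comparison with $|H|$ completes the argument.

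I expect the only real care to be needed in the small-$n$ bookkeeping, since the relations absent for $n\le 3$ must be shown harmless. For $n=2$ (where $V$ and relations (ii), (iv), (v) are dropped) $G$ collapses to the dihedral group $\langle\Delta,U\mid U^2=\Delta^{q^2-1}=1,\ \Delta^U=\Delta^{-1}\rangle$ of order $2(q^2-1)=|H|$. For $n=3$ the stabiliser of $(1,2)$ is trivial, so the absence of relation (ii) is correct, and the missing disjoint-pair part of (v) is harmless because there are no disjoint index pairs in $\{1,2,3\}$ and the identity $\Delta_{13}=\Delta_{12}\Delta_{23}$ already forces $\Delta_{13}$ to commute with $\Delta_{12}$ and $\Delta_{23}$.
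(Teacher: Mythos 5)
Your proposal is correct and follows essentially the same route as the paper's proof: relation (ii) lets you label the $\langle U,V\rangle$-conjugates of $\Delta$ as $\Delta_{ij}$, and relations (iii)--(vi) then pin down the normal closure of $\Delta$ as an abelian group generated by $\Delta_{12},\Delta_{23},\ldots,\Delta_{n-1,n}$, yielding the bound $|G|\le(q^2-1)^{n-1}n!=|H|$. The paper simply dismisses $n=2$ as clear and leaves the verification that the relations hold in $H$ implicit; your additional bookkeeping for the small cases matches its intent.
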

\begin{proof}
The result is clear if $n=2$, so assume $n>2$.
Relations (ii) state that $\Delta$ is centralised by 
the subgroup of $\langle U,V\rangle$ that 
corresponds to the copy of $S_{n-2}$ that fixes $1$ and $2$, 
so the conjugates of $\Delta$ 
under $\langle U,V\rangle$ may be labelled as $\Delta_{ij}$, where $1\le i\ne j\le n$.  
Relation (iii) states that $\Delta_{ji}=\Delta_{ij}^{-1}$ when $(i,j)=(1,2)$, and 
hence for any suffices $(i,j)$.  Relation (v) states that $\Delta_{12}$ commutes with 
$\Delta_{23}$ and $\Delta_{34}$, when these are defined, 
and hence all $\Delta_{ij}$ commute with each other.
Finally relation (iv) states that 
$\Delta_{i,i+1}\Delta_{i+1,i+2}=\Delta_{i,i+2}$
when $i=1$, and hence for all $i$.  Hence, in $G$, 
every element of the group generated by the 
$\Delta_{ij}$ may be written in the form 
$\Delta_{12}^{m_1}\Delta_{23}^{m_2}\cdots\Delta_{n-1,n}^{m_{n-1}}$, 
and the theorem is proved.
\end{proof}

Let $K$ be the subgroup of $\SU(2n, q)$ generated 
by $\{\delta,\Delta,U,V\}$, omitting $V$ if $n = 2$. 
Clearly $K$ has order $(q-1)(q^2-1)^{n-1}n!$.

\begin{theorem}\label{SUDeltadeltaUV}
Let $G$ be the group generated by $\{\delta,\Delta,U,V\}$, omitting $V$ if $n = 2$,
subject to the relations listed below.
Then $G$ is isomorphic to $K$.
\begin{enumerate}
\item[(i)] Defining relations for $\langle \Delta, U, V \rangle$ as 
in Theorem $\ref{SUDeltaUV}$, omitting $V$ if $n = 2$. 
\item[(ii)] $\delta^{q-1}=1$.
\item[(iii)] $[\delta,U^V]=1$, and if $n>3$ then $[\delta, VU]=1$.
\item[(iv)] $[\delta,\Delta]=1$, and $[\delta,\Delta^V]=1$.
\item[(v)] $\Delta^{q+1}=\delta\delta^{-U}$.
\end{enumerate} 
We omit those relations that involve $V$ if $n=2$.  
\end{theorem}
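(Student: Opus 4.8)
The plan is to show that $G$ surjects onto $K$ and that $|G|\le|K|=(q-1)(q^2-1)^{n-1}n!$; together these force $G\cong K$. Each of relations (i)--(v) is a routine matrix identity in $\SU(2n,q)$: the only one worth checking by hand is (v), where $\Delta_{12}^{q+1}=\diag(\omega_0^{-1},\omega_0,\omega_0,\omega_0^{-1},1,\dots)=\delta_1\delta_2^{-1}$, and since $\delta^{U}=\delta_2$ this is exactly $\delta\delta^{-U}$. Hence $K$, being generated by $\delta,\Delta,U,V$ and satisfying all the relations, is a homomorphic image of $G$, and the whole task reduces to bounding $|G|$ from above. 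I would work entirely inside the abstract group $G$.

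First I would set up the diagonal part. By relation (i) and Theorem~\ref{SUDeltaUV}, the subgroup $\langle\Delta,U,V\rangle$ of $G$ is an image of $H$; in particular $\langle U,V\rangle$ is an image of $S_n$, the conjugates $\Delta_{ij}$ of $\Delta=\Delta_{12}$ under $\langle U,V\rangle$ are well defined and pairwise commuting, and $\langle\Delta_{ij}\rangle$ has order at most $(q^2-1)^{n-1}$. To introduce the $\delta_i$ I would invoke relation (iii): it states that $\delta=\delta_1$ commutes with $U^V=(2,3)$ and, for $n>3$, with $VU=(2,3,\ldots,n)$, which together generate the stabiliser of $1$ in $S_n$ (for $n=3$ the single element $U^V$ already generates it, and for $n=2$ the stabiliser is trivial and (iii) is omitted). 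Thus $\delta_i:=\delta^{w}$ is independent of the choice of $w\in\langle U,V\rangle$ sending $1\mapsto i$, and $\delta_i^{w}=\delta_{i^{w}}$ for all $w$.

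Next I would prove that $D:=\langle\Delta_{ij},\delta_i\rangle$ is abelian. The $\Delta_{ij}$ already commute; relation (iv), which gives $[\delta_1,\Delta_{12}]=[\delta_1,\Delta_{23}]=1$, extends by conjugation to $[\delta_i,\Delta_{jk}]=1$ for every configuration, since the first instance covers the case $i\in\{j,k\}$ and the second the case $i\notin\{j,k\}$. For the $\delta_i$ among themselves I would combine (iv) and (v): as $\delta_1$ commutes with $\Delta_{12}$ it commutes with $\Delta_{12}^{q+1}=\delta_1\delta_2^{-1}$, which forces $[\delta_1,\delta_2]=1$, and conjugation then yields $[\delta_i,\delta_j]=1$ for all $i\ne j$.

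The decisive step, and the one I expect to be the main obstacle, is bounding $|D|$ by exploiting (v) to collapse the $\delta_i$ modulo the $\Delta$-lattice. Conjugating (v) gives $\delta_i\delta_j^{-1}=\Delta_{ij}^{q+1}\in\langle\Delta_{jk}\rangle$, so all the $\delta_i$ coincide modulo $\langle\Delta_{ij}\rangle$; since $D$ is abelian, $D=\langle\Delta_{ij}\rangle\langle\delta_1\rangle$ and the quotient $D/\langle\Delta_{ij}\rangle$ is cyclic, generated by the image of $\delta_1$, hence of order at most $q-1$ by relation (ii). Therefore $|D|\le(q-1)(q^2-1)^{n-1}$. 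Finally $D$ is normalised by $\langle U,V\rangle$, which permutes its generators, and $G=\langle\delta_1,\Delta_{12},U,V\rangle=D\langle U,V\rangle$, so $|G|\le|D|\cdot|\langle U,V\rangle|\le(q-1)(q^2-1)^{n-1}n!=|K|$. Combined with the surjection $G\twoheadrightarrow K$, this gives $G\cong K$.
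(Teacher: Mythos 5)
Your proposal is correct and follows essentially the same route as the paper's proof: relations (iii) make the $\delta_i$ well defined as $\langle U,V\rangle$-conjugates, (iv) and (v) show the diagonal subgroup generated by the $\delta_i$ and $\Delta_{ij}$ is abelian with the $\delta_i$ congruent modulo $\langle\Delta_{ij}\rangle$, and (ii) then bounds the order by $(q-1)(q^2-1)^{n-1}n!$, which matches $|K|$. The only (harmless) difference is that you spell out the verification of relation (v) in the matrix group and the small-$n$ cases slightly more explicitly than the paper does.
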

\begin{proof}
Suppose  first that $n > 2$.
Relations (iii) state that $\delta$ is centralised by the 
subgroup of $\langle U,V\rangle$ that
corresponds to the subgroup of $S_{n-1}$ that fixes 1, 
so the conjugates of $\delta$ under 
$\langle U,V\rangle$ may be labelled as $\delta_i$, where $1\le i\le n$.  
Relation (v) states that $\delta_2=\delta_1\Delta^{-(q+1)}$; so, using (iv), $\delta_1$ 
commutes with $\delta_2$, and hence $\delta_i$ commutes with $\delta_j$ for all $i,j$.  
By (iv), $\delta$ commutes with $\Delta_{12}$ and with $\Delta_{23}$, 
and hence $\delta_i$ commutes with $\Delta_{jk}$ for all $i$ and for all $j\ne k$.  
It follows from (v) that $\delta_i\delta_j^{-1}=\Delta_{ij}^{q+1}$ for all $i\ne j$.
Thus if $A$ is the subgroup of $G$ generated by  
$\{\delta_i \, | \, i>0\}\cup\{\Delta_{ij} \, | \, i\ne j\}$
and $B=\langle\{\Delta_{ij}\}\rangle$, then $A$ is an abelian group,  
and $A/B$ is generated by $\{\delta_i \, | \, i>0\}$  
subject to the relations $\delta_i=\delta_j$, and $\delta_1^{q-1}=1$.  
So the group presented has order at most $(q-1)(q^2-1)^{n-1}n!$.  
But the corresponding matrix group has this order, so the theorem is proved when $n>2$.
The case $n=2$ is proved similarly.
\end{proof}

Let $N$ be the subgroup of $\SU(2n, q)$   
generated by $\{Z,\delta,\Delta,U,V \}$, omitting $V$ if $n=2$. 
It has order $(q-1)(q^2-1)^{n-1}2^n n!$. 

\begin{theorem}\label{SU2ncore}
Let $G$ be the group generated by $\{ Z, \delta,\Delta, U, V \}$, omitting $V$ if $n=2$, 
subject to the relations listed below.
Then $G$ is isomorphic to $N$.
\begin{enumerate}
\item[(i)] Defining relations for $\langle \delta,\Delta,U,V \rangle$
as in Theorem $\ref{SUDeltadeltaUV}$.
\item[(ii)]  $Z^2=\delta^{(q-1)/2}$ if $q$ is odd; $Z^2=1$ if $q$ is even. 
\item[(iii)] $[Z,U^V]=1$, and if $n>3$ then $[Z,VU]=1$.
\item[(iv)] $[Z,Z^U]=1$.
\item[(v)] $\delta=[\Delta^{-1},Z]$.
\item[(vi)] $[Z,\Delta^V]=1$.
\item[(vii)] $\delta^Z=\delta^{-1}$.
\item[(viii)] $[\delta, Z^U]=1$ if $n=2$.
\end{enumerate}
We omit those relations that involve $V$ if $n=2$.  
\end{theorem}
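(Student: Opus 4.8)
The strategy follows the pattern established for the earlier normaliser groups: one first checks that relations (i)--(viii) hold among the matrix generators $\{Z_i,\delta_i,\Delta_{ij},U,V\}$, so that $N$ is a homomorphic image of $G$, and it then suffices to prove the bound $|G|\le |N| = (q-1)(q^2-1)^{n-1}2^n n!$. So the real content is an order estimate in the abstractly presented group.

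First I would isolate $L=\langle\delta,\Delta,U,V\rangle\le G$. By relation (i) and Theorem \ref{SUDeltadeltaUV}, $L$ is a homomorphic image of $K$, so $|L|\le (q-1)(q^2-1)^{n-1}n!$, its abelian ``torus'' $A=\langle\{\delta_i\}\cup\{\Delta_{ij}\}\rangle$ has order at most $(q-1)(q^2-1)^{n-1}$, and $\langle U,V\rangle$ acts on $A$ as $S_n$. Relations (iii) say that $Z=Z_1$ is centralised by the stabiliser of the symbol $1$ inside $\langle U,V\rangle\cong S_n$ (generated by $U^V$ and, for $n>3$, by $VU$), so the $\langle U,V\rangle$-conjugates of $Z$ yield $n$ well-defined elements $Z_1,\dots,Z_n$, permuted naturally; relation (iv) together with conjugation shows they pairwise commute, and relation (ii) gives $Z_i^2\in A$.

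The crux is to show that each $Z_i$ normalises $A$. Here I would read relation (vii) as $\delta^Z=\delta^{-1}$, and rewrite relation (v), $\delta=[\Delta^{-1},Z]$, as $\Delta^Z=\delta^{-1}\Delta\in A$. Conjugating relation (vi) by the stabiliser of $1$ gives $[Z,\Delta_{jk}]=1$ for all $j,k\ne 1$, and then the identity $\Delta_{1j}=\Delta_{12}\Delta_{2j}$ together with $\delta_i=\delta_1\Delta_{1i}^{-(q+1)}$ (from Theorem \ref{SUDeltadeltaUV}) expresses every $\Delta_{ij}^Z$ and $\delta_i^Z$ as an element of $A$; hence $Z_1$ normalises $A$. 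Since $A$ is $\langle U,V\rangle$-invariant and each $Z_i$ is an $\langle U,V\rangle$-conjugate of $Z_1$, every $Z_i$ then normalises $A$. For the base case $n=2$, where there are no spare symbols to run the conjugation argument of relation (vi), relation (viii) supplies the missing fact that $Z^U$ commutes with $\delta$.

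With $A$ normal in $M:=\langle A,Z_1,\dots,Z_n\rangle$, the quotient $M/A$ is generated by the $n$ pairwise-commuting involutions $\overline{Z_i}$, so $|M/A|\le 2^n$ and $|M|\le (q-1)(q^2-1)^{n-1}2^n$. Finally $M$ is normalised by $\langle U,V\rangle$ (it normalises $A$ and permutes the $Z_i$), so $G=M\langle U,V\rangle$ and $|G|\le |M|\cdot n!\le |N|$; comparing with the surjection $G\twoheadrightarrow N$ forces $G\cong N$. I expect the normalisation step to be the main obstacle: assembling, from the sparse relations (v)--(vii), the complete action of $Z$ on the torus and confirming that all the generators $\delta_i,\Delta_{ij}$ are carried back into $A$. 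Everything after that is a routine order count.
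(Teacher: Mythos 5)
Your proposal is correct and follows essentially the same route as the paper's proof: both reduce to the order bound $|G|\le|N|$ by invoking Theorem \ref{SUDeltadeltaUV} for the torus $A$, using (iii) and (iv) to obtain $n$ commuting conjugates $Z_i$ with $Z_i^2\in A$, showing via (v)--(vii) (and (viii) when $n=2$) that each $Z_i$ normalises $A$, and then counting. The only cosmetic difference is that you compute $\Delta^Z=\delta^{-1}\Delta$ and propagate it through the generators of $A$, whereas the paper derives $\delta_i=[\Delta_{ij}^{-1},Z_i]$ and argues through centralisation; these are the same idea.
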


\begin{proof}
Suppose first that $n>2$. As with the previous theorem, 
relations (iii) imply that we have elements $Z_i$ 
in $G$ that are permuted by $\langle U,V\rangle$ via the natural action of $S_n$ 
on the suffices.  Relation (iv) asserts that $[Z_1,Z_2]=1$; 
so the group generated by the $Z_i$ is abelian.
Relation (vi) states that $\Delta_{23}$ is centralised by $Z_1$;
so $\Delta_{ij}$ is
centralised by $Z_k$ for all $i\ne k\ne j$.  
It follows from (v) that $\delta_i=[\Delta_{ij}^{-1},Z_i]$ for all 
$j\ne i$, and hence $\delta_i$ is centralised by $Z_k$ for all $k\ne i$  since $n>2$.
It now follows, from (vii), that the group $A$ in the proof of the previous theorem 
is normalised by $Z_1$, 
and hence by $Z_i$ for all $i$.  
By (ii), $A$ contains $Z_1^2$, and hence contains $Z_i^2$ for all $i$.
Thus $G$ has at most the stated order; since
the corresponding matrix group has this order the theorem is proved for $n>2$.

The proof for $n=2$ is similar, except for the proof that $\delta_i$ commutes with $Z_j$ 
for all $i\ne j$.
This follows from (viii), which states that $\delta_1$ commutes with $Z_2$ if $n=2$.
\end{proof}

\begin{theorem}\label{SU2nSL2} The subgroup of $\SU(2n,q)$ generated by 
$\{\tau, \delta, Z\}$ is isomorphic to $\SL(2,q)$, 
and the subgroup $H$ generated 
by $\{\sigma, \Delta, UZ^2\}$ is isomorphic to $\SL(2,q^2)$.
\end{theorem}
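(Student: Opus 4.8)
The plan is to realise both isomorphisms concretely, by restricting the action of $\SU(2n,q)$ to a suitable two-dimensional invariant subspace, exactly as in the one-line proof of Theorem \ref{SpSL2}. For the first assertion I would restrict to $W_1 = \langle e_1, f_1\rangle$, and for the second to $W_2 = \langle e_1, e_2\rangle$. In each case the work splits into checking that the subspace is invariant, that the restriction is faithful, and that the restricted generators generate the claimed $\SL(2,\cdot)$.

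For $\langle \tau, \delta, Z\rangle$ each of $\tau = \tau_1(1)$, $\delta = \delta_1$ and $Z = Z_1$ fixes $e_i$ and $f_i$ for $i\geq 2$ and preserves $W_1$, so restriction to $W_1$ is a faithful homomorphism into the unitary group of the hyperbolic hermitian line $W_1$, namely $\SU(2,q)\cong\SL(2,q)$. In the basis $(e_1,f_1)$ the generators become $\begin{pmatrix}1&0\\-\psi&1\end{pmatrix}$, $\begin{pmatrix}\omega_0^{-1}&0\\0&\omega_0\end{pmatrix}$ and $\begin{pmatrix}0&\psi^{-1}\\-\psi&0\end{pmatrix}$; applying the change of basis of the notation remark (sending $f_1\mapsto -\psi^{-1}f_1$) replaces $\psi$ by $1$ and puts all entries in $\GF(q)$. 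These are a lower transvection, a diagonal element with $\omega_0$ primitive in $\GF(q)$, and a Weyl element. I would conclude generation by the standard argument: conjugating the transvection by powers of $\delta$ scales its off-diagonal entry by $\omega_0^{2}$, whose additive span is all of $\GF(q)$, yielding one full root subgroup, while $Z$ produces the opposite one, so together they give $\SL(2,q)$.

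For $H = \langle \sigma, \Delta, UZ^2\rangle$ the same idea applies, but faithfulness is more delicate since the generators move $f_1$ and $f_2$. Each generator fixes $e_i,f_i$ for $i\geq 3$ and preserves both $W_2=\langle e_1,e_2\rangle$ and $\langle f_1,f_2\rangle$, so $H$ acts on $\langle e_1,e_2,f_1,f_2\rangle$. Since the hermitian form pairs $W_2$ non-degenerately with $\langle f_1,f_2\rangle$, any $g\in H$ acting trivially on $W_2$ also acts trivially on $\langle f_1,f_2\rangle$ (for $v\in W_2$ one has $\langle v, gw\rangle = \langle gv, gw\rangle = \langle v,w\rangle$, forcing $gw=w$), hence on the whole space; thus restriction to $W_2$ is faithful. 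In the basis $(e_1,e_2)$, $\sigma$ gives the transvection $\begin{pmatrix}1&0\\1&1\end{pmatrix}$, $\Delta=\Delta_{12}$ gives $\begin{pmatrix}\omega^{-1}&0\\0&\omega\end{pmatrix}$ with $\omega$ primitive in $\GF(q^2)$, and $UZ^2$ gives a Weyl element $\begin{pmatrix}0&1\\-1&0\end{pmatrix}$ (up to sign). All three have determinant $1$, so $H|_{W_2}\leq \SL(2,q^2)$, and the same transvection-plus-torus-plus-Weyl argument, now with $\omega^2$ generating $\GF(q^2)$, shows they generate $\SL(2,q^2)$.

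Most of this is routine matrix bookkeeping. The two places that need genuine care are the faithfulness of the $W_2$-restriction, which I handle through the duality of $W_2$ and $\langle f_1,f_2\rangle$ under the form, and the generation step, where one must verify that the diagonal conjugates of the transvection fill out the full additive group of the field. The latter reduces to the standard fact that $\omega^2$ generates $\GF(q)$, respectively $\GF(q^2)$, as a field; apart from these points everything is a direct computation.
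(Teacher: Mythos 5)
Your proposal is correct and follows essentially the same route as the paper: the paper's proof also restricts $H$ to $\langle e_1,e_2\rangle$, with faithfulness coming from the fact that the action on $\langle f_1,f_2\rangle$ is the inverse transpose (composed with Frobenius) of the action on $\langle e_1,e_2\rangle$ — which is exactly the duality under the hermitian form that you spell out. Your treatment of the first statement (restriction to $\langle e_1,f_1\rangle$) supplies the details the paper dismisses as obvious, and your generation argument via diagonal conjugates of the transvection is the standard one.
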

\begin{proof}
The first statement is obvious.
Observe that $H$ acts on $\langle e_1,e_2\rangle$ as $\SL(2,q^2)$, and 
similarly on $\langle f_1,f_2\rangle$, the actions 
 corresponding under the inverse transpose automorphism 
followed by the Frobenius automorphism.
\end{proof}

\begin{theorem}\label{SU2nsigma}
The centraliser $H$ of $\sigma = \sigma_{12}(1)$ in $N$ 
is generated by the following elements:
\begin{enumerate}
\item[(i)] $U^{V^2}$ if $n>3$, and $VUU^V$ if $n>4$;
\item[(ii)] $\Delta^{V^2}$ if $n>3$ and $q$ is odd, 
and $\delta^{V^2}$ if $n=3$ and $q$ is odd;
\item[(iii)] $Z^{V^2}$ if $n>2$;
\item[(iv)] $\Delta\Delta^{2V}$ if $n>2$;
\item[(v)] if $n=2$ then $\delta\delta^U$ if $q$ is even, 
and $\Delta^{(q+1)/2}\delta^{-1}$ if $q$ is odd;
\item[(vi)] $ZUZ^{-1}$.
\end{enumerate}
If $n > 2$ then $H$ has order $(q-1)(q^2-1)^{n-2}2^{n-1}(n-2)!$; 
if $n = 2$ then $H$ has order $4(q-1)$ if $q$ is odd, and order $2(q-1)$ if $q$ is even.
\end{theorem}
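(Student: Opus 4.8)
The plan is to prove the theorem by the now-familiar strategy used throughout this section: exhibit a specific subgroup of $N$ that centralises $\sigma$ and has the claimed isomorphism type, then compute its index in $N$ and match it against the size of the $N$-orbit of $\sigma$. Since $N$ acts transitively by conjugation on the set of short root elements $\sigma_{ij}(\alpha)$ with $0<|i|\ne|j|\le n$ and $\alpha\in\GF(q^2)^{\times}$, the centraliser of $\sigma$ has index in $N$ equal to the number of such elements, recalling that $\sigma_{ij}(\alpha)=\sigma_{ji}(-\alpha^q)$ identifies them in pairs.

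First I would count the orbit. There are $2n(n-1)$ ordered pairs $(i,j)$ with $0<|i|\ne|j|\le n$ once we fix $|i|\ne|j|$ and allow signs, and for each we have $q^2-1$ nonzero values of $\alpha$; dividing by the factor of $2$ coming from the identification $\sigma_{ij}(\alpha)=\sigma_{ji}(-\alpha^q)$ gives the orbit size, and hence the index of the centraliser. I would then confirm that the product of $|N|=(q-1)(q^2-1)^{n-1}2^n n!$ with the reciprocal of this index equals the claimed order $(q-1)(q^2-1)^{n-2}2^{n-1}(n-2)!$ for $n>2$, and separately $4(q-1)$ or $2(q-1)$ for $n=2$, distinguishing the parity of $q$.

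Next I would verify that each listed generator genuinely centralises $\sigma=\sigma_{12}(1)$, which is a direct matrix check against the action formulas in the notation subsection: the permutation elements $U^{V^2}$, $VUU^V$ and their relatives act only on indices $\ge 3$ and so fix $e_1,e_2,f_1,f_2$; $Z^{V^2}$ similarly touches only higher indices; the toral elements $\Delta^{V^2}$, $\delta^{V^2}$, $\Delta\Delta^{2V}$ act on the relevant coordinate subspaces in a way that leaves $\sigma_{12}(1)$ fixed; $ZUZ^{-1}$ is designed to centralise the short root group; and the special $n=2$ generators in (v) are exactly the surviving torus elements that fix $\sigma$. I would then identify the abstract isomorphism type of the subgroup they generate—it decomposes as a direct factor coming from the torus acting on $\langle e_1,e_2,f_1,f_2\rangle$ (which supports the $\SL(2,q^2)$ of Theorem~\ref{SU2nSL2}) together with a copy of the wreathed normaliser $C_? \wr S_{n-2}$ acting on the complementary hyperbolic pairs—and read off its order.

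The main obstacle will be the bookkeeping at $n=2$ and the dependence on the parity of $q$. When $n=2$ there is no $V$, the higher-index permutation and toral generators vanish, and the centraliser collapses to the torus fixing $\sigma$ inside $\langle\delta,\Delta,Z\rangle$; the subtlety is that $\Delta^{(q+1)/2}\delta^{-1}$ (for odd $q$) versus $\delta\delta^U$ (for even $q$) are precisely the elements surviving the centralising condition, and confirming that they generate a group of order $4(q-1)$ respectively $2(q-1)$—rather than something larger or smaller—requires care about which powers of $\Delta$ and $\delta$ commute with $\sigma$ and about the relation $Z^2=\delta^{(q-1)/2}$ from Theorem~\ref{SU2ncore}. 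I would handle this by computing directly in the copy of $\SL(2,q^2)$ on $\langle e_1,e_2\rangle$ furnished by Theorem~\ref{SU2nSL2}, where $\sigma$ is a transvection and its centralising torus is transparent.
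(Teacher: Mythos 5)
Your overall strategy (count the $N$-orbit of $\sigma$ to get the index of the centraliser, then exhibit centralising elements and show they generate a subgroup of that order) is the same as the paper's, but there is a genuine gap at the point you yourself flag as the main obstacle. Your opening transitivity claim --- that $N$ acts transitively on all $\sigma_{ij}(\alpha)$ with $0<|i|\ne|j|\le n$ and $\alpha\in\GF(q^2)^{\times}$ --- is false when $n=2$ and $q$ is odd, and this failure is precisely the source of the $4(q-1)$ versus $2(q-1)$ dichotomy. For $n=2$ every generator of $N$ scales the parameter $\alpha$ by a square of $\GF(q^2)^{\times}$ (conjugation by $\Delta$ gives $\omega^2$, by $\delta$ gives $\omega_0=\omega^{q+1}$, by $Z^2$ gives $-1$, all squares for odd $q$), so the orbit of $\sigma$ is only $\{\sigma_{\pm1,\pm2}(\alpha^2)\}$, of size $2(q^2-1)$ rather than $4(q^2-1)$; only for $n\ge 3$ do the elements $\Delta_{jk}$ with $k>2$ supply multiplication by $\omega^{\pm1}$ and restore transitivity. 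If you ran your proposed ``confirmation'' with the full orbit at $n=2$, $q$ odd, you would get centraliser order $2(q-1)$, contradicting the claimed $4(q-1)$; the paper instead computes the smaller orbit directly and identifies the centraliser as a copy of $SD_{4(q-1)}$ generated by items (v) and (vi).

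A second, lesser point: for $n>2$ and $q$ odd the subgroup generated by the listed elements is not a direct product of a block-$\{1,2\}$ factor with a wreath product $C_{?}\wr S_{n-2}$, so you cannot simply ``read off'' its order from such a decomposition. The diagonal part generated by $\Delta^{V^2}$ and its conjugates carries a norm constraint (it is $\{\diag(1,1,1,1,\alpha_3,\alpha_3^{-q},\ldots)\;:\;\prod_i\alpha_i\in\GF(q)^{\times}\}$, of order $(q^2-1)^{n-3}(q-1)$ rather than $(q^2-1)^{n-2}$), and the $2$-group generated by $ZUZ^{-1}$ and the conjugates of $Z^{V^2}$ meets the diagonal part in a subgroup of order $2^{n-2}$. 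The paper has to track these intersections explicitly, case by case in the parities of $q$ and the sizes of $n$, to land on the order $(q-1)(q^2-1)^{n-2}2^{n-1}(n-2)!$; your sketch would need the same bookkeeping to be a proof.
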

\begin{proof}
Let $n>2$. The orbit of $\sigma$ under conjugation by $N$ is 
$\{\sigma_{ij}(\alpha) \, | \, 1\le |i|\ne |j|\le n,\alpha\in\GF(q^2)^\times\}$. 
But the $4n(n-1)(q^2-1)$ matrices defined 
are equal in pairs, as shown in Section \ref{notation-SU2n}.
The orbit has size $2n(n-1)(q^2-1)$, so $H$ has order
$(q-1)(q^2-1)^{n-2}2^{n-1}(n-2)!$.
Let $K$ be the group generated by the itemised elements.  
Clearly $K$ is a subgroup of $H$.
The subgroup $K_0$ of $K$ generated by the elements in (i) is the copy 
of $S_{n-2}$ that permutes the basis vectors
$\{e_1,e_2,\ldots,e_n\}$, fixing $e_1$ and $e_2$, 
and also permutes $\{f_1,\ldots,f_n\}$, fixing $f_1$ and $f_2$.

Let $n>2$ and $q$ be even. 
Observe that $$\Delta\Delta^{2V} = 
\diag(\omega^{-1},\omega^q,\omega^{-1},\omega^q,\omega^2,\omega^{-2q},1,1,\ldots,1).$$ 
Its $K_0$-conjugates generate
an abelian group $A$ of order $(q^2-1)^{n-2}$.
Define $$b:=[\Delta\Delta^{2V},Z^{V^2}]=\diag(1,1,1,1,\omega_0^{-2},\omega_0^2,1,1,\ldots,1).$$  
Clearly $B := \langle A, b\rangle$ is a diagonal subgroup of $K$ of order $(q-1)(q^2-1)^{n-2}$ that is normalised by $K_0$.  
Also, $ZUZ^{-1}$ and the $K_0$-conjugates of $Z^{V^2}$
generate an
abelian group $C$ of order $2^{n-1}$ that intersects $B$ in the identity.  
Since $\langle K_0, B, C \rangle$ has the correct order, it  is the 
centraliser of $\sigma$.

Let $n>3$ and $q$ be odd.  Observe that 
$$\Delta^{V^2}=\diag(1,1,1,1,\omega^{-1},\omega^q,\omega,\omega^{-q},1,1,\ldots,1).$$ 
Its $\langle K_0,Z^{V^2}\rangle$-conjugates generates the group
$$\{\diag(1,1,1,1,\alpha_3,\alpha_3^{-q},\ldots,\alpha_n,\alpha_n^{-q}) \, : \, 
\prod_i\alpha_i\in\GF(q)^\times\},$$ 
which has order $(q^2-1)^{n-3}(q-1)$.  This group and 
$\Delta\Delta^{2V}$
generate a group $D$ of order $(q^2-1)^{n-2}(q-1)$, 
which is normalised by the group $C=\langle K_0,Z^{V^2},ZUZ^{-1}\rangle.$
Now $C$ has order $2^{2n-3}(n-2)!$, and intersects $D$ in a group of 
order $2^{n-2}$, 
so $CD$ has order $(q-1)(q^2-1)^{n-2}2^{n-1}(n-2)!$ and 
so is the centraliser of $\sigma$ in $N$.

Let $n=3$ and $q$ be odd.  
Now $D = \langle \Delta\Delta^{2V}, \delta^{V^2}\rangle$ has order $(q-1)(q^2-1)$, 
and is normalised by
$C=\langle Z^{V^2}, ZUZ^{-1}\rangle$, which has order 8, and 
$C\cap D$ has order 2, so $CD$ has order $4(q-1)(q^2-1)$, as required.

Finally let $n = 2$.
The orbit of $\sigma$ under conjugation by $N$ is 
$\{\sigma_{\pm1,\pm2}(\alpha^2) \, | \, \alpha\in\GF(q^2)^\times\}$,
and hence has size $4(q^2-1)$ in $N$ if $q$ is even, 
and $2(q^2-1)$ if $q$ is odd.
So the centraliser of $\sigma$ has order $2(q-1)$ if $q$ is even, and
order $4(q-1)$ if $q$ is odd.
Items (v) and (vi) generate a copy of 
$D_{2(q-1)}$ if $q$ is even, else a copy of $SD_{4(q-1)}$; 
and these groups have the required order.
\end{proof}

\begin{theorem}\label{SU2ntau}
The centraliser of $\tau = \tau_1(1)$ in $N$ 
has order $(q^2-1)^{n-1}2^{n - 1}(n-1)!$ and is generated by the following elements:
\begin{enumerate}
\item[(i)] $U^V$ if $n>2$, and $VU$ if $n>3$;
\item[(ii)] if $n=2$ and $q$ is odd then $\delta^U$, and if $n>2$ then $\Delta^V$;
\item[(iii)] $Z^U$;
\item[(iv)] $\Delta^2\delta^{-1}$.
\end{enumerate}
\end{theorem}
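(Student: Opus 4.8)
The goal is to identify the centralizer $C_N(\tau)$ of the long root element $\tau = \tau_1(1)$ inside $N$, show its order is $(q^2-1)^{n-1}2^{n-1}(n-1)!$, and verify that the four itemized families generate it. The strategy mirrors the proof of Theorem \ref{SU2nsigma}: first compute the size of the orbit of $\tau$ under conjugation by $N$ (hence the index of the centralizer), then exhibit explicit elements that visibly centralize $\tau$ and whose span has exactly the right order, so that by an index count it must be the full centralizer.

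First I would locate the orbit of $\tau$ under $N$. Recall $N$ acts on the long root elements $\{\tau_{\pm i}(s) : 1 \le i \le n,\ s \in \GF(q)^\times\}$. The key point is how the torus acts: from the definitions $\delta_i = \Delta_{ij}\Delta_{ij}^{-Z}$ scales $\tau_i(s)$ by $\omega_0 = \omega^{q+1}$, while the full $\Delta_{ij}$ scales it by a factor with nontrivial $q$-power twist; one must check precisely which scalars in $\GF(q)^\times$ are realizable, and account for the sign/swap action of the $Z_i$ and the permutation action of $\langle U,V\rangle \cong S_n$ on the suffices. Since $\tau_i(s)$ is genuinely parametrized by $s \in \GF(q)^\times$ (an $(q-1)$-element set), and there are $2n$ choices of signed index, I expect the orbit to have size $2n(q-1)$, giving $|C_N(\tau)| = |N|/(2n(q-1)) = (q-1)(q^2-1)^{n-1}2^n n! / (2n(q-1)) = (q^2-1)^{n-1}2^{n-1}(n-1)!$, matching the claim.

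Next I would verify each listed generator centralizes $\tau$ and then count. The elements in (i), $U^V$ and $VU$, generate the copy of $S_{n-1}$ fixing the index $1$, so they fix $e_1,f_1$ and centralize $\tau$. The element $Z^U$ in (iii) swaps $e_2,f_2$ and fixes $e_1,f_1$, hence centralizes $\tau$; together with the conjugates of $Z$ at indices $2,\dots,n$ it contributes a $C_2^{n-1}$. The torus elements in (ii) and (iv)—namely $\Delta^2\delta^{-1}$ and $\Delta^V$ (or $\delta^U$ when $n=2$ and $q$ odd)—are chosen precisely to generate the part of the maximal torus that fixes the $\tau_1$-scalar: $\Delta^2\delta^{-1}$ acts trivially on $\langle e_1,f_1\rangle$ (the $\delta^{-1}$ cancels the $\omega_0$-scaling coming from $\Delta^2$ at index $1$) while acting as a full $\GF(q^2)^\times$ torus element at index $2$, and $\Delta^V$ supplies the remaining torus directions at indices $3,\dots,n$. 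I would then check the subgroup these generate is a wreath-type extension $(\text{torus})\rtimes(C_2 \wr S_{n-1})$ of order $(q^2-1)^{n-1}2^{n-1}(n-1)!$, equal to $|C_N(\tau)|$, forcing equality.

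\textbf{The main obstacle.} The delicate step is the torus bookkeeping, and in particular confirming that the stabilizer of $\tau$ in the torus has exactly order $(q^2-1)^{n-1}(q-1)^{-1}\cdot(q-1) = (q^2-1)^{n-1}$ accounted correctly across indices. The subtlety is that $\Delta_{1j}$ acts on $\tau_1(s)$ through the $\omega^{-1}$ on $e_1$ (scaling $s$ by $\omega^{\text{something}}$ with a $q$-twist from the $f_1$-action), and one must pin down whether the residual torus fixing the index-$1$ root scalar is all of $\GF(q^2)^\times$ in the index-$1$ coordinate or only a $\GF(q)^\times$-sized subgroup—this is exactly why the generator in (iv) is $\Delta^2\delta^{-1}$ rather than $\Delta$ alone, and why the problem splits by parity of $q$ in item (ii) and (v) of the analogous $\sigma$-case. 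Getting these scalar exponents right (and handling the $n=2$ boundary case separately, as the theorem statement implicitly does) is where the real care is needed; the permutation and sign-swap contributions are routine by comparison.
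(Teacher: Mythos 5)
Your overall strategy is the same as the paper's: compute the orbit $\{\tau_{\pm i}(s)\}$ of size $2n(q-1)$ to get the index, exhibit elements that visibly centralise $\tau$, and close the argument by an order count. Up to that point the proposal is sound (one small imprecision: $\Delta^2\delta^{-1}=\diag(\omega^{q-1},\omega^{q-1},\omega^2,\omega^{-2q},1,\ldots,1)$ acts on $\langle e_1,f_1\rangle$ as the scalar $\omega^{q-1}$, not trivially; this still centralises $\tau_1(1)$, so no harm is done there).

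The genuine gap is in the order count for the diagonal part when $q$ is odd, exactly at the point you flag but do not resolve. You assert that $\Delta^2\delta^{-1}$ acts ``as a full $\GF(q^2)^\times$ torus element at index $2$.'' It does not: its $e_2$-entry is $\omega^2$, so for odd $q$ it generates a cyclic group of order $(q^2-1)/2$, and together with the $S_{n-1}$-conjugates of $\Delta^V$ (which generate a diagonal group of order $(q^2-1)^{n-2}$) you obtain a torus of order only $(q^2-1)^{n-1}/2$. The naive product with $C_2\wr S_{n-1}$ then has half the required order and the index argument does not close. The paper recovers the missing factor of $2$ from the \emph{interaction} of the listed generators: $g:=(\Delta^V)^{Z^U}=\Delta^V\cdot\diag(1,1,\omega_0,\omega_0^{-1},1,\ldots,1)$ satisfies $g^2=\Delta^{2V}(\Delta^2\delta^{-1})^{q+1}$, so $g^2$ lies in the obvious torus but $g$ does not, enlarging it to the full $(q^2-1)^{n-1}$. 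The same issue forces the extra generator $\delta^U$ in the case $n=2$, $q$ odd: there $\Delta^2\delta^{-1}$ again has order $(q^2-1)/2$ and $\delta^U$ supplies the missing index $2$. Without this step (or some substitute for it) the proof is incomplete for odd $q$; for even $q$ your count goes through as written.
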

\begin{proof}
Recall that the order of $N$ is $(q-1)(q^2-1)^{n-1}2^nn!$.
The orbit of $\tau$ under conjugation by $N$ 
is $\{\tau_{\pm i}(s) \, | \, 1\le i\le n, s\in\GF(q)^{\times}\}$, 
as $\tau_i(s)^{\Delta}=\tau_i(s\omega_0)$, and has size $2(q-1)n$.

The elements in (i) imply that $\tau$ is centralised by 
the copy of $S_{n-1}$ in $\langle U,V\rangle$ 
that fixes the basis vectors $e_1$ and $f_1$.  

Suppose that $n>2$ and $q$ is odd.
Now $\Delta^V=\diag(1,1,\omega^{-1},\omega^q,\omega,\omega^{-q},1,\ldots,1)$, 
together with its 
conjugates under this copy of $S_{n-1}$, 
generates a group $A$ that is the direct product of 
$n-2$ cyclic groups of order $q^2-1$.
This group intersects the cyclic group of order $(q^2-1)/2$ generated by 
$\Delta^2\delta^{-1}=\diag(\omega^{q-1},\omega^{q-1},\omega^2,\omega^{-2q},1,\ldots,1)$ 
in the identity,
so this element and $A$ generate a group $B$ of order $(q^2-1)^{n-1}/2$.
Define $g := (\Delta^V)^{Z^U} = \Delta^V\times\diag(1,1,\omega_0,\omega_0^{-1},1,\ldots,1)$.
Now $g^2= \Delta^{2V} (\Delta^2\delta^{-1})^{q+1}$ lies in $B$, 
but $g$ does not, and $C := \langle B,g\rangle$
has order $(q^2-1)^{n-1}$.  
Thus the group generated by the listed elements 
contains
an abelian subgroup $C$ of order $(q^2-1)^{n-1}$, 
extended by an elementary abelian group of
order $2^{n-1}$ generated, modulo $C$, by $\{Z_i \, | \, i>1\}$, 
extended by a copy of $S_{n-1}$.
This group has index $2(q-1)n$ in $N$, and so is the centraliser of $\tau$.

If $n>2$ and $q$ is even, then the proof is simpler: 
$\Delta^2\delta^{-1}$ and the $S_{n-1}$-conjugates 
of $\Delta^V$ now generate a group of order $(q^2-1)^{n-1}$.

The proof is similar for $n = 2$.  We need to prove that the given elements 
generate a group that contains a
diagonal subgroup of order $q^2-1$.  
If $q$ is even, then $\Delta^2\delta^{-1}$ generates such a subgroup.
If $q$ is odd, then this element has order $(q^2-1)/2$, 
and $\delta^U=\diag(1,1,\omega_0^{-1},\omega_0)$
does not lie in $\langle \Delta^2\delta^{-1} \rangle$, 
but its square, which is $(\Delta^2\delta^{-1})^{-(q+1)}$, does.
\end{proof}

\begin{theorem}\label{SU2npres}
Let $q=p^e$ for a prime $p$ and let $n\ge 2$.  
Let $G$ be the group generated by $\{ \sigma, \tau, Z, \delta, \Delta, U, V \}$, omitting 
$V$ if $n=2$, subject to the relations given below. 
Then $G$ is isomorphic to $\SU(2n,q)$.

\begin{enumerate}
\item[(i)] Defining relations for $N=\langle Z, \delta,\Delta, U,V \rangle$
as in Theorem $\ref{SU2ncore}$, omitting $V$ if $n=2$,
and omitting relations (iii) and (vi) of Theorem $\ref{SUDeltaUV}$, 
relation (ii) of Theorem $\ref{SUDeltadeltaUV}$,
and relations (ii) and (vii) of Theorem $\ref{SU2ncore}$.

\item[(ii)] Relations that state that the elements listed in 
Theorem $\ref{SU2nsigma}$ centralise $\sigma$.

\item[(iii)] Relations that state that the elements listed in 
Theorem $\ref{SU2ntau}$ centralise $\tau$, but omitting the relation $[\tau,VU]=1$.

\item[(iv)] Relations that present 
$\SL(2,q)$ on $\{\tau, \delta, Z\}$,
and $\SL(2,q^2)$ on $\{\sigma, \Delta, UZ^2\}$
as in Theorem $\ref{SU2nSL2}$. 

\item[(v)] The following instances of Steinberg relations:
\begin{enumerate}
\item[(a)] $[\sigma,\tau]=1$;
\item[(b)]  $[\sigma,\sigma^Z]=\tau^{2ZU}$, and
$[\sigma^{\Delta},\sigma^Z]=\tau^{ZU\Delta^m}$, where $\omega_0^m=\omega^2+\omega^{2q}$;
the RHS of the first relation may be replaced by $1$ if $q$ is even, 
and the RHS of the second relation must be replaced by $1$ if $q=3$;
\item[(c)] $[\sigma,\tau^Z]=\sigma^Z\tau^{-ZU}$;
\item[(d)] if $n>2$ then $[\sigma,\sigma^{U^V}]=1$, and if $n=3$ and $q=2$ then 
$[\sigma,\sigma^{U^V\Delta}]=1$;
\item[(e)] if $n>2$ then $[\sigma,\sigma^V]=\sigma^{VU}$, and if $n=3$ and $q=2$ then
$[\sigma,\sigma^{V\Delta}]=\sigma^{VU\Delta^{-1}}$;
\item[(f)] if $n<4$ then $[\tau,\tau^U]=1$;
\item[(g)] if $n=3$ then $[\tau,\sigma^V]=1$;
\item[(h)] if $n>3$ then $[\sigma,\sigma^{V^2}]=1$.
\end{enumerate}
\item[(vi)] If $n=2$ and $q=3$ then $\tau = [\sigma^{-UZ},\sigma]^\Delta$.
\end{enumerate}
\end{theorem}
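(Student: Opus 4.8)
The plan is to follow the template already executed for $\Sp(2n,q)$ in Theorem \ref{Sp}: first identify $N$ inside the presented group $G$, then define all root elements $\sigma_{ij}(\alpha)$ and $\tau_i(s)$ as $N$-conjugates of the base elements (or as elements of the root-$\SL_2$ subgroups), then show that the listed instances of the Steinberg relations force their general forms, and finally invoke the sufficiency of the Steinberg relations for twisted groups to conclude that the subgroup generated by the root elements is all of $\SU(2n,q)$ and coincides with $G$. Throughout, relation set (iv) carries most of the structural weight, since it supplies the full long root group $\{\tau_1(s):s\in\GF(q)\}$ and short root group $\{\sigma_{12}(\alpha):\alpha\in\GF(q^2)\}$, including additivity and the $p$-th power relations.

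First I would use (i) together with Theorem \ref{SU2ncore} to realise $N=\langle Z,\delta,\Delta,U,V\rangle$ in $G$, so that the elements $Z_i$, $\delta_i$, $\Delta_{ij}$ are available and are permuted by $\langle U,V\rangle\cong S_n$ in the natural way. The relations omitted from Theorem \ref{SU2ncore} in (i)---namely (iii),(vi) of Theorem \ref{SUDeltaUV}, (ii) of Theorem \ref{SUDeltadeltaUV}, and (ii),(vii) of Theorem \ref{SU2ncore}---must be recovered from the $\SL(2,\cdot)$ presentations in (iv), exactly as the analogous omitted relations were recovered in the $\Sp$ proof. Next, using the centraliser data of Theorems \ref{SU2nsigma} and \ref{SU2ntau} with relations (ii) and (iii), I would define the full families of root elements as $N$-conjugates of $\sigma$ and $\tau$. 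For $\tau$ this is clean: since conjugation by $\Delta$ scales the parameter by the primitive element $\omega_0$, the $N$-orbit already covers every $\tau_i(s)$ with $s\in\GF(q)^{\times}$, whence the omitted relation $[\tau,VU]=1$ follows as in the symplectic argument. The one genuine complication occurs for $\sigma$ when $n=2$, where the $N$-orbit consists only of the $\sigma_{\pm1,\pm2}(\alpha^2)$; I would repair this using the copy of $\SL(2,q^2)$ on $\{\sigma,\Delta,UZ^2\}$ furnished by (iv) and Theorem \ref{SU2nSL2}, inside which the whole root group appears and so defines every $\sigma_{12}(\alpha)$ in $G$.

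The substantive work is verifying that the instances (v)(a)--(h) imply the general Steinberg relations for $\SU(2n,q)$. For each, I would argue as in the $\Sp$ case that, for fixed suffices, the set of field parameters for which the relation holds is closed under addition (via $[a,bc]=[a,c][a,b]^c$) and under multiplication by the scalar through which the relevant diagonal element acts. The decisively new feature is that the short root groups are parametrised by $\GF(q^2)$, so one must check that the admissible scalars generate $\GF(q^2)$ over the prime field rather than a proper subfield, and that the trace/norm constraints linking short roots to the long root $\tau$ are respected. This is precisely why relation (b) carries a second clause involving $\sigma^{\Delta}$, and why relations (d) and (e) acquire $\Delta$-twisted companions when $n=3$ and $q=2$: these supply the extra $\GF(p)$-direction needed to span $\GF(q^2)$, while the short-to-long commutator in (b) and the mixed relation (c) encode the unitary-specific interaction. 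I expect this step---tracking $\GF(q^2)$-linear closure simultaneously with the trace/norm conditions---to be the main obstacle, together with the genuinely exceptional small cases $q\in\{2,3\}$, where the field-generation arguments degenerate (note the required replacement of a right-hand side by $1$ when $q=3$, and the explicit definition of $\tau$ in relation (vi) for $n=2,\,q=3$).

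Finally, once all Steinberg relations are established in $G$, the sufficiency results of \cite{Steinberg62,Steinberg81,Babaietal97} for twisted groups show that the subgroup $H\le G$ generated by the root elements is isomorphic to $\SU(2n,q)$. It remains to check that the defining generators lie in $H$: the two $\SL(2,\cdot)$ subgroups of (iv) place $Z$, $\delta$, $\Delta$, and $UZ^2$ (hence $U$) in $H$, and since $H$ is normalised by $\langle U,V\rangle\cong S_n$ and $V$ is determined by that action on the root elements, $V\in H$ as well. Therefore $G=H\cong\SU(2n,q)$, as required.
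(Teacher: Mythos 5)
Your proposal follows essentially the same route as the paper's proof: realise $N$ via (i), recover the omitted relations from (iv), define the root elements $\sigma_{ij}(\alpha)$ and $\tau_i(s)$ as $N$-conjugates (using the $\SL(2,q^2)$ subgroup on $\{\sigma,\Delta,UZ^2\}$ to repair the orbit defect when $n=2$), propagate each listed Steinberg instance to its general form by closure under addition and under the scalars induced by the torus, and conclude via the sufficiency of the Steinberg relations together with the argument that the presentation generators lie in the root subgroup. The one slip is your explanation of relation (vi): it is needed not because the field-generation arguments degenerate at $q=3$, but because the Steinberg relations present the universal covering group, which strictly exceeds $\SU(2n,q)$ precisely when $(n,q)\in\{(2,2),(2,3),(3,2)\}$ owing to exceptional Schur multipliers; the paper settles these three small cases by coset enumeration.
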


\begin{proof}
The relations omitted from (i) are supplied by (iv).  
In odd characteristic the relation $\Delta^U=\Delta^{-1}$, which is 
(iii) of Theorem \ref{SUDeltaUV}, follows from the 
relation $\Delta^{UZ^2}=\Delta^{-1}$ implied by 
(iv) of Theorem \ref{SU2nSL2}, since $Z^2=\delta^{(q-1)/2}$
by the same relations, and $[\delta,\Delta]=1$ by (iv) of 
Theorem \ref{SUDeltadeltaUV}.

The relation $[\tau,VU]=1$ omitted from (iii) may be supplied using 
relation (v)(b) exactly as with the presentation for $\Sp(2n,q)$;
here the argument applies in all characteristics.

As to (v)(b), note that $\omega^2+\omega^{2q}=0$ if $q=3$.

Relations (i), (ii) and (iii), together with (iv) if $n=2$, 
allow us to define elements $\sigma_{ij}(\alpha)$ and $\tau_i(s)$ of
$G$ that are permuted by $N$.  

The following Steinberg relations must now be proved to hold in $G$.  
Here $i,j,k,l$ are distinct positive integers in the range $[1,n]$, 
and $\alpha,\beta\in\GF(q^2)$, and $s,t\in\GF(q)$.
\begin{enumerate}
\item[(1)] $\sigma_{ij}(\alpha)=\sigma_{-j,-i}(-\alpha^q)$; 
$\sigma_{i,-j}(\alpha)=\sigma_{j,-i}(\alpha^q)$;
$\sigma_{-i,j}(\alpha)=\sigma_{-j,i}(\alpha^q)$; 
\item[(2)] $\tau_i(s)\tau_i(t)=\tau_i(s+t)$, $\tau_i(s)^p=1$;
\item[(3)] $\sigma_{ij}(\alpha)\sigma_{ij}(\beta)=\sigma_{ij}(\alpha+\beta)$, $\sigma_{ij}(\alpha)^p=1$;
\item[(4)] $[\sigma_{ij}(\alpha),\tau_i(s)]=[\sigma_{ij}(\alpha),\tau_{-j}(s)]=1$;
\item[(5)] $[\sigma_{ij}(\alpha),\sigma_{-i,j}(\beta)]=\tau_{-j}(\alpha^q\beta+\alpha\beta^q)$;
\item[(6)] $[\sigma_{ij}(\alpha),\sigma_{i,-j}(\beta)]=\tau_i(\alpha^q\beta+\alpha\beta^q)$;
\item[(7)] $[\sigma_{ij}(\alpha),\tau_{-i}(s)]=\sigma_{-i,j}(\alpha s)\tau_{-j}(-\alpha^{q+1}s)$, \\
$[\sigma_{ij}(\alpha),\tau_j(s)]=\sigma_{j,-i}(\alpha^qs)\tau_i(-\alpha^{q+1}s)$;
\item[(8)] $[\sigma_{ij}(\alpha),\sigma_{ik}(\beta)]=1$ if $n>2$;
\item[(9)] $[\sigma_{ij}(\alpha),\sigma_{kj}(\beta)]=1$ if $n>2$;
\item[(10)] $[\sigma_{ij}(\alpha),\sigma_{jk}(\beta)]=\sigma_{ik}(\alpha\beta)$ if $n>2$;
\item[(11)] 
$[\sigma_{ij}(\alpha),\sigma_{-i,k}(\beta)]=\sigma_{-j,k}(-\alpha^q\beta)$, \\
$[\sigma_{ij}(\alpha),\sigma_{k,-j}(\beta)]=\sigma_{i,-k}(\alpha\beta^q)$,\\
$[\sigma_{ij}(\alpha),\sigma_{k,-i}(\beta)]=[\sigma_{ij}(\alpha),\sigma_{-j,k}(\beta)]=1$, 
where in all cases $n>2$;
\item[(12)] $[\tau_i(s),\tau_j(t)]=[\sigma_{ij}(\alpha),\tau_k(s)]=[\sigma_{ij}(\alpha),\sigma_{kl}(\beta)]=1$ 
for all $n$, if $n>2$, and if $n>3$ respectively.
\end{enumerate}

Similar relations hold when we lift the restriction that $i,j,k$ and $l$ are positive. 
These relations are obtained from those listed by conjugating 
with $Z_i$, $Z_j$, $Z_k$, and $Z_l$, 
as appropriate, and may differ from them in the signs of the coefficient: 
$\sigma_{-i,j}(\alpha)^{Z_i} = \sigma_{i,j}(-\alpha)$ 
for $i>0$ and $j$ positive or negative, and
similarly $\sigma_{i,-j}(\alpha)^{Z_j}=\sigma_{i,j}(-\alpha)$ for $j>0$ and 
$i$ positive or negative.
It suffices to prove that these relations hold in $G$ when the 
suffices $i$ to $l$ are positive.
We assume that $(i,j,k,l) = (1,2,3,4)$; the general case 
follows by conjugation in $\langle U,V\rangle$.

Relation (vi) is also required, since the Steinberg relations define the 
universal covering group of 
the simple group, and this is not the classical group in the case 
of $\PSU(2n,q)$ when $(n,q)$ is
$(2,2)$, or $(2,3)$, or $(3,2)$.  Our presentations include relations that 
are not Steinberg 
relations, in 
particular those for $N$.  We find, by coset enumeration, that our 
presentations in these
three cases are correct; in particular, no further relation is needed if 
$(n,q)$ is $(2,2)$ or $(3,2)$.
Note that $\SU(4,2)$ is simple, and its universal covering group is 
isomorphic to $\Sp(4,3)$, which has centre of order 2.

Relations (1) hold by definition, using (i) and (iii), but 
$Z_i^2$ commutes with $\tau_{\pm i}(s)$
and conjugates $\sigma_{\pm i,\pm j}(\alpha)$ to its inverse.

Relations (2) and (3) follow from Theorem \ref{SU2nSL2} and relations (i) to (iv).

We now prove the first relation in (4).  
The instance $i=1$, $j=2$, $\alpha=s=1$ is given by relation (a).  
Let $P$ be the set of pairs $(\alpha,s)$ for which (4) holds with $i=1$ and $j=2$.  
Clearly if $(\alpha,s)\in P$ and
$(\beta,s)\in P$, then $(\alpha+\beta,s)\in P$; 
and if $(\alpha,s)\in P$ and $(\alpha,t)\in P$, then
$(\alpha,s+t)\in P$.
Conjugating by $\Delta$ shows that if $(\alpha,s)\in P$ then 
$(\omega^2\alpha,\omega_0s)\in P$, and so $(\omega^{2(q-1)},1)\in P$.  
If $q \ne 3$ then $\omega^{2(q-1)}$ generates $\GF(q^2)$ as a field, 
so $(\alpha,1) \in P$ for all $\alpha\in\GF(q^2)$.
Since $\omega_0$ generates $\GF(q)$ as a field, it follows 
that $(\alpha, s)\in P$ for all $\alpha$ in $\GF(q^2)$ and
all $s\in\GF(q)$.  For $q=3$ (or more generally prime), 
if $(\alpha,s)\in P$ for one $s$ in $\GF(q)^\times$ then
$(\alpha,s)\in P$ for all $s$ in $\GF(q)$, 
and since $\omega^2$ generates $\GF(q^2)$ as a field, again $(\alpha,s)\in P$
for all $\alpha$ in $\GF(q^2)$, and all $s\in\GF(q)$.
The second relation in (4) follows from the first by applying (1), 
and conjugating by $Z_iZ_j$.

We now prove relation (5).  Let $P$ be the set of pairs $(\alpha,\beta)$ such that 
(5) holds 
with $i=1$ and $j=2$.  Relation (b) states that $(1,1)\in P$ and $(\omega^2,1)\in P$.  
Relations (2) and (4) show that if $(\alpha,\beta_1)$ and $(\alpha,\beta_2)$ lie in $P$, 
then so does $(\alpha,\beta_1+\beta_2)$.  Similarly, 
if $(\alpha_1,\beta)$ and $(\alpha_2,\beta)$
lie in $P$, then so does $(\alpha_1+\alpha_2,\beta)$.
Conjugating by $\Delta$ shows that if  $(\alpha,\beta)\in P$ then 
$(\omega^2\alpha,\omega^{1-q}\beta)\in P$.
If $q$ is odd then 
$(\omega^{q+1}\alpha,-\beta)=(\omega_0\alpha,-\beta)\in P$, so
$(\omega_0\alpha,\beta)\in P$.  
Similarly, if $q$ is even, then $(\omega_0^2\alpha,\beta)\in P$,
and hence $(\omega_0\alpha,\beta)\in P$.
Conjugating by $\Delta_{21}$ shows that if
$(\alpha,\beta)\in P$ then $(\omega^{-2}\alpha,\omega^{q-1}\beta)\in P$ and 
so $(\alpha,\omega_0\beta)\in P$.
It follows that $P=\GF(q^2)\times\GF(q^2)$ since $\omega^2\not\in\GF(q)$.  

We now prove relation (6).  By (5), 
$[\sigma_{ij}(\alpha),\sigma_{-i,j}(\beta)]=\tau_{-j}(\alpha^q\beta+\alpha\beta^q)$.
Conjugating by $Z_j$ now gives 
$[\sigma_{i,-j}(\alpha),\sigma_{-i,-j}(\beta)]=\tau_j(\alpha^q\beta+\alpha\beta^q)$.
Equivalently 
$[\sigma_{j,-i}(\alpha),\sigma_{-j,-i}(\beta)]=\tau_i(\alpha^q\beta+\alpha\beta^q)$.
By (1) this can be written as
$[\sigma_{i,-j}(\alpha^q),\sigma_{i,j}(-\beta^q)] = \tau_i(\alpha^q\beta+\alpha\beta^q),$
which is equivalent to (6).

We now prove the first relation in (7).  
An instance with $i=1$, $j=2$, and $\alpha=s=1$ is given by
relation (c).  
Let $P$ be the set of pairs $\{ (\alpha,s) \, | \, \alpha\in\GF(q^2),s\in\GF(q) \}$
for which (7) holds with $(i,j)=(1,2)$.
Suppose that $(\alpha,s)$ and $(\beta,s)$ lie in $P$.  Now 
\begin{eqnarray*}
[\sigma_{12}(\alpha+\beta),\tau_{-1}(s)] & = & 
[\sigma_{12}(\alpha),\tau_{-1}(s)]^{\sigma_{12}(\beta)}[\sigma_{12}(\beta),\tau_{-1}(s)] \\
& = & (\sigma_{-1,2}(\alpha s)\tau_{-2}(-\alpha^{q+1}s))^{\sigma_{12}(\beta)}\sigma_{-1,2}(\beta s)\tau_{-2}(-\beta^{q+1}s) \\
& = & 
\sigma_{-1,2}(\alpha s)\tau_{-2}(-\alpha^q\beta s-\alpha\beta^qs)\tau_{-2}(-\alpha^{q+1}s)\sigma_{-1,2}(\beta s)\tau_{-2}(-\beta^{q+1}s) \\ 
& = & 
\sigma_{-1,2}((\alpha+\beta)s)\tau_{-2}(-(\alpha+\beta)^{q+1}s).
\end{eqnarray*}
Here we used relations (2)--(5).
So if $(\alpha,s)$ and $(\beta,s)$ lie in $P$ then so does $(\alpha+\beta,s)$.
Similarly, using (2), (3), (4), and (12), one proves that,
if $(\alpha,s)$ and $(\alpha,t)$ lie in $P$ then so does $(\alpha,s+t)$.
Conjugating by $\Delta$ shows that if $(\alpha,s)\in P$ then 
$(\alpha\omega^2,s\omega_0)\in P$, so $(\alpha\omega^{2(q-1)},s)\in P$.
As in the proof of (4), 
it follows that if $(\alpha,s)$ lies in $P$ for one value of $\alpha$ then 
this holds for all $\alpha$; and hence $(\alpha,s)\in P$ for all $\alpha$ and all $s$.
The second relation in (7) follows from the first by applying (1), 
and conjugating by $Z_iZ_j$.

We now prove relation (8).  
Clearly if this holds for $(\alpha_1,\beta)$ and $(\alpha_2,\beta)$ then
it also holds for $(\alpha_1+\alpha_2,\beta)$, and similarly with the roles 
of $\alpha$ and $\beta$ reversed.  
Let $P$ be the set of all
pairs $(\alpha,\beta)$ for which (8) holds. 
If $(\alpha,\beta)\in P$, then 
conjugating by $\Delta_{ij}$ and $\Delta_{ik}$ shows that
both $(\omega^2\alpha,\omega\beta)\in P$
and $(\omega\alpha,\omega^2\beta)\in P$, so $(\alpha,\omega^{-3}\beta)\in P$.  
If $q>2$ then $\omega^3$
generates $\GF(q^2)$ as a field, so (8) follows from one instance, 
such as (d).
If $q=2$ and $n>3$ then the general case also follows from one instance, 
by conjugation with $\Delta_{jl}$ and $\Delta_{kl}$.  If $(n, q) = (3, 2)$ 
then we need the additional relation in 
(d), which asserts that $(1,\omega)\in P$.

Now (9) follows from (8) since $\sigma_{ij}(\alpha)=\sigma_{-j,-i}(-\alpha^q)$.

Using (8) and (9) it follows that if (10) holds 
for $(\alpha_1,\beta)$ and $(\alpha_2,\beta)$ then
it holds for $(\alpha_1+\alpha_2,\beta)$, and similarly 
with $\alpha$ and $\beta$ reversed.
The result now follows from (e) exactly as in the proof of (8), 
and again we need an extra relation if $(n,q)=(3,2)$,
which here gives $[\sigma_{12}(2), \sigma_{23}(\omega)]=\sigma_{13}(\omega)$.

The relations in (11) follow from 
(8)--(10) by conjugating with 
$Z_i$, $Z_j$ and $Z_k$, as appropriate, and using (1).

If $n>3$ then relations (f) and (g) may be supplied using (h) and (b).
Relations (12) now follow. 

\medskip 
It follows that the conjugates of $\sigma$ and $\tau$ under 
$N=\langle Z,\Delta,U,V\rangle$, omitting $V$ if $n=2$, 
generate a subgroup $H$ of $G$ that is isomorphic to $\SU(2n,q)$.  
But $Z$ and $\Delta$ and $U$ lie in $H$, as do their conjugates under $N$, and 
hence if $n>2$ then $V$ lies in $H$, as a product of $N$-conjugates of $U$.  
Thus $G=H$, as required.
\end{proof}

We shall need the following result.  
\begin{corollary}\label{SU(4,q)/-I4} 
If $q$ is odd, then a presentation for $\SU(4,q)/\langle -I_4\rangle$  
is obtained from the presentation for $\SU(4,q)$ of Theorem $\ref{SU2npres}$ by 
replacing the presentation for $\SL(2,q^2)$ on $\{\sigma,\Delta,UZ^2\}$ by a 
presentation of $\PSL(2,q^2)$ on these generators as in Theorem $\ref{CRW-PSL-odd}$.
\end{corollary}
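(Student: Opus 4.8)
The plan is to realise the passage from $\SU(4,q)$ to $\SU(4,q)/\langle -I_4\rangle$ as the single act of killing the central involution $z := (UZ^2)^2$. First I would identify $z$. By Theorem \ref{SU2nSL2} the subgroup $H=\langle\sigma,\Delta,UZ^2\rangle$ is a copy of $\SL(2,q^2)$ in which $UZ^2$ plays the role of the Weyl element (the generator $U$ of the presentations of Section \ref{CRW}), so $(UZ^2)^2$ is the image of $U^2=-I_2$, the central involution of $H$. A direct matrix computation confirms that this element is $-I_4$: taking $\psi=1$ (permissible by the remark in Section \ref{notation-SU2n}), we have $Z_1^2=\diag(-1,-1,1,1)$ on $(e_1,f_1,e_2,f_2)$, so $UZ_1^2U=\diag(1,1,-1,-1)$ and hence $(UZ^2)^2 = UZ_1^2UZ_1^2 = \diag(-1,-1,-1,-1) = -I_4$. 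Since $-I_4$ is central in $\SU(4,q)$, its normal closure is $\langle -I_4\rangle$, and von Dyck's theorem identifies $\SU(4,q)/\langle -I_4\rangle$ with the group presented by the relators of Theorem \ref{SU2npres} together with the single extra relation $z=1$.

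Next I would compare the two presentations that differ between the two statements. As $q$ is odd, $q^2\equiv 1\bmod 4$, so the relevant presentations are those of Theorem \ref{CRW-SL-odd} for $\SL(2,q^2)$ and of Theorem \ref{CRW-PSL-odd} for $\PSL(2,q^2)$ (not the $q\equiv 3\bmod 4$ variants). Translated onto $\{\sigma,\Delta,UZ^2\}$, these two presentations share every relator except that the $\SL$ one carries $(\sigma(UZ^2)^{-1})^3=z$, $(UZ^2\,\Delta)^2=z$, $(UZ^2)^4=1$, and $\Delta^{(q^2-1)/2}=z$ where the $\PSL$ one carries $(\sigma\,UZ^2)^3=1$, $(UZ^2\,\Delta)^2=1$, $(UZ^2)^2=1$, and $\Delta^{(q^2-1)/2}=1$. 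Imposing $z=(UZ^2)^2=1$ forces $UZ^2=(UZ^2)^{-1}$ and collapses the first block of relators exactly onto the second; conversely the $\PSL$ relator $(UZ^2)^2=1$ is precisely $z=1$, and in its presence the $\PSL$ relators return the $\SL$ relators. Thus, modulo the relators common to both, the $\PSL$ sub-presentation is equivalent to the $\SL$ sub-presentation augmented by $z=1$.

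It remains to assemble these. Write $\bar G$ for the group of the corollary (the relators of Theorem \ref{SU2npres} with the $\SL(2,q^2)$ block replaced by the $\PSL(2,q^2)$ block) and $Q:=\SU(4,q)/\langle -I_4\rangle$. The equivalence above shows on one hand that all defining relators of $\bar G$ hold in $Q$ — the relators of Theorem \ref{SU2npres} hold in $\SU(4,q)$ and $z=1$ holds in $Q$ — which yields a surjection $\bar G\twoheadrightarrow Q$; and on the other hand that the relators of Theorem \ref{SU2npres} together with $z=1$ all hold in $\bar G$, which yields a surjection $Q\twoheadrightarrow\bar G$. Since $Q$ is finite, the two surjections force $|\bar G|=|Q|$, so both are isomorphisms and $\bar G\cong\SU(4,q)/\langle -I_4\rangle$.

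The only delicate point is the bookkeeping in the first paragraph: confirming that $UZ^2$ is the generator corresponding to $U$ in the base presentation and that $(UZ^2)^2$ is genuinely the scalar $-I_4$ rather than some other central element. This is exactly why the corollary is special to $\SU(4,q)$: for $n>2$ the analogous element $(UZ^2)^2$ inverts only the first two hyperbolic pairs and is not $-I_{2n}$. Once this is settled, the remainder is the routine observation that the $\SL\to\PSL$ presentation change is nothing but the adjunction of the relation that kills this central involution.
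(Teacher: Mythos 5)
Your argument is correct and is precisely the (implicit) justification intended here: the paper states this corollary without proof, and your two observations --- that $(UZ^2)^2=Z_1^2Z_2^2=-I_4$ in $\SU(4,q)$, and that the $\PSL(2,q^2)$ relators of Theorem \ref{CRW-PSL-odd} are equivalent, modulo the relators common to both presentations, to the $\SL(2,q^2)$ relators of Theorem \ref{CRW-SL-odd} together with the single relation $(UZ^2)^2=1$ --- are exactly what make it immediate via von Dyck. Your closing remark on why this fails for $n>2$ is also apt; there is nothing to correct.
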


\subsection{A presentation for $\PSU(2n,q)$} 
The centre of $\SU(2n,q)$ has order $\gcd(q+1, 2n)$.

\begin{lemma}
Let $v_2(x)$ denote the $2$-adic value of the integer $x$.
If $v_2(n)\ge v_2(q+1)$ then the centre of $\SU(2n,q)$ is generated by
$(\Delta^{q-1}UV^{-1})^{(n-1)(q+1)/\gcd(q+1,n)}$.  If $v_2(q+1)>v_2(n)$
then the centre is generated by $Z^2(\Delta^{q-1}UV^{-1})^{(n-1)(q+1)/(2\gcd(q+1,n))}$.
\end{lemma}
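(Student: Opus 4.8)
The plan is to realise the claimed generator explicitly as a scalar matrix and then check that it has the right order. Recall first that the centre of $\SU(2n,q)$ consists of the scalar matrices $\lambda I_{2n}$ with $\lambda^{q+1}=1$ (to preserve the form) and $\lambda^{2n}=1$ (determinant one), so it is cyclic of order $\gcd(q+1,2n)$; any central element whose order equals this number is a generator. Set $g:=\Delta^{q-1}UV^{-1}$. Since $\Delta=\Delta_{12}$ is diagonal while $U$ and $V$ are permutation matrices, $g$ is a monomial matrix $DP$ with diagonal part $D=\Delta^{q-1}$ and permutation part $P=UV^{-1}$. Reading the product left to right (the convention behind $a^b=b^{-1}ab$), $P$ fixes the index $1$ and cyclically permutes $\{2,\dots,n\}$, acting identically on the $e$- and $f$-labels.

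Next I would compute $g^{n-1}$. For a monomial matrix the $(n-1)$st power is diagonal, and on each coordinate lying in the $(n-1)$-cycle its entry equals the product of the diagonal scalars of $D$ around that cycle. On the $e$-side these scalars are $\omega^{-(q-1)},\omega^{q-1},1,\dots,1$ at positions $1,2,\dots,n$, so the product over $\{2,\dots,n\}$ is $\omega^{q-1}$, while the fixed coordinate $e_1$ contributes $\omega^{-(q-1)(n-1)}$. On the $f$-side the scalars are $\omega^{q(q-1)},\omega^{-q(q-1)},1,\dots,1$; using $-q(q-1)\equiv q-1\pmod{q^2-1}$ one again obtains $\omega^{q-1}$ over $\{2,\dots,n\}$, with $f_1$ contributing $\omega^{q(q-1)(n-1)}$. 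Thus $g^{n-1}$ is the scalar $\omega^{q-1}$ on every coordinate except the plane $\langle e_1,f_1\rangle$. Raising the diagonal matrix $g^{n-1}$ to a power $k$ and comparing the two exceptional entries with $\omega^{k(q-1)}$, a short congruence (reducing modulo $q+1$ via $q\equiv-1$) shows that $(g^{n-1})^k$ is scalar precisely when $kn\equiv0\pmod{q+1}$; the least such $k$ is $k_0=(q+1)/\gcd(q+1,n)$, and the resulting scalar $\omega^{k_0(q-1)}$ has order $(q+1)/\gcd(q+1,k_0)=\gcd(q+1,n)$.

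I would then split on the $2$-adic valuation exactly as in the statement, using $v_2(\gcd(q+1,2n))=\min(v_2(q+1),v_2(n)+1)$. When $v_2(n)\ge v_2(q+1)$ we have $\gcd(q+1,2n)=\gcd(q+1,n)$, so $g^{(n-1)k_0}$ is a central scalar whose order equals $|Z(\SU(2n,q))|$ and hence generates it; this is the first displayed generator. When $v_2(q+1)>v_2(n)$ the centre is twice as large, $\gcd(q+1,2n)=2\gcd(q+1,n)$, and no power of $g$ alone reaches it, so I bring in $Z=Z_1$, noting that $Z^2=\diag(-1,-1)$ on $\langle e_1,f_1\rangle$ and is the identity elsewhere, precisely on the exceptional plane. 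With $k_1=(q+1)/(2\gcd(q+1,n))$ (an integer, since $2\gcd(q+1,n)\mid q+1$ in this case) the matrix $g^{(n-1)k_1}$ is diagonal but not scalar; multiplying by $Z^2$ alters only the $e_1,f_1$ entries by $-1$, and the condition that the product be scalar reduces to $k_1 n\equiv(q+1)/2\pmod{q+1}$, equivalently to $n/\gcd(q+1,n)$ being odd, which holds because $v_2(n)=v_2(\gcd(q+1,n))$ here. The resulting scalar $\omega^{k_1(q-1)}$ then has order $2\gcd(q+1,n)=\gcd(q+1,2n)$, so $Z^2g^{(n-1)k_1}$ generates the centre, as claimed.

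The routine parts are the monomial-power computation and the reductions modulo $q+1$. The only genuinely delicate point is the second case, where one must simultaneously verify two things: that the single exceptional coordinate of $g^{n-1}$ is exactly the plane $\langle e_1,f_1\rangle$ on which $Z^2$ acts (this is what pins down the left-to-right reading of $UV^{-1}$), and that the parity identity ``$n/\gcd(q+1,n)$ odd'' is precisely what lets the $-1$ contributed by $Z^2$ cancel the square-root-of-unity discrepancy and land in the correct coset. I expect the bookkeeping of which index is fixed by $P$, and the check that the $e_1$- and $f_1$-entry conditions collapse to the same congruence, to be where care is needed.
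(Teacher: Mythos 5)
Your proposal is correct and follows essentially the same route as the paper: both identify $(\Delta^{q-1}UV^{-1})^{n-1}$ as a diagonal matrix that is $\theta^{-1}=\omega^{q-1}$ outside $\langle e_1,f_1\rangle$ and $\theta^{n-1}$ on it, reduce scalarity of its $k$th power to $\theta^{nk}=1$, and in the case $v_2(q+1)>v_2(n)$ use $Z^2=\diag(-1,-1,1,\ldots,1)$ together with the oddness of $n/\gcd(q+1,n)$ to reach the full centre of order $\gcd(q+1,2n)$. Your treatment is if anything more explicit than the paper's (which simply asserts the diagonal form of $g$ and the equality of the two eigenvalues), and your identification of the fixed index of $UV^{-1}$ as $1$ is the correct reading.
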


\begin{proof}
Let $\theta := \omega^{1-q}$, so 
$\Delta^{q-1}= \diag(\theta,\theta,\theta^{-1},\theta^{-1},1,1,\ldots,1)$. 
If $g := (\Delta^{q-1}UV^{-1})^{n-1}$ then 
$g = \diag(\theta^{(n-1)},\theta^{(n-1)},
\theta^{-1},\theta^{-1},\ldots,\theta^{-1})$.
So $g^k$ is a scalar matrix if and only if $\theta^{nk}=1$; 
equivalently, 
$k$ is a multiple of $(q+1)/\gcd(q+1,n)$.  
So $g^{(q+1)/\gcd(q+1,n)}$ is a scalar element of 
$\SU(2n,q)$ of order $\gcd(q+1,n)$.  
If $v_2(n)\ge v_2(q+1)$ then $\gcd(q+1,n)=\gcd(q+1,2n)$,
as required.  Now suppose that $v_2(q+1)>v_2(n)$.
Observe that $Z^2g^{(q+1)/(2\gcd(q+1,n))}$ 
has eigenvalues $-\theta^{(n-1)(q+1)/(2\gcd(q+1,n)}$ 
and $\theta^{-(q+1)/(2\gcd(q+1,n))}$,
and these are equal, so we have a scalar element of order $2\gcd(q+1,n)=\gcd(q+1,2n)$,
as required.
\end{proof}

\subsection{Standard generators for $\SU(2n, q)$} 
In Table 1 of \cite{even,odd} the standard generators for 
$\SU(2n, q)$ are labelled $s, t, \delta, u, v, x, y$. 
Observe that $s = Z^{-1}; t = \tau^{-1}; 
u = U; v = V; x = \sigma; y = \Delta^{-1}$; and 
the standard generator $\delta$ is the 
inverse of the presentation generator $\delta$. 

\section{A presentation for $\SU(2n+1,q)$ for $n>1$}

\subsection{Generators and notation}
Let $q=p^e$ for a prime $p$.
Let $\omega$ be a primitive element of $\GF(q^2)$,  
and let $\omega_0=\omega^{q+1}$, so $\omega_0$ is a primitive element of $\GF(q)$. 
Define $\psi = \omega^{(q+1)/2}$ if $q$ is odd, and $\psi = 1$ if $q$ is even.   
Let $\phi$ be a fixed element of $\GF(q^2)$ of trace $-1$:  
define $\phi=-1/2$ if $q$ is odd, 
and $\phi=\omega/(\omega+\omega^q)$ if $q$ is even.  

Let $n>1$.
We take a basis $(e_1,f_1,e_2,f_2,\ldots,e_n,f_n,w)$ of the natural module,
with symmetric bilinear form defined by 
$e_i.f_i=f_i.e_i=1$ for all $i$, and 
$w.w=1$, and the form vanishes on all other pairs of basis vectors.  
So $\SU(2n+1,q)$ contains $\SU(2n,q)$ as defined in Section \ref{notation-SU2n}.
We retain the notation of that section and define the following new elements
of $\SU(2n+1, q)$, where  $1\le i\le n$ and $\alpha\in\GF(q^2)$. 

\smallskip
 $t_i = (e_i\mapsto f_i, f_i\mapsto e_i, w\mapsto -w)$;

\smallskip
$\Gamma_i = 
(e_i\mapsto\omega^{-1} e_i, f_i\mapsto \omega^{q}f_i, w\mapsto\omega^{1-q}w)$;

\smallskip
$\myupsilon_i(\alpha)  = 
(e_i\mapsto e_i+\phi\alpha^{q+1}f_i+\alpha w, w\mapsto w-\alpha^q f_i)$;

\smallskip
$\myupsilon_i(\alpha)^{-1} = 
(e_i\mapsto e_i-(\phi+1)\alpha^{q+1}f_i-\alpha w,
w\mapsto w+\alpha^q f_i)$;

\smallskip
$\myupsilon_{-i}(\alpha)=
(f_i\mapsto f_i+\phi\alpha^{q+1}e_i -\alpha w,  w\mapsto w +\alpha^q e_i)
= \myupsilon_i(\alpha)^{t_i}$. 

\smallskip
Recall that $\sigma_{ij}(\alpha) = 
(e_i\mapsto e_i+\alpha e_j, f_j\mapsto f_j-\alpha^q f_i)$ 
and $\tau_i(s) = (e_i\mapsto e_i-s\psi f_i)$,  
where $1 \leq i,j \leq n$ and $i \not=j$, and $s\in \GF(q)$. 

All these elements of $\SU(2n+1,q)$ centralise those basis elements 
that they are not stated to move.

\subsection{A presentation for $\SU(2n + 1, q)$}
We give a presentation for $\SU(2n+1,q)$ on the generating set 
$\{ \myupsilon = \myupsilon_1(1), \sigma = \sigma_{12}(1), \tau = \tau_1(1), 
\Gamma = \Gamma_1, t = t_1, U, V\}$, 
omitting $V$ if $n=2$.

The remaining generators used in the presentation for $\SU(2n,q)$ are defined as follows:  
$\Delta=\Gamma\Gamma^{-U}$;
if $q$ is even, then $Z=t$, else $Z=t\Gamma^{(q+1)/2}$;
and $\delta=\Delta\Delta^{-Z}=\Gamma^{q+1}$.

%
%
%
%
%
%
%
%
%



Let $N$ be the subgroup of $\SU(2n+1,q)$ generated by $\{\Gamma,t,U,V\}$, 
omitting $V$ if $n=2$.
It is isomorphic to $(C_{q^2-1}:C_2)\wr S_n$, where the $2$-cycle generated by $t$ acting 
on the $(q^2-1)$-cycle generated by $\Gamma$ acts as $g\mapsto g^{-q}$. 

\begin{theorem}\label{SU2n+1top}
Let $G$ be the group generated by 
$\{\Gamma,t,U,V\}$, omitting $V$ if $n=2$,
 subject to the relations given below. 
Then $G$ is isomorphic to $N$.
\begin{enumerate}
\item[(i)] Defining relations for $S_n$ on $\{U,V\}$, omitting $V$ if $n=2$.
\item[(ii)] $\Gamma^{q^2-1}=t^2=1$, $\Gamma^t=\Gamma^{-q}$.
\item[(iii)] If $n>2$ then $[\Gamma,U^V]=[t,U^V]=1$, 
and if $n>3$ then $[\Gamma,VU]=[t,VU]=1$.
\item[(iv)] $[\Gamma,\Gamma^U]=[t,t^U]=[\Gamma,t^U]=1$.
\end{enumerate}
\end{theorem}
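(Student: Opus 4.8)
The plan is to follow the now-standard strategy used in the earlier normaliser theorems (compare the proofs of Theorems \ref{SpN} and \ref{SU2ncore}). First I would observe that every listed relation holds visibly among the matrices $\Gamma, t, U, V$ that generate $N$: relation (ii) records $|\langle\Gamma\rangle|$, the order of $t$, and the action $\Gamma^t=\Gamma^{-q}$; relations (i), (iii) and (iv) are immediate permutation and commuting facts in the wreath product $(C_{q^2-1}{:}C_2)\wr S_n$. Hence $N$ is a homomorphic image of the presented group $G$, so $|G|\ge|N|=2^n(q^2-1)^n\,n!$. The substance of the proof is the matching upper bound $|G|\le 2^n(q^2-1)^n\,n!$, after which the two inequalities force $G\cong N$.

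For the upper bound I would first use relation (i) to identify $\langle U,V\rangle$ (or $\langle U\rangle$ when $n=2$) with $S_n$ acting naturally on $\{1,\ldots,n\}$. As in the earlier proofs, relations (iii) state precisely that both $\Gamma$ and $t$ are centralised by the copy of $S_{n-1}$ fixing the point $1$, generated by $U^V=(2,3)$ and $VU=(2,3,\ldots,n)$. Consequently the $\langle U,V\rangle$-conjugates of $\Gamma$, and those of $t$, are well defined, at most $n$ in number, and may be labelled $\Gamma_i,t_i$ for $1\le i\le n$, permuted by $\langle U,V\rangle$ through the natural $S_n$-action on the subscripts. When $n=2$ no relation of type (iii) is needed, since the relevant $S_1$ is trivial, and I would simply set $\Gamma_2=\Gamma^U$ and $t_2=t^U$; likewise $VU$ is only required once $n>3$.

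Next I would pin down the base group $B:=\langle \Gamma_i,t_i : 1\le i\le n\rangle$. Relation (iv) gives $[\Gamma_1,\Gamma_2]=[t_1,t_2]=[\Gamma_1,t_2]=1$, and conjugating by $\langle U,V\rangle$ together with the $2$-transitivity of $S_n$ upgrades these to $[\Gamma_i,\Gamma_j]=[t_i,t_j]=[\Gamma_i,t_j]=1$ for all $i\ne j$; thus the factors $\langle\Gamma_i,t_i\rangle$ commute pairwise. Inside a single factor, relation (ii) supplies $\Gamma_i^{q^2-1}=t_i^2=1$ and $\Gamma_i^{t_i}=\Gamma_i^{-q}$, and since $x\mapsto x^{-q}$ squares to $x\mapsto x^{q^2}=x$ on $\langle\Gamma_i\rangle$, this presents a group of order at most $2(q^2-1)$. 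Hence $|B|\le\bigl(2(q^2-1)\bigr)^n$. Finally, $\langle U,V\rangle$ normalises $B$ and $G=B\langle U,V\rangle$ (the generators $\Gamma=\Gamma_1$, $t=t_1$ lie in $B$), so $|G|\le\bigl(2(q^2-1)\bigr)^n\,n!=|N|$, as required.

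I do not expect a genuine obstacle here: this is among the most routine of the normaliser theorems. The only points needing care are the bookkeeping of the degenerate small cases (the omission of $V$ and the absence of the type-(iii) relations when $n\le 3$), and the observation that well-definedness of the $\Gamma_i,t_i$ follows already from the centraliser relations rather than requiring the entire stabiliser to be exhibited. Both the verification that the relations hold in $N$ and the order count are immediate, so the argument should go through cleanly.
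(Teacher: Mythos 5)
Your argument is correct and is exactly the standard wreath-product verification that the paper invokes with its one-line proof (``This is a standard wreath product presentation''): relations hold in $N$ giving the lower bound, relations (iii) make the conjugates $\Gamma_i,t_i$ well defined, (iv) plus $2$-transitivity makes the factors commute, and (ii) bounds each factor by $2(q^2-1)$. Nothing is missing; you have simply written out the details the paper leaves implicit.
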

\begin{proof}
This is a standard wreath product presentation.
\end{proof}

\begin{theorem}\label{SU2n+1ups}
The centraliser $H$ of $\myupsilon=\myupsilon_1(1)$ in $N$
has index $2(q^2-1)n$ in $N$ and 
is generated by the following elements:
\begin{enumerate}
\item[(i)] $U^V$ if $n>2$;
\item[(ii)] $VU$ if $n>3$;
\item[(iii)] if $q$ is even, then $t^U$, else $t^U\Delta^{(q^2-1)/2}$;
\item[(iv)] if $n=2$ and $q\equiv 1\bmod 3$ then 
$\Gamma^{q-1}\Gamma^{3U}[t,\Gamma^{-1}]^U$, else $\Gamma^{q-1}\Gamma^{3U}$; 
\item[(v)] $\Gamma^U\Gamma^{-V^2}$ if $n>2$.
\end{enumerate}
\end{theorem}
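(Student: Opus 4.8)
The plan is to follow the template of Theorems \ref{SU2nsigma} and \ref{SU2ntau}: first determine the $N$-orbit of $\myupsilon=\myupsilon_1(1)$ to obtain the index, then verify that the listed elements centralise $\myupsilon$, and finally count to show they generate the whole centraliser. The one genuinely new feature, which I would flag at the outset, is that the torus element $\Gamma_j$ with $j\neq 1$ acts nontrivially on $\myupsilon_1$ through the \emph{shared} anisotropic vector $w$.

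First I would record the action of the torus $T=\langle\Gamma_1,\dots,\Gamma_n\rangle$ on the parameter of $\myupsilon_1$. A direct matrix computation gives $\myupsilon_1(\alpha)^{\Gamma_1}=\myupsilon_1(\alpha\omega^{q-2})$, whereas for $j\neq 1$ one gets $\myupsilon_1(\alpha)^{\Gamma_j}=\myupsilon_1(\alpha\omega^{q-1})$, the latter because $\Gamma_j$ rescales $w$. Hence conjugation by $T$ multiplies the parameter by the subgroup $\langle\omega^{q-2},\omega^{q-1}\rangle$, which is all of $\GF(q^2)^\times$ since $\gcd(q-2,q-1)=1$. Combined with $t_1$ (which interchanges $\myupsilon_1(\alpha)$ and $\myupsilon_{-1}(\alpha)$) and the $S_n$ generated by $U,V$ (which permutes the indices), this shows that the orbit of $\myupsilon$ is exactly $\{\myupsilon_{\pm i}(\alpha):1\le i\le n,\ \alpha\in\GF(q^2)^\times\}$, of size $2(q^2-1)n$; since $N$ visibly preserves this set, the orbit is no larger. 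Thus the index is $2(q^2-1)n$ and $|H|=2^{n-1}(q^2-1)^{n-1}(n-1)!$.

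Next I would introduce the character $\chi\colon T\to\GF(q^2)^\times$ determined by $\chi(\Gamma_1)=\omega^{q-2}$ and $\chi(\Gamma_j)=\omega^{q-1}$ for $j\ge2$, so that $T\cap H=\ker\chi$ has order $(q^2-1)^{n-1}$. Verifying that the listed elements centralise $\myupsilon$ is then routine: elements (i),(ii) fix the index $1$ and the vector $w$; elements (iv) and (v) lie in $\ker\chi$ by a one-line evaluation, e.g.\ $\chi(\Gamma_1^{q-1}\Gamma_2^{3})=\omega^{(q-2)(q-1)+3(q-1)}=\omega^{q^2-1}=1$; and for (iii), when $q$ is odd the reflection $t^U=t_2$ negates $w$ and so sends $\myupsilon_1(1)$ to $\myupsilon_1(-1)$, which is undone by the twist $\Delta^{(q^2-1)/2}$ since $\chi(\Delta)=\omega^{-1}$ rescales the parameter by $\chi(\Delta)^{(q^2-1)/2}=-1$ (for $q$ even no twist is needed).

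For generation I would argue modulo $T\cap H$. Elements (i),(ii) generate the $S_{n-1}$ permuting $\{2,\dots,n\}$, and the $S_{n-1}$-conjugates of (iii) give the reflections at $2,\dots,n$ up to torus, so the listed elements surject onto $H/(T\cap H)\cong C_2\wr S_{n-1}$; it then suffices to show they generate $\ker\chi$. The $S_{n-1}$-conjugates of (v) produce all $\Gamma_i\Gamma_j^{-1}$ ($i,j\ge2$), generating a subgroup $T_0\le\ker\chi$ of order $(q^2-1)^{n-2}$, and $\ker\chi/T_0$ is cyclic of order $q^2-1$. The delicate point — the main obstacle — is the image of (iv) in this cyclic quotient: a short computation shows it has order $q^2-1$ (hence is a generator) except precisely when $q\equiv1\bmod3$, where $3\mid q-1$ forces the order down to $(q^2-1)/3$. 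For $n>2$ this gap is closed by the extra torus element $[\Gamma_2\Gamma_3^{-1},\,g_2]=\Gamma_2^{-(q+1)}=\delta_2^{-1}$, where $g_2$ is element (iii) (its torus part commutes with the torus element (v), so only $t_2$ contributes); its image has order $q-1$ in the cyclic quotient, and $\mathrm{lcm}\bigl((q^2-1)/3,\,q-1\bigr)=q^2-1$, so together with (iv) it generates everything. For $n=2$ element (v) is unavailable, which is exactly why (iv) is replaced there by $\Gamma^{q-1}\Gamma^{3U}[t,\Gamma^{-1}]^U=\Gamma_1^{q-1}\Gamma_2^{\,2-q}$, which lies in $\ker\chi$ and whose image generates the full cyclic group $\ker\chi$ directly. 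A final order count gives $|H|=(q^2-1)^{n-1}\cdot2^{n-1}(n-1)!$, completing the proof.
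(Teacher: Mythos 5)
Your proof is correct and follows essentially the same route as the paper's: orbit--stabiliser for the index, a diagonal-entry check that the listed elements centralise $\myupsilon$, and a build-up of the torus part of the centraliser from the $S_{n-1}$-conjugates of (v), the commutator of (v) with (iii) (which is $\delta_2^{-1}$), and item (iv), topped by a copy of $C_2\wr S_{n-1}$. Your repackaging via the character $\chi$ and the cyclic quotient $\ker\chi/T_0$ (whose cyclicity, asserted but not proved, does follow from $\gcd(q-2,q-1)=1$) is equivalent to the paper's explicit chain of diagonal subgroups $A\le B\le\langle B,\Gamma^{q-1}\Gamma^{3U}\rangle$, and your localisation of the role of $[\,\Gamma^U\Gamma^{-V^2},t^U\,]$ to the case $q\equiv 1\bmod 3$ is a nice explanation of why the statement's case distinction appears only at $n=2$.
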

\begin{proof}
Since the matrix $\diag(a_1,a_2,\ldots,a_{2n+1})$ centralises $\myupsilon$ if 
and only if $a_1=a_2=a_{2n+1}$, 
these elements centralise $\myupsilon$.

The orbit of $\myupsilon$ under conjugation by $N$ is 
$\{\myupsilon_{\pm i}(\alpha) \, | \, 1\le i\le n, 
\alpha\in\GF(q^2)^{\times}\}$, and has size $2(q^2-1)n$.  

Consider the case $n>2$. 
Item (v) is the matrix 
$g = \diag(1,1,\omega^{-1},\omega^q,\omega,\omega^{-q},1,\ldots,1)$; 
clearly the conjugates of $g$ under the copy of $S_{n-1}$ generated by 
items (i) and (ii) generate a group $A$ of order $(q^2-1)^{n-2}$, 
namely 
$$\{\diag(1,1,a_2,a_2^{-q},a_3,a_3^{-q},\ldots,a_n,a_n^{-q},1) \, | \, 
a_i\in\GF(q^2)^{\times}, a_2a_3\ldots a_n=1\}.$$

If $h$ is the matrix given by item (iii) then 
$[g,h]=\diag(1,1,\omega_0,\omega_0^{-1},1,\ldots,1)$; 
this commutator and $A$
generate a group $B$ of order $(q-1)(q^2-1)^{n-2}$, namely 
$$\{\diag(1,1,a_2,a_2^{-q},a_3,a_3^{-q},\ldots,a_n,a_n^{-q},1) \, | \, 
a_i\in\GF(q^2)^{\times}, a_2a_3\ldots a_n\in \GF(q)^{\times}\}.$$

Also, $\Gamma^{q-1}\Gamma^{3U} = 
\diag(\omega^{1-q},\omega^{1-q},\omega^{-3},\omega^{3q},1,\ldots,1,\omega^{1-q})$;
it and $B$ generate a group of order $(q^2-1)^{n-1}$.

Thus $H$ has order 
$(q^2-1)^{n-1}2^{n-1}(n-1)!$, the factor $2^{n-1}$
coming from conjugates of $h$, and the factor of $(n-1)!$ from items (i) and (ii).
So $H$ has index $2(q^2-1)n$ in $N$, as required.

The case $n=2$ is obvious.
\end{proof}

\begin{theorem}\label{SU2n+1sig}
The centraliser of $\sigma=\sigma_{12}(1)$ in $N$ 
has index $2(q^2-1)n (n-1)$ in $N$ and 
is generated by the following elements:
\begin{enumerate}
\item[(i)] $U^{V^2}$ if $n>3$, and $VUU^V$ if $n>4$;
\item[(ii)] both $\Gamma^{V^2}$ and $t^{V^2}$ if $n>2$;
\item[(iii)] if $q$ is even, then $U^t$, else $U^tZ^2$;
\item[(iv)] $\Gamma\Gamma^U$.
\end{enumerate}
\end{theorem}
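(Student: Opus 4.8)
The plan is to mirror the proof of Theorem~\ref{SU2nsigma}: first pin down $|C_N(\sigma)|$ by computing the $N$-orbit of $\sigma$, then check that each listed element centralises $\sigma$, and finally show that these elements generate a subgroup of exactly the right order.

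First I would count the orbit of $\sigma=\sigma_{12}(1)$ under conjugation by $N$. As recalled from Section~\ref{notation-SU2n}, the short root elements $\sigma_{ij}(\alpha)$ with $1\le|i|\ne|j|\le n$ and $\alpha\in\GF(q^2)^\times$ are equal in pairs via $\sigma_{ij}(\alpha)=\sigma_{-j,-i}(-\alpha^q)$. There are $4n(n-1)$ signed index pairs and $q^2-1$ nonzero parameters, so the orbit has size $2(q^2-1)n(n-1)$, provided $N$ acts transitively: conjugation by $\Gamma_1$ and $\Gamma_2$ scales $\alpha$ over all of $\GF(q^2)^\times$, while $\langle U,V\rangle$ and the reflections $t_i$ realise every signed pair of indices. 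Since $|N|=(2(q^2-1))^n\,n!$ by Theorem~\ref{SU2n+1top}, the index is $2(q^2-1)n(n-1)$ and $|C_N(\sigma)|=2^{n-1}(q^2-1)^{n-1}(n-2)!$.

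Next I would verify that each listed element centralises $\sigma$ by direct matrix computation. Item~(iv) is $\Gamma\Gamma^U=\Gamma_1\Gamma_2=\diag(\omega^{-1},\omega^q,\omega^{-1},\omega^q,1,\ldots,1,\omega^{2(1-q)})$, which scales $e_1+\alpha e_2$ and $f_2-\alpha^q f_1$ by a single factor and hence commutes with $\sigma$. Item~(iii) is built to interchange indices $1$ and $2$ compatibly with $\sigma$: one computes that $U^t$ sends $e_1\mapsto f_2$, $f_1\mapsto e_2$, $e_2\mapsto f_1$, $f_2\mapsto e_1$ and fixes $w$, and the extra factor $Z^2=Z_1^2=\diag(-1,-1,1,\ldots,1)$ inserts the sign needed to commute with $\sigma$ in odd characteristic; in even characteristic that sign is immaterial, which is why no correction appears there. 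Items~(i) and~(ii) only involve the indices $3,\ldots,n$ together with the scalar action on $w$, so they visibly fix the index-$1,2$ data of $\sigma$.

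Finally I would count the order of the generated group. Items~(i) and~(ii), together with their $S_{n-2}$-conjugates, produce the elements $\Gamma_i,t_i$ for $3\le i\le n$ and the group $S_{n-2}$ permuting $\{3,\ldots,n\}$; hence they generate a copy $W$ of $(C_{q^2-1}:C_2)\wr S_{n-2}$ of order $(2(q^2-1))^{n-2}(n-2)!$. Items~(iii) and~(iv) generate a group $P$ on the indices $1,2$: here I would show that $\Gamma_1\Gamma_2$ has order $q^2-1$, that item~(iii) normalises $\langle\Gamma_1\Gamma_2\rangle$ and acts on it as $x\mapsto x^{-q}$, and that its square lies in $\langle\Gamma_1\Gamma_2\rangle$ (equal to $(\Gamma_1\Gamma_2)^{(q^2-1)/2}$ for odd $q$, trivial for even $q$), giving $|P|=2(q^2-1)$. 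The subgroups $W$ and $P$ act on disjoint index sets apart from the common scaling of $w$, so they commute; and they meet trivially, since no nonidentity element of either is a pure scaling of $w$. Thus $\langle W,P\rangle=W\times P$ has order $2^{n-1}(q^2-1)^{n-1}(n-2)!=|C_N(\sigma)|$, and being contained in the centraliser it equals it. The same count reads off correctly in the degenerate cases $n=2$ (where $W$ is trivial) and $n=3$ (where $S_{n-2}$ is trivial).

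The step I expect to be the main obstacle is this last order count, specifically establishing the order and normalising action of item~(iii) modulo the torus --- which is exactly where the characteristic-dependent factor $Z^2$ enters --- and ruling out any extra collapse between the blocks $W$ and $P$ through their shared action on the anisotropic vector $w$.
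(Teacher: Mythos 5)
Your proposal is correct and follows essentially the same route as the paper's (much terser) proof: compute the orbit size $2(q^2-1)n(n-1)$ of $\sigma$ under $N$ using the pairwise identifications $\sigma_{ij}(\alpha)=\sigma_{-j,-i}(-\alpha^q)$, check that the listed elements centralise $\sigma$, and observe that items (i)--(ii) generate a copy of $(C_{q^2-1}{:}C_2)\wr S_{n-2}$ while items (iii)--(iv) generate a commuting copy of $C_{q^2-1}{:}C_2$, whose direct product has exactly the order of the centraliser. Your additional verifications — the action $x\mapsto x^{-q}$ of item (iii) on $\langle\Gamma_1\Gamma_2\rangle$, the identity $(U^tZ^2)^2=(\Gamma_1\Gamma_2)^{(q^2-1)/2}$, and the trivial intersection of the two blocks — are all accurate and simply make explicit what the paper leaves as "clear".
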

\begin{proof}
The elements 
$\{\sigma_{\pm i\pm j}(\alpha) \, | \, i\le i \ne j\le n, 
\alpha\in\GF(q^2)^{\times}\}$ of $\SU(2n+1,q)$,
each of which may be written in exactly two ways in this form, 
are permuted transitively by $N$, forming an orbit of size $2(q^2-1)n(n-1)$.

Clearly the itemised elements centralise $\sigma$.
The first two sets generate a copy of $(C_{q^2-1}:C_2)\wr S_{n-2}$,  
and the last two a copy of $C_{q^2-1}:C_2$. Their 
direct product has the claimed index in $N$.
\end{proof}

\begin{theorem}
Let $q=p^e$ for a prime $p$ and let $n\ge 2$.  
Let $G$ be the group generated by 
$\{ \myupsilon, \sigma, \tau, \Gamma, t, U,V \}$, 
omitting $V$ if $n=2$, 
subject to the relations given below. Then $G$ is isomorphic to $\SU(2n + 1,q)$.
\begin{enumerate}
\item[(i)] Defining relations for 
$N=\langle \Gamma,t,U,V\rangle$ as in Theorem $\ref{SU2n+1top}$, 
but omitting relations (ii) of that theorem.

\item[(ii)] Relations that state that the elements listed in 
Theorem $\ref{SU2n+1ups}$ centralise $\myupsilon$.

\item[(iii)] Relations that state that the elements listed in 
Theorem $\ref{SU2n+1sig}$ centralise $\sigma$, 
but omitting items (i) and (ii) of that theorem.

\item[(iv)] Defining relations for $\SU(3,q)$ on 
$\{\myupsilon,\tau^{-1},\Gamma^{-1},t\}$ if $q>2$, 
and on $\{\myupsilon,\myupsilon^{\Gamma^U},\Gamma^{-1},t\}$ 
as in Section $\ref{SU32}$ if $q=2$. 

\item[(v)] Defining relations for $\SL(2,q^2)$ on $\{\sigma,\Delta,W\}$, where 
$\Delta=\Gamma\Gamma^{-U}$, and $W=U$ if $q$ is even, else $W=UZ^2$. 

\item[(vi)] The following instances of Steinberg relations:
\begin{enumerate}
\item[(a)] $[\myupsilon,\myupsilon^U]=\sigma^{-t^U}$; 
\q $([\myupsilon_i(\alpha), \myupsilon_j(\beta)] = \sigma_{i,-j}(\psi^{-1} \alpha \beta^q))$;  
\item[(b)] if $q=4$ then $[\myupsilon, \myupsilon^{\Gamma U}]=\sigma^{\Gamma^7t^U}$;
\q $([\myupsilon_i(\alpha), \myupsilon_j(\omega^{2-q} \beta)] = \sigma_{i,-j}(\omega^7\alpha\beta^q))$; 

\item[(c)] $[\myupsilon,\sigma]=1$;
\q $([\myupsilon_i(\alpha), \sigma_{ij}(\beta)] = 1)$;

\item[(d)] if $q=4$ then $[\myupsilon^\Gamma,\sigma]=1$;
\q $([\myupsilon_i(\omega^{2 - q} \alpha), \sigma_{ij}(\beta)] = 1)$;

\item[(e)] if $q$ is even, then $[\myupsilon,\sigma^U]=\sigma^{\Gamma^rZ^U}\myupsilon^U$, 
where
$\omega^r=\omega^q/(\omega+\omega^q)$; \\ if $q$ is odd then
$[\myupsilon,\sigma^U]=\sigma^{\Gamma^rZ^U}\myupsilon^{-U}$, 
where $\omega^r={1\over2}\omega^{-(q+1)/2}$;  
{\rm (see (5) below)};

\item[(f)] if $q=2$ then 
$[\myupsilon,\sigma^{\Gamma U}]=\sigma^{\Gamma Z^U}\myupsilon^{U^{\Gamma^{-1}}}$;
{\rm (see (5) below with $\beta$ replaced by $\omega \beta)$};

\item[(g)] if $n>2$ then $[\myupsilon,\sigma^V]=1$;
\q $([\myupsilon_i(\alpha),\sigma_{jk}(\beta)] = 1)$;  

\item[(h)] $[\myupsilon,\tau^U]=1$;
\q $($see $(2)$ below$)$;

\item[(j)] the Steinberg relations (v) of Theorem $\ref{SU2npres}$.
\end{enumerate}
The given relations are instances of the more general relations in 
parentheses, with $s=\alpha=\beta=1$ and $(i,j,k,l)=(1,2,3,4)$.  
\end{enumerate}
\end{theorem}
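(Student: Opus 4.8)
The plan is to follow the template used for $\Sp(2n,q)$ in Theorem~\ref{Sp} and for $\SU(2n,q)$ in Theorem~\ref{SU2npres}: present the torus normaliser $N$, embed the base cases $\SU(3,q)$ and $\SL(2,q^2)$, use the centraliser relations to define root elements as $N$-conjugates, and then verify that all Steinberg relations for $\SU(2n+1,q)$ hold in the presented group $G$. First I would recover the relations omitted from (i), namely relations (ii) of Theorem~\ref{SU2n+1top}. The relations $t^2=1$ and $\Gamma^t=\Gamma^{-q}$ are exactly the relations $t^2=1$ and $\Delta^t=\Delta^{-q}$ of the $\SU(3,q)$ presentation supplied by (iv), under the identification $\Delta\leftrightarrow\Gamma^{-1}$, and $\Gamma^{q^2-1}=1$ then follows, since $\Gamma=\Gamma^{t^2}=\Gamma^{q^2}$, exactly as noted in Section~\ref{SU3q}. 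With the full relation set for $N$ in hand, Theorem~\ref{SU2n+1top} identifies $\langle\Gamma,t,U,V\rangle$ with $N$, and relations (ii) and (iii), via Theorems~\ref{SU2n+1ups} and~\ref{SU2n+1sig}, let me define the root elements $\myupsilon_i(\alpha)$, $\sigma_{ij}(\alpha)$ and $\tau_i(s)$ in $G$ as $N$-conjugates of the respective generators, with well-definedness guaranteed by the centraliser relations. As in the earlier proofs, the centraliser relations omitted from (iii) (items (i) and (ii) of Theorem~\ref{SU2n+1sig}) are recovered from the Steinberg relations, exactly as $[\tau,VU]=1$ is recovered in Theorems~\ref{Sp} and~\ref{SU2npres}.

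Next I would write out the full list of Steinberg relations for $\SU(2n+1,q)$. Those internal to the embedded $\SU(2n,q)$ are supplied by (vi)(j) together with Theorem~\ref{SU2npres}; the additive and power relations in each root group $\langle\myupsilon_i(\alpha)\rangle$ come from the $\SU(3,q)$ base case of (iv); and there remain the genuinely new commutator families $[\myupsilon_i(\alpha),\myupsilon_j(\beta)]=\sigma_{i,-j}(\psi^{-1}\alpha\beta^q)$, $[\myupsilon_i(\alpha),\sigma_{ij}(\beta)]=1$, $[\myupsilon_i(\alpha),\sigma_{jk}(\beta)]=1$, $[\myupsilon_i(\alpha),\tau_j(s)]$, and the mixed relation encoding $[\myupsilon_i(\alpha),\sigma_{ji}(\beta)]$ as a product of a $\sigma$-element and a $\myupsilon$-element. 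For each family the method is the one used repeatedly in the paper: fix the relation for a single instance, show the set of field parameters for which it holds is closed under addition (using the identity $[a,bc]=[a,c][a,b]^c$ and relations already established) and under the scaling produced by conjugating with a suitable power of $\Gamma$, and then invoke that this scalar generates $\GF(q^2)$ (or $\GF(q)$) as a field to force the relation for all parameters; conjugation by $\langle U,V\rangle$ and by the $Z_i$ then yields all index and sign choices.

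The main obstacle will be twofold. The delicate relations are the mixed commutators (a), (e) and (f), which express $[\myupsilon,\myupsilon^U]$ and $[\myupsilon,\sigma^U]$ as products of two distinct root elements (as with relation (g) of Theorem~\ref{Sp}); here the right-hand side is not a single root element, so the additivity step must track the structure constants involving $\phi$, $\psi$ and the norm $\alpha^{q+1}$, and the presence of the Frobenius twist $\beta\mapsto\beta^q$ makes the $\Gamma$-scaling semilinear rather than linear, so two base instances are typically needed. More seriously, the field-generation step fails for the smallest fields: the scalar produced by conjugating with $\Gamma$ fails to generate $\GF(q^2)$ when $q\in\{2,4\}$, which is precisely why the extra instances (b), (d) and (f) are imposed only for those $q$; I would dispose of these by supplying the stated extra instances explicitly. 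For the truly exceptional Schur multipliers — the odd-dimensional analogues of the $(n,q)=(2,2),(2,3),(3,2)$ cases of Theorem~\ref{SU2npres}, and in particular the $\SU(3,2)$-based relations of Section~\ref{SU32} at $q=2$ — I would verify correctness directly by coset enumeration. Once all Steinberg relations hold, the $N$-conjugates of $\myupsilon$, $\sigma$ and $\tau$ generate a subgroup $H\cong\SU(2n+1,q)$ that contains $\Gamma$, $t$ and $U$, and hence (as a product of $N$-conjugates of $U$) contains $V$, so $G=H$ as required.
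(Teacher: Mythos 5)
Your proposal is correct and follows essentially the same route as the paper's proof: recover the omitted relations for $N$ from the $\SU(3,q)$ base case, define the root elements as $N$-conjugates via the centraliser relations, and establish each new Steinberg family by the closure-under-addition and $\Gamma$-conjugation scaling argument, with the extra instances (b), (d), (f) covering exactly the small fields $q\in\{2,4\}$ where the relevant scalar fails to generate $\GF(q^2)$. The only point you leave implicit is the counting argument the paper uses for the relation $[\myupsilon_i(\alpha),\myupsilon_j(\beta)]=\sigma_{i,-j}(\psi^{-1}\alpha\beta^q)$ when $5\mid q^2-1$, but your framework accommodates it and no exceptional Schur multipliers actually arise in odd dimension $2n+1\ge 5$.
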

\begin{proof}
The relations omitted from (i) are supplied by (iv), so we may 
define $N$ as a subgroup of $G$.  
The relations omitted from (iii) may be retrieved using (i), (ii), and (a).  

The Steinberg relations (j) involve 
$\Delta=\Gamma\Gamma^{-U}$ and $Z$ where 
$Z = t$ if $q$ is even, else $Z = t\Gamma^{(q+1)/2}$. 

In $\SU(2n+1,q)$, $N$ acts transitively by conjugation on 
$\{\myupsilon_{\pm i}(\alpha) \, | \, 1\le i\le n, \alpha\in\GF(q^2)^{\times}\}$, 
and on 
$\{\sigma_{\pm i\pm j}(\alpha) \, | \, 1\le i \ne j\le n, \alpha\in\GF(q^2)^{\times}\}$,  
so these elements may be defined in $G$, 
by (i), (ii), and (iii), and the omitted relations. 

In $\SU(2n+1,q)$ the centraliser of $\tau$ in $N$ is the direct product 
of the cyclic group of 
order $q+1$ generated by $\Gamma^{q-1}$ with the subgroup of $N$ that fixes 
$e_1$ and $f_1$, and it has order $2^{n-1}(q+1)(q^2-1)^{n-1}(n-1)!$. 
This direct product has index $2(q-1)n$ in $N$, which corresponds to 
the fact that $N$ acts transitively by conjugation on 
$\{\tau_{\pm i}(s) \, | \, 1\le |i|\le n, s\in\GF(q)^{\times}\}$.
It follows from (i), (ii), (iv) and (j) that this group centralises $\tau$ in $G$, 
so $\{\tau_{\pm i}(s) \, | \, 1\le |i|\le n, s\in\GF(q)^{\times}\}$ may be
be defined in $G$, together with the action of $N$ on these elements.

The normaliser $N$ of our chosen torus in $\SU(2n+1,q)$ contains the 
normaliser of our chosen torus in 
$\SU(2n,q)$ as a subgroup of index $q+1$; but the centraliser of $\sigma$ in 
$N$ is correspondingly larger, so the definitions of the Steinberg generators 
$\sigma_{ij}(\alpha)$ are essentially the same in both cases.
Thus $G$ contains a copy of $\SU(2n,q)$, 
and our first objective is to prove 
that the Steinberg relations involving the generators $\myupsilon_i(\alpha)$, and not 
subsumed in (iv), are satisfied.  
These relations are as follows, 
where $|i|, |j|$, and $|k|$ are distinct:
\begin{enumerate}
\item[(1)] $[\myupsilon_i(\alpha),\sigma_{ij}(\beta)]=1$;
\item[(2)] $[\myupsilon_{i}(\alpha),\tau_j(s)]=1$;
\item[(3)] $[\myupsilon_i(\alpha),\myupsilon_j(\beta)]=\sigma_{i,-j}(\psi^{-1}\alpha\beta^q)$;  
\item[(4)] $[\myupsilon_i(\alpha),\sigma_{jk}(\beta)]=1$ if $n>2$;
\item[(5)] $[\myupsilon_i(\alpha),\sigma_{ji}(\beta)]=
\sigma_{i,-j}(\psi^{-1}(\phi+1)\alpha^{q+1}\beta^q)\myupsilon_j(-\alpha\beta)$.  
\end{enumerate}

We have omitted Steinberg relations whose LHS involves the same suffix twice, 
but with opposite signs, since $\sigma_{-i,j}(\alpha)=\sigma_{-j,i}(\alpha^q)$.

We may always assume that $(i,j,k)=(1,2,3)$ since conjugating a 
Steinberg generator by $t_i$ changes the sign of suffices of the form 
$\pm i$ and keeps all other suffices fixed.

We now prove relation (1).
Let $P$ be the set of pairs $(\alpha,\beta)$ for which (1) holds in $G$ with $(i,j) = (1,2)$.  
By relation (c), $(1,1)\in P$.  Clearly if $(\alpha,\beta_1)$ and $(\alpha,\beta_2)$
lie in $P$ then so does $(\alpha,\beta_1+\beta_2)$.  
If $(\alpha_1,\beta)$ and $(\alpha_2,\beta)$
lie in $P$ then so does $(\alpha_1+\alpha_2,\beta)$:  
for $\myupsilon_1(\alpha_1+\alpha_2, \beta)\equiv
\myupsilon_1(\alpha_1,\beta)\myupsilon_1(\alpha_2,\beta)\bmod
\langle\{\tau_1(s) \, | \, {s\in\GF(q)}\}\rangle$, and
$\tau_1(s)$ commutes with $\sigma_{12}(\beta)$ by Theorem \ref{SU2npres} (v)(a).
Conjugating by $\Gamma_1$ and by $\Gamma_2$ 
shows that if $(\alpha,\beta)\in P$ then 
$(\omega^{-q+2}\alpha,\omega\beta)\in P$, 
and $(\omega^{-q+1},\omega^{-1}\beta)\in P$, so
$(\omega^{-2q+3}\alpha,\beta)\in P$.  
But $\gcd(-2q+3,q-1)=1$ and $\gcd(-2q+3,q+1)=\gcd(5,q+1)$, 
so $\omega^{-2q+3}$ generates $\GF(q^2)$ as a field if $q\ne 4$.
So if $q\ne 4$ and $(\alpha,\beta)\in P$ for some $(\alpha,\beta)$, 
then $(\alpha,\beta)\in P$ for that value of $\beta$ and all $\alpha$, 
and hence for all $(\alpha,\beta)$.  
Now let $q=4$.  By the action of $\Gamma_j^{-q-1}$ 
we see that if $(\alpha,\beta)\in P$ then
$(\alpha,\omega_0\beta)\in P$; and since $\omega^{2q-3}=\omega^5=\omega_0$, it
follows that $(\omega_0\alpha,\beta)\in P$.  
Now by relation (d) we see that,
in addition to $(1,1)$, also $(\omega^{-2},1)\in P$, and hence $(\alpha,1)\in P$
for all $\alpha \in \GF(q^2)$.  It follows that $(\alpha,\beta)\in P$ for all $(\alpha,\beta)$,
and (1) is proved.

We now prove relation (2).
Let $P$ be the set of pairs $(\alpha,s)$ for which (2) holds in $G$ with $(i,j)=(1,2)$.  
Clearly if
$(\alpha_1,s)\in P$ and $(\alpha_2,s)\in P$ 
then $(\alpha_1+\alpha_2,s)\in P$, as in the proof of (1), and
if $(\alpha,s_1)\in P$ and $(\alpha,s_2)\in P$, then $(\alpha,s_1+s_2)\in P$.
Conjugating by $\Gamma_1$ and by $\Gamma_2$ shows that if $(\alpha,s)\in P$ 
then $(\omega^{-q+2}\alpha,s)\in P$, and $(\omega^{-q+1}\alpha,\omega_0s)\in P$.
If $q\ne2$ then $\omega^{-q+2}$ generates $\GF(q^2)$ as a field; 
if $q=2$ then $\omega_0=1$, and
$\omega^{-q+1}$ generates $\GF(q^2)$ as a field.  So in either case
if $(\alpha,s)\in P$ for one value of $(\alpha,s)$ 
then $(\alpha,s)\in P$ for that value of $s$ and all values 
of $\alpha$, and hence (if $s\ne 0$) for all values of 
$s$ and all values of $\alpha$.  
But $(1,1)\in P$, by relation (h), so (2) is proved.

We now prove relation (3).
Let $P$ be the set of pairs $(\alpha,\beta)$ for 
which (3) holds in $G$ with $(i,j)=(1,2)$.
Relation (a) implies that $(1,1)\in P$.
As in the proof of (1),
it follows from (2) that if $(\alpha_1,\beta)\in P$ and 
$(\alpha_2,\beta)\in P$ then $(\alpha_1+\alpha_2,\beta)\in P$,
and similarly with the roles of $\alpha$ and $\beta$ reversed.
Conjugating by $\Gamma_1^{q+1}$, one sees that if $(\alpha,\beta)\in P$
then $(\omega_0\alpha,\beta)\in P$ and similarly $(\alpha,\omega_0\beta)\in P$.
Now let $T$ be the set of $\alpha\in\GF(q^2)$ such that  
$(\alpha,\beta)\in P$ for all $\beta\in\GF(q^2)$.  
Since $T$ is closed under addition and (using conjugation by $\Gamma_1$)
under multiplication by $\omega^{q-1}$, 
and since $\omega^{q-1}$ generates $\GF(q^2)$ as a field,
 (3) holds if $T \neq \{0\}$.  
So we are reduced to the case in which, for every $\alpha\in\GF(q^2)^{\times}$, 
the pair $(\alpha,\beta)$ lies in $P$
for exactly $q-1$ elements $\beta\in\GF(q^2)^{\times}$, 
making $(q^2-1)(q-1)$ such pairs in all.  
Now conjugating by $\Gamma_1$ and $\Gamma_2$ shows that if
$(\alpha,\beta)\in P$ then $(\omega^{-q+2}\alpha,\omega^{-q+1}\beta)\in P$ and
$(\omega^{-q+1}\alpha,\omega^{-q+2}\beta)\in P$.  Since $\omega^{q+1}=\omega_0$, 
it follows that $(\omega^2\alpha,\omega^3\beta)\in P$, 
and $(\omega^3\alpha,\omega^2\beta)\in P$.
If $q^2-1$ is not a multiple of 5 then the pairs $(2,3)$ and $(3,2)$ generate
$C_{q^2-1}\times C_{q^2-1}$, so we may now assume that $q^2-1$ is a 
multiple of 5.  Now $P$ contains
at least $(q^2-1)^2/5$ pairs $(\alpha,\beta)$ with $\alpha\beta\ne 0$. But if
(3) does not hold then $P$ contains exactly $(q^2-1)(q-1)$ such pairs.  
Thus (3) follows from one instance, unless $q=4$, when a 
second instance is required, as in relation (b), where $(\alpha,\beta)=(1,\omega^2)$.

We now prove relation (4).  We may assume that $(i,j,k) = (1,2,3)$: 
the result follows trivially from the case $\alpha=\beta=1$, 
as in relation (g), by conjugating with $\Gamma_1$ and $\Gamma_2$.  

We now prove relation (5).
Note that $\sigma_{i,-j}(\psi^{-1}(\phi+1)\alpha^{q+1}\beta^q)$ and 
$\myupsilon_j(-\alpha\beta)$  commute by (4). 
As usual, we restrict to the case $(i,j) = (1,2)$.
Let $P$ be the set of pairs $(\alpha,\beta)$ for 
which (5) holds in $G$ with $(i,j)=(1,2)$.  
Conjugating by $\Gamma_1$ or 
$\Gamma_2$ shows that 
if $(\alpha,\beta)\in P$ then $(\omega^{2-q}\alpha,\omega\beta)\in P$, and
$(\omega^{1-q}\alpha,\omega^{-1} \beta)\in P$.
Thus if $(\alpha,\beta)\in P$ then $(\omega^{3-2q}\alpha,\beta)\in P$.  
It is also clear, from the above information, that
if $(\alpha_1,\beta)\in P$ and $(\alpha_2,\beta)\in P$ then 
$(\alpha_1+\alpha_2,\beta)\in P$.  
If $q\ne 2$ then $\omega^{2-q}$ generates $\GF(q^2)$ as a field, and 
hence (5) follows from any one instance (with $\alpha\beta\ne0$), such 
as relation (e), which 
shows that $(1,1)\in P$.
If $q=2$ then relation (f), which states that 
$(1,\omega)\in P$, completes the proof in this case.

\medskip 
Thus the subgroup $H$ of $G$ generated by the Steinberg 
generators is isomorphic to $\SU(2n+1,q)$,
and it remains to prove that this subgroup equals $G$.  
By (iii), $\Gamma$ and $t$ lie in $H$.
Also, $U$ and $V$ lie in $H$, as in the case of $\SU(2n,q)$. 
Thus $G = H$, as required.
\end{proof}
 
\subsection{A presentation for $\PSU(2n+1,q)$}
The centre of $\SU(2n+1,q)$ has order $\gcd (q + 1, 2n + 1)$; 
if it is non-trivial, then it is generated by 
$(\Gamma ZV)^{2n(q+1)/\gcd(q+1, 2n+1)}$.
 
\subsection{Standard generators for $\SU(2n+1,q)$}
In Table 1 of \cite{even,odd} the standard generators for 
$\SU(2n + 1, q)$ are labelled $s, t, \delta, u, v, x, y$. 
Observe that $s = Z^{-1}; t = \tau^{-1}; 
\delta = \Gamma^{-(q + 1)}; 
u = U; v = V; x = \myupsilon^{(V^{-1})}$; and $y = (\Gamma^{-1})^{(V^{-1})}$.
If $q$ is odd then the presentation generator 
$t = \Gamma^{((q^2 + q) / 2)} s^{-1}$, else $t = s$.
Also $\sigma = [\myupsilon^{tU},\myupsilon^t]^t$,
as a word in the presentation generators $\myupsilon$, $t$, and $U$, 
so it can be written as an explicit word in the standard generators.

\section{A presentation for $\Omega^+(2n,q)$ for $n > 1$}\label{SectOmegaP}

\subsection{Odd characteristic}
\subsubsection{Generators and notation}
Let $q = p^e$ for an odd prime $p$. Let $\omega$ be a primitive element of $\GF(q)$.  

Let $n>1$.  
We take a hyperbolic basis $(e_1,f_1,\ldots,e_n,f_n)$ of the natural module.
We define the following elements of $\SO^+(2n,q)$, where $1\le i,j\le n$
and $i\ne j$, and $s\in\GF(q)$.

$\delta_i=(e_i\mapsto\omega^{-1}e_i, f_i\mapsto\omega f_i)$;  

$\Delta_{ij}=\delta_i\delta_j^{-1}$;  

$\Delta_{-i,j}=\delta_i^{-1}\delta_j^{-1}$;  

$\Delta_{i,-j}=\delta_i\delta_j$; 

$\Delta_{-i,-j}=\delta_i^{-1}\delta_j$; 

$\sigma_{ij}(s)=(e_i\mapsto e_i+se_j, f_j\mapsto f_j-sf_i)$;

$\sigma_{i,-j}(s)=(e_i\mapsto e_i+sf_j, e_j\mapsto e_j-sf_i)$;

$\sigma_{-i,j}(s)=(f_i\mapsto f_i+se_j, f_j\mapsto f_j-se_i)$;

$\sigma_{-i,-j}(s)=\sigma_{ji}(-s)$;

$Z_{ij}=(e_i,f_i)(e_j,f_j)$;

$U=(e_1,e_2)(f_1,f_2)$;

$U'=(e_1,e_2)^-(f_1,f_2)^-$;

$V=(e_1,e_2,\ldots,e_n)(f_1,f_2,\ldots,f_n)$;

$V'=(e_1,e_2,\ldots,e_n)^{\epsilon_n}(f_1,f_2,\ldots,f_n)^{\epsilon_n}$.

\smallskip
All these elements of $\SO^+(2n,q)$ centralise those basis elements that 
they are not stated to move.  
All lie in $\Omega^+(2n,q)$ except for $\delta_i$, and $U$ if
$q\equiv 3 \bmod 4$, and $V$ if $n$ is even and $q\equiv 3\bmod 4$.  
These statements, except for those involving $V$ and $V'$, follow from the 
analysis of $\Omega^+(4,q)$ below.  
Since $V'$ is a product of conjugates of $U'$, it lies in $\Omega^+(2n,q)$.  
Also $U=U'\delta_1^{(q-1)/2}$ and 
$V=V'\delta_1^{(q-1)/2}$ if $n$ is even, so the statements concerning $V$ follow.

\subsubsection{A presentation for $\Omega^+(4,q)$}
Since $\Omega^+(4,q)$ is the central product of two copies of
$\SL(2,q)$, there is no difficulty in writing down the required presentation, 
but we look more closely at this case as a preparation for the general case.

Consider the spaces $\langle e_1,e_2\rangle$ and $\langle f_1,f_2\rangle$.  
They are fixed 
by $\{ \sigma_{12}(1), \Delta_{12}, U'\}$. These elements act on  
$\langle e_1,e_2\rangle$ by the matrices 
$\left(\begin{smallmatrix}1&1\cr0&1\end{smallmatrix}\right)$,
$\left(\begin{smallmatrix}\omega^{-1}&0\cr0&\omega\end{smallmatrix}\right)$, 
and
$\left(\begin{smallmatrix}0&1\cr-1&0\end{smallmatrix}\right)$ respectively; they act on 
$\langle f_1,f_2\rangle$ by the inverse transpose of these matrices.  So these 
elements generate a copy of $\SL(2,q)$.
Now consider the spaces $\langle e_1,f_2\rangle$ and $\langle e_2,f_1\rangle$.  These are
fixed by $\{ \sigma_{1,-2}(1), \Delta_{1,-2}, Z_{12}U'\}$. These 
elements 
act on $\langle e_1,f_2\rangle$ as the above three matrices, 
and as the inverse transpose on $\langle e_2,f_1\rangle$,  
thus generating a second copy of $\SL(2,q)$.  The two copies of
$\SL(2,q)$ commute with each other, and 
$\Delta_{12}^{(q-1)/2}=\Delta_{1,-2}^{(q-1)/2}=-I_4$. 
This gives the required presentation on 
$\{ \sigma_{12}(1), \Delta_{12}, U', \sigma_{1,-2}(1), 
\Delta_{1,-2}, Z_{12}U'\}$,
omitting $\Delta_{12}$ and $\Delta_{1,-2}$ if $e = 1$.

\subsubsection{A presentation for $\Omega^+(2n, q)$ for $n \ge 3$ and $e > 1$}
We give a presentation for $\Omega^+(2n,q)$ on the generating set 
$\{ \sigma=\sigma_{12}(1), \Delta=\Delta_{12}, Z=Z_{12}, U',V'\}$.  

Let $N_1$ be the subgroup of $\Omega^+(2n,q)$ generated by 
$\{ Z,U',V'\}$. This is an extension
of an elementary abelian $2$-group $A$ of rank $2(n-1)$ by a copy of $S_n$.  
Here $A$ is
the direct product of a subgroup of rank $n-1$, 
generated by $U'^2$ and its conjugates, with
a second subgroup of the same rank generated by $Z$ and its conjugates.
So if $B$ is the copy of the four-group generated by the transformations
$(e_1\mapsto -e_1, f_1\mapsto -f_1)$ and $(e_1\mapsto f_1, f_1\mapsto e_1)$, 
the other basis vectors being fixed, then 
$N_1$ is the subgroup of $B\wr S_n$ of index $4$ consisting of the 
elements of determinant $1$ and spinor norm $1$.

\begin{theorem}\label{OplusoddN1}
Let $n\ge 3$.  Let $G$ be the group generated by  
$\{Z,U',V'\}$ subject to the relations given below. Then $G$ is isomorphic to $N_1$.
\begin{enumerate}
\item[(i)] Defining relations for $\langle U',V'\rangle$ as 
in Theorems $\ref{signedsymmetricodd}$ and $\ref{signedsymmetriceven}$.
\item[(ii)] If $n>3$ then $[Z,U'^{V'^2}]=1$.
\item[(iii)] If $n>4$ then $[Z,V'U'U'^{V'}]=1$.
\item[(iv)] $Z^2=1$.
\item[(v)] $[Z, U'^{2V'}]=1$.
\item[(vi)] $ZZ^{V'}=Z^{V'U'}$.
\item[(vii)] $[Z,U']=1$.
\item[(viii)] $[Z,Z^{V'}]=1$.
\item[(ix)] If $n>3$ then $[Z,Z^{V'^2}]=1$.
\end{enumerate}
\end{theorem}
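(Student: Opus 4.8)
The plan is to bound the order of the presented group $G$ by $|N_1| = 2^{2(n-1)}n!$. Since every listed relation holds in $N_1$ — a routine check with the signed permutation matrices and the $Z_{ij}$ — the group $N_1$ is a homomorphic image of $G$, so the order bound forces $G\cong N_1$. For the quotient, let $M$ be the normal closure of $Z$ in $G$. Killing $M$ trivialises $Z$, so relations (ii)--(ix) collapse and $G/M$ is generated by the images of $U'$ and $V'$ subject to relations (i); by Theorems \ref{signedsymmetricodd} and \ref{signedsymmetriceven} these present the group of signed permutation matrices of determinant $1$, of order $2^{n-1}n!$. It therefore remains to show $|M|\le 2^{n-1}$.

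First I would identify the centraliser of $Z$ inside $\langle U',V'\rangle$. Relations (ii), (iii), (v), (vii) say that $Z$ commutes with $(3,4)^-$, with $(1)^-(2)^-(3,\ldots,n)^{\epsilon_n}$, with $(2)^-(3)^-$, and with $(1,2)^-$; by Theorem \ref{subgp} these generate the subgroup $S$ of $\langle U',V'\rangle$ stabilising the unordered pair $\{1,2\}$, so $Z$ is centralised by $S$. Since $S$ contains the whole even sign-change subgroup, and in particular $V'^n$, we get $Z^{V'^n}=Z$, so the conjugates $Z_i:=Z^{V'^{i-1}}$ form a family permuted cyclically by $V'$ with $Z_{n+1}=Z_1$. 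Because $Z^g$ then depends only on the image of $\{1,2\}$, every conjugate of $Z$ equals some $Z_{ij}$, and relation (vi), which reads $Z_1Z_2=Z^{V'U'}$, together with its $V'$-translates lets one write each $Z_{ij}$ as a product of consecutive $Z_k$. Hence $M=\langle Z_1,\ldots,Z_n\rangle$.

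Next I would show this group is elementary abelian of rank at most $n-1$. Each $Z_i$ is an involution by (iv). Now $Z_1$ commutes with $Z_2$ by (viii) and with $Z_3$ by (ix); for $4\le j\le n-1$ the element $Z_j$ is an $S$-conjugate of $Z_3$ and $S$ centralises $Z_1$, so $[Z_1,Z_j]=[Z_1,Z_3]^g=1$; and $[Z_1,Z_n]=1$ follows by conjugating (viii) by $V'^{-1}$. Conjugating by powers of $V'$ then shows all $Z_i$ commute, so $M$ is elementary abelian. The final relation $Z_1Z_2\cdots Z_n=1$, which forces the rank down from $n$ to $n-1$, is obtained by the telescoping argument used for $a_1\cdots a_d=1$ in the proof of Theorem \ref{signedsymmetricodd}: since each $Z_j$ is centralised by the stabiliser of its pair, $U'$ commutes with $Z_j$ for $3\le j\le n-1$, and, invoking $(U'V')^{n-1}=1$, one evaluates $Z_1^{(U'V')^{n-1}}$ to reach $Z_1=Z_2Z_3\cdots Z_n$ and then squares. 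Therefore $|M|\le 2^{n-1}$ and $|G|\le 2^{2(n-1)}n!=|N_1|$, completing the proof.

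I expect the main obstacle to be the rank computation for $M$, and within it the product relation $Z_1\cdots Z_n=1$: commutativity falls out almost directly from the relations once $S$ has been pinned down, whereas the product relation is the sole source of the drop from rank $n$ to $n-1$, and its derivation needs the signed telescoping identity with careful tracking of the signs that $U'$ and $V'$ introduce as signed permutations. A secondary point requiring care is verifying, through Theorem \ref{subgp}, that relations (ii), (iii), (v), (vii) really do cut out the full pair-stabiliser $S$ rather than a proper subgroup, and that the boundary cases $n=3,4$ — where (ii), (iii), (ix) are partly absent — are still covered by the argument.
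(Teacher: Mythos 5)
Your proof is correct and follows essentially the same route as the paper's: identify the centraliser of $Z$ in $\langle U',V'\rangle$ via Theorem \ref{subgp} and the sign-change relations, define the conjugates $Z_{ij}$ indexed by unordered pairs, show they generate an elementary abelian normal subgroup of order at most $2^{n-1}$ on which $\langle U',V'\rangle$ acts, and compare orders with $N_1$. The only divergence is your telescoping derivation of $Z_1Z_2\cdots Z_n=1$, which is valid but redundant: once relation (vi) yields $Z_{ij}Z_{jk}=Z_{ik}$, the subgroup is already generated by $Z_{12},Z_{23},\ldots,Z_{n-1,n}$ (in particular $Z_n=Z_{n,1}=Z_{1,n}=Z_1\cdots Z_{n-1}$), which is how the paper reaches the bound $2^{n-1}$ directly.
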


\begin{proof}
Relations (ii) and (iii) state that $Z$ is centralised by the 
subgroup of $\langle U',V'\rangle$
that centralises $1$ and $2$ as signed permutations, and also by $U'^2$.  
These relations, together with (v), also
imply that $Z$ is centralised by all $\langle U',V'\rangle$-conjugates 
of $U'^2$, so we may
label these conjugates as $Z_{ij}$, 
where $1\le i\ne j\le n$.  Now (vii) implies that $Z_{12}=Z_{21}$,
and hence, by conjugation, that $Z_{ij}=Z_{ji}$ for all $i\ne j$.

It follows similarly from (viii) and (ix) that
all $Z_{ij}$ commute, and hence, by (iv), 
generate an elementary abelian $2$-group.  
Also, (vi) states that
$Z_{12}Z_{23}=Z_{13}$, so $Z_{ij}Z_{jk}=Z_{ik}$ 
for any three distinct values of $i,j,k$, and so
the subgroup of $G$ generated by the $Z_{ij}$ is generated by
$\{Z_{12},Z_{23},\ldots,Z_{n-1,n}\}$, and hence has order at most $2^{n-1}$. 
Thus $G$ is a split extension of a normal subgroup of order 
at most $2^{n-1}$ by the group 
$\langle U',V'\rangle$ of order
$2^{n-1}n!$, by (i).
But $N_1$, as a subgroup of $\Omega^+(2n,q)$, has order $2^{2n-2}n!$, 
and the result follows.
\end{proof}

Let $N=\langle \Delta,Z,U',V'\rangle \leq \Omega^+(2n,q)$. 
It is easy to see that the normal subgroup of $N$ generated as 
normal subgroup by $\Delta$, and as subgroup 
by $\{\Delta_{\pm i,\pm j} \, | \, |i|\ne |j|\}$, 
is generated by 
$\{\Delta_{1,-2},\Delta_{12},\Delta_{23},\ldots,\Delta_{n-1,n}\}$, and 
has order $(q-1)^n/2$, the
factor of $1/2$ coming from the relation $\Delta_{12}^{(q-1)/2}=\Delta_{1,-2}^{(q-1)/2}$.  
Since this normal subgroup contains $U'^2$, it follows that $N$ has 
order $(q-1)^n2^{n-2}n!$.

\begin{theorem}\label{OmegaplusoddN}
Let $q=p^e$ for an odd prime $p$ and $e>1$, and let $n\ge 3$.  
Let $G$ be the group generated by  
$\{\Delta, Z,U',V'\}$ subject to the relations given below. Then $G$ is isomorphic to $N$.
\begin{enumerate}
\item[(i)] Defining relations for 
$N_1 = \langle Z,U',V'\rangle$ as in Theorem $\ref{OplusoddN1}$.
\item[(ii)] If $n>3$ then $[\Delta,U'^{V'^2}]=1$.
\item[(iii)] If $n>4$ then $[\Delta,V'U'U'^{V'}]=1$.
\item[(iv)] If $n>3$ then $[\Delta,Z^{V'^2}]=1$.
\item[(v)] $\Delta^{U'} = \Delta^{-1}$, and $[\Delta,U'^{2V}]=1$.  
\item[(vi)] $\Delta^{(q-1)/2}=U'^2$.
\item[(vii)] $\Delta\Delta^{V'}=\Delta^{V'U'}$.  
\item[(viii)] $[\Delta,\Delta^{V'}]=1$.
\item[(ix)] If $n>3$ then $[\Delta,\Delta^{V'^2}]=1$.
\item[(x)] $\Delta^{Z}=\Delta^{-1}$.
\end{enumerate}
\end{theorem}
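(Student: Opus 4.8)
The plan is to follow the template of Theorems \ref{SLN} and \ref{OplusoddN1}: every listed relation holds in $N$, so $N$ is a homomorphic image of the presented group $G$, and it suffices to prove $|G| \le |N| = (q-1)^n 2^{n-2} n!$. By relations (i) the subgroup $\langle Z, U', V'\rangle$ of $G$ is a homomorphic image of $N_1$, hence has order at most $2^{2n-2} n!$; I shall treat it as $N_1$ and use the labelled conjugates $Z_{ij}$ constructed in the proof of Theorem \ref{OplusoddN1}. The heart of the argument is to analyse the normal closure $T$ of $\Delta$ in $G$ and to show it is abelian of order at most $(q-1)^n/2$.

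First I would use the centraliser relations (ii), (iii) together with Theorem \ref{subgp} to see that $\Delta$ is centralised by the signed-permutation stabiliser of $\{1,2\}$, and use the second clause of (v) to incorporate $U'^{2V'}$; this lets me label the $\langle U',V'\rangle$-conjugates of $\Delta$ as $\Delta_{ij}$ for $1 \le i \ne j \le n$, the action being the natural one on pairs. The relation $\Delta^{U'} = \Delta^{-1}$ of (v) gives $\Delta_{ji} = \Delta_{ij}^{-1}$; relation (vii) gives the cocycle identity $\Delta_{ij}\Delta_{jk} = \Delta_{ik}$; and relations (viii), (ix), propagated by conjugation in $\langle U',V'\rangle$, show that all $\Delta_{ij}$ commute, so they generate an abelian group. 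To reach the full torus I would then conjugate by the $Z_{kl}$: relation (x) gives $\Delta_{ij}^{Z_{ij}} = \Delta_{ij}^{-1}$, while (x) for a $Z_{jk}$ sharing a single index (written as $Z_{ik}=Z_{ij}Z_{jk}$) together with the disjointness relations (iv) lets me compute each action $\Delta_{ij}^{Z_{kl}}$, realising the sign-flipped elements $\Delta_{-i,j}=\delta_i^{-1}\delta_j^{-1}$ and, on combining with $\Delta_{ij}$, the squares $\delta_i^2$. These together with the $\Delta_{ij}$ generate $T$. Finally, relation (vi) gives $\Delta_{ij}^{(q-1)/2} = U'^2$, whence $\Delta_{ij}^{q-1} = U'^4 = 1$ by the relations of $N_1$; and since conjugation by a sign-change fixes the scalar $U'^2$, it also forces $\Delta_{1,-2}^{(q-1)/2} = U'^2 = \Delta_{12}^{(q-1)/2}$. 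This single relation among the $n$ abelian generators $\Delta_{12},\Delta_{23},\dots,\Delta_{n-1,n},\Delta_{1,-2}$ (each of order dividing $q-1$) supplies the factor $1/2$ and bounds $|T|$ by $(q-1)^n/2$.

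With $T$ in hand I would check that $T$ is normalised by $Z$, $U'$ and $V'$: it is $\langle U',V'\rangle$-invariant by construction, and $Z$-invariant because each $Z_{kl}$ permutes the listed generators of $T$ up to sign and inversion, so $T \lhd G$. Since $U'^2 = \Delta^{(q-1)/2} \in T$ by (vi), the quotient $G/T$ is generated by the images of $Z, U', V'$ with $\overline{U'}^2 = 1$, hence is a homomorphic image of $N_1/\langle U'^2\rangle^{N_1}$, of order at most $2^{n-1} n!$. Combining the bounds gives $|G| \le \tfrac{1}{2}(q-1)^n \cdot 2^{n-1} n! = (q-1)^n 2^{n-2} n! = |N|$, so $G \cong N$. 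The main obstacle is the middle step: correctly controlling the sign structure when passing from the permutation conjugates $\Delta_{ij}$ (which span only the sum-zero part) to the full even-sum torus via the $Z_{kl}$, and verifying that the cocycle relation (vii), the inversions (v) and (x), and the order relation (vi) are mutually consistent, so that $T$ has order exactly $(q-1)^n/2$ rather than more.
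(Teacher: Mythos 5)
Your proposal is correct and follows essentially the same route as the paper's proof: label the $N_1$-conjugates of $\Delta$ via the centraliser relations, use (v), (vii)--(x) to show they generate an abelian group with the cocycle and inversion structure, extract the factor $1/2$ from relation (vi) by conjugating $\Delta^{(q-1)/2}=U'^2$ with a suitable $Z_{kl}$, and combine the bound $|T|\le (q-1)^n/2$ with $|G/T|\le 2^{n-1}n!$. The only (cosmetic) difference is that the paper labels the full set $\Delta_{\pm i,\pm j}$ as $N_1$-conjugates in one step rather than first taking $\langle U',V'\rangle$-conjugates and then applying the $Z_{kl}$.
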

\begin{proof}
We first work in $\Omega^+(2n,q)$.
Since $n>2$, $N_1$ permutes transitively the elements $\Delta_{\pm i,\pm j}$.  
Here $\langle U',V'\rangle$ permutes the suffices as unsigned permutations, so $U'^2$
centralises, and $Z_{ij}$ sends $i$ to $-i$, and $j$ to $-j$.
Since $\Delta_{ji}=\Delta_{-i,-j}$  
these $4n(n-1)$ symbols define $2n(n-1)$ distinct matrices. 
Now $\Delta=\Delta_{12}$ is centralised by the subgroup of $N_1$ that centralises  
$\langle e_1,f_1,e_2,f_2\rangle$ and has order $2^{2(n-3)}(n-2)!$, 
and also by $ZU'$ and $U'^{2V'}$, which contribute an additional factor of 
8 to the order of the centraliser. 
Thus the centraliser of $\Delta$ in $N_1$ has 
order $2^{2n-3}(n-2)!$ and index $2n(n-1)$ as required.

We now work in the presented group $G$.  
By the above remarks, and using relations (i) to (v) and (x), 
we may define the elements $\Delta_{\pm i,\pm j}$
in $G$ as the $N_1$-conjugates of $\Delta$.
Relation (v) states that $\Delta_{21} =\Delta_{12}^{-1}$, and hence  
$\Delta_{ij}=\Delta_{ji}^{-1}$ whenever $|i|\ne|j|$.
Relations (viii) and (ix) state that
$\Delta_{12}$ commutes with $\Delta_{23}$, and with $\Delta_{34}$ if $n>3$, so 
the group $K$ generated
by $\{\Delta_{ij} \, | \, i,j>0, i \ne j\}$ is abelian.  
Since $\Delta_{-1,-2}=\Delta_{12}^{-1}$ by (x), 
it follows that $\Delta_{-i,-j}=\Delta_{ij}^{-1}$ if $|i|\ne |j|$.
Relation (vii) states that $\Delta_{12}\Delta_{23}=\Delta_{13}$, 
and hence 
$\Delta_{ij}\Delta_{jk}=\Delta_{ik}$ whenever $|i|$, $|j|$, and $|k|$ are distinct. 
It is now easy to prove, since $n\ge3$, that the group $H$ generated 
by $\{\Delta_{\pm i\pm j}\,|\,|i| \ne |j|\}$ is 
abelian, and that $K$ is generated by $\{\Delta_{i, i+1} \, | \, 1\le i < n\}$.
Since $\Delta_{i,-j}=\Delta_{i1}\Delta_{1,-2}\Delta_{-2,-j}
=\Delta_{i1}\Delta_{1,-2}\Delta_{2,j}^{-1}$ if $|i|\ne |j|$,
it follows that $H$ is generated by
$\{\Delta_{1,-2},\Delta_{12},\Delta_{23},\ldots,\Delta_{n-1,n}\}$. 
Now $H$ has exponent dividing $q-1$ by (vi), and order dividing
$(q-1)^n/2$, the factor of 1/2 coming from the fact that 
$\Delta_{12}^{(q-1)/2}=U'^2=U'^{2Z}=\Delta_{1,-2}^{(q-1)/2}$.
Since the corresponding subgroup of $\Omega^+(2n,q)$ has this order, so does $H$.
The result follows.
\end{proof}

\begin{theorem}\label{Op2nsigstab}
Let $q=p^e$ for an odd prime $p$ and $e>1$, and let $n\ge 3$.  
The centraliser of $\sigma =\sigma_{12}(1)$ in $N$ 
has order $(q-1)^{n-1}2^{n-3}(n-2)!$ and is generated by the following elements:
\begin{enumerate}
\item[(i)] $U'^{V'^2}$ if $n>3$;
\item[(ii)] $V'U'^{-1}U'^{-V'}$ if $n>4$;
\item[(iii)] $Z^{V'^2}$ if $n>3$;
\item[(iv)] $\Delta^{V'^2}$ if $n>3$;
\item[(v)] $\Delta^{Z^{V'}}$;  
\item[(vi)] $ZU'$;
\item[(vii)] $\Delta^{{U'}^{V'}}\Delta^{V'}$ if $n<5$.
\end{enumerate}
\end{theorem}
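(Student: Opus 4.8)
The plan is to follow the orbit--stabiliser template already used for the short-root centralisers in Theorem \ref{SpCen} and Theorem \ref{SU2nsigma}: first pin down $|C_N(\sigma)|$ from the size of the $N$-orbit of $\sigma$, then exhibit the listed elements inside $C_N(\sigma)$ and verify that they generate a subgroup of exactly that order.

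First I would work directly in the matrix group $\Omega^+(2n,q)$, where $N$ has order $(q-1)^n 2^{n-2} n!$. The conjugates of $\sigma=\sigma_{12}(1)$ under $N$ are the short root elements $\sigma_{ij}(s)$ with $0<|i|\ne|j|\le n$ and $s\in\GF(q)^\times$; after the identification $\sigma_{ij}(s)=\sigma_{-j,-i}(-s)$ these comprise $2n(n-1)(q-1)$ distinct matrices. The key point is transitivity of $N$ on this set. The signed permutations $U'$ and $V'$ realise every signed permutation of the indices, and $Z$ realises the $e_i\leftrightarrow f_i$ interchanges, so it suffices to check that the diagonal part of $N$ carries $\sigma_{12}(1)$ to $\sigma_{12}(s)$ for every $s$. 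Conjugating $\sigma_{12}(s)$ by $\diag(\lambda_1,\lambda_1^{-1},\ldots,\lambda_n,\lambda_n^{-1})$ multiplies $s$ by $\lambda_1\lambda_2^{-1}$; since $n\ge3$ I may choose $\lambda_1,\lambda_2$ freely and absorb the spinor-norm constraint into $\lambda_3$, so $\lambda_1\lambda_2^{-1}$ ranges over all of $\GF(q)^\times$. Hence the orbit has size $2n(n-1)(q-1)$, and orbit--stabiliser gives $|C_N(\sigma)|=(q-1)^{n-1}2^{n-3}(n-2)!$.

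Second I would confirm each listed element centralises $\sigma$. Items (i) and (ii) are the signed permutations $(3,4)^-$ and $(3,4,\ldots,n)^{\epsilon_n}$ of Theorem \ref{subgp}, and (iii) is $Z_{34}$; all fix the indices $1$ and $2$ and so commute with $\sigma$. Items (iv) and (vii) are the diagonal elements $\Delta_{34}$ and $\Delta_{13}\Delta_{23}=\delta_1\delta_2\delta_3^{-2}$, and (v) is $\Delta^{Z^{V'}}=\Delta_{1,-2}=\delta_1\delta_2$; each has equal eigenvalues on $e_1$ and $e_2$, which is exactly the condition for a diagonal element to fix $\sigma_{12}(s)$. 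The one genuine computation is (vi): a direct check shows $ZU'$ sends $e_1\mapsto f_2$, $e_2\mapsto -f_1$, $f_1\mapsto e_2$, $f_2\mapsto -e_1$, and conjugating $\sigma$ by it returns $\sigma$. Then I would count: items (i)--(iv) are the standard generators $\{U',V',Z,\Delta\}$ transported to the indices $3,\ldots,n$, so for $n\ge5$ they generate a copy of the normaliser $N$ for $\Omega^+(2(n-2),q)$ of order $(q-1)^{n-2}2^{n-4}(n-2)!$, while (v) and (vi) contribute, on the block $\langle e_1,f_1,e_2,f_2\rangle$, an independent group $\langle\Delta_{1,-2},ZU'\rangle$ of order $2(q-1)$ with trivial intersection (disjoint support). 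Multiplying yields $(q-1)^{n-1}2^{n-3}(n-2)!$; since this subgroup lies in $C_N(\sigma)$ and has the same order, it is all of $C_N(\sigma)$.

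The main obstacle is the small-rank cases $n=3$ and $n=4$, where the block on indices $3,\ldots,n$ degenerates (to a single index, or to an $\Omega^+(4,q)$ block) and the uniform generation argument for (i)--(iv) breaks down. This is precisely why (iv) is dropped when $n=3$ and the corrective diagonal element (vii) $=\Delta_{13}\Delta_{23}$ is adjoined when $n<5$. For these two cases I would verify separately that the listed elements still span the full diagonal torus of the centraliser, so that the generated subgroup again attains the order $(q-1)^{n-1}2^{n-3}(n-2)!$ and hence equals $C_N(\sigma)$.
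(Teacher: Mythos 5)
Your proposal is correct and follows essentially the same route as the paper's proof: orbit--stabiliser on the $2n(n-1)(q-1)$ short root elements $\sigma_{ij}(s)$ to pin down $|C_N(\sigma)|$, identification of (v), (vi), (vii) as $\delta_1\delta_2$, an element interchanging $\sigma_{12}(s)$ with $\sigma_{-2,-1}(-s)$, and $\delta_1\delta_2\delta_3^{-2}$, and then a direct-product decomposition of the generated subgroup into $\langle\Delta_{1,-2},ZU'\rangle$ of order $2(q-1)$ and the rank-$(n-2)$ torus normaliser supported on indices $3,\ldots,n$. The only piece you defer --- checking for $n=3,4$ that item (vii) supplies the factor $\delta_3^2$ that items (i)--(iv) alone cannot produce, which for $n\ge 5$ the paper obtains as $\Delta^{V'^2}\Delta^{W}$ with $W=V'^2Z^{V'^3}$ --- is exactly the computation the paper carries out, and your outline identifies the right elements for it.
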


\begin{proof}
Observe that $N$ acts transitively by conjugation on 
$\{\sigma_{ij}(s) \, | \, |i|\ne |j|, s \in \GF(q)^\times \}$, 
a set of size $2n(n-1)(q-1)$, as follows.  
If $g\in\langle U',V'\rangle$ sends $i$ to $\epsilon_1k$, and sends $j$ to $\epsilon_2l$, 
where $\epsilon_1,\epsilon_2 = \pm1$, then
$\sigma_{ij}(s)^g=\sigma_{kl}(\epsilon_1\epsilon_2s)$;
and $Z_{ij}$ acts on the suffices, interchanging $i$ with $-i$, and $j$ with $-j$, 
fixing all other suffices and $s$.   The action of $\Delta_{ij}$
follows from the formulae
$\sigma_{ij}(s)^{\delta_i}=\sigma_{ij}(s\omega)$ and 
$\sigma_{ij}(s)^{\delta_j}=\sigma_{ij}(s\omega^{-1})$.

Items (v) and (vii) equal $\delta_1\delta_2$ and 
$\delta_1\delta_2\delta_3^{-2}$ respectively, 
while item (vi) centralises $\sigma$ since $\sigma_{12}(s) = \sigma_{-2,-1}(-s)$.
The other itemised elements clearly centralise $\sigma$.  
It remains to prove that the group generated by the itemised elements has 
the claimed order.

Suppose first that $n=3$.  Clearly items (v) and (vii) generate a group of order 
$(q-1)^2/2$, and with item (vi) generate a group of order $(q-1)^2$, as required.



















Now let $n>3$.  The centraliser of $\sigma$ in $N$ is the direct product of
the centraliser $C$ in $N$ of $\langle e_1,f_1,e_2,f_2\rangle$, which has 
order $(q-1)^{n-2}2^{n-4}(n-2)!$,
with a copy $Q$ of $Q_{2(q-1)}$, namely $\langle\Delta_{1,-2}, Z_{12}U'\rangle$. 
For this subgroup centralises 
$\sigma$, and has order $(q-1)^{n-1}2^{n-3}(n-2)!$.  
These generators of $Q$ are supplied by items (v) and (vi).
Observe that $C$ is generated by the union of the following sets:  
the elements $\Delta_{k,k+1}$ for $k>2$, 
supplied by item (iv) and its conjugates 
under items (i) and (ii),  
which generate a group of order $(q-1)^{n-3}$;
the elements $Z_{k,k+1}$ for $k>2$, supplied by item (iii) and its 
conjugates under items (i) and (ii), which 
contribute a factor of $2^{n-3}$; 
the elements in items (i) and (ii), which contribute
a factor of $(n-2)!$;  
and $\delta_3^2$, which contributes the remaining 
factor of $(q-1)/2$.
If $n>4$ then $\delta_3^2=\Delta^{{V'}^2}\Delta^W$, 
where $W={V'}^2Z^{{V'}^3}$, so this generator is not 
required; if $n=4$ then this element is supplied by items (v) and (vii); 
that is, by $\delta_1\delta_2$ and $\delta_1\delta_2\delta_3^{-2}$.
\end{proof}

\begin{theorem}\label{OmegaPlusOdd}
Let $q=p^e$ for an odd prime $p$ and $e>1$, and let $n\ge 3$.  
Let $G$ be the group generated by 
$\{\sigma, \Delta, Z,U',V'\}$ subject to the relations given below.  
Then $G$ is isomorphic to $\Omega^+(2n,q)$.  
\begin{enumerate}
\item[(i)] Defining relations for $N=\langle\Delta,Z,U',V'\rangle$ as in 
Theorem $\ref{OmegaplusoddN}$, but omitting relation (vi) of that theorem.  

\item[(ii)] Relations that state that the elements listed in 
Theorem $\ref{Op2nsigstab}$ centralise $\sigma$.

\item[(iii)] Relations that present $\SL(2,q)$ on $\{\sigma, \Delta, U'\}$.  

\item[(iv)] The following instances of Steinberg relations:
\begin{enumerate}
\item[(a)] $[\sigma,\sigma^{V'}]=\sigma^{V'U'^{-1}}$; \q$([\sigma_{ij}(s),\sigma_{jk}(t)]=\sigma_{ik}(st))$;
\item[(b)] $[\sigma,\sigma^{V'U'^{-1}}]=1$; \q$([\sigma_{ij}(s),\sigma_{ik}(t)]=1)$;
\item[(c)] $[\sigma,\sigma^W]=1$, where $W=U'^{V'U'^{-1}}$; \q$([\sigma_{ij}(s),\sigma_{kj}(t)]=1)$;
\item[(d)] if $n>3$ then $[\sigma,\sigma^{V'^2}]=1$; \q$([\sigma_{ij}(s),\sigma_{kl}(t)]=1)$;
\item[(e)] $[\sigma,\sigma^{Z^V}]=1$; \q$([\sigma_{ij}(s),\sigma_{i,-j}(t)]=1)$;
\item[(f)] if $n=4$ then $[\sigma,\sigma^{Z^VV^2}]=1$; \q$([\sigma_{ij}(s),\sigma_{k,-l}(t)]=1)$.
\end{enumerate}
The given relations are instances of the more general relations in 
parentheses, with $s=t=1$ and $(i,j,k,l)=(1,2,3,4)$.  
\end{enumerate}
\end{theorem}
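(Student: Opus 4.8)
The plan is to follow the template already used for $\Sp(2n,q)$ and $\SU(2n,q)$: realise $N$ inside the presented group $G$, use the centraliser data of Theorem \ref{Op2nsigstab} to define every root element as an $N$-conjugate of $\sigma$, install the within-root-subgroup relations through the embedded copy of $\SL(2,q)$, and then verify that the listed instances (a)--(f) imply the full set of Steinberg relations for the $D_n$ root system. Since $\Omega^+(2n,q)$ is a Chevalley group, once all Steinberg relations are shown to hold, their sufficiency is guaranteed by \cite{Steinberg62,Steinberg81}.

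First I would recover the relation $\Delta^{(q-1)/2}=U'^2$, which is deliberately omitted from (i): it is implied by the $\SL(2,q)$ presentation of (iii), exactly as $\delta^{(q-1)/2}=U^2$ holds in $\SL(2,q)$ by Theorem \ref{CRW-SL-odd}. Thus all the defining relations of $N$ from Theorem \ref{OmegaplusoddN} hold, and $N$ embeds in $G$. Using relations (i) and (ii) together with Theorem \ref{Op2nsigstab}, the conjugates of $\sigma$ under $N$ are then well defined and may be labelled $\sigma_{ij}(s)$ for $s\in\GF(q)^\times$ and $1\le|i|\ne|j|\le n$; the centraliser having precisely the index predicted by Theorem \ref{Op2nsigstab} guarantees the labelling is consistent. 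The $\SL(2,q)$ presentation supplies $\sigma_{ij}(s)\sigma_{ij}(t)=\sigma_{ij}(s+t)$ and $\sigma_{ij}(s)^p=1$ for the base pair, hence for all pairs by $N$-conjugation, so $\sigma_{ij}(s)$ is defined for every $s\in\GF(q)$.

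The bulk of the work is the verification of the Steinberg relations. For each of (a)--(f) I would deduce the general parenthetical identity from the single instance with $s=t=1$. The standard device is that the commutator identity $[a,bc]=[a,c][a,b]^c$ shows that, for fixed indices and fixed $t$, the set of $s$ for which the relation holds is closed under addition; conjugation by $\Delta=\Delta_{ij}$ sends $\sigma_{ij}(s)$ to $\sigma_{ij}(s\omega^2)$, so this set is also closed under multiplication by $\omega^2$. Since $q$ is odd, $\omega^2$ generates $\GF(q)$ as a field, so the relation holds for all $s$ once it holds for one nonzero value, and symmetrically in $t$. Relation (a) then yields the multiplicative relation $[\sigma_{ij}(s),\sigma_{jk}(t)]=\sigma_{ik}(st)$ in full, while (b)--(f) yield the commuting relations; the sign-changed variants follow by conjugating with the $Z_{ij}$ and using $\sigma_{ij}(s)=\sigma_{-j,-i}(-s)$ from the generator list.

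The main obstacle I anticipate is the orbit bookkeeping: I must confirm that (a)--(f), with their stated restrictions on $n$, provide exactly one relation from each $N$-orbit on unordered pairs of distinct root elements of $D_n$. Pairs are classified by the inner product of the underlying roots, and the Weyl-group part of $N$ (signed permutations of determinant $1$ and spinor norm $1$) acts transitively within each class; the $n$-conditions ($n>3$ in (d), reflecting when four distinct indices first appear, and $n=4$ in (f), reflecting an orbit of the pair $(e_i-e_j,\,e_k+e_l)$ not otherwise reached) encode precisely the small-rank degeneracies. Once every orbit is covered and every instance lifted to all field elements, the Steinberg generators generate a copy of $\Omega^+(2n,q)$; finally $\Delta$, $Z$, and $U'$ lie in this subgroup by (iii) together with its $N$-conjugates, and $V'$ lies in it as a product of conjugates of $U'$, so $G$ equals this subgroup and the proof is complete.
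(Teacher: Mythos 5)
Your overall template matches the paper's: recover the omitted relation $\Delta^{(q-1)/2}=U'^2$ from (iii), define the $\sigma_{ij}(s)$ as $N$-conjugates of $\sigma$ via Theorem~\ref{Op2nsigstab}, and then verify the Steinberg relations. But the route through the Steinberg relations differs, and in one place your plan has a real gap. On the route: the paper does not argue orbit-by-orbit with additive/multiplicative closure. Instead it identifies two subgroups of $G$ wholesale --- $\langle\sigma,\Delta,U',V'\rangle\cong\SL(n,q)$, which disposes of every relation with all suffices positive by appeal to Theorem~\ref{SLq}, and $\langle\sigma_{\pm1,\pm2}(1),\Delta_{12},\Delta_{1,-2},Z,U'\rangle\cong\Omega^+(4,q)$ (a central product of two copies of $\SL(2,q)$, the second obtained from (iii) by conjugating with $Z_{23}$ and shown to commute with the first using (e)), which disposes of every relation involving only two suffix-moduli with arbitrary signs. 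Three-modulus relations are then reduced to the all-positive case by conjugating with $Z_{ab}$ and using $\sigma_{ij}(s)=\sigma_{-j,-i}(-s)$, and four-modulus ones come from (d), plus (f) when $n=4$. Your closure argument can be made to work for (a)--(d), but note two points: conjugation by $\Delta_{ij}$ rescales \emph{both} $s$ and $t$, so "fixed $t$, multiply $s$ by $\omega^2$" needs $\delta_i^2=\Delta_{ij}\Delta_{i,-j}$ rather than $\Delta_{ij}$; and in fact for $D_n$ with $n\ge3$ the elements $\Delta_{i,-j}$ make $N$ transitive on the ordered pairs with $s,t\ne0$, so no field-generation argument is needed at all --- this is why the paper simply says the general case follows "by conjugation with $N$".

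The genuine gap is your final inference: "once all Steinberg relations are shown to hold, their sufficiency is guaranteed by Steinberg \dots\ the Steinberg generators generate a copy of $\Omega^+(2n,q)$." Steinberg's theorem says the group presented by the relations $x_r(s)x_r(t)=x_r(s+t)$ together with the commutator relations is the \emph{universal} Chevalley group of type $D_n$, namely ${\rm Spin}^+(2n,q)$, which for odd $q$ is a proper double cover of $\Omega^+(2n,q)$. So verifying all Steinberg relations only shows that the subgroup $H$ generated by the $\sigma_{ij}(s)$ is a quotient of ${\rm Spin}^+(2n,q)$ surjecting onto $\Omega^+(2n,q)$; it could a priori be the full cover. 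One must use the non-Steinberg relations --- the presentation of $N$ (with $Z^2=1$, $U'^4=1$) and the $\SL(2,q)$ presentation in (iii) (with $\Delta^{(q-1)/2}=U'^2$) --- to kill the extra central involution. This is exactly what the paper's $\Omega^+(4,q)$ computation accomplishes: it checks that the two embedded copies of $\SL(2,q)$ have \emph{amalgamated} centres, "the amalgamation of the centres follow[ing] from the presentation for $N$", which is the step that collapses the spin cover. Your proposal never engages with this, so as written it proves only that $G$ is isomorphic to $\Omega^+(2n,q)$ or to ${\rm Spin}^+(2n,q)$.
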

\begin{proof}
The relation omitted from (i) is supplied by (iii).

By (i) we may define the elements $Z_{ij}$ and $\Delta_{ij}$ in $G$.
Since $N$ acts transitively, in $\Omega^+(2n,q)$, by conjugation
on the set of all root elements $\sigma_{ij}(s)$,
where $s\ne 0$ and $|i|\ne|j|$,
we may define $\sigma_{ij}(s)$ in $G$ by the same conjugation, by (ii).  

Recall that $\Omega^+(4,q)$ is isomorphic to the direct product of two copies
of $\SL(2,q)$ with the centres amalgamated, the natural representation of
$\Omega^+(4,q)$ being the tensor product of two copies of the 
natural representation of $\SL(2,q)$.
Here $\langle \sigma_{12},\Delta_{1,-2}, U'\rangle$ is one copy of $\SL(2,q)$, and 
$\langle \sigma_{1,-2}, \Delta_{12},Z_{12}U'\rangle$ is the other.  
The first of these subgroups in $G$ is isomorphic to $\SL(2,q)$ by (iii), 
and, since $Z_{12}U'=U'^{Z_{23}}$, so is the second, by conjugation with $Z_{23}$.  
By (e), (i) and (ii), these subgroups commute with each other.  
The amalgamation of the centres 
follows from the presentation for $N$.  It follows that, in $G$, the subgroup 
$\langle\sigma_{\pm1,\pm2}(1),\Delta_{12},\Delta_{1,-2},Z, U'\rangle$ is 
isomorphic to $\Omega^+(4,q)$.

The parenthetical Steinberg relations under each of the headings 
(a) to (d) all follow from one 
example, by conjugation with $N$.  

All remaining Steinberg relations may be deduced as follows.  
Those that only involve positive suffices follow easily, as in the case of $\SL(n,q)$.  
Indeed, $\langle \sigma,\Delta,U',V'\rangle$ is isomorphic to  $\SL(n,q)$.  
Since $\langle\sigma_{\pm1,\pm2}(1),\Delta_{12},\Delta_{1,-2},Z,U'\rangle$ is 
isomorphic, in $G$, to $\Omega^+(4,q)$,
those Steinberg relations that only involve two suffices, 
with various signs, all hold in $G$.
Relations that involve four different suffices, with various signs, follow 
from the one case
(d) by conjugation, except in the case $n=4$, when the extra relation (f) is needed, 
as we can only change the signs of the suffices two at a time.
It remains to consider the case when three suffices occur:  namely, the LHS of
the relation is of the form $[\sigma_{ij}(s),\sigma_{uv}(t)]$, where 
$\{a,b,c\} := \{|i|, |j|, |u|, |v|\} $ forms a set of size 3.  
Conjugating by $Z_{ab}$, or $Z_{bc}$, or $Z_{ac}$
changes the signs of any two of these suffices at all their appearances.  Also 
the relation $\sigma_{ij}(s)=\sigma_{-j,-i}(-s)$ allows us to change the 
sign of $i$ and $j$ (or of $u$ and $v$).
Hence we reduce to the case in which all suffices are positive.

Thus all Steinberg relations hold in $G$, and it follows that the subgroup 
$H$ of $G$ generated by the Steinberg generators is isomorphic to $\Omega^+(2n,q)$.
By (iii) $H$ contains $U'$, so it contains all the $N$-conjugates of $U'$, 
and hence contains $V'$.  The result follows.
\end{proof}

\subsubsection{A presentation for $\Omega^+(2n,p)$}
We give a presentation for $\Omega^+(2n,p)$ on the generating set 
$\{ \sigma, Z, U',V'\}$.  
Observe that $N_1=\langle Z,U',V'\rangle$, 
as a subgroup of $\Omega^+(2n,p)$, acts transitively by conjugation 
on the set of elements 
$\sigma_{ij}(\epsilon)$, where $|i|\ne|j|$ and $\epsilon=\pm 1$. 
For example, $\sigma_{12}(1)^{{U'}^{2V'}}=\sigma_{12}(-1)$.
The number of symbols of
this form is $8n(n-1)$; but as group elements they are equal in pairs, since 
$\sigma_{ij}(s)=\sigma_{-j,-i}(-s)$.  The centraliser of
$\sigma$ in $N_1$ is the direct product of the cyclic group of order 4 generated by
$Z_{12}U'$ with the subgroup of $N_1$ that 
has index $4n(n-1)$ and 
centralises 
$\langle e_1,f_1,e_2,f_2\rangle$. 

\begin{theorem}
Let $p$ be an odd prime and let $n\ge 3$.  Let $G$ be the group generated by 
$\{\sigma,Z,U',V'\}$ subject to the relations given below.  
Then $G$ is isomorphic to $\Omega^+(2n,p)$.  
\begin{enumerate}
\item[(i)] Defining relations for $N_1=\langle Z,U',V'\rangle$ as in 
Theorem $\ref{OplusoddN1}$.

\item[(ii)] Relations that state that the elements listed in 
Theorem $\ref{Op2nsigstab}$ centralise $\sigma$, but omitting those involving $\Delta$.

\item[(iii)] Relations that present $\SL(2,p)$ on $\{\sigma,U'\}$.
\item[(iv)] The Steinberg relations (iv) of Theorem $\ref{OmegaPlusOdd}$.
\end{enumerate}
\end{theorem}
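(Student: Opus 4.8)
The plan is to follow the proof of Theorem \ref{OmegaPlusOdd} closely, the essential difference being that over the prime field we do not have the diagonal generator $\Delta$ available to rescale root parameters, so the general Steinberg relations must be bootstrapped from their $s=t=1$ instances by additive closure alone. First I would use relation (i) together with Theorem \ref{OplusoddN1} to identify $\langle Z,U',V'\rangle$ in $G$ with $N_1$, thereby defining the elements $Z_{ij}$. By relation (ii) the centraliser of $\sigma$ in $N_1$ centralises $\sigma$ in $G$; since, as recorded before the statement, $N_1$ acts transitively by conjugation on the set of root elements $\sigma_{ij}(\pm1)$ with $|i|\ne|j|$ (for instance $\sigma_{12}(1)^{{U'}^{2V'}}=\sigma_{12}(-1)$), this lets me define $\sigma_{ij}(\pm1)$ unambiguously in $G$ as the corresponding $N_1$-conjugates of $\sigma$, and then every $\sigma_{ij}(s)$ as a power.

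Next I would reconstruct the copy of $\Omega^+(4,p)$. Relation (iii) makes $\langle\sigma_{12}(1),U'\rangle$ a copy of $\SL(2,p)$; since $Z_{12}U'={U'}^{Z_{23}}$, conjugating by $Z_{23}$ shows $\langle\sigma_{1,-2}(1),Z_{12}U'\rangle$ is a second copy of $\SL(2,p)$; relation (e), with (i) and (ii), forces the two copies to commute, and the amalgamation of their centres is already encoded in the presentation for $N_1$. Hence $\langle\sigma_{\pm1,\pm2}(1),Z,U'\rangle\cong\Omega^+(4,p)$. In parallel I would observe that $\langle\sigma,U',V'\rangle$ is isomorphic to $\SL(n,p)$: the defining relations for the signed permutations, the presentation (iii) of $\SL(2,p)$ on $\{\sigma,U'\}$, the centraliser relations (ii), and the Steinberg relations (a)--(d) are of exactly the form required by Theorem \ref{SLp} on the generating set $\{\sigma,U',V'\}$. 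This is the key gain of the prime-field setting, because it supplies all Steinberg relations in positive suffices, for \emph{every} pair of parameters in $\GF(p)$, without any multiplicative scaling.

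It then remains to deduce the Steinberg relations that involve signed suffices. Those in only two suffices follow from the $\Omega^+(4,p)$ subgroup; those in four distinct suffices follow from relation (d) by conjugation in $N_1$, supplemented by (f) when $n=4$ (since there signs may only be flipped two at a time); and those in three distinct suffices reduce to the positive case by conjugating with the appropriate $Z_{ab}$, $Z_{bc}$, or $Z_{ac}$ and using $\sigma_{ij}(s)=\sigma_{-j,-i}(-s)$. Once all Steinberg relations are in hand, the subgroup $H$ generated by the Steinberg generators is isomorphic to $\Omega^+(2n,p)$; since $H$ contains $U'$ by (iii), it contains all $N_1$-conjugates of $U'$ and hence $V'$, so $G=H$.

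The main obstacle I anticipate is exactly the additive bootstrapping forced by the absence of $\Delta$. In Theorem \ref{OmegaPlusOdd} one passes from $s=t=1$ to arbitrary $s,t$ partly by conjugating with $\delta$-type elements so as to multiply a parameter by $\omega^2$, and over $\GF(p)$ this device is unavailable. The point to check carefully is therefore that, for each relation, the set of parameters for which it holds is closed under addition --- using the commutator identities $[a,bc]=[a,c][a,b]^c$ and $[ab,c]=[a,c]^b[b,c]$ together with the commuting relations already established --- so that, containing $1$, it exhausts $\GF(p)$. Since this is precisely the mechanism behind Theorem \ref{SLp}, localising the parameter argument to the subgroup $\langle\sigma,U',V'\rangle\cong\SL(n,p)$ is the cleanest way to discharge it, after which only the sign manipulations above remain.
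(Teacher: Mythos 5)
Your proposal is correct and follows essentially the same route as the paper: define the $Z_{ij}$ from (i), the $\sigma_{ij}(\pm1)$ from (ii) and then $\sigma_{ij}(s)=\sigma_{ij}(1)^s$ using $\sigma^p=1$ from (iii), recover $\Omega^+(4,p)$ inside $G$, and replace the $\delta$-conjugation scaling of the $e>1$ case by additive closure via the standard commutator identities (the paper phrases this as: if $[a,b]=[a,c]=1$ then $[a,bc]=1$, and $[a^m,b]=[a,b]^m$ when $[a,b]$ is central in $\langle a,b\rangle$). Your extra observation that $\langle\sigma,U',V'\rangle\cong\SL(n,p)$ by Theorem \ref{SLp} is also how the paper disposes of the positive-suffix relations in the $e>1$ argument it invokes, so there is no substantive divergence.
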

\begin{proof}

By (i) we may define the elements $Z_{ij}$ in $G$.

By (ii) we may define $\sigma_{ij}(\pm1)$ in $G$ for $|i|\ne|j|$ by conjugation, 
and then define $\sigma_{ij}(s)$ to be $\sigma_{ij}(1)^s$, 
taking $s$ as an integer, since $\sigma_{ij}(1)^p=\sigma^p=1$ by (iii).

As in the case $e>1$, 
$\langle\sigma_{12}(1), \sigma_{1,-2}(1),Z, U'\rangle$ is 
isomorphic in $G$ to $\Omega^+(4,p)$.

The parenthetical Steinberg relations under each of the headings 
(a) to (d) all follow from one example, 
as follows.   We may assume, by conjugation, that
the relation holds for the given suffices with $s=t=1$.  
In relations (b), (c), (d) and (e), where the
RHS is 1, the case of general $s$ and $t$ follows from the 
fact that if $[a,b]=1$ and $[a,c]=1$
then $[a,bc]=1$.  Relation (a) follows similarly: if $[a,b,a]=[a,b,b]=1$ then
$[a^m,b]=[a,b^m]=[a,b]^m$.

We thus deduce that all Steinberg relations hold in $G$, 
and complete the proof arguing as in the case $e>1$.
\end{proof}
\noindent 
Observe that 
$\Delta = {({\sigma^{\omega - \omega^2}})}^{U'}  \sigma^{\omega^{-1}} 
          {(\sigma^{\omega- 1})}^{U'} \sigma^{-1}$.

\subsubsection{A presentation for ${\rm P}\Omega^+(2n,q)$}
The centre of $\Omega^+(2n,q)$ is trivial, unless $q^n\equiv 1\bmod 4$, 
when it has order 2. 
If it is non-trivial, then it is generated by $V'^n$ if $n$ is even, and by
$(V'Z\Delta)^{n(q-1)/4}$ if $n$ is odd.  For $(V'Z\Delta)^n$ acts
on one $n$-dimensional space containing $e_1$ by multiplication by 
$\omega^{-2}$, and on a complementary $n$-dimensional
space containing $f_1$ by multiplication by $\omega^2$.

\subsubsection{Standard generators for $\Omega^+(2n,q)$} 
In Table 2 of \cite{odd} the non-trivial standard generators for 
$\Omega^+(2n, q)$ are labelled $s, t, \delta, s', t', \delta', v$.
Observe that $s = ZU'^{-1}; t = (\sigma^{-1})^{Z^{V'}}; 
\delta = \Delta^{(Z^{V'^{-1}})}; s'= U'; t' = \sigma;
\delta' = \Delta^{-1}$; and $v = V'$.  
The presentation generator $Z = s s'$.


\subsection{Even characteristic}
\subsubsection{Generators and notation} 
Let $q = 2^e$.  Let $\omega$ be a primitive element of $\GF(q)$.  

Let $n>1$.  
We take a hyperbolic basis $(e_1,f_1,\ldots,e_n,f_n)$  of the natural module,
so that, if $Q$ is the quadratic form that is preserved, 
then $Q(e_i)=Q(f_i)=0$ for all $i$, and the corresponding
bilinear form is defined by $e_i.f_i=1$ and vanishes 
on all other pairs of basis vectors.

We define the following elements of $\SO^+(2n,q)$, where 
$1\le i,j\le n$ and $i \not= j$, and $s\in\GF(q)$.
 
$\delta_i=(e_i\mapsto\omega^{-1}e_i, f_i\mapsto\omega f_i)$ if $e>1$;

$z_i=(e_i,f_i)$;

$Z_{ij}=z_iz_j$;

$\sigma_{ij}(s)=(e_i\mapsto e_i+se_j, f_j\mapsto f_j+sf_i)$;

$\sigma_{-i,j}(s)=(f_i\mapsto f_i+se_j, f_j\mapsto f_j+se_i)=\sigma_{ij}(s)^{z_i}$;

$\sigma_{i,-j}(s)=(e_i\mapsto e_i+sf_j, e_j\mapsto e_j+sf_i)=\sigma_{ij}(s)^{z_j}$;

$\sigma_{-i,-j}(s)=\sigma_{ji}(s) = \sigma_{ij}(s)^{Z_{ij}}$;

$U=(e_1,e_2)(f_1,f_2)$;

$V=(e_1,e_2,\ldots,e_n)(f_1,f_2,\ldots,f_n)$.

\smallskip
All these elements of $\SO^+(2n,q)$ centralise those basis elements that they are not
stated to move.  All lie in $\Omega^+(2n,q)$ except for $z_i$. 
Note that $\langle \sigma_{12}(1), 
\delta_1\delta_1^{-U}, U, V\rangle$ is isomorphic to $\SL(n,q)$.

\subsubsection{A presentation for $\Omega^+(4,q)$}
Recall that $\Omega^+(4,q)$ is the direct product of two copies of
$\SL(2,q)$.  
One acts on the spaces $\langle e_1,e_2\rangle$ and $\langle f_1,f_2\rangle$
and is generated by $\{ \sigma_{12}(1), \delta_1\delta_2^{-1}, U\}$; 
the other acts on the spaces
$\langle e_1,f_2\rangle$ and $\langle f_1,e_2\rangle$ and is generated by 
$\{ \sigma_{1,-2}(1), \delta_1\delta_2, Z_{12}U\}$.  
The two representations of each copy of $\SL(2,q)$ 
are linked by the inverse transform automorphism.  Conjugating by $z_2$ 
interchanges the generating set
of the first of these copies of $\SL(2,q)$ with that of the second,
reflecting the fact that $\SO^+(4,q)$ is isomorphic to $\SL(2,q)\wr C_2$.
This gives the required presentation on 
$\{ \sigma_{12}(1), \delta_1\delta_2^{-1}, U, 
\sigma_{1,-2}(1), \delta_1\delta_2, Z_{12}U\}$,
omitting $\delta_1\delta_2^{-1}$ and $\delta_1\delta_2$ if $e = 1$.

\subsubsection{A presentation for $\Omega^+(2n, q)$ for $n \ge 3$ and $e > 1$} 
We give a presentation for $\Omega^+(2n,q)$ on the generating set 
$\{ \sigma=\sigma_{12}(1), \delta=\delta_1, Z=Z_{12},U,V\}$. 

Let $N_1$ be the subgroup of $\Omega^+(2n,q)$ generated by $\{ Z,U,V\}$.
It is isomorphic to a subgroup of index 2 in \mbox{$C_2\wr S_n$}.
\begin{theorem}\label{OmegaplusNoneeven}
Let $n\ge 3$.  
Let $G$ be the group generated by
$\{Z,U,V\}$ subject to the relations given below.  Then $G$ is isomorphic to $N_1$.
\begin{enumerate}
\item[(i)] Defining relations for $S_n$ on $\{U,V\}$. 
\item[(ii)] If $n > 3$ then $[Z,U^{V^2}]=1$.
\item[(iii)] If $n > 4$ then $[Z,VUU^V]=1$.
\item[(iv)] $Z^2=1$.
\item[(v)] If $n > 3$ then $[Z,Z^{V^2}]=1$.
\item[(vi)] $ZZ^V=Z^{U^V}$.
\item[(vii)] $[Z,U]=1$.
\item[(viii)] $[Z,Z^V]=1$.
\end{enumerate}
\end{theorem}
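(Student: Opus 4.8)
The plan is to follow closely the template of Theorem~\ref{OplusoddN1}, exploiting the fact that in even characteristic $U$ squares to the identity, so that $N_1$ involves a single elementary abelian $2$-group rather than two. First I would invoke relation~(i) to identify $\langle U,V\rangle$ with $S_n$ acting on $\{1,\dots,n\}$, with $U=(1,2)$ and $V=(1,2,\dots,n)$, and note that $Z=Z_{12}$ corresponds to the unordered pair $\{1,2\}$.

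The key first step is to define the conjugates $Z_{ij}$. Relations~(ii) and~(iii) state that $Z$ is centralised by the copy of $S_{n-2}$ fixing $1$ and $2$ (generated by $U^{V^2}=(3,4)$ and $VUU^V=(3,4,\dots,n)$, with the conditions $n>3$ and $n>4$ exactly matching the range in which these generators are needed), while relation~(vii) states that $Z$ commutes with $U=(1,2)$. Together these assert that $Z$ is centralised by the full setwise stabiliser $S_2\times S_{n-2}$ of $\{1,2\}$ in $\langle U,V\rangle$. Consequently the $\langle U,V\rangle$-conjugates of $Z$ are indexed by unordered pairs, so I may write $Z_{ij}=Z_{ji}$ for $1\le i<j\le n$, giving at most $\binom{n}{2}$ such elements.

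Next I would establish that $A:=\langle Z_{ij}\rangle$ is elementary abelian of rank at most $n-1$. By~(iv) each $Z_{ij}$ is an involution. Relation~(viii) gives $[Z_{12},Z_{23}]=1$, and, when $n>3$, relation~(v) gives $[Z_{12},Z_{34}]=1$; conjugating by $\langle U,V\rangle$ then shows that any two of the $Z_{ij}$ commute, whether or not their index sets meet, so $A$ is elementary abelian. Relation~(vi) reads $Z_{12}Z_{23}=Z_{13}$, and by conjugation yields the cocycle identity $Z_{ij}Z_{jk}=Z_{ik}$ for distinct $i,j,k$; hence $A=\langle Z_{12},Z_{23},\dots,Z_{n-1,n}\rangle$ has order at most $2^{n-1}$.

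Finally, since $A$ is the normal closure of $Z$ and is abelian, $A\trianglelefteq G$ and $G/A$ is a quotient of $\langle U,V\rangle\cong S_n$; thus $|G|\le 2^{n-1}\,n!$. As $N_1$ has index $2$ in $C_2\wr S_n$ it has order exactly $2^{n-1}\,n!$, and all the listed relations hold in $N_1$, so $N_1$ is a homomorphic image of $G$; comparing orders forces $G\cong N_1$ (and incidentally that the extension splits and $|A|=2^{n-1}$). The only point requiring genuine care is the well-definedness of the labelling $Z_{ij}$ via the centraliser computation in the second paragraph; everything else is the same order-counting argument as in the odd-characteristic case, simplified here by the absence of any contribution from $U^2$.
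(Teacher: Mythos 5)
Your proposal is correct and follows essentially the same route as the paper's proof: define the conjugates $Z_{ij}$ via the centraliser relations (ii), (iii), (vii), use (iv), (v), (viii) to make $A=\langle Z_{ij}\rangle$ elementary abelian, use (vi) to cut its rank to at most $n-1$, and conclude by comparing $|G|\le 2^{n-1}n!$ with $|N_1|=2^{n-1}n!$. The extra care you take over the setwise stabiliser $S_2\times S_{n-2}$ and the well-definedness of the labelling is exactly the point the paper treats more tersely, but the argument is the same.
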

\begin{proof}
Since, for the stated $n$,  
$U^{V^2}$ and $VUU^V$, as permutations of $\{1,2,\ldots,n\}$, 
stand for $(3,4)$ and $(3,4,\ldots,n)$, 
relations (ii) and (iii) allow us to define $Z_{ij}$ in $G$, 
and (vii) then implies that $Z_{12}=Z_{21}$, so
$Z_{ij}=Z_{ji}$ for all $i\ne j$.  
By (v) and (viii) $Z_{12}$ commutes with $Z_{23}$ and with $Z_{34}$,
and hence all $Z_{ij}$ commute with each other, generating an 
elementary abelian 2-group $A$ by (iv).
Finally $Z_{12}Z_{23}=Z_{13}$ by (vi), so $Z_{ij}Z_{jk}=Z_{ik}$ for all 
distinct $i, j, k$, and $A$ 
is generated by $\{Z_{12},Z_{23},\ldots,Z_{n-1,n}\}$ and has rank at most $n-1$.
But this group has rank $n-1$ in $N_1$, and the result follows.
\end{proof}

Let $N=\langle \delta,Z,U,V\rangle \leq \Omega^+(2n,q)$. 
It is isomorphic to a subgroup of index 2 in \mbox{$D_{2(q-1)} \wr S_n$}.

\begin{theorem}\label{OplusevenN}
Let $q=2^e$ with $e > 1$, and let $n\ge 3$.  
Let $G$ be the group generated by $\{\delta,Z,U,V\}$ subject to the relations given below.  
Then $G$ is isomorphic to $N$.
\begin{enumerate}
\item[(i)] Defining relations for $N_1 = \langle Z, U,V \rangle$ as 
in Theorem $\ref{OmegaplusNoneeven}$.
\item[(ii)] $[\delta,U^V]=1$.
\item[(iii)] If $n > 3$ then $[\delta, VU]=1$.
\item[(iv)] $[\delta,\delta^U]=1$.
\item[(v)] $\delta^{q-1}=1$.
\item[(vi)] $\delta^Z=\delta^{-1}$.
\item[(vii)] $[\delta,Z^V]=1$.
\end{enumerate}
\end{theorem}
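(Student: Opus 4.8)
The plan is to follow the pattern already established for $N$ in Theorems~\ref{SpN} and~\ref{OmegaplusoddN}, exploiting that the even characteristic case is in fact cleaner: here $\delta$ has odd order $q-1$, so no amalgamation of the cyclic factors occurs. First I would record the target orders. Since $N$ has index $2$ in $D_{2(q-1)}\wr S_n$ we have $|N|=2^{n-1}(q-1)^n n!$, and by Theorem~\ref{OmegaplusNoneeven} the subgroup $N_1=\langle Z,U,V\rangle$ has order $2^{n-1}n!$; thus $|N|=(q-1)^n|N_1|$. All of the relations (i)--(vii) are readily checked to hold in the matrix group $N$, so $N$ is a homomorphic image of $G$ and $|G|\ge|N|$. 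It therefore suffices to prove the reverse bound $|G|\le 2^{n-1}(q-1)^n n!$, which I would obtain by exhibiting inside $G$ a normal abelian subgroup $D$ of order at most $(q-1)^n$ with $G/D$ a quotient of $N_1$.

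By relation (i) the subgroup $\langle Z,U,V\rangle$ of $G$ is a copy of $N_1$, inside which $\langle U,V\rangle\cong S_n$ permutes the elements $Z_{ij}$ naturally. Since $U^V$ and $VU$ stand for the permutations $(2,3)$ and $(2,3,\ldots,n)$, relations (ii) and (iii) assert that $\delta$ is centralised by the copy of $S_{n-1}$ fixing the index $1$; hence the $\langle U,V\rangle$-conjugates of $\delta$ may be labelled $\delta_i$ for $1\le i\le n$, permuted by $\delta_i^g=\delta_{g(i)}$. Relation (iv) reads $[\delta_1,\delta_2]=1$, and conjugating through $\langle U,V\rangle$ shows that all $\delta_i$ commute; together with relation (v) this proves that $D:=\langle\delta_1,\ldots,\delta_n\rangle$ is abelian of exponent dividing $q-1$, so $|D|\le (q-1)^n$.

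Next I would pin down the action of $N_1$ on $D$. Relation (vi) gives $\delta_1^{Z_{12}}=\delta_1^{-1}$, and conjugating by $\langle U,V\rangle$ yields $\delta_i^{Z_{ij}}=\delta_i^{-1}$ for every $j\ne i$; relation (vii) gives $[\delta_1,Z_{23}]=1$, and conjugation yields $\delta_i^{Z_{jk}}=\delta_i$ whenever $i\notin\{j,k\}$. Thus every $Z_{jk}$ sends each $\delta_i$ to $\delta_i^{\pm1}$, so $Z$ normalises $D$; as $U$ and $V$ permute the $\delta_i$, the subgroup $D$ is normal in $G=\langle\delta,Z,U,V\rangle$. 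Consequently $G/D$ is generated by the images of $Z,U,V$, subject at least to the relations (i), so it is a quotient of $N_1$ and $|G/D|\le 2^{n-1}n!$. Hence $|G|=|D|\,|G/D|\le (q-1)^n2^{n-1}n!=|N|$, and combined with the surjection $G\twoheadrightarrow N$ this forces $G\cong N$.

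The individual verifications are routine transcriptions of the matrix action. The one step deserving care is the bookkeeping in the third paragraph that deduces the action of \emph{every} $Z_{jk}$ on \emph{every} $\delta_i$ from the two given relations (vi) and (vii) together with the $S_n$-symmetry, since it is precisely this complete description that guarantees the normality of $D$ and hence the clean factorisation $|G|=|D|\,|G/D|$. I expect no genuinely new obstacle beyond that already handled in Theorem~\ref{OmegaplusoddN}; indeed the even characteristic here avoids the index-two amalgamation $\Delta_{12}^{(q-1)/2}=\Delta_{1,-2}^{(q-1)/2}$ that complicated the odd case, so the argument is shorter.
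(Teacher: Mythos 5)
Your proposal is correct and follows essentially the same route as the paper's proof: label the $\langle U,V\rangle$-conjugates of $\delta$ as $\delta_i$ using (ii)--(iii), show they generate an abelian group $D$ of exponent dividing $q-1$ via (iv)--(v), use (vi)--(vii) to see that the $Z_{jk}$ normalise $D$, and conclude by the order count $|G|\le|D|\,|G/D|\le(q-1)^n\cdot 2^{n-1}n!=|N|$. Your third paragraph merely spells out the bookkeeping that the paper leaves implicit.
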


\begin{proof}
Since $U^V$ and $VU$ stand for $(2,3)$ and $(2,3,\ldots,n)$ respectively, 
(ii) and (iii) allow us to define $\delta_i$ in $G$.  
Now (iv) implies that $\delta_1$ commutes with $\delta_2$,
and hence the $\delta_i$ generate an abelian group $A$, of 
exponent dividing $q-1$ by (v), and
of rank at most $n$.
Since $A$ is a group of order $(q-1)^n$ in $\Omega^+(2n,q)$ this must also
be the case in $G$.  In $G$, the subgroup of $N$ generated by the $Z_{ij}$
normalises $A$.  This is guaranteed by (vi) and (vii), which assert 
that $\delta_1$ is inverted by $Z_{12}=Z_{21}$ and centralised by $Z_{23}$.  
So $G/A$ is isomorphic to a subgroup of 
index 2 in $C_2\wr S_n$.
The result follows.
\end{proof}

Since $n \ge 3$, $N$ acts transitively by conjugation on 
$\{\sigma_{ij}(s) \, | \, |i|\ne |j|, s \in \GF(q)^\times \}$, 
a set of size $2n(n-1)(q-1)$, as follows.  
Elements of $\langle Z, U,V\rangle$ permute the suffices, 
acting as signed permutations, where
$Z_{ij}$ maps $i$ to $-i$, and $j$ to $-j$, and fixes the other suffices.

%
%
%
%
%
%
%
%
%
\begin{theorem}\label{CentOPlus-sigma}
Let $q=2^e$ with $e > 1$, and let $n\ge 3$.  
The centraliser of $\sigma=\sigma_{12}(1)$ in $N$ 
has index $2n(n-1)(q-1)$ in $N$ and is generated by the following elements:
\begin{enumerate}
\item[(i)] $U^{V^2}$ if $n>3$, 
\item[(ii)] $VUU^V$ if $n>4$;
\item[(iii)] $Z^{V^2}$ if $n > 3$;
\item[(iv)] $\delta^{V^2}$; 
\item[(v)] $ZU$;
\item[(vi)] $\delta\delta^U$.
\end{enumerate}
\end{theorem}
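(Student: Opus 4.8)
The plan is to get the index assertion immediately from an orbit--stabiliser count, and then prove the generation claim by identifying the listed subgroup as an internal direct product on complementary subspaces. First I would invoke the transitivity statement recorded immediately before the theorem: $N$ acts by conjugation on the set $\{\sigma_{ij}(s) : |i|\neq|j|,\ s\in\GF(q)^\times\}$, which has exactly $2n(n-1)(q-1)$ elements (the $4n(n-1)(q-1)$ symbols collapse in pairs via $\sigma_{ij}(s)=\sigma_{-j,-i}(s)$), and the action is transitive. Since the stabiliser of $\sigma=\sigma_{12}(1)$ under conjugation is precisely $C_N(\sigma)$, orbit--stabiliser gives $|N:C_N(\sigma)|=2n(n-1)(q-1)$. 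Using $|N|=2^{n-1}(q-1)^n n!$ from Theorem \ref{OplusevenN} (the index-$2$ subgroup of $D_{2(q-1)}\wr S_n$), this yields $|C_N(\sigma)|=2^{n-2}(q-1)^{n-1}(n-2)!$, establishing the index outright and reducing the rest to showing the listed elements generate a subgroup of exactly this order.

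Second I would check that each listed element centralises $\sigma$. Items (i)--(iv) fix the suffices $1,2$ as signed permutations and act only on $\langle e_3,f_3,\ldots,e_n,f_n\rangle$: here $U^{V^2}$, $VUU^V$ represent the permutations $(3,4)$ and $(3,4,\ldots,n)$ (as in Section \ref{symmetric} and the odd-characteristic analogue, where signs vanish in even characteristic), while $Z^{V^2}=Z_{34}$ and $\delta^{V^2}=\delta_3$. For (v), $ZU=Z_{12}U$ sends $\sigma_{12}(s)$ to $\sigma_{21}(s)=\sigma_{12}(s)$, since $\sigma_{12}(s)^{Z_{12}}=\sigma_{-1,-2}(s)=\sigma_{21}(s)$ and the subsequent $U$ swaps the suffices back. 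For (vi), $\delta\delta^U=\delta_1\delta_2$ leaves $\sigma_{12}(s)=(e_1\mapsto e_1+se_2,\ f_2\mapsto f_2+sf_1)$ unchanged because the scalings on $e_1$ and $e_2$ cancel, i.e. $\sigma_{12}(s)^{\delta_1\delta_2}=\sigma_{12}(s\omega\omega^{-1})=\sigma_{12}(s)$.

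Third, and where the real content lies, I would identify the generated subgroup $C$ as a direct product supported on complementary spaces. Items (v),(vi) generate $\langle ZU,\delta_1\delta_2\rangle$, acting only on $\langle e_1,f_1,e_2,f_2\rangle$; since $ZU$ is an involution inverting the order-$(q-1)$ element $\delta_1\delta_2$, this is a copy of $D_{2(q-1)}$ of order $2(q-1)$. Items (i)--(iv) act only on $\langle e_3,f_3,\ldots,e_n,f_n\rangle$: (i),(ii) generate the symmetric group $S_{n-2}$ on $\{3,\ldots,n\}$, the $S_{n-2}$-conjugates of (iv) generate $\langle\delta_3,\ldots,\delta_n\rangle$ of order $(q-1)^{n-2}$, and the $S_{n-2}$-conjugates of (iii) generate $\langle Z_{ij}:3\le i\ne j\le n\rangle$, which by $Z_{ij}Z_{jk}=Z_{ik}$ and $Z_{ij}=Z_{ji}$ is the even-weight subgroup of $\langle z_3,\ldots,z_n\rangle$, of rank $n-3$ and order $2^{n-3}$. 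Exactly as in Theorem \ref{OplusevenN}, items (i)--(iv) thus generate the copy of $N$ for the $(n-2)$-fold hyperbolic space, the index-$2$ subgroup of $D_{2(q-1)}\wr S_{n-2}$ of order $2^{n-3}(q-1)^{n-2}(n-2)!$. As the two factors have disjoint supports they commute and meet trivially, so $|C|=2(q-1)\cdot 2^{n-3}(q-1)^{n-2}(n-2)!=2^{n-2}(q-1)^{n-1}(n-2)!=|C_N(\sigma)|$; since $C\le C_N(\sigma)$, equality follows.

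The main obstacle I anticipate is the bookkeeping in the third step: seeing that only even products $z_iz_j$ lie in $\Omega^+$, so the relevant $z$-part has order $2^{n-3}$ rather than $2^{n-2}$; confirming that $\langle ZU,\delta_1\delta_2\rangle$ is genuinely dihedral of order $2(q-1)$; and verifying the clean direct-product decomposition in the degenerate small cases $n=3$ (items (i)--(iii) absent) and $n=4$ (item (ii) absent), where the block on $\{3,\ldots,n\}$ shrinks but the count $2^{n-2}(q-1)^{n-1}(n-2)!$ still holds.
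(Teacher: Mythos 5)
Your proof is correct and follows essentially the same route as the paper: the paper's (one-sentence) proof likewise identifies $C_N(\sigma)$ as the direct product of the centraliser in $N$ of $\langle e_1,f_1,e_2,f_2\rangle$, of order $(q-1)^{n-2}2^{n-3}(n-2)!$, with the copy of $D_{2(q-1)}$ generated by $\delta_1\delta_2$ and $Z_{12}U$, the index then following from transitivity of $N$ on the $2n(n-1)(q-1)$ root elements. Your write-up just makes explicit the order count, the verification that each listed element centralises $\sigma$, and the small cases $n=3,4$.
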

\begin{proof}
The centraliser of $\sigma$ in $N$ is the direct product of
the centraliser in $N$ of $\langle e_1,f_1,e_2,f_2\rangle$, 
which has order $(q-1)^{n-2}2^{n-3}(n-2)!$,
with a copy of $D_{2(q-1)}$, namely $\langle\delta_1\delta_2, Z_{12}U\rangle$. 
\end{proof}

\begin{theorem} \label{Opluseven}
Let $q=2^e$ with $e>1$, and let $n \ge 3$.  Let $G$ be the group generated by 
$\{\sigma, \delta, Z,U,V\}$ subject to the relations given below.  
Then $G$ is isomorphic to $\Omega^+(2n,q)$.  
\begin{enumerate}
\item[(i)] Defining relations for $N=\langle\delta,Z,U,V\rangle$ as in 
Theorem $\ref{OplusevenN}$.

\item[(ii)] Relations that state that the elements listed in
Theorem $\ref{CentOPlus-sigma}$ centralise $\sigma$.

\item[(iii)] Relations that present $\SL(2,q)$ on $\{\sigma, \Delta,U\}$, where 
$\Delta=[U,\delta]=\delta_1\delta_2^{-1}$, but omitting the relation $\Delta^{q-1}=1$.

\item[(iv)] The following instances of Steinberg relations:
\begin{enumerate}
\item[(a)] $[\sigma,\sigma^V]=\sigma^{VU}$; \q$([\sigma_{ij}(s),\sigma_{jk}(t)]=\sigma_{ik}(st))$;
\item[(b)] $[\sigma,\sigma^{VU}]=1$; \q$([\sigma_{ij}(s),\sigma_{ik}(t)]=1)$;
\item[(c)] $[\sigma,\sigma^W]=1$, where $W=U^{VU}$; \q$([\sigma_{ij}(s),\sigma_{kj}(t)]=1)$;
\item[(d)] if $n>3$ then $[\sigma,\sigma^{V^2}]=1$; \q$([\sigma_{ij}(s),\sigma_{kl}(t)]=1)$;
\item[(e)] $[\sigma,\sigma^{Z^V}]=1$; \q$([\sigma_{ij}(s),\sigma_{i,-j}(t)]=1)$;
\item[(f)] if $n=4$ then $[\sigma,\sigma^{Z^VV^2}]=1$; \q$([\sigma_{ij}(s),\sigma_{k,-l}(t)]=1)$.
\end{enumerate}
The given relations are instances of the more general relations in parentheses, 
with $s=t=1$ and $(i,j,k,l)=(1,2,3,4)$.  
In the parenthetical version, $i,j,k,l$ are distinct and positive.
\end{enumerate}
\end{theorem}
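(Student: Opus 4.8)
The plan is to follow the template established in the proof of Theorem~\ref{OmegaPlusOdd}, adapting it to even characteristic, where the two features that change are that $\Omega^+(4,q)$ is now a genuine direct product $\SL(2,q)\times\SL(2,q)$ (the centres being trivial) and that the sign-change identity for root elements reads $\sigma_{ij}(s)=\sigma_{-j,-i}(s)$, with no sign. First I would supply the relation $\Delta^{q-1}=1$ omitted from the $\SL(2,q)$ presentation in~(iii): since $\Delta=[U,\delta]=\delta_1\delta_2^{-1}$ and relations~(iv) and~(v) of Theorem~\ref{OplusevenN} (included in~(i)) state that $\delta_1$ and $\delta_2$ commute and have order dividing $q-1$, this is immediate. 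Using~(i) I can then define the torus elements $\delta_i$ and the involutions $Z_{ij}$ in $G$, and, because $N$ acts transitively by conjugation on the short root elements $\sigma_{ij}(s)$ with $|i|\ne|j|$ and $s\in\GF(q)^\times$, the centraliser relations~(ii) let me define every $\sigma_{ij}(s)$ in $G$ as an $N$-conjugate of $\sigma$.

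Next I would build a copy of $\Omega^+(4,q)$ inside $G$. By~(iii) the subgroup $\langle\sigma,\Delta,U\rangle$ is isomorphic to $\SL(2,q)$; this is one factor. Conjugating its generating set by $Z^V=Z_{23}\in N$ yields $\{\sigma_{1,-2}(1),\,\delta_1\delta_2,\,Z_{12}U\}$ --- one checks directly that $\sigma^{Z_{23}}=\sigma_{1,-2}(1)$, that $(\delta_1\delta_2^{-1})^{Z_{23}}=\delta_1\delta_2$, and that $U^{Z_{23}}=Z_{12}U$ --- so this second set also generates a copy of $\SL(2,q)$ in $G$. Relation~(e) asserts $[\sigma,\sigma^{Z^V}]=[\sigma_{12}(1),\sigma_{1,-2}(1)]=1$, and together with~(i) and~(ii) this forces the two $\SL(2,q)$ subgroups to commute; as $q$ is even they meet trivially, so $\langle\sigma_{\pm1,\pm2}(1),\delta_1,\delta_2,Z,U\rangle\cong\SL(2,q)\times\SL(2,q)\cong\Omega^+(4,q)$ in $G$.

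With these structures in hand I would verify all the Steinberg relations for $\Omega^+(2n,q)$. Those involving only positive suffices follow from the copy of $\SL(n,q)=\langle\sigma,\Delta,U,V\rangle$, the argument of Theorem~\ref{SLq} applying verbatim once relations~(a)--(d) are available; here each general coefficient $s,t\in\GF(q)$ is reached from the instance $s=t=1$ by closure under addition together with conjugation by the torus element $\delta_i$, which multiplies the relevant coefficient by $\omega$, and since $\omega$ generates $\GF(q)$ over the prime field all coefficients are obtained, exactly as in the earlier theorems. Relations in two distinct moduli hold inside the $\Omega^+(4,q)$ subgroup; relations in four distinct moduli follow from~(d) by conjugation, with the single extra relation~(f) supplied when $n=4$, precisely because the available sign changes (conjugation by $Z_{ab}$) alter two suffices at a time. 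Relations in three distinct moduli are reduced to the all-positive case by conjugating with $Z_{ab}$, $Z_{bc}$ or $Z_{ac}$ to flip signs in pairs and invoking $\sigma_{ij}(s)=\sigma_{-j,-i}(s)$ to adjust the remaining suffix.

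I expect the main obstacle to be exactly this sign bookkeeping in the three- and four-suffix Steinberg relations: because even-characteristic sign changes come only in pairs, I must check that $n=4$ is the sole case needing the auxiliary relation~(f) and that every mixed-sign relation genuinely reduces to an all-positive instance. Granting the Steinberg relations, Steinberg's theorem identifies the subgroup $H:=\langle\sigma_{ij}(s)\rangle$ with $\Omega^+(2n,q)$. Since $N$ permutes the generators $\sigma_{ij}(s)$ it normalises $H$; by~(iii) we have $U\in H$, hence all $N$-conjugates of $U$, and their product yields $V\in H$; the $\Omega^+(4,q)$ subgroup places $Z$ and $\delta_1\delta_2^{\pm1}$ in $H$, and as $q-1$ is odd this gives $\delta_1=\delta\in H$. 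Thus every generator of $G$ lies in $H$, so $G=H\cong\Omega^+(2n,q)$, as required.
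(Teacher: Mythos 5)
Your proposal is correct and follows essentially the same route as the paper's proof: supply the omitted relation $\Delta^{q-1}=1$ from the torus relations, define the $\sigma_{ij}(s)$ as $N$-conjugates via the centraliser relations, build the rank-2 subgroup from the two commuting copies of $\SL(2,q)$ (the second being the $Z_{23}$-conjugate of the first, with (iv)(e) forcing them to commute), and then reduce the general Steinberg relations to the listed instances exactly as in the odd-characteristic Theorem~\ref{OmegaPlusOdd}. The paper phrases the rank-2 step as exhibiting $\SO^+(4,q)\cong\SL(2,q)\wr C_2$ with $Z^V$ as wreathing element rather than $\Omega^+(4,q)\cong\SL(2,q)\times\SL(2,q)$, but the underlying verification is identical.
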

\begin{proof}
Relations (i) imply that $(\delta_1\delta_2)^{q-1}=1$, which supplies the 
relation omitted from (iii).

By (i) we may define the elements $Z_{ij}$ and $\delta_i$ in $G$.
Since $N$ acts transitively, in $\Omega^+(2n,q)$, by conjugation on the set 
of all root elements $\sigma_{ij}(s)$, where $s\ne 0$,
we may define $\sigma_{ij}(s)$ in $G$ by the same conjugation, by (ii).  

Recall that $\SO^+(4,q)$ is isomorphic to $\SL(2,q)\wr C_2$.
Here $\langle \sigma_{12}, \delta_1\delta_{2}^{-1}, U\rangle$ is 
one copy of $\SL(2,q)$, and 
$\langle \sigma_{1,-2}, \delta_1\delta_2, Z_{12}U\rangle$ is another.  
The first of these subgroups in $G$ is isomorphic to $\SL(2,q)$ by (iii), 
and, since $Z_{12}U=U^{Z_{23}}$, so is the second, by conjugation with $Z_{23}$.  
By (iv)(e), (i) and (ii), these subgroups commute with each other.  
It follows that, in $G$, the subgroup 
$\langle\sigma_{\pm1,\pm2}(1),\delta, Z, Z^V, U\rangle$ is 
isomorphic to $\SO^+(4,q)$, where $Z^V$ 
acts as a wreathing element.  
Thus all Steinberg relations that involve only two suffices, 
with various signs, hold in $G$.

The parenthetical Steinberg relations under each of the headings (a) to (d) all follow 
from one example, as in the case of odd $q$, the suffices all being positive.

All remaining Steinberg relations involving negative suffices may be deduced as 
in the case of odd $q$. 

The proof that $G$ is generated by the Steinberg generators follows as before. 
\end{proof}

\noindent 
Recall that $\Omega^+(2n,q)$ is simple for even $q$.

\subsubsection{A presentation for $\Omega^+(2n,2)$ for $n \ge 3$}
\begin{theorem}
Let $n\ge 3$.  Let $G$ be the group generated by 
$\{\sigma,Z,U,V\}$ subject to the relations given below.  
Then $G$ is isomorphic to $\Omega^+(2n,2)$.  
\begin{enumerate}
\item[(i)] Defining relations for $N_1=\langle Z,U,V\rangle$ as in 
Theorem $\ref{OmegaplusNoneeven}$.

\item[(ii)] Relations that state that the elements listed in
Theorem $\ref{CentOPlus-sigma}$ centralise $\sigma$, but 
omitting those involving $\delta$.

\item[(iii)] Relations that present $\SL(2,2)$ on $\{\sigma,U\}$. 

\item[(iv)] The Steinberg relations (iv) of Theorem $\ref{Opluseven}$.

\end{enumerate}
\end{theorem}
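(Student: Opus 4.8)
The plan is to follow verbatim the template of the even-characteristic case in Theorem~\ref{Opluseven}, specialising throughout to $q=2$; the argument is in fact shorter, because when $q=2$ there is no torus and every root subgroup has order $2$. First I would invoke (i) together with Theorem~\ref{OmegaplusNoneeven} to identify $N_1=\langle Z,U,V\rangle$ with $N$ and to define the involutions $Z_{ij}$ in $G$. Since $\GF(2)^\times=\{1\}$ and $\sigma^2=1$ by (iii), the parameter $s$ takes only the value $1$; so by (ii) and Theorem~\ref{CentOPlus-sigma} (ignoring the items that mention $\delta$) the elements $\sigma_{ij}(1)$, for $|i|\ne|j|$, are well defined in $G$ as the $N_1$-conjugates of $\sigma$, and these together with the identity exhaust each root subgroup.

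Next I would reconstruct a copy of $\SO^+(4,2)$ inside $G$. By (iii), $\langle\sigma,U\rangle\cong\SL(2,2)$; conjugating by $Z_{23}=Z^V$ and using $\sigma_{1,-2}(1)=\sigma^{Z^V}$ and $Z_{12}U=U^{Z^V}$ produces a second copy $\langle\sigma_{1,-2}(1),Z_{12}U\rangle\cong\SL(2,2)$, and relation (iv)(e), together with (i) and (ii), shows the two copies commute. Adjoining the wreathing involution $Z^V$ then exhibits $\langle\sigma_{\pm1,\pm2}(1),Z,Z^V,U\rangle$ as a quotient of $\SL(2,2)\wr C_2$ that maps onto the matrix group $\SO^+(4,2)$ of order $72$, hence is isomorphic to it. This single isomorphism delivers at once every Steinberg relation among root elements whose suffices lie in $\{\pm1,\pm2\}$.

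For the remaining Steinberg relations I would argue exactly as in the odd-prime presentation for $\Omega^+(2n,p)$: the parenthetical relations with general $s,t$ now coincide with the single displayed instances, since $s,t\in\{0,1\}$, so \emph{none} of the additive-closure or field-generation steps of Theorem~\ref{Opluseven} is needed. The purely positive-suffix relations follow from (iv)(a)--(d) by conjugating with $N_1$, as for $\SL(n,2)$ (the relevant subgroup $\langle\sigma,U,V\rangle$ being isomorphic to $\SL(n,2)$); the relations with negative suffices reduce to the positive case by conjugating with the $Z_{ab}$ and using $\sigma_{ij}(s)=\sigma_{-j,-i}(s)$; and the four-suffix sign case at $n=4$ is supplied by (iv)(f). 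It then follows that the subgroup $H$ of $G$ generated by the root elements is isomorphic to $\Omega^+(2n,2)$, and since $\gcd(4,2^n-1)=1$ there is no central extension of the Steinberg group to disturb this identification.

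Finally I would show $G=H$. Since $H$ is normalised by $N_1$ (which merely permutes the root elements), the identity $U=\sigma\cdot\sigma^U\cdot\sigma$, which holds by $(\sigma U)^3=1$, gives $U\in H$; then $V\in H$ because $V$ equals a product of the $N_1$-conjugates $U^{V^k}$ of $U$, an identity that is a consequence of the $S_n$-relations of (i), and $Z=(Z_{12}U)U^{-1}\in H$. Thus all four generators of $G$ lie in $H$ and $G=H\cong\Omega^+(2n,2)$. The step I expect to require the most care is precisely this last one: checking \emph{within $G$}, rather than through the matrix representation, that the monomial and reflection generators $U$, $V$ and $Z$ are genuinely expressible through the root elements, since it is this that pins down $|G|$. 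As a secondary precaution I would confirm the smallest cases, such as $(n,q)=(3,2)$ with $\Omega^+(6,2)\cong\SL(4,2)$, by coset enumeration, as the paper does for its other base cases.
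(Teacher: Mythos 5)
Your proposal is correct and follows essentially the same route as the paper: define the $Z_{ij}$ and the $N_1$-conjugates $\sigma_{ij}(1)$ via (i) and (ii), exhibit $\langle\sigma_{\pm1,\pm2}(1),Z,Z^V,U\rangle\cong\Omega^+(4,2)$ from the two commuting copies of $\SL(2,2)$, deduce the remaining Steinberg relations from single instances by conjugation (with (iv)(f) covering the $n=4$ sign case), and close with the standard argument that $U$, $Z$ and $V$ lie in the subgroup generated by the root elements. The paper's own proof is just a terse version of this, deferring the details to the $e>1$ case, so your write-up simply makes explicit what the paper leaves implicit.
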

\begin{proof}
By (i) we may define the elements $Z_{ij}$ in $G$.
Since $N_1$ acts transitively, in $\Omega^+(2n,2)$, by conjugation
on the set of $2n(n-1)$ root elements $\sigma_{\pm i,\pm j}(1)$,
we may define $\sigma_{ij}(1)$ in $G$ by the same conjugation, by (ii).  

As in the case $e>1$, 
$\langle\sigma_{12}(1),U,\sigma_{1,-2}(1),Z\rangle$ 
is isomorphic in $G$ to $\Omega^+(4,2)$.

The parenthetical Steinberg relations, under each of the headings (a) to (e), 
all follow from one example,
by conjugation. We complete the proof arguing as in the case $e>1$.
\end{proof}
\noindent 
Observe that $\delta = 1$.

\subsubsection{Standard generators for $\Omega^+(2n, 2^e)$}
In Table 1 of \cite{even} the non-trivial standard generators for 
$\Omega^+(2n, 2^e)$ are labelled $s, t, \delta, u, x, y, v$.
Observe that $s = ZU; t = (\sigma^{-1})^{Z^V}; 
u = U; x = \sigma; 
y = (\delta \delta^{ZU})^{-1}; v = V$; and 
the standard generator $\delta = \delta^{-1} (\delta^{-1})^U$.
The presentation generators
$\delta = (\delta y)^{(q - 2)/2}$ and $Z = s u$.


\section{A presentation for $\Omega(2n+1,q)$} 
We may assume that $q$ is odd, since $\Omega(2n+1,q)$ is isomorphic to $\Sp(2n,q)$ if
$q$ is even.  Since $\Omega(3,q)$ is isomorphic to $\PSL(2,q)$ for odd $q$, 
we assume that $n> 1$.

\subsection{Generators and notation}
Let $q = p^e$ for an odd prime $p$. Let $\omega$ be a primitive element of $\GF(q)$.  

Let $n>1$.  
We take a basis $(e_1,f_1,e_2,f_2,\ldots,e_n,f_n,w)$ of the natural 
module, with symmetric bilinear form defined by 
$e_i.f_i=f_i.e_i=1$ for all $i$, 
and $w.w=-2$, and the form vanishes on all other  
pairs of basis vectors.  
Note that, with our standard generators, we assume that $w.w=-1/2$;  
we have made this change for compatibility with our 
treatment of $\Omega^-(2n,q)$ for odd $q$.  

We define the following elements of $\SO(2n+1,q)$, 
where $1\le i,j\le n$ and $i\ne j$, and $s\in\GF(q)$.

$\delta_i=(e_i\mapsto\omega^{-1}e_i, f_i\mapsto\omega f_i)$ if $e>1$;

$\Delta_{ij}=\delta_i\delta_j^{-1}$;

$\Delta_{-i,j}=\delta_i^{-1}\delta_j^{-1}$;

$\Delta_{i,-j}=\delta_i\delta_j$;

$\Delta_{-i,-j}=\delta_i^{-1}\delta_j$;

$z_i=(e_i,f_i)(w)^-$;


$\sigma_{ij}(s)=(e_i\mapsto e_i+se_j, f_j\mapsto f_j-sf_i)$;

$\sigma_{i,-j}(s)=(e_i\mapsto e_i+sf_j, e_j\mapsto e_j-sf_i)=\sigma_{ij}(s)^{z_j}$;

$\sigma_{-i,j}(s)=(f_i\mapsto f_i+se_j, f_j\mapsto f_j-se_i)=\sigma_{ij}(s)^{z_i}$;

$\sigma_{-i,-j}(s)=\sigma_{ji}(-s)=\sigma_{ij}(s)^{z_iz_j}$;

$\tau_i(s)=(e_i\mapsto e_i+s^2 f_i+sw, w\mapsto w + 2s f_i)$;  

$\tau_{-i}(s)=(f_i\mapsto f_i + s^2 e_i-sw, w\mapsto w - 2s e_i)  = \tau_{i}(s)^{z_i}$;

$U=(e_1,e_2)(f_1,f_2)$;

$U'=(e_1,e_2)^-(f_1,f_2)^-$;

$V=(e_1,e_2,\ldots,e_n)(f_1,f_2,\ldots,f_n)$;

$V'=(e_1,e_2,\ldots,e_n)^{\epsilon_n}(f_1,f_2,\ldots,f_n)^{\epsilon_n}$.

\smallskip
All these elements of $\SO(2n+1,q)$ centralise those basis elements 
that they are not stated to move.  
All lie in $\Omega(2n+1,q)$ except for $\delta_i$, and $U$ if
$q\equiv 3 \bmod 4$, and $V$ if $n$ is even and $q\equiv 3\bmod 4$.  

If $n > 2$ then 
$\langle \sigma_{12}(1), \Delta_{12}, z_1z_1^{U'}, U',V'\}$ acts as the natural copy of 
$\Omega^+(2n,q)$ on $\langle e_1,f_1,\ldots,e_n,f_n\rangle$. 

\subsection{A presentation for $\Omega(2n + 1, q)$ for $n \ge 2$ and $e > 1$}
We give a presentation for $\Omega(2n+1, q)$ on the generating set 
$\{ \sigma=\sigma_{12}(1), \tau=\tau_1(1), \Delta=\Delta_{12}, z=z_1, U',V'\}$, 
omitting $V'$ if $n=2$.

Let $N_1$ be the subgroup of $\Omega(2n+1,q)$ generated by 
$\{ z,U',V'\}$, omitting $V'$ if $n=2$.  
This is an extension
of an elementary abelian $2$-group $A$ of rank $2n-1$ by a copy of $S_n$.  Here $A$ is
the direct product of a subgroup of rank $n-1$, 
generated by $U'^2$ and its conjugates, with 
a second subgroup of rank $n$ generated by $z$ and its conjugates.

\begin{theorem}\label{OoddoddN1}
Let $n\ge 2$.  Let $G$ be the group generated by 
$\{z,U',V'\}$ subject to the relations given below.  
Then $G$ is isomorphic to $N_1$.
\begin{enumerate}
\item[(i)] Defining relations for $\langle U',V'\rangle$
as in Theorems $\ref{signedsymmetricodd}$ and $\ref{signedsymmetriceven}$,
omitting $V'$ if $n=2$.
\item[(ii)] If $n>2$ then $[z,U'^{V'}]=1$.
\item[(iii)] If $n>3$ then $[z,V'U'^{-1}]=1$.
\item[(iv)] $z^2=1$.
\item[(v)] $[z, U'^2]=1$.
\item[(vi)] $[z,z^{U'}]=1$.
\end{enumerate}
\end{theorem}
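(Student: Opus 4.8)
The plan is to follow the template of the proof of Theorem~\ref{OplusoddN1}. Since the listed relations all hold in $N_1$, that group is a homomorphic image of the presented group $G$; as $|N_1| = 2^{2n-1} n!$, it suffices to prove the reverse inequality $|G| \le 2^{2n-1} n!$. By relation~(i) the subgroup $\langle U', V'\rangle$ of $G$ is a quotient of the group of signed permutation matrices of determinant $1$ and degree $n$ presented in Theorems~\ref{signedsymmetricodd} and~\ref{signedsymmetriceven}, so it has order at most $2^{n-1}n!$; throughout I would identify its elements with signed permutations of $\{1,\ldots,n\}$ acting on the indices of the basis pairs $(e_i,f_i)$, with $z=z_1=(e_1,f_1)(w)^-$ inverting $w$ and interchanging $e_1,f_1$.

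First I would pin down the three signed permutations appearing in relations~(ii), (iii) and~(v). As in Theorem~\ref{subgp}, conjugation by $V'$ shifts the support of $U'=(1,2)^-$ upwards, so $U'^{V'}=(2,3)^-$; a short direct computation shows that $V'U'^{-1}$ fixes the index $1$ and acts as a signed $(n-1)$-cycle on $\{2,\ldots,n\}$; and $U'^2=(1)^-(2)^-$ is the sign change at indices $1$ and $2$. Together these three elements generate the subgroup $S=\{g : g(1)=\pm 1\}$ of $\langle U',V'\rangle$ stabilising the index $1$ up to sign. Relations~(ii), (iii) and~(v) assert precisely that $z$ commutes with these generators, hence with all of $S$. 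Consequently the conjugate $z^g$ depends only on $|g(1)|$, so we obtain well-defined elements $z_i$ ($1\le i\le n$) of $G$, permuted by $\langle U',V'\rangle$ via the natural unsigned action of $S_n$ on $\{1,\ldots,n\}$. For $n=2$ only relations~(i), (iv), (v), (vi) survive; here $S=\langle U'^2\rangle$ and the same conclusion holds with two conjugates $z_1,z_2$.

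Next, relation~(iv) makes each $z_i$ an involution, and relation~(vi), which states $[z_1,z_2]=1$, extends by conjugation to $[z_i,z_j]=1$ for all $i\ne j$, since $\langle U',V'\rangle$ acts transitively on the ordered pairs of distinct indices. Thus $B:=\langle z_1,\ldots,z_n\rangle$ is elementary abelian of rank at most $n$, and it is normalised by $\langle U',V'\rangle$. As $B$ contains $z$, it is the normal closure of $z$ and $G=B\langle U',V'\rangle$. Therefore $|G|\le |B|\,|\langle U',V'\rangle|\le 2^{n}\cdot 2^{n-1}n! = 2^{2n-1}n!$, which forces equality and yields $G\cong N_1$.

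The one delicate step is the signed-permutation bookkeeping of the second paragraph: one must check both that $U'^{V'}$, $V'U'^{-1}$ and $U'^2$ generate the full stabiliser $S$ rather than a proper subgroup, and that each genuinely centralises $z_1=(e_1,f_1)(w)^-$ --- the first two because they act only on indices $\ge 2$ and fix $w$, and $U'^2$ because negating $e_1,f_1$ commutes with their interchange while fixing $w$. Some care is also required with the ranges $n>2$ and $n>3$ attached to relations~(ii) and~(iii), and with the small cases $n=2,3$, where the $(n-1)$-cycle on $\{2,\ldots,n\}$ degenerates to the identity or to the transposition already supplied by~(ii), so that the corresponding relation is legitimately omitted.
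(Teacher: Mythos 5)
Your proof is correct and takes essentially the same route as the paper's: identify $U'^{V'}$, $V'U'^{-1}$ and $U'^2$ as signed permutations generating the stabiliser of the block $\{1,-1\}$, use relations (ii), (iii), (v) to define the $n$ conjugates $z_i$ permuted by the natural unsigned $S_n$-action, deduce from (iv) and (vi) that they generate an elementary abelian normal subgroup of rank at most $n$, and compare the resulting bound $2^{n}\cdot 2^{n-1}n!$ with $|N_1|$. Your additional attention to the degenerate cases $n=2,3$ and to the generation claim for the stabiliser only makes explicit what the paper asserts more tersely.
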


\begin{proof}
Relations (ii) and (iii) state that $z$ is centralised by the 
subgroup of $\langle U',V'\rangle$
that centralises 1 as signed permutations, since $U'^{V'}$ and $V'U'^{-1}$ stand for
$(2,3)^-$ and $(2,3,\ldots,n)^{-\epsilon_n}$ respectively as signed permutations.

These relations, together with (v), also
imply that $z$ is centralised by all $\langle U',V'\rangle$-conjugates of $U'^2$, 
so we may label the conjugates of $z$ as $z_{i}$, where $1\le i\le n$.  
Relation (vi) states that $z=z_1$ commutes with $z_2$; hence, by conjugation, 
all $z_i$ commute and,  by (iv), generate an elementary abelian $2$-group.  
Thus $G$ is a split extension of a normal subgroup of order at most $2^n$ by the group 
$\langle U',V'\rangle$ of order $2^{n-1}n!$, by (i).
But $N_1$, as a subgroup of $\Omega(2n+1,q)$, has order $2^{2n-1}n!$, and the result follows.
\end{proof}

Let $N=\langle \Delta,z,U',V'\rangle \leq \Omega(2n+1,q)$, 
omitting $V'$ if $n=2$.  
As with $\Omega^+(2n,q)$, the normal subgroup of $N$ generated as 
normal subgroup by $\Delta$, and as subgroup by 
$\{\Delta_{\pm i,\pm j} \, | \, |i|\ne |j|\}$, has order $(q-1)^n/2$.
Since this normal subgroup contains $U'^2$, 
it follows that $N$ has order $(q-1)^n2^{n-1}n!$.

\begin{theorem}\label{OpoddoddN}
Let $q = p^e$ for an odd prime $p$ and $e > 1$, and let $n \ge 2$. 
Let $G$ be the group generated by
$\{\Delta, z,U',V'\}$, omitting $V'$ if $n=2$, subject to the relations given below.
Then $G$ is isomorphic to $N$.
\begin{enumerate}
\item[(i)] Defining relations for $N_1 = \langle z,U',V'\rangle$
as in Theorem $\ref{OoddoddN1}$, omitting $V'$ if $n=2$. 
\item[(ii)] If $n>3$ then $[\Delta,U'^{V'^2}]=1$.
\item[(iii)] If $n>4$ then $[\Delta,V'U'U'^{V'}]=1$.
\item[(iv)] If $n>2$ then $[\Delta,z^{V'^2}]=1$.
\item[(v)] $\Delta^{U'} = \Delta^{-1}$, and if $n>2$ then $[\Delta,U'^{2V}]=1$.
\item[(vi)] $\Delta^{(q-1)/2}=U'^2$.
\item[(vii)] If $n>2$ then $\Delta\Delta^{V'}=\Delta^{V'U'}$.
\item[(viii)] If $n>2$ then $[\Delta,\Delta^{V'}]=1$.
\item[(ix)] If $n>3$ then $[\Delta,\Delta^{V'^2}]=1$.
\item[(x)] $\Delta^{zz^{U'}}=\Delta^{-1}$.
\item[(xi)] If $n=2$ then $[\Delta,\Delta^z]=1$.
\end{enumerate}
\end{theorem}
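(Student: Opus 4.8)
The plan is to follow the proof of Theorem~\ref{OmegaplusoddN} closely, the element $Z=Z_{12}$ there being replaced by $zz^{U'}=z_1z_2$, with the single flips $z_i$ accounting for the larger $2$-group. First I would work inside the matrix group $\Omega(2n+1,q)$ to record the conjugation action of $N_1=\langle z,U',V'\rangle$ on the set of matrices $\{\Delta_{\pm i,\pm j}\}$. Since $\Delta_{ji}=\Delta_{-i,-j}$, these symbols name only $2n(n-1)$ distinct matrices, so the orbit of $\Delta=\Delta_{12}$ has size $2n(n-1)$ and its centraliser in $N_1$ has index $2n(n-1)$ and order $2^{2n-2}(n-2)!$. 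I would verify that this centraliser is generated precisely by the elements appearing in relations (ii)--(v) and (x)---the copy of $S_{n-2}$ fixing $e_1,f_1,e_2,f_2$, the flips $z_i$ for $i\ge 3$, the negations $U'^2$ together with its conjugates, and the sign-corrected transposition of $\{1,2\}$. This justifies defining, inside the presented group $G$, the conjugates $\Delta_{\pm i,\pm j}$ as the $N_1$-images of $\Delta$: relation (i) realises $N_1$ via Theorem~\ref{OoddoddN1}, and relations (ii)--(v),(x) guarantee that the stabiliser of $\Delta$ behaves correctly, so the labelling is unambiguous.

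The structural core is to prove in $G$ that $H:=\langle\{\Delta_{\pm i,\pm j}\}\rangle$ is abelian of the expected order. Relation (v) gives $\Delta_{21}=\Delta_{12}^{-1}$, hence $\Delta_{ij}=\Delta_{ji}^{-1}$; relation (x), which reads $\Delta^{z_1z_2}=\Delta^{-1}$ and so $\Delta_{-1,-2}=\Delta_{12}^{-1}$, yields $\Delta_{-i,-j}=\Delta_{ij}^{-1}$; relations (viii) and (ix) make $\Delta_{12}$ commute with $\Delta_{23}$ and $\Delta_{34}$, so the subgroup $K$ generated by the $\Delta_{ij}$ with positive suffices is abelian; and relation (vii), $\Delta_{12}\Delta_{23}=\Delta_{13}$, gives $\Delta_{ij}\Delta_{jk}=\Delta_{ik}$ for distinct moduli. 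As in Theorem~\ref{OmegaplusoddN} these facts force $H$ to be abelian, generated by $\{\Delta_{1,-2},\Delta_{12},\Delta_{23},\dots,\Delta_{n-1,n}\}$, of exponent dividing $q-1$ by relation (vi). The factor of $1/2$ in the order comes from $\Delta_{12}^{(q-1)/2}=U'^2=\Delta_{1,-2}^{(q-1)/2}$: the first equality is relation (vi), and for the second I would write $\Delta_{1,-2}=\Delta^{z^{U'}}$ and use $(U'^2)^{z^{U'}}=U'^2$, which follows from $[z,U'^2]=1$ and $z^2=1$ (relations (v) and (iv) of Theorem~\ref{OoddoddN1}). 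Hence $|H|\le (q-1)^n/2$.

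Finally I would count orders. The relations exhibit $G=H\cdot N_1$ with $N_1$ normalising $H$, and $H\cap N_1$ contains the rank-$(n-1)$ group $\langle U'^2\rangle^{N_1}$ of order $2^{n-1}$; combining $|H|\le (q-1)^n/2$ with $|N_1|=2^{2n-1}n!$ gives $|G|\le (q-1)^n 2^{n-1}n!=|N|$. Since $N$ is a homomorphic image of $G$ of exactly this order, $G\cong N$. I expect the genuine obstacle to be the base case $n=2$, where the cyclic-shift relation (vii) is unavailable and $V'$ is absent: here commutativity within $H=\langle\Delta_{12},\Delta_{1,-2}\rangle$ must instead be extracted from relation (xi), $[\Delta,\Delta^z]=1$, which reads $[\Delta_{12},\Delta_{-1,2}]=1$ and hence (as $\Delta_{-1,2}=\Delta_{1,-2}^{-1}$) gives $[\Delta_{12},\Delta_{1,-2}]=1$; the same exponent-and-order bookkeeping then yields $|H|\le (q-1)^2/2$ and the conclusion. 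A secondary point requiring care throughout is the sign bookkeeping for the $z_i$, since each $z_i$ negates $w$ and squares to the identity, which I would handle exactly as the $Z_{ij}$ were handled in the $\Omega^+$ case.
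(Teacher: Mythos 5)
Your proposal is correct and follows essentially the same route as the paper: the paper's proof simply notes that item (i) enlarges $N_1$ by replacing $Z=Z_{12}$ with $z=z_1$ and that item (iv) now asserts centralisation by $z_3$ (doubling the centraliser of $\Delta$ in $N_1$, since both versions of $N_1$ act transitively on the same $2n(n-1)$ elements $\Delta_{\pm i,\pm j}$), and then defers to the argument of Theorem~\ref{OmegaplusoddN}, exactly as you do. Your treatment of the $n=2$ case via relation (xi) and the conjugation of relation (vi) by $z^{U'}$ fills in details the paper dismisses as ``easy'', but introduces no new method.
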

\begin{proof}
Suppose first that $n \geq 3$.
This presentation is very similar to that for $N$ in Theorem \ref{OmegaplusoddN}.
The subgroups $N$ and $N_1$ of $\Omega^+(2n,q)$ have index 2 in the corresponding
subgroups of $\Omega(2n+1,q)$, the generator $Z=Z_{12}$ being replaced by $z=z_1$.
The presentation for $N$ given in this theorem differs from the presentation in Theorem
\ref{OmegaplusoddN} in items (i) and (iv).  Item (i) gives a presentation for  
the enlarged version of $N_1$
that results from replacing $Z$ by $z$.  
Item (iv) asserts that $\Delta$ is centralised by $z_3$, and hence, 
by conjugation, by $z_3,z_4,\ldots,z_n$, rather than by 
$Z_{34}, Z_{45},\ldots,Z_{n-1,n}$ in the case
of $\Omega^+(2n,q)$.  This reflects the fact that the 
centraliser of $\Delta$ in $N_1$ is twice as large
in the case of $\Omega(2n+1,q)$, 
since both groups $N_1$ conjugate transitively 
the same set of elements $\Delta_{ij}$.  
The proof now follows that of Theorem \ref{OmegaplusoddN}. 

The proof for $n=2$ is easy.
\end{proof}

The centraliser of $\sigma=\sigma_{12}(1)$ in $N$ is the direct product of the
subgroup of $N$ that centralises $\langle e_1,f_1,e_2,f_2\rangle$
with the copy of $Q_{2(q-1)}$ generated by 
$\{\Delta_{1,-2}, U'^z\}$.   
If $n = 2$ then the centraliser is $Q_{2(q-1)}$. 
If $n > 2$ then the centraliser 
has order $(q-1)^{n-1}2^{n-2}(n-2)!$, 
and index $2(q-1)n(n-1)$ in $N$; 
this corresponds to the fact that $N$ acts transitively 
by conjugation on the set of elements
$\sigma_{\pm i, \pm j}(s)$, where $1\le i\ne j\le n$ 
and $s\in\GF(q)^{\times}$, and these are equal in pairs.

\begin{theorem} \label{Omega2n+1-tau-cent}
The centraliser of $\tau=\tau_1(1)$ in $N$ 
has index $2(q-1)n$ in $N$ and 
is generated by the following elements:
\begin{enumerate}
\item[(i)] $U'^{V'}$ if $n>2$; 
\item[(ii)] $V'U'^{-1}$ if $n>3$; 
\item[(iii)] $\Delta^{V'}$ if $n>2$; 
\item[(iv)] $U'^2z^{U'}$; 
\item[(v)] $\Delta\Delta^z$ if $n=2$.
\end{enumerate}
\end{theorem}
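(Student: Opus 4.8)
The plan is to compute $|C_N(\tau)|$ from the size of the conjugacy orbit of $\tau$ under $N$, and then to exhibit the listed elements as generators of a subgroup of exactly that order.

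First I would determine the orbit. The key local computation is $\tau_i(s)^{\delta_i}=\tau_i(\omega s)$: tracking $e_i$ and $w$ gives $e_i\mapsto e_i+\omega^2s^2f_i+\omega s w$ and $w\mapsto w+2\omega s f_i$, which is precisely $\tau_i(\omega s)$. Since $\Delta_{ij}=\delta_i\delta_j^{-1}$ and $\delta_j$ centralises $\tau_i$ for $j\ne i$, the torus element $\Delta_{ij}\in N$ scales the parameter by $\omega$, so $\langle\Delta_{ij}\rangle$ is transitive on $\{\tau_i(s):s\in\GF(q)^{\times}\}$. The signed permutations $U',V'$ permute the indices $1,\dots,n$ transitively, while $z_i$ interchanges the suffices $i$ and $-i$; hence $N$ is transitive on $\{\tau_{\pm i}(s):1\le i\le n,\ s\in\GF(q)^{\times}\}$. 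These $2n(q-1)$ matrices are pairwise distinct (since $p$ is odd, the signed index and the value of $s$ are recovered from the action on $w$), so the orbit has size $2n(q-1)$ and $C_N(\tau)$ has index $2(q-1)n$ in $N$. Using $|N|=(q-1)^n2^{n-1}n!$ this gives $|C_N(\tau)|=(q-1)^{n-1}2^{n-2}(n-1)!$.

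Next I would check that each listed element centralises $\tau=\tau_1(1)$. As signed permutations, (i) and (ii) are $(2,3)^-$ and $(2,3,\dots,n)^{-\epsilon_n}$ (as computed in the proof of Theorem~\ref{OoddoddN1}); they fix $e_1,f_1,w$ and so commute with $\tau_1(1)$. Element (iii) $\Delta^{V'}$ is the diagonal element $\delta_2\delta_3^{-1}$, and for $n=2$ element (v) is $\Delta\Delta^z=\delta_2^{-2}$; both fix $e_1,f_1,w$. The only case needing a computation is (iv): writing $z^{U'}=z_2$ and $U'^2=\delta_1^{(q-1)/2}\delta_2^{(q-1)/2}$, one checks that $U'^2z_2$ acts as $-I$ on $\langle e_1,f_1,w\rangle$ and preserves $\langle e_2,f_2\rangle$. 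Since $\tau_1(1)-I$ has image and support inside $\langle e_1,f_1,w\rangle$, and $-I$ commutes with it there, (iv) centralises $\tau$.

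Finally I would match orders. Project $N$ onto the signed permutation group $W\cong C_2\wr S_n$ with kernel the torus $T_\Omega$ of order $(q-1)^n/2$. The centralising condition of Step~2 forces the image of $C_N(\tau)$ into $\mathrm{Stab}_W(+1)\cong C_2\wr S_{n-1}$ on $\{2,\dots,n\}$, of order $2^{n-1}(n-1)!$, and forces $C_N(\tau)\cap T_\Omega=\{\prod_{i\ge2}\delta_i^{k_i}:\sum_{i\ge2}k_i\ \text{even}\}$, of order $(q-1)^{n-1}/2$. I would then show the listed elements realise both pieces: by Theorems~\ref{signedsymmetricodd} and~\ref{signedsymmetriceven}, the images of (i) and (ii) generate the determinant-$1$ signed permutations of $\{2,\dots,n\}$, and the image of (iv) adjoins the sign change at $2$, giving all of $\mathrm{Stab}_W(+1)$; meanwhile the $\langle(\mathrm{i}),(\mathrm{ii})\rangle$-conjugates of (iii) give every $\delta_i\delta_j^{-1}$ with $i,j\ge2$, and conjugating these by the sign change coming from (iv) produces $\delta_i^2$, so together they generate the even-sum torus above. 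As this subgroup lies in $C_N(\tau)$ and meets $T_\Omega$ in exactly that torus, its order is $\tfrac{(q-1)^{n-1}}{2}\cdot 2^{n-1}(n-1)!=(q-1)^{n-1}2^{n-2}(n-1)!=|C_N(\tau)|$. For $n=2$ everything collapses to checking that (iv) and (v) generate the dihedral group $\langle\delta_2^{-2},U'^2z_2\rangle$ of order $q-1$, in which $U'^2z_2$ inverts $\delta_2^{-2}$.

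The main obstacle is the sign bookkeeping: verifying that (iv) acts as $-I$ on $\langle e_1,f_1,w\rangle$, so that the $w$-sign is correctly coupled to the coordinate-$1$ torus value (which is exactly what the centraliser condition demands), and confirming that conjugating the $\delta_i\delta_j^{-1}$ by a single sign change yields precisely the even-sum sublattice of index $2$, rather than the full torus or a smaller subgroup.
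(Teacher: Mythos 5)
Your proposal is correct and follows essentially the same route as the paper: compute the orbit of $\tau$ under $N$ to get $|C_N(\tau)|=(q-1)^{n-1}2^{n-2}(n-1)!$, check the listed elements centralise $\tau$, and match orders. The paper's own proof merely asserts that the centraliser is the direct product of the pointwise stabiliser of $\langle e_1,f_1,w\rangle$ with $\langle U'^2z_2\rangle$; your analysis via the extension $1\to T_\Omega\to N\to C_2\wr S_n\to 1$ supplies the generation details (in particular how $\delta_i^2$ is recovered from conjugating $\Delta^{V'}$ by item (iv)) that the paper leaves to the reader.
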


\begin{proof}
The centraliser of $\tau$ in $N$ is the direct product of the subgroup of $N$ that centralises 
$\langle e_1,f_1,w\rangle$ with $\langle U'^2z_2\rangle$, and so has order 
$(q-1)^{n-1}2^{n-2}(n-1)!$, and index $2(q-1)n$ in $N$.  This corresponds to the fact that
$N$ acts transitively by conjugation on the $2(q-1)n$ root 
elements $\tau_{\pm i}(s)$, where $s \in \GF(q)^{\times}$.  
\end{proof}

\begin{theorem}\label{omegaodd}
Let $q = p^e $ for an odd prime $p$ and $e > 1$, and let $n \ge 2$. 
Let $G$ be the group generated by 
$\{\sigma, \tau, \Delta,z,U',V'\}$, omitting $V'$ if $n=2$, 
subject to the relations given below.  
Then $G$ is isomorphic to $\Omega(2n+1,q)$.  
\begin{enumerate}
\item[(i)] Defining relations for $N=\langle\Delta,z,U',V'\rangle$, 
omitting $V$ if $n = 2$, as in Theorem $\ref{OpoddoddN}$, 
but omitting relation (vi) of that theorem.

\item[(ii)] $[\sigma,\Delta^{z^{U'}}]=1$.

\item[(iii)] Relations that state that the elements listed in
Theorem $\ref{Omega2n+1-tau-cent}$ centralise $\tau$.

\item[(iv)] Relations that present $\SL(2,q)$ on $\{\sigma,\Delta,U'\}$.

\item[(v)] Relations that present $\PSL(2,q)$ on $\{\tau,\Delta\Delta^{z^{U'}},z\}$.

\item[(vi)] The following instances of Steinberg relations:
\begin{enumerate}
\item[(a)] if $n>2$ then $[\sigma,\sigma^{V'}]=\sigma^{V'U'^{-1}}$; 
\q$([\sigma_{ij}(s),\sigma_{jk}(t)]=\sigma_{ik}(st))$;
\item[(b)] if $n>2$ then $[\sigma,\sigma^{V'U'^{-1}}]=1$; \q$([\sigma_{ij}(s),\sigma_{ik}(t)]=1)$;
\item[(c)] if $n>2$ then $[\sigma,\sigma^W]=1$, where $W=U'^{V'U'^{-1}}$; \q$([\sigma_{ij}(s),\sigma_{kj}(t)]=1)$;
\item[(d)] if $n>3$ then $[\sigma,\sigma^{V'^2}]=1$; \q$([\sigma_{ij}(s),\sigma_{kl}(t)]=1)$;
\item[(e)] $[\sigma,\sigma^{z^{U'}}]=1$; \q$([\sigma_{ij}(s),\sigma_{i,-j}(t)]=1)$;
\item[(f)] $[\tau,\tau^{U'}]=\sigma^{2z^{U'}}$; \q$([\tau_i(s),\tau_j(t)]=\sigma_{i,-j}(2st))$;
\item[(g)] $[\sigma,\tau]=1$; \q$([\sigma_{ij}(s),\tau_i(t)]=1)$;
\item[(h)] $[\sigma^z,\tau]=\sigma\tau^{zU'}$;\q$([\sigma_{-i,j}(s),\tau_i(t)]=\sigma_{ij}(st^2)\tau_{-j}(st))$;
\item[(j)] if $n>2$ then $[\sigma, \tau^{{V'}^2}]=1$; \q$([\sigma_{ij}(s), \tau_k(t)]=1)$.  
\end{enumerate}
The given relations are instances of the more general relations in parentheses, 
with $s=t=1$, and $(i,j,k,l)=(1,2,3,4)$.  
In the parenthetical version, $i,j,k,l$ are distinct and positive.
\end{enumerate}
\end{theorem}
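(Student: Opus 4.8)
The plan is to follow the strategy used for $\Omega^{+}(2n,q)$ in Theorem~\ref{OmegaPlusOdd} and for $\Sp(2n,q)$ in Theorem~\ref{Sp}, treating the $\sigma_{ij}(s)$ as the long root elements of the $B_n$ system and the $\tau_i(s)$ as its short root elements. First I would recover the relation $\Delta^{(q-1)/2}=U'^2$ that was omitted from~(i): it is an instance of the $\SL(2,q)$ relations in~(iv) (compare $\delta^{(q-1)/2}=U^2$ in Theorem~\ref{CRW-SL-odd}), so by Theorem~\ref{OpoddoddN} the subgroup $N=\langle\Delta,z,U',V'\rangle$ is faithfully presented inside $G$, and using~(i) I may define the $z_i$ and $\Delta_{ij}$ in $G$. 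Since $N$ acts transitively by conjugation on $\{\sigma_{ij}(s):|i|\ne|j|,\ s\in\GF(q)^{\times}\}$ and on $\{\tau_{\pm i}(s)\}$, the single extra relation~(ii) together with the relations defining $N$ yields the full centraliser of $\sigma$ described before Theorem~\ref{Omega2n+1-tau-cent}, while~(iii) yields the centraliser of $\tau$; this lets me define every $\sigma_{ij}(s)$ and $\tau_i(s)$ in $G$ as a well defined $N$-conjugate, together with the action of $N$ on them.

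Next I would pin down the rank $\le 2$ subgroups that carry the two-index Steinberg relations. By~(iv), $\langle\sigma,\Delta,U'\rangle\cong\SL(2,q)$ is the long root $\SL_2$, and by~(v), $\langle\tau,\Delta\Delta^{z^{U'}},z\rangle\cong\PSL(2,q)$ is the short root group $\Omega(3,q)$ acting on $\langle e_1,f_1,w\rangle$ (note $\Delta\Delta^{z^{U'}}=\delta_1^{2}$). Exactly as in Theorem~\ref{OmegaPlusOdd}, two commuting copies of $\SL(2,q)$ on the block $\langle e_1,f_1,e_2,f_2\rangle$ generate a copy of $\Omega^{+}(4,q)$ in $G$, the commuting coming from relation~(e); this handles every long root relation supported on two indices. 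The relations (a)--(e) among the $\sigma_{ij}(s)$ then reduce, via the argument of Theorem~\ref{OmegaPlusOdd}, to showing that the subgroup generated by all $\sigma_{ij}(s)$ realises $\Omega^{+}(2n,q)$; here the sign bookkeeping is in fact \emph{easier} than in the $\Omega^{+}$ case, because each $z_i=(e_i,f_i)(w)^{-}$ flips a single index, so all sign variants of the four-index relation~(d) follow from its displayed instance by conjugating with individual $z_i$, with no special $n=4$ relation required.

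The substantive new work is the short and mixed relations (f)--(j), which I would treat by the standard closure argument. For~(f), $[\tau_i(s),\tau_j(t)]=\sigma_{i,-j}(2st)$, and~(g), $[\sigma_{ij}(s),\tau_i(t)]=1$, I would show that the set of admissible parameters is closed under addition in each variable, using $[a,bc]=[a,c][a,b]^{c}$ with the relations already in hand, and closed under the relevant field scaling obtained by conjugating with $\delta_i$ and $\delta_j$; since a suitable power of $\omega$ generates $\GF(q)$, the relation propagates from its single instance to all parameters, and relation~(j) together with the genuinely four-index instances then follows by $N$-conjugation for $n>3$, with $n=2$ (no $V'$) and $n=3$ checked directly. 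The hard part will be relation~(h), $[\sigma_{-i,j}(s),\tau_i(t)]=\sigma_{ij}(st^2)\tau_{-j}(st)$: it is quadratic in $t$ and mixes all three root lengths, so I expect verifying additivity in $s$ and in $t$ modulo the lower terms supplied by~(f) and~(g), followed by the scaling step, to be the delicate calculation, exactly mirroring the proof of relation~(g) in Theorem~\ref{Sp}. As there, the variants obtained by changing signs of suffices are recovered by conjugating with the appropriate $z_i$ and using $\sigma_{ij}(s)=\sigma_{-j,-i}(-s)$.

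Finally, once all Steinberg relations hold in $G$, the subgroup $H$ generated by the $\sigma_{ij}(s)$ and $\tau_i(s)$ satisfies the full set of Chevalley relations of type $B_n$; by \cite{Steinberg62,Steinberg81}, and since the torus relations of $N$---in particular $\Delta^{(q-1)/2}=U'^2$---fix the central quotient, $H\cong\Omega(2n+1,q)$. It then remains to show $H=G$: the Weyl elements $U'$ and $z$ are products of long and short root elements respectively, so lie in $H$ by~(iv) and~(v), as does $\Delta$; hence $H$ contains every $N$-conjugate of $U'$ and therefore $V'$, giving $G=H$ and completing the proof.
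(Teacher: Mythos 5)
There is a genuine gap at the point where you define the elements $\sigma_{ij}(s)$ in $G$. You assert that ``the single extra relation~(ii) together with the relations defining $N$ yields the full centraliser of $\sigma$''. That is false as stated: relation~(ii) only says that $\Delta^{z^{U'}}=\Delta_{1,-2}$ centralises $\sigma$, whereas the centraliser of $\sigma$ in $N$ is the direct product of the copy of $Q_{2(q-1)}$ generated by $\{\Delta_{1,-2},U'^{z}\}$ with the (large) subgroup of $N$ that centralises $\langle e_1,f_1,e_2,f_2\rangle$. Unlike for $\tau$, the presentation contains \emph{no} block of relations asserting that these remaining generators centralise $\sigma$, so you cannot read the centraliser off from $N$ and~(ii); and without the full centraliser you cannot well-define $\sigma_{ij}(s)$ as an $N$-conjugate of $\sigma$, which is the foundation of everything that follows.

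The missing idea, which is the one genuinely non-routine step in the paper's proof, is to extract the remaining centralising relations from the Steinberg relation~(vi)(f). From~(f) one gets $\sigma^{2}=[\tau_1(-1),\tau_{-2}(1)]$, so $\sigma^{2}$ is centralised by everything that centralises $\tau_1(\pm1)$ and $\tau_{-2}(1)$ --- in particular by the subgroup $A$ of $N$ centralising $\langle e_1,f_1,e_2,f_2\rangle$ and by $U'^{z}$, all of which is available from~(iii) and conjugation in $N$. Since $\sigma$ has odd order $p$ by~(iv), $\sigma$ is a power of $\sigma^{2}$, so these elements centralise $\sigma$ itself; together with~(ii) this completes a generating set for the centraliser. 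You should insert this argument before claiming the $\sigma_{ij}(s)$ are defined. The remainder of your outline (recovering $\Delta^{(q-1)/2}=U'^2$ from~(iv), the \emph{a fortiori} reduction of the long-root relations to Theorem~\ref{OmegaPlusOdd}, the closure arguments for (f)--(j), and the final step $G=H$) matches the paper's route, though note that your closing appeal to the Steinberg presentation should also address why no covering-group issue arises here, as it does for $\Omega(7,3)$ in the prime-field case.
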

\begin{proof}
Relation (vi) of Theorem \ref{OpoddoddN}, omitted from 
 (i) of this theorem, is provided by (iv).
Since, in the matrix group, $N$ acts transitively 
by conjugation on $\{\tau_{\pm i}(s) \, | \,
1\le i\le n, s \in\GF(q)^{\times}\}$, we may define the 
elements $\tau_{\pm i}(s)$ in $G$, using (i) and (iii).

It follows from (f) that $\sigma^2= [\tau_1(-1),\tau_{-2}(1)]$.
So $\sigma^2$ is centralised by the subgroup $A$ of $N$ that
centralises $\langle e_1,f_1,e_2,f_2\rangle$, and also by $U'^z$.
Also, $\sigma$ has odd order by (iv); so, 
since $\Delta_{1,-2}=\Delta^{z^{U'}}$,  
relations (ii) and (iv) complete a generating set for the centraliser of $\sigma$ in $N$.  
Thus we may define the elements $\sigma_{ij}(s)$ 
for all $i$ and $j$, positive or negative, in $G$.  

The Steinberg relations for the
individual root groups may be deduced from (iv) and (v).  
We must show that the Steinberg relations that give the commutator of two long root elements, 
that is, of two elements of the form $\sigma_{ij}(s)$, are all consequences of (a) to (e).
These relations correspond precisely to the Steinberg relations (iv)(a)--(iv)(e) of 
Theorem \ref{OmegaPlusOdd}, except for an insignificant change to (iv)(e).  
Since the torus normaliser $N$ in
the case of $\Omega(2n+1,q)$ contains the corresponding torus normaliser in $\Omega^+(2n,q)$
as a subgroup of index 2, the fact that these five instances of 
Steinberg relations in $\Omega(2n+1,q)$ suffice to
imply all Steinberg relations that give the commutator of two long root elements follows
{\it a fortiori} from Theorem \ref{OmegaPlusOdd}.
Clearly the Steinberg relations that give the commutator of two short root elements all
follow, by conjugation in $N$, from the one instance (f).
Similarly the Steinberg relations that give the commutator of a long and a short root element 
may be deduced from the instances (g), (h) and (j). 

Thus the subgroup $H$ of $G$ generated by 
the Steinberg generators is isomorphic to $\Omega(2n+1,q)$;
that $U'$ and $V'$ lie in $H$, which is thus the whole of $G$, follows by
the now familiar argument.
\end{proof}
\noindent 
Recall that $\Omega^+(2n+1,q)$ is simple.  

\subsection{A presentation for $\Omega(2n+1,p)$ for $n \ge 2$}  
\begin{theorem}
Let $p$ be an odd prime and let $n\ge 2$.  Let $G$ be the group generated by 
$\{\sigma, \tau, z,U',V'\}$, 
omitting $V'$ if $n=2$,
subject to the relations given below.  
Then $G$ is isomorphic to $\Omega(2n+1,p)$.  
\begin{enumerate}
\item[(i)] Defining relations for $N_1=\langle z,U',V'\rangle$
as in Theorem $\ref{OoddoddN1}$, omitting $V'$ if $n = 2$. 
\item[(ii)] Relations that state that the elements listed in
Theorem $\ref{Omega2n+1-tau-cent}$ centralise $\tau$, but omitting those involving $\Delta$.
\item[(iii)] Relations that present $\SL(2,p)$ on $\{\sigma,U'\}$.
\item[(iv)] Relations that present $\PSL(2,p)$ on $\{\tau,z\}$.
\item[(v)] The Steinberg relations (vi) of Theorem $\ref{omegaodd}$.
\item[(vi)] If $n=p=3$ then $[\tau,\sigma^{V'}]=1$.
\end{enumerate}
\end{theorem}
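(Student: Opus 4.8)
The plan is to mirror the proof of Theorem~\ref{omegaodd} (the case $e>1$) and adapt each step to the prime field $q=p$, where the torus element $\Delta$ is no longer a generator and every root element is instead obtained as an integer power of the corresponding element with parameter~$1$. First I would use~(i) together with Theorem~\ref{OoddoddN1} to realise $N_1=\langle z,U',V'\rangle$ as a subgroup of $G$, so that the involutions $z_i$ are defined and permuted by $\langle U',V'\rangle$ as signed permutations. Using~(ii), the centraliser relations for $\tau$ drawn from Theorem~\ref{Omega2n+1-tau-cent} with the $\Delta$-terms deleted, together with the $\PSL(2,p)$ relations of~(iv), I would define the short root elements $\tau_{\pm i}(s)$ as $N_1$-conjugates of $\tau$ raised to integer powers; here $\tau^p=1$ is supplied by~(iv).

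To define the long root elements $\sigma_{ij}(s)$ for suffices of either sign, I would invoke the Steinberg relation~(f) of~(v), which expresses $\sigma^2$ (up to sign) as a commutator of two $\tau$-elements already available in $G$. Combined with the fact that $\sigma$ has odd order, supplied by the $\SL(2,p)$ relations of~(iii), this identifies the full centraliser of $\sigma$ in $N_1$ and so allows me to set $\sigma_{ij}(s)=\sigma_{ij}(1)^s$ for all admissible suffices. I would then reconstruct, exactly as in the $e>1$ argument, the copy of $\Omega^+(4,p)$ carried by the relevant two-suffix root elements, which disposes of every Steinberg relation involving only two suffices with arbitrary signs.

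With the root elements in place I would verify the remaining Steinberg relations as in Theorem~\ref{omegaodd}, but replacing the $\Delta$-scaling device (which furnished field generation when $e>1$) by two prime-field substitutes. For a fixed pattern of suffices, dependence on the parameters is handled by integer powers: $[\sigma_{ij}(1),\tau_k(1)]=1$ immediately yields $[\sigma_{ij}(s),\tau_k(t)]=1$, and the identities $[a,bc]=[a,c]\,[a,b]^c$ and $[ab,c]=[a,c]^b[b,c]$ give the additive closure needed for the relations with nontrivial right-hand side, such as~(f) and~(h). The remaining suffix patterns are reached by conjugation in $N_1$, using $\sigma_{ij}(s)=\sigma_{-j,-i}(-s)$ and the sign-changing conjugations by the $z_i$, reducing to the case of all-positive suffices as in the proof of Theorem~\ref{omegaodd}. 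Once all Steinberg relations hold, the subgroup generated by the Steinberg generators is a copy of $\Omega(2n+1,p)$ containing $U'$, hence all its $N_1$-conjugates and therefore $V'$, so $G$ coincides with this subgroup.

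The main obstacle will be the single exceptional configuration $n=p=3$, that is $\Omega(7,3)$. In the universal central extension cut out by the Steinberg relations, conjugating a Steinberg relation by the torus-normaliser element $V'$ may alter it by a central element, and for generic $(n,q)$ the accompanying relations force that element to be trivial; but $\Omega(7,3)$ carries an exceptional Schur multiplier, so here the propagation of the mixed long--short relation $[\sigma_{ij}(s),\tau_k(t)]=1$ across suffix patterns genuinely fails. Relation~(vi), $[\tau,\sigma^{V'}]=1$, is precisely the missing instance that collapses the spurious central factor. I would confirm that the presentation with~(vi) adjoined is correct for $(n,p)=(3,3)$ by coset enumeration, exactly as the paper treats its other small exceptional cases.
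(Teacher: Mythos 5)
Your proposal follows essentially the same route as the paper's proof: define the $z_i$ from (i), obtain the $\tau_{\pm i}(s)$ from (ii) and (iv), use the Steinberg relation (vi)(f) of Theorem \ref{omegaodd} together with the odd order of $\sigma$ from (iii) to pin down the centraliser of $\sigma$ in $N_1$ and hence define the $\sigma_{ij}(s)$ as integer powers of $N_1$-conjugates, propagate the general Steinberg relations by additivity and conjugation, and dispose of $\Omega(7,3)$ via its exceptional Schur multiplier, relation (vi), and coset enumeration. This matches the paper's argument (which likewise reduces the bulk of the work to the $e>1$ case), so no further comment is needed.
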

\begin{proof}
By (i) we may define the elements $z_i$ in $G$.

If $n > 2$ then 
$N_1$ acts transitively by conjugation on 
$\{\sigma_{\pm i,\pm j}(\pm1) \, | \, 1\le i\ne j\le n\}$; 
the $8n(n-1)$ symbols are equal, as matrices, in pairs.  
Since $N_1$ has order $2^{2n-1}n!$, 
the centraliser in $N_1$ of $\sigma$ has order $2^{2n-3}(n-2)!$.
Thus this centraliser is generated by the subgroup of $N_1$ that centralises 
$\langle e_1,f_1,e_2,f_2\rangle$, which has order $2^{2n-5}(n-2)!$ as $n>2$, 
together with $U'^2$ and $U'^{z_1}$. 
It follows, from item (ii) of this theorem and the Steinberg relation (vi)(f)
of Theorem $\ref{omegaodd}$, that these elements of $N_1$ centralise $\sigma$ in
$G$.  Thus the elements $\sigma_{ij}(\pm 1)$ may be defined in $G$ for all $i$ and $j$,
positive or negative, and hence all elements $\sigma_{ij}(s)$ for $s$ in $\GF(p)$.

Now let $n=2$.  By (ii), we may define the elements $\tau_i(s)$ in $G$.
Moreover $N_1$, as matrices, permutes the $8$ elements 
$\{\sigma_{\pm i,\pm j}(\pm1) \, | \, 1\le i\ne j\le 2\}$
in 2 orbits of size $4$.  Since $N_1$ has order 16, the centraliser of 
$\sigma$ in $N_1$ has order 4 and is 
generated by $\{U'^2, U'^{z_1}\}$.  
As in the case $n>2$ these elements centralise $\sigma$ as elements of $G$, and
so we may now define the elements $\sigma_{ij}(s)$ in this group.

The general cases of the Steinberg relations now follow easily from
the cases given in the presentation.

We complete the proof arguing as in the case $e>1$.

Relation (vi) is needed because the Steinberg presentation gives the 
universal Chevalley group, and
$\Omega(7,3)$ has a Schur multiplier of order 6.  
The involution in the multiplier is killed by our other relations,
and (vi) is needed to kill the remaining 3-cycle.
We prove correctness of the presentation for this group by coset enumeration.
\end{proof}
\noindent 
Observe that 
$\Delta = {({\sigma^{\omega - \omega^2}})}^{U'}  \sigma^{\omega^{-1}} 
          {(\sigma^{\omega- 1})}^{U'} \sigma^{-1}$.

\subsection{Standard generators for $\Omega(2n+1,q)$} 
In Table 2 of \cite{odd} the standard generators for 
$\Omega(2n + 1, q)$ are labelled $s, t, \delta, u, v$.
Observe that $s = z^{V'^{-1}}; 
\delta = [z,\Delta^{-1}]^{V'^{-1}}; u = U'; v = V'$;  
if $d \equiv 3 \bmod 4$ then  $t = (\tau)^{V'^{-1}}$,
else $t = (\tau^{-1})^{V'^{-1}}$.
 
The presentation generator $\sigma=[\tau^{zU'},\tau^{(p+1)/2}]$.
If $q\equiv3\bmod4$ then 
$\Delta={U'}^2([\delta^{V'},U'])^{(q+1)/4}$.
Also $\Delta^2=[\delta^{V'},U']$.
If $q \ne 9$, then 
$\Delta \in 
\langle\Delta^2,\sigma,U'\rangle \cong \SL(2,q)$. 
In particular, 
$$\Delta = 
\prod_{i=0}^{e-1} \sigma^{c_i\Delta^{2i}U'} 
\Delta^2 
\sigma^{U'}  
\prod_{i=0}^{e-1} \sigma^{b_i\Delta^{2i} \sigma^{U'}} 
\prod_{i=0}^{e-1}\sigma^{a_i\Delta^{2i}}$$
where $\sum_{i=0}^{e-1} a_i \omega^{4i} = -\omega^{-1} + 1$, and 
$\sum_{i=0}^{e-1} b_i \omega^{4i} = 1 - \omega$, 
and $\sum_{i=0}^{e-1} c_i \omega^{4i} = -\omega^3$.
If $q=9$, then $\Delta\in\langle \Delta^2,z,U',\tau\rangle$, and this 
group is isomorphic to $\PSp(4,9)$, the natural module for $\Omega(5,9)$ being 
isomorphic to an irreducible constituent of the exterior square
of the natural module for $\Sp(4,9)$;
we record $\Delta$ as an explicit fixed word in these generators. 

\section{A presentation for $\Omega^-(2n,q)$ for $n > 1$}
Since $\Omega^-(4,q) \cong \PSL(2, q^2)$, we assume that $n > 2$.

\subsection{Odd characteristic}
\subsubsection{Generators and notation}
Let $q = p^e$ for an odd prime $p$. 
Let $\omega$ be a primitive element of $\GF(q^2)$, and let $\omega_0=\omega^{q+1}$, 
so $\omega_0$ is a primitive element of $\GF(q)$. Define $\psi = \omega^{(q+1)/2}$.   

Let $n>2$.  
We take a basis $(e_1,f_1,e_2,f_2,\ldots,e_{n-1},f_{n-1},w_1,w_2)$ of 
the natural module, with orthogonal bilinear
form defined by $e_i.f_i=f_i.e_i=1$, $w_1.w_1=-2$, $w_2.w_2=2\omega_0$, 
and the form vanishes on all other pairs
of basis elements.  This form is of type $-$.

We define the following elements of $\SO^-(2n,q)$, 
where $1\le i,j\le n-1$ and $i\ne j$, and $s\in\GF(q)$,
and $\alpha \in \GF(q^2)$.  

$\delta_i=(e_i\mapsto\omega_0^{-1}e_i, f_i\mapsto\omega_0 f_i, 
w_1\mapsto Aw_1-Cw_2, w_2\mapsto-Bw_1+Aw_2)$, where
 \begin{eqnarray*}
A & = & (\omega^{q-1}+\omega^{1-q})/2 \\
B & = & \psi(\omega^{1-q}-\omega^{q-1})/2 \\
C & = & \psi^{-1}(\omega^{1-q}-\omega^{q-1})/2;
\end{eqnarray*}

$\Delta_{i,-j}=\delta_i\delta_j^{-1}$;

$z_i=(e_i,f_i)(w_1)^-$;

$\sigma_{ij}(s)=(e_i\mapsto e_i+se_j, f_j\mapsto f_j-sf_i)$;

$\sigma_{i,-j}(s)=(e_i\mapsto e_i+sf_j, e_j\mapsto e_j-sf_i)=\sigma_{ij}^{z_j}$;

$\sigma_{-i,j}(s)=(f_i\mapsto f_i+se_j, f_j\mapsto f_j-se_i)=\sigma_{ij}^{z_i}$;

$\sigma_{-i,-j}(s)=\sigma_{ji}(-s)=\sigma_{ij}(s)^{z_iz_j}$;

$\tau_i(\alpha)=(e_i\mapsto e_i+\alpha^{q+1}f_i+\frac{1}{2}(\alpha^q+\alpha)w_1 + 
\frac{1}{2}\psi^{-1}(\alpha^q-\alpha)w_2, \\ \hspace*{2.15cm}
w_1\mapsto w_1+(\alpha+\alpha^q)f_i,\, 
w_2\mapsto w_2+\psi(\alpha-\alpha^q)f_i)$; 

$\tau_{-i}(\alpha)=(f_i\mapsto f_i+\alpha^{q+1}e_i-\frac{1}{2}(\alpha^q+\alpha)w_1 + 
\frac{1}{2}\psi^{-1}(\alpha^q-\alpha)w_2,\\ \hspace*{2.15cm}
w_1\mapsto w_1-(\alpha+\alpha^q)e_i,  \,
w_2\mapsto w_2+\psi(\alpha-\alpha^q)e_i)=\tau_i(\alpha)^{z_i}$; 

$U=(e_1,e_2)(f_1,f_2)$;

$U'=(e_1,e_2)^-(f_1,f_2)^-$;

$V=(e_1,e_2,\ldots,e_{n-1})(f_1,f_2,\ldots,f_{n-1})$;

$V'=(e_1,e_2,\ldots,e_{n-1})^{\epsilon_{n-1}}(f_1,f_2,\ldots,f_{n-1})^{\epsilon_{n-1}}$.

\smallskip
All these elements of $\SO^-(2n,q)$ centralise those basis elements 
that they are not stated to move.  
All lie in $\Omega^-(2n,q)$ except for $U$ if
$q\equiv 3 \bmod 4$, and $V$ if $n$ is odd and $q\equiv 3\bmod 4$.  

All those elements of $\Omega^-(2n,q)$ that fix $w_2$ are defined as 
the corresponding elements of $\Omega(2n-1,q)$.   
In particular, $\tau_i(s)$ for $s\in\GF(q)$, and its conjugates 
under $\delta_i\delta_j^{-1}$ for $i \ne j$, 
are the same in $\Omega(2n-1,q)$ and in $\Omega^-(2n,q)$.   

We give a presentation for $\Omega^-(2n, q)$ on the generating set 
$\{ \sigma=\sigma_{12}(1), \tau=\tau_1(1), \delta=\delta_1, z=z_1, U',V'\}$, 
omitting $V'$ if $n=3$.

\subsubsection{A presentation for $\Omega^-(6, q)$}
We exploit the isomorphism 
between $\Omega^-(6,q)$ and $\SU(4,q)/\langle-I_4\rangle$.
Let $W$ be the natural module for $\SU(4,q)$. 
Take a hyperbolic basis $(u_1,v_1,u_2,v_2)$ for $W$ and 
define a basis $(e_1,f_1,e_2,f_2,w_1,w_2)$ for $W\wedge W$ thus:
$$e_1=u_1\wedge v_2, f_1=v_1\wedge u_2, 
e_2=-\psi(v_1\wedge v_2), f_2=\psi^{-1}(u_1\wedge u_2)$$ 
$$w_1=u_1\wedge v_1-u_2\wedge v_2, w_2=\psi(u_1\wedge v_1+u_2\wedge v_2).$$ 
The natural action of $\SU(4,q)/\langle -I_4\rangle$ on $W\wedge W$ 
coincides with the natural action of $\Omega^-(6,q)$,
with the basis defined above, under the isomorphism defined below.  
The kernel of this homomorphism is, of course, $\langle-I_4\rangle$.  Equating an element of 
$\SU(4,q)$ with its image modulo $\langle -I_4\rangle$, and using our notation for 
elements of $\SU(4,q)$,
this homomorphism maps our chosen generators for $\SU(4,q)$ to $\Omega^-(6,q)$ as follows:
$$\sigma\mapsto\tau^{-1},\; \tau\mapsto\sigma,\; Z\mapsto U',\;
\delta\mapsto\delta\delta^{-U'},\;\Delta\mapsto\delta,\;U\mapsto z{U'}^2.$$

\begin{theorem}\label{Omega6minus}
If $q$ is odd, then a presentation for $\Omega^-(6,q)$ on our chosen generators is given 
by taking the presentation 
for $\SU(4,q)/\langle-I_4\rangle$ in Corollary $\ref{SU(4,q)/-I4}$
with the above substitutions.  
\end{theorem}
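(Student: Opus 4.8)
The plan is to derive the presentation from the classical exceptional isomorphism $\phi\colon \SU(4,q)/\langle -I_4\rangle \to \Omega^-(6,q)$ realised on the exterior square, and then to transfer the presentation of Corollary \ref{SU(4,q)/-I4} across $\phi$. First I would make the homomorphism explicit. The group $\SU(4,q)$ acts on $W\wedge W$, and this action preserves the bilinear form induced by the wedge pairing $W\wedge W\times W\wedge W\to W\wedge W\wedge W\wedge W\cong\GF(q^2)$ together with the $\GF(q)$-rational structure coming from the unitary form on $W$. Since $-I_4$ acts trivially on bivectors, $\langle -I_4\rangle$ lies in the kernel, and a dimension and order count identifies the induced map onto the isometry group with the stated isomorphism, the minus type of the form being forced by the unitary structure over $\GF(q^2)$. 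Relative to the basis $(e_1,f_1,e_2,f_2,w_1,w_2)$ of $W\wedge W$ displayed above, the image is exactly our chosen copy of $\Omega^-(6,q)$.

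The substance of the argument is the verification that $\phi$ carries the generators $\sigma,\tau,Z,\delta,\Delta,U$ of $\SU(4,q)$ to the words in $\{\sigma,\tau,\delta,z,U'\}$ recorded in the displayed substitution. This is a direct computation: for each generator I would compute its action on the six basis bivectors $u_1\wedge v_2$, $v_1\wedge u_2$, $v_1\wedge v_2$, $u_1\wedge u_2$, $u_1\wedge v_1$, $u_2\wedge v_2$, read off the resulting matrix on $(e_1,f_1,e_2,f_2,w_1,w_2)$, and match it against the corresponding element of $\Omega^-(6,q)$. The bookkeeping of the factors $\psi^{\pm1}$ and of the signs entering $e_2,f_2,w_1,w_2$ is the only delicate point, and is where I expect the main obstacle to lie; in particular one must check on the nose that the image of $\delta$ is $\delta\delta^{-U'}$ and that the image of $U$ is $zU'^2$, since these pin down the correspondence of the torus and of the reflection part.

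Finally I would transfer the presentation. By Corollary \ref{SU(4,q)/-I4} a set $R$ of relations presents $\SU(4,q)/\langle -I_4\rangle$ on its generators; since $\phi$ is an isomorphism, the six images listed in the substitution satisfy, and are defined by, the relations $S$ obtained from $R$ by that substitution, and each relation in $S$ is a word in $\{\sigma,\tau,\delta,z,U'\}$. The two generating sets are mutually expressible: the six images are words in $\{\sigma,\tau,\delta,z,U'\}$ by definition, while conversely $\sigma$, $\delta$ and $U'$ occur verbatim among the images, $\tau$ is the inverse of the image $\tau^{-1}$, and $z=(zU'^2)(U')^{-2}$. Let $\bar G$ be the group presented on $\{\sigma,\tau,\delta,z,U'\}$ by $S$. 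The relations $S$ hold in $\Omega^-(6,q)$ because they are $\phi$ applied to relators of $\SU(4,q)/\langle -I_4\rangle$; and the six images, being $\phi$ of a generating set, generate $\Omega^-(6,q)$, so the five chosen generators (words in the images) do too. Hence there is a surjection $\bar G\to\Omega^-(6,q)$. On the other hand the six images generate $\bar G$ and satisfy $R$, so $\bar G$ is a quotient of $\SU(4,q)/\langle -I_4\rangle$, giving $|\bar G|\le|\SU(4,q)/\langle -I_4\rangle|=|\Omega^-(6,q)|$. The surjection is therefore an isomorphism. This standard von Dyck transfer completes the proof.
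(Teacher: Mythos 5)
Your proposal is correct and follows essentially the same route as the paper: the paper's entire justification for this theorem is the explicit exterior-square realisation of the isomorphism $\SU(4,q)/\langle -I_4\rangle\cong\Omega^-(6,q)$ together with the recorded generator images, the transfer of the presentation being the standard von Dyck argument you spell out. Your additional observation that the two generating sets are mutually expressible (so the substituted presentation really is one on the chosen generators of $\Omega^-(6,q)$) is exactly the point the paper leaves implicit.
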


\subsubsection{A presentation for $\Omega^-(2n,q)$ for $n \ge 4$}
Let $N_1$ be the subgroup of $\Omega^-(2n,q)$ generated by $\{ z,U',V'\}$.  
This is the same generating set as used in Theorem \ref{OoddoddN1} 
for a subgroup of $\Omega(2n-1,q)$, so the same presentation holds.




Let $N$ be the subgroup of $\Omega^-(2n,q)$ generated by  
$\{ \delta,z,U',V'\}$. 
Now $\delta$ and its conjugates
generate a subgroup of $N$ that is the direct product of the 
cyclic group $\langle\delta\rangle$ 
of order $(q^2-1)/2$ with the direct product of $n-2$ cyclic groups of 
order $q-1$ generated by 
$\delta_1\delta_2^{-1},\delta_2\delta_3^{-1},\ldots,\delta_{n-2}\delta_{n-1}^{-1}$.  This
subgroup contains 
$U'^2=(\delta_1\delta_2^{-1})^{(q-1)/2}$.  Thus $N$ has order $(q+1)(q-1)^{n-1}2^{n-2}(n-1)!$, 
where a factor of $2^{n-1}$ arises from the $z_i$.  

\begin{theorem}\label{OminusoddN}
Let $n \ge 4$. Let $G$ be the group generated by
$\{\delta, z, U',V'\}$ subject to the relations given below. Then $G$ is isomorphic to $N$.
\begin{enumerate}
\item[(i)] Defining relations for $N_1 = \langle z,U',V'\rangle$
as in Theorem $\ref{OoddoddN1}$. 
\item[(ii)] $[\delta,U'^{V'}]=1$.
\item[(iii)] If $n>4$ then $[\delta,V'U'^{-1}]=1$.  
\item[(iv)] $U'^2=(\delta\delta^{-U'})^{(q-1)/2}$. 
\item[(v)] $[\delta,z^{U'}]=\delta^{q-1}$.
\item[(vi)] $\delta^{(q^2-1)/2}=1$.
\item[(vii)] $[\delta,\delta^{U'}]=1$.
\item[(viii)] $\delta^z=\delta^{-1}$.
\item[(ix)] $[\delta^{q-1},U'] = 1$.
\end{enumerate}
\end{theorem}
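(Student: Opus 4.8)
The plan is to follow the template established for the torus normalisers in the earlier classical cases, and in particular the closely analogous Theorem~\ref{OmegaplusoddN}: first verify that all the listed relations hold in $N$, so that $N$ is a homomorphic image of the presented group $G$, and then prove $|G|\le |N|=(q+1)(q-1)^{n-1}2^{n-2}(n-1)!$, whence $G\cong N$. Since the relations are visibly satisfied by the matrices, only the order bound needs work.

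First I would invoke relation~(i) and Theorem~\ref{OoddoddN1} (applied with the index set $\{1,\dots,n-1\}$) to identify $\langle z,U',V'\rangle$ in $G$ with $N_1$, of order $2^{2n-3}(n-1)!$, with the $z_i$ and $U',V'$ acting as signed permutations. Relations~(ii) and~(iii) state that $\delta=\delta_1$ is centralised by the copy of $S_{n-2}$ in $\langle U',V'\rangle$ fixing the index~$1$; together with $[\delta,U'^2]=1$ — which I would derive from (iv) and (vii), since $U'^2=(\delta\delta^{-U'})^{(q-1)/2}$ and $\delta$ commutes with $\delta\delta^{-U'}=\delta_1\delta_2^{-1}$ — this lets me define the conjugates $\delta_i$ for $1\le i\le n-1$, permuted naturally by $\langle U',V'\rangle$. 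Relation~(vii), conjugated through $\langle U',V'\rangle$, then shows that all $\delta_i$ commute, so $A:=\langle\delta_1,\dots,\delta_{n-1}\rangle$ is abelian.

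The crux, and the main point of departure from the split case \ref{OmegaplusoddN}, is the anisotropic rotation $r:=\delta^{q-1}$ arising from the action on $\langle w_1,w_2\rangle$. I would use (ix) to get $r^{U'}=r$, so that $(\delta_1^{q-1})^{U'}=\delta_2^{q-1}$ forces $\delta_i^{q-1}=r$ for all $i$ (and $r$ is then fixed by all of $\langle U',V'\rangle$); by (vi), $r$ has order dividing $(q+1)/2$. Consequently each $\delta_i\delta_{i+1}^{-1}$ has order dividing $q-1$, and since $A$ is generated by $\delta_1$ together with the $n-2$ elements $\delta_i\delta_{i+1}^{-1}$, relation~(vi) gives $|A|\le \frac{q^2-1}{2}(q-1)^{n-2}=\frac{(q+1)(q-1)^{n-1}}{2}$. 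Normality of $A$ in $G$ is where relation~(v) does the essential work: (viii) gives $\delta_1^{z}=\delta_1^{-1}$, while conjugating (v) by $U'$ — noting $z^{U'^2}=z$ (as $U'^2$ acts as $-I$ on the first pair) and $[r,U']=1$ — yields $[\delta_2,z_1]=r$, i.e.\ $\delta_2^{z_1}=\delta_2 r\in A$. Propagating these through $\langle U',V'\rangle$ shows every $z_i$ normalises $A$, so $A\trianglelefteq G$.

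Finally, since $G=\langle A,N_1\rangle$ with $A$ normal, I would conclude $|G|=|A|\,|N_1|/|A\cap N_1|$. Relation~(iv) and its $\langle U',V'\rangle$-conjugates place the rank-$(n-2)$ subgroup $\langle U'^{2g}\rangle$ (of order $2^{n-2}$ in $N_1$) inside $A$, so $|A\cap N_1|\ge 2^{n-2}$ and hence
$$|G|\le \frac{\frac{1}{2}(q+1)(q-1)^{n-1}\cdot 2^{2n-3}(n-1)!}{2^{n-2}}=(q+1)(q-1)^{n-1}2^{n-2}(n-1)!=|N|.$$
The expected obstacle is the bookkeeping around this twisted torus: correctly tracking how the reflections $z_i$ act on $\delta_j$ for $i\ne j$ — they multiply by the rotation $r$ rather than centralising, exactly as encoded in (v) — and confirming the small-$n$ boundary (for $n=4$, where (iii) is vacuous and the stabiliser of index~$1$ is generated by $U'^{V'}$ alone), just as in the treatment of Theorem~\ref{OmegaplusoddN}.
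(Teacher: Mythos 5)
Your proposal is correct and follows essentially the same route as the paper's proof: identify $N_1$ via (i), use (iv) and (vii) to get $[\delta,U'^2]=1$ and hence define the commuting conjugates $\delta_i$, use (ix) to force all $\delta_i^{q-1}$ to coincide so that the normal closure $A$ of $\delta$ has order at most $(q+1)(q-1)^{n-1}/2$, use (v) and (viii) to get normality of $A$ under the $z_i$, and then count $|G|\le|A|\,|N_1|/2^{n-2}=|N|$ using the fact that relation (iv) places the normal closure of $U'^2$ inside $A$. The only cosmetic slip is describing the subgroup generated by $U'^{V'}$ and $V'U'^{-1}$ as a copy of $S_{n-2}$ — it is actually the full group of signed permutations fixing the index $1$ — but since you also adjoin $U'^2$, the stabiliser has the correct index $n-1$ and the argument is unaffected.
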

\begin{proof}  
By (i) we may take $G$ to contain $N_1$.  By (iv) and (vii) it follows 
that $U'^2$
commutes with $\delta$, so, using (ii) and (iii), we may assume that
$\delta_i\in G$
for all $i$, and that $\langle U',V'\rangle$ permutes these elements as unsigned
permutations of the suffices.  By (vii) it then follows that the $\delta_i$ all commute.
The actions of $z_1$ and of $z_2$ on $\delta_1$ are given by (v) and (viii), and
this gives the action of $z_i$ on $\delta_j$ for all $i$ and $j$.
If $n>4$ it follows from (iii) and (ix), or from (ii) and (ix) if $n=4$, 
that $[\delta^{q-1},V']=1$.
Hence $\delta_i^{q-1}=\delta_j^{q-1}$ for all $i$ and $j$.
Thus the order of the normal subgroup of $G$ generated by the $\delta_i$
has order dividing $(q-1)^{n-1}(q+1)/2$.  But $N_1$ has order $2^{2n-3}(n-1)!$,
and contains a normal subgroup of order $2^{n-2}$, namely the subgroup generated
by $U'^2$ and its conjugates, that is contained in the subgroup generated by the $\delta_i$.
So $G$ has order at most $(q+1)(q-1)^{n-1}2^{n-2}(n-1)!$, and as this is the order of $N$ 
the result follows.
\end{proof}

\begin{theorem}\label{OmegaMinusOdd-cent-sigma}
The centraliser of $\sigma = \sigma_{12}(1)$ in $N$  
has index $2(n-1)(n-2)(q-1)$ in $N$ and is generated by the following elements:
\begin{enumerate}
\item[(i)] $U'^{{V'}^2}$ if $n>4$; 
\item[(ii)] $V'U'^{-1}U'^{-V'}$ if $n>5$; 
\item[(iii)] $z^{{V'}^2}$;
\item[(iv)] $\delta^{{V'}^2}$;
\item[(v)] $\delta\delta^{U'}$;
\item[(vi)] $zz^{U'}U'$.
\end{enumerate}
\end{theorem}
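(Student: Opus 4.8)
The plan is to mirror the centraliser computations of Theorems~\ref{Op2nsigstab} and \ref{Omega2n+1-tau-cent}: first fix the index by an orbit count, then check each listed generator centralises $\sigma$, and finally show that these generators already account for the whole centraliser by an order argument. The essential new feature, and the step I expect to be the main obstacle, is that the anisotropic plane $\langle w_1,w_2\rangle$ is moved by torus elements coming from \emph{both} constituent pieces of the centraliser, so the centraliser is not the clean direct product that appeared in the plus-type and odd-dimensional cases.

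First I would establish the index. Since $n>2$, the subgroup $N$ acts transitively by conjugation on the set $\{\sigma_{ij}(s) : 1\le|i|\ne|j|\le n-1,\ s\in\GF(q)^\times\}$: the elements $U'$ and $V'$ act as signed permutations of the suffices, each $z_i$ interchanges $i$ and $-i$, and $\delta_i$ rescales via $\sigma_{ij}(s)^{\delta_i}=\sigma_{ij}(s\omega_0)$, $\sigma_{ij}(s)^{\delta_j}=\sigma_{ij}(s\omega_0^{-1})$. There are $4(n-1)(n-2)$ ordered index pairs and $q-1$ scalars, but $\sigma_{ij}(s)=\sigma_{-j,-i}(-s)$ identifies these symbols in pairs, so the orbit has size $2(n-1)(n-2)(q-1)$. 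Dividing the order $|N|=(q+1)(q-1)^{n-1}2^{n-2}(n-1)!$ from Theorem~\ref{OminusoddN} by this shows the centraliser has order $(q+1)(q-1)^{n-2}2^{n-3}(n-3)!$; it remains to prove the listed elements generate a group of exactly this order. Checking that each generator centralises $\sigma=\sigma_{12}(1)$ is then a direct matrix verification: items (i) and (ii) permute only the suffices $\ge 3$; item (iii) is $z_3$ and item (iv) is $\delta_3$, both supported away from $\langle e_1,f_1,e_2,f_2\rangle$; item (v) is $\delta_1\delta_2$, which fixes $\sigma$ because $\sigma_{12}(s)^{\delta_1\delta_2}=\sigma_{12}(s\omega_0\omega_0^{-1})=\sigma_{12}(s)$; and item (vi) is $z_1z_2U'$, which fixes $\sigma$ using $\sigma_{12}(s)=\sigma_{-2,-1}(-s)$.

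For the order, I would write the centraliser as an internal product $C\cdot Q$. The factor $C=\langle (\mathrm{i}),(\mathrm{ii}),(\mathrm{iii}),(\mathrm{iv})\rangle$ fixes $\langle e_1,f_1,e_2,f_2\rangle$ pointwise and is precisely the torus normaliser of the minus-type group $\Omega^-(2(n-2),q)$ acting on the orthogonal complement $\langle e_3,f_3,\ldots,e_{n-1},f_{n-1},w_1,w_2\rangle$; by the $n\mapsto n-2$ instance of Theorem~\ref{OminusoddN} it has order $(q+1)(q-1)^{n-3}2^{n-4}(n-3)!$, with items (i),(ii) supplying the symmetric group on $\{3,\ldots,n-1\}$, item (iii) the reflections $z_i$, and item (iv) the diagonal torus. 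The factor $Q=\langle \delta_1\delta_2,\ z_1z_2U'\rangle$ comes from the $\Omega^+(4,q)$-structure on $\langle e_1,f_1,e_2,f_2\rangle$; using relation (viii) of Theorem~\ref{OminusoddN} one sees that $z_1z_2U'$ inverts $\delta_1\delta_2$, so $Q$ is dihedral-or-quaternion of order $2\cdot\mathrm{ord}(\delta_1\delta_2)$.

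The crux is the intersection $C\cap Q$. Because the $\delta_i$ all rotate $\langle w_1,w_2\rangle$ by the same order-$(q+1)$ rotation $R$, the element $\delta_1\delta_2$ acts there as $R^2$, while $C$ already contains the pure rotations $\langle R^2\rangle$ of $\langle w_1,w_2\rangle$ (fixing every $e_i,f_i$); consequently $(\delta_1\delta_2)^{q-1}$, which fixes $e_1,f_1,e_2,f_2$ and acts as $R^{-4}$ on $\langle w_1,w_2\rangle$, lies in $C\cap Q$. I would show $C\cap Q$ is exactly the cyclic group generated by this shared rotation, compute its order as $(q+1)/\gcd(4,q+1)$ (so the value depends on $q\bmod 4$), and hence evaluate $|C\cdot Q|=|C|\,|Q|/|C\cap Q|$. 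A short calculation in each congruence class of $q$ modulo $4$ shows this equals $(q+1)(q-1)^{n-2}2^{n-3}(n-3)!$, matching the index; since $C\cdot Q$ is contained in the centraliser, it must be the whole of it. I would finish by treating the boundary values $n=4$ and $n=5$ separately, where items (i) and/or (ii) are absent and the complement degenerates to $\Omega^-(4,q)$ or $\Omega^-(6,q)$, the latter handled through the isomorphism with $\SU(4,q)/\langle-I_4\rangle$ of Theorem~\ref{Omega6minus}.
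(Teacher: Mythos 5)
Your overall skeleton (orbit count for the index, direct verification that the listed elements centralise $\sigma$, then an order count) matches the paper's, and you correctly identify the real difficulty: the anisotropic plane $\langle w_1,w_2\rangle$ is rotated by torus elements from both pieces, so there is no clean direct product. However, your resolution of that difficulty contains a concrete error. The claim that $z_1z_2U'$ inverts $\delta_1\delta_2$ is false: by relation (viii) of Theorem \ref{OminusoddN} we have $\delta_1^{z_1}=\delta_1^{-1}$, but relation (v) gives $\delta_1^{z_2}=\delta_1^{q}$ (each $z_i$ reflects the $w$-plane, so $z_1z_2$ acts \emph{trivially} there), and one computes $(\delta_1\delta_2)^{z_1z_2U'}=(\delta_1\delta_2)^{-q}$, which equals $(\delta_1\delta_2)^{-1}$ only when $q=3$. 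Consequently your description of $Q$ is wrong: in fact $|Q|=q^2-1$ for all odd $q$, whereas $2\,\mathrm{ord}(\delta_1\delta_2)=(q^2-1)/2$ when $q\equiv3\bmod4$ (since then $\mathrm{ord}(\delta_1\delta_2)=\mathrm{lcm}(q-1,(q+1)/2)=(q^2-1)/4$, and $U'^2=(z_1z_2U')^2$ does not lie in $\langle\delta_1\delta_2\rangle$). Your intersection is likewise understated: besides $\langle(\delta_1\delta_2)^{q-1}\rangle$, the coset $(\delta_1\delta_2)^{(q-1)/2}U'^2\langle(\delta_1\delta_2)^{q-1}\rangle$ also fixes $e_1,f_1,e_2,f_2$ and acts on $\langle w_1,w_2\rangle$ by even powers of the basic rotation, hence lies in $C$; so $|C\cap Q|=(q+1)/2$ in all cases, not $(q+1)/\gcd(4,q+1)$. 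For $q\equiv3\bmod4$ your two errors are each off by a factor of $2$ and cancel in $|C||Q|/|C\cap Q|$, so the final number is right by accident, but the argument as written is not correct.

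The paper avoids this bookkeeping entirely by first passing to the quotient by the normal subgroup $K=\langle\delta^{q-1}\rangle$ of order $(q+1)/2$, which is exactly the shared rotation group of $\langle w_1,w_2\rangle$ contained in the group generated by the listed elements. Modulo $K$ the obstruction you identified disappears: $z_1z_2U'$ genuinely inverts the image of $\delta_1\delta_2$, $(z_1z_2U')^2\equiv(\delta_1\delta_2)^{(q-1)/2}$, and the group becomes the direct product of $D_{2(q-1)}\wr S_{n-3}$ (from items (i)--(iv)) with $Q_{2(q-1)}$ (from items (v)--(vi)), whence the order $(q+1)(q-1)^{n-2}2^{n-3}(n-3)!$ follows by a single multiplication, uniformly in $q\bmod 4$ and without separate treatment of $n=4,5$. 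If you wish to keep your $|C||Q|/|C\cap Q|$ route, you must recompute $|Q|$ and $|C\cap Q|$ as above; otherwise, quotienting by $K$ first is the cleaner path.
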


\begin{proof}  
The first claim follows from the fact that $N$ permutes transitively the 
$4(n-1)(n-2)(q-1)$ elements
$\sigma_{ij}(s)$, 
for $1\le |i|\ne|j|\le n-1$ and $s\in\GF(q)^{\times}$, which are equal in pairs.
The listed elements all centralise $\sigma$, and generate a group $G$
that contains a normal subgroup $K$ of order $(q+1)/2$ generated by $\delta^{q-1}$.
Observe that $G/K$ is isomorphic to the direct product of a copy of $D_{2(q-1)}\wr S_{n-3}$ 
generated by the images of the elements (i) to (iv) with a copy of $Q_{2(q-1)}$
generated by the images of the elements (v) and (vi).  
So $G$ has order $(q+1)(q-1)^{n-2}2^{n-3}(n-3)!$, 
which is the order of the centraliser.
\end{proof}

Since $\tau_i(\alpha)^{\delta_i}=\tau_i(\alpha\omega^2)$ for $i>0$, 
and $\tau_{i}(\alpha)^{\delta_i}=\tau_i(\alpha\omega^{-2})$ for $i<0$, 
it follows that 
$\{\tau_{\pm i}(\alpha) \, | \, 1\le i\le n-1, \alpha\in\GF(q^2)^{\times}\}$ is 
permuted under conjugation by $N$, and falls into two orbits, the elements
for which $\alpha$ is a square forming one orbit, and those for which $\alpha$ is a 
non-square forming the other.

\begin{theorem}\label{OmegaMinusOdd-cent-tau}
The centraliser of $\tau = \tau_1(1)$ in $N$ 
has index $(n-1)(q^2-1)$ in $N$ and 
is generated by the following elements: 
\begin{enumerate}
\item[(i)] $\delta^{(q+1)/2}\delta^{U}$;
\item[(ii)] $U'zU'$;
\item[(iii)] $U'^{V'}$;
\item[(iv)] $V'U'^{-1}$ if $n > 4$.
\end{enumerate}
\end{theorem}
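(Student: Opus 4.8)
The plan is to follow the template of the preceding centraliser results, notably Theorems \ref{OmegaMinusOdd-cent-sigma} and \ref{Omega2n+1-tau-cent}: first pin down the index from an orbit count, then verify that the listed elements centralise $\tau$, and finally show that they already generate a subgroup of the correct order. For the index, recall from the paragraph preceding the statement that $N$ permutes the set $\{\tau_{\pm i}(\alpha) : 1\le i\le n-1,\ \alpha\in\GF(q^2)^{\times}\}$ in two orbits, according to whether $\alpha$ is a square. Since $\tau=\tau_1(1)$ lies in the square orbit, that orbit has size $2(n-1)\cdot(q^2-1)/2=(n-1)(q^2-1)$, so the centraliser of $\tau$ in $N$ has index $(n-1)(q^2-1)$. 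Dividing the order $(q+1)(q-1)^{n-1}2^{n-2}(n-1)!$ of $N$ (see Theorem \ref{OminusoddN}) by this index shows that the centraliser has order $(q-1)^{n-2}2^{n-2}(n-2)!$; it therefore suffices to prove that the listed elements centralise $\tau$ and generate a subgroup of exactly this order.

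First I would check that each element fixes $\tau$ under conjugation. Elements (iii) and (iv), namely $U'^{V'}$ and $V'U'^{-1}$, act as signed permutations fixing the index $1$ and permuting $\{2,\dots,n-1\}$; they fix $\langle e_1,f_1,w_1,w_2\rangle$ pointwise and so centralise $\tau_1(1)$, with (iv) contributing a genuinely new generator only once $\{2,\dots,n-1\}$ has at least three points, that is $n>4$. For (ii) I would rewrite $U'zU'=U'^2z^{U'}=U'^2z_2$ and track the effect on the parameter: $U'^2$ negates $e_1,f_1$ and sends $\tau_1(\alpha)$ to $\tau_1(-\alpha)$, while $z_2$ negates $w_1$ and sends $\tau_1(\alpha)$ to $\tau_1(-\alpha^q)$, so their product acts as the Frobenius $\alpha\mapsto\alpha^q$ and in particular fixes $\tau_1(1)$. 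For the torus element (i) I would use the scaling rule $\tau_1(\alpha)^{\delta_1}=\tau_1(\alpha\omega^2)$ together with the fact that each $\delta_j$ with $j\ge2$ acts on $\tau_1$ only through the common rotation of $\langle w_1,w_2\rangle$; composing these contributions expresses the action of (i) on $\tau_1(\alpha)$ as multiplication of $\alpha$ by a fixed power of $\omega$, and the verification reduces to checking that the resulting exponent of $\omega$ vanishes modulo $q^2-1$.

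Finally I would identify the subgroup $C$ generated by the listed elements and compute its order. The elements (iii), (iv) together with the twisted sign changes supplied by (ii) and its conjugates generate a copy of the signed symmetric group acting on the pairs $(e_i,f_i)$, $2\le i\le n-1$, of order $2^{n-2}(n-2)!$, and (i) together with its conjugates under this group generates a diagonal torus of order $(q-1)^{n-2}$ on those same pairs. Exhibiting $C$ as an extension of this torus by the signed symmetric group, with the two pieces meeting trivially, gives $|C|=(q-1)^{n-2}2^{n-2}(n-2)!$, which forces $C$ to be the whole centraliser.

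The main obstacle is the torus computation in step (i) and the accompanying order count. Because every $\delta_i$ induces the \emph{same} rotation $R$ on the anisotropic plane $\langle w_1,w_2\rangle$, the characters by which the torus acts on $\tau_1(\alpha)$ couple the $(e_1,f_1)$ scaling to this shared rotation; to control them one writes $\delta_1$ as the product of a scaling of $\langle e_1,f_1\rangle$ with $R$, notes that $R$ acts on the two isotropic eigenlines $\langle w_1\pm\psi^{-1}w_2\rangle$ by $\omega^{q-1}$ and $\omega^{1-q}$, and observes that these eigenlines carry precisely the coordinates $\alpha$ and $\alpha^{q}$ of $\tau_1(\alpha)$. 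Keeping track of these exponents of $\omega$, and disentangling which combinations of the $z_i$ and of $U'^2$ actually fix $w_1$ (and hence lie in the centraliser), is the delicate part of both verifying that (i) centralises $\tau$ and confirming that the sign-change part of $C$ has the claimed rank $n-2$.
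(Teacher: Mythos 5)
Your overall strategy---index from the two-orbit count, verification that each listed element centralises $\tau$, and an order count showing the listed elements generate a group of order $2^{n-2}(q-1)^{n-2}(n-2)!$---is exactly the paper's; your ``diagonal torus extended by a signed symmetric group'' is the paper's $D_{2(q-1)}\wr S_{n-2}$ described differently, and your analysis of items (ii)--(iv) agrees with the paper's (item (ii) is indeed the matrix $(e_1)^-(f_1)^-(e_2,-f_2)(w_1)^-$, acting on the parameter as $\alpha\mapsto\alpha^q$). In the order count you should still say a word about why your complement meets the torus trivially; the paper sidesteps this by packaging the two as a wreath product of dihedral groups.

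The genuine gap is the one step you defer: the exponent computation for item (i), which is precisely where the content lies, and when you carry it out it does not close. Since every $\delta_j$ induces the \emph{same} rotation on $\langle w_1,w_2\rangle$, with eigenvalue $\omega^{q-1}$ on $w_1+\psi^{-1}w_2$ (which carries the coefficient $\alpha^q$ of $\tau_1(\alpha)$) and $\omega^{1-q}$ on $w_1-\psi^{-1}w_2$ (which carries $\alpha$), one finds $\tau_1(\alpha)^{\delta^{U}}=\tau_1(\alpha\omega^{1-q})$, to be combined with the paper's rule $\tau_1(\alpha)^{\delta}=\tau_1(\alpha\omega^{2})$. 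Hence $\delta^{a}\delta^{U}$ sends $\tau_1(1)$ to $\tau_1(\omega^{2a+1-q})$, and this is $\tau_1(1)$ if and only if $2a\equiv q-1 \bmod (q^2-1)$. For $a=(q+1)/2$ the exponent is $2$, so the element $\delta^{(q+1)/2}\delta^{U}$ in the statement conjugates $\tau$ to $\tau_1(\omega^2)$ and does \emph{not} centralise it; the element that does is $\delta^{(q-1)/2}\delta^{U}$, which is exactly the matrix $\diag(-1,-1,\omega_0^{-1},\omega_0,1,\ldots,1,-1,-1)$ displayed in the paper's own proof. In other words, the exponent $(q+1)/2$ in item (i) appears to be a typo for $(q-1)/2$; your argument would go through verbatim with that correction, but as written the phrase ``the verification reduces to checking that the resulting exponent of $\omega$ vanishes'' conceals the one point where the claim, taken literally, fails.
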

\begin{proof}
These elements all centralise $\tau$.  
The first two generators, which as matrices are
$\diag(-1,-1,\omega_0^{-1},\omega_0,1,1,\ldots,1,-1,-1)$ and 
$(e_1)^-(f_1)^-(e_2,-f_2)(w_1)^-$, lie in $\Omega^-(6,q)$;
they generate a copy of $D_{2(q-1)}$. So
these elements, together with items (iii) and (iv), generate
a copy of $D_{2(q-1)} \wr S_{n-2}$, which has 
the required order ${(2(q-1))}^{n-2}(n-2)!$.
\end{proof}

\begin{theorem} \label{OmegaMinusOdd}
Let $q = p^e$ for an odd prime $p$, and let $n\ge 4$.  
Let $G$ be the group generated by 
$\{\sigma,\tau,\delta,z,U',V'\}$ subject to the relations given below.  
Then $G$ is isomorphic to $\Omega^-(2n,q)$.  
\begin{enumerate}
\item[(i)] Defining relations for $N=\langle\delta,z,U',V'\rangle$ as in 
Theorem $\ref{OminusoddN}$, 
but omitting relations (iv) to (ix) of that theorem, 
relations (iv), (v) and (vi) of Theorem $\ref{OoddoddN1}$, 
and the relation $U'^4=1$ from the presentation for $\langle U',V'\rangle$.

\item[(ii)] Relations that state that the elements listed
in Theorem $\ref{OmegaMinusOdd-cent-sigma}$ centralise $\sigma$, 
omitting relations (v) and (vi).  

\item[(iii)] Relations that state that the 
elements listed in Theorem $\ref{OmegaMinusOdd-cent-tau}$ centralise $\tau$,
omitting relations (i) and (ii).  

\item[(iv)] Relations that present $\Omega^{-}(6,q)$ on $\{ \sigma,\tau,\delta,z,U'\}$ as in 
Theorem $\ref{Omega6minus}$.

\item[(v)] The Steinberg relations (iv)(a) to (iv)(c), and (iv)(d) when $n>4$, 
of Theorem $\ref{OmegaPlusOdd}$.

\item[(vi)] The Steinberg relation $[\sigma^{V'},\tau]=1$.
\end{enumerate}
\end{theorem}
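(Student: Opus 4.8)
The plan is to follow the template established for $\Omega^+(2n,q)$ and $\Omega(2n+1,q)$ in Theorems \ref{OmegaPlusOdd} and \ref{omegaodd}, using the isomorphism $\Omega^-(6,q)\cong\SU(4,q)/\langle -I_4\rangle$ of Theorem \ref{Omega6minus} as the base case. First I would recover the relations for $N$ that were dropped from (i): items (iv)--(ix) of Theorem \ref{OminusoddN}, items (iv)--(vi) of Theorem \ref{OoddoddN1}, and $U'^4=1$. Each of these involves only $\delta,z,U'$ and so holds in the subgroup $\langle\sigma,\tau,\delta,z,U'\rangle$, which (iv) presents as $\Omega^-(6,q)$; hence all are consequences of (iv). For instance $U'^4=1$ follows because $\delta\delta^{-U'}=\delta_1\delta_2^{-1}$ fixes $w_1,w_2$ and has order $q-1$, so that $U'^4=(\delta\delta^{-U'})^{q-1}=1$ using the omitted relation $U'^2=(\delta\delta^{-U'})^{(q-1)/2}$, itself supplied by (iv). With $N$ fully presented, I would then use the centraliser relations (ii) and (iii) to define the root elements $\sigma_{ij}(s)$ and $\tau_i(\alpha)$ as $N$-conjugates of $\sigma$ and $\tau$; the centraliser generators omitted from (ii) and (iii) all lie in $\Omega^-(6,q)$, so their centralising action is again guaranteed by (iv). Because (iv) is a full presentation of $\Omega^-(6,q)$, the elements $\tau_1(\alpha)$ for every $\alpha\in\GF(q^2)^\times$ are already available, covering both $N$-orbits (squares and non-squares), and conjugation by $N$ extends everything to all suffices.

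The decisive structural feature is that in $\Omega^-(2n,q)$ the element $z_i=(e_i,f_i)(w_1)^-$ lies in the group, so conjugation by $z_i$ flips the sign of the single suffix $i$. This is precisely the extra element by which $N$ properly contains the torus normaliser of the long-root subsystem $\Omega^+(2(n-1),q)$, and it is why far fewer Steinberg relations are needed here than in Theorem \ref{OmegaPlusOdd}. The base case (iv) disposes of every Steinberg relation supported on the two suffices $\{1,2\}$: the long--long relations with all sign patterns (the analogue of instance (e) of Theorem \ref{OmegaPlusOdd}), the short--short relation $[\tau_i(s),\tau_j(t)]=\sigma_{i,-j}(2st)$, and the long--short relations $[\sigma_{ij},\tau_k]$ with $k\in\{i,j\}$. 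Each then holds for an arbitrary pair of suffices by conjugating with $N$, and the individual root-subgroup relations $\sigma_{ij}(s)\sigma_{ij}(t)=\sigma_{ij}(s+t)$, $\tau_i(\alpha)\tau_i(\beta)=\tau_i(\alpha+\beta)$, and the $p$-th power relations likewise descend from (iv).

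It remains to deduce the Steinberg relations genuinely involving three or four distinct suffices. For two long root elements this is where (v) enters: the long root elements generate a copy of $\Omega^+(2(n-1),q)$, and the commutator relations among them are precisely those analysed in Theorem \ref{OmegaPlusOdd}. Since $N$ contains the torus normaliser of that $\Omega^+$ and is strictly larger --- supplying the single sign-flips $z_i$ --- the instances (v)(a)--(c), together with (v)(d) when $n>4$, suffice a fortiori. One first derives the positive-suffix cases by the usual closure argument: the set of scalars for which a given relation holds is closed under addition via the identity $[a,bc]=[a,c][a,b]^c$ and under multiplication by $\omega$ via conjugation by the torus $\Delta_{ij}$, hence is all of the relevant field; one then reaches every sign pattern by conjugating with the $z_i$ and using $\sigma_{ij}(s)=\sigma_{-j,-i}(-s)$. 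The last family is the long--short commutator $[\sigma_{jk},\tau_i]$ of disjoint support ($i\notin\{j,k\}$): the single instance $[\sigma^{V'},\tau]=1$ of (vi) gives $[\sigma_{23}(1),\tau_1(1)]=1$, and the general case follows by scalar closure and $N$-conjugation. I expect the sign bookkeeping in these final steps --- confirming that every signed triple of suffices is reached from $(1,2,3)$ and that no relation is overlooked --- to be the main place where care is required, although it is routine given the free availability of the single sign-flips $z_i$, which is exactly what renders the extra four-suffix sign relation of Theorem \ref{OmegaPlusOdd} unnecessary.

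Finally, having shown that all Steinberg relations of the relative type $B_{n-1}$ hold in $G$, the subgroup $H$ generated by the root elements is a quotient of the corresponding universal group; the presentation of $N$ together with the base case (iv), which already realises the correct central quotient through Corollary \ref{SU(4,q)/-I4}, forces $H\cong\Omega^-(2n,q)$. Since the root elements of $\Omega^-(6,q)$ generate $U'$, we have $U'\in\Omega^-(6,q)\subseteq H$; consequently all $N$-conjugates of $U'$ lie in $H$, and hence so does $V'$, while $\delta$ and $z$ lie in $H$ by the same token. Therefore $G=H\cong\Omega^-(2n,q)$, as required.
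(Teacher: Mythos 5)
Your proposal is correct and follows essentially the same route as the paper: the omitted relations are recovered from the $\Omega^-(6,q)$ base case (iv), the two-suffix Steinberg relations are subsumed in that base case, the long--long relations on three or more suffices follow \emph{a fortiori} from the $\Omega^+(2(n-1),q)$ analysis because $N$ properly contains the torus normaliser $N_0$ there (which is also why the analogues of (iv)(e) and (iv)(f) of Theorem \ref{OmegaPlusOdd} are not needed), and the disjoint-support long--short relation comes from the single instance in (vi). The paper's own proof is just a terser version of the same argument.
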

\begin{proof}
The relations omitted from (i), (ii), and (iii) are supplied by (iv).  

The relations (iv)(e) and (iv)(f) of Theorem $\ref{OmegaPlusOdd}$ 
are supplied respectively by (iv) of this theorem, and by (iv)(d) of 
Theorem $\ref{OmegaPlusOdd}$ on conjugation by $z_l$.

All Steinberg relations involving $\sigma_{ij}(s)$ and $\tau_k(\alpha)$, 
when at most two values for
the moduli of the suffices $i,j,k$ arise, may be deduced from (i) to (iv).  

We now consider Steinberg relations whose LHS is of the form 
$[\sigma_{ij}(s),\sigma_{kl}(t)]$, 
where the moduli of the suffices take at least three distinct values.  
Let $H$ denote the subgroup of $\Omega^-(2n,q)$ generated by $\{\sigma,\delta,z,U',V'\}$.  
Observe that $H$ contains $N$ and fixes
$\langle e_1,f_1,\ldots,e_{n-1},f_{n-1}\rangle$, acting on this space 
as ${\Ot}^+(2(n-1),q)$.  Let $N_0$ denote the torus normaliser in 
$\Omega^+(2(n-1),q)$ denoted by $N$ in Section \ref{SectOmegaP}.  
So $N$ contains $N_0$ as a subgroup of index $2(q+1)$, and the 
Steinberg relations (iv) of Theorem \ref{OmegaPlusOdd} 
suffice {\it a fortiori} to imply all the Steinberg relations of the type 
under consideration.  

It remains to consider the Steinberg relations $[\sigma_{ij}(s),\tau_k(\alpha)]=1$, 
where the suffices have distinct moduli.  
These follow from the one instance $[\sigma_{23}(1),\tau_1(1)]=1$ given by (v).

The standard argument now shows that $G$ is generated by the Steinberg generators.
\end{proof}

\subsubsection{A presentation for ${\rm P}\Omega^-(2n,q)$}
The centre of $\Omega^-(2n,q)$ is trivial, unless $q^n\equiv 3\bmod 4$, 
when it has order 2. If it is non-trivial, then it is generated by
$V'^{(n-1)}\delta^{(q^2-1)/4}$. 
For $V'^{(n-1)}=\diag(-1,-1,\ldots,-1,1,1)$ as $n$ is odd, and 
$\delta^{(q^2-1)/4}=\diag(1,1,\ldots,1,-1,-1)$ as $q\equiv 3\bmod 4$.

\subsubsection{Standard generators for $\Omega^-(2n,q)$}
In Table 2 of \cite{odd} the standard generators for 
$\Omega^{-}(2n, q)$ are labelled $s, t, \delta, u, v$.
Observe that $s = z^{V'^{-1}}; \delta = (\delta^{V'^{-1}})^{((q- 1)^2 / 2) - 1}; 
u = U'$; $v = V'$;  
if $d \equiv 0 \bmod 4$ then  
$t = \tau^{V'^{-1}}$, else $t = {(\tau^{-1})}^{V'^{-1}}$.
The presentation generators $\sigma = [\tau,\tau^{zU'}]^{(p-1)/2}$
and $\delta = (\delta^v)^{((q-1)^2/2)-1}$. 

\subsection{Even characteristic}
\subsubsection{Generators and notation}
Let $q = 2^e$. Let $\omega$ be a primitive element of $\GF(q^2)$, and 
let $\omega_0=\omega^{q+1}$, 
so $\omega_0$ is a primitive element of $\GF(q)$.  

Let $n>2$.  
We take a basis $(e_1,f_1,e_2,f_2,\ldots,e_{n-1},f_{n-1},w_1,w_2)$ for 
the natural module, where the associated quadratic form has matrix 
$$\diag\left(\left(\begin{smallmatrix}0&1\cr0&0\end{smallmatrix}\right),\ldots,
\left(\begin{smallmatrix}0&1\cr0&0\end{smallmatrix}\right),
\left(\begin{smallmatrix}1&\omega+\omega^q\cr0&\omega_0\end{smallmatrix}\right)\right),$$ 
and the corresponding bilinear form has matrix 
$$\diag\left(\left(\begin{smallmatrix}0&1\cr1&0\end{smallmatrix}\right),\ldots,
\left(\begin{smallmatrix}0&1\cr1&0\end{smallmatrix}\right),
\left(\begin{smallmatrix}0&\omega+\omega^q\cr\omega+\omega^q&0\end{smallmatrix}\right)\right).$$
This form is of type $-$.

We define the following elements 
of $\Omega^-(2n,q)$, where $1\le i,j\le n-1$ and $i\ne j$, and $s\in\GF(q)$,
and $\alpha \in \GF(q^2)$.

$\delta_i=(e_i\mapsto\omega_0^{-1}e_i, f_i\mapsto\omega_0 f_i, 
           w_1\mapsto Aw_1 + Cw_2, w_2\mapsto Bw_1 + w_2)$, where
\begin{eqnarray*}
A & = & \omega^{q-1} + \omega^{1 -q}+1 \\
B & = & \omega+\omega^q \\
C & = & \omega^{-1}+\omega^{-q}; 
\end{eqnarray*}





$z_i=(e_i\mapsto f_i, f_i\mapsto e_i, w_2\mapsto w_2+Bw_1)$;


$\sigma_{ij}(s)=(e_i\mapsto e_i+se_j, f_j\mapsto f_j+sf_i)$;

$\sigma_{-i,j}(s)=(f_i\mapsto f_i+se_j, f_j\mapsto f_j+se_i)=\sigma_{ij}(s)^{z_i}$;

$\sigma_{i,-j}(s)=(e_i\mapsto e_i+sf_j, e_j\mapsto e_j+sf_i)=\sigma_{ij}(s)^{z_j}$;

$\sigma_{-i,-j}(s)=\sigma_{ji}(s) = \sigma_{ij}(s)^{z_iz_j}$;

$\tau_i(\alpha)=(e_i\mapsto e_i + \alpha^{q+1}f_i +(\alpha^q\omega^q+\alpha\omega)B^{-1}w_1
+(\alpha^q+\alpha)B^{-1}w_2,  \\
\hspace*{2cm} w_1\mapsto w_1 +(\alpha+\alpha^q)f_i, 
w_2\mapsto w_2 +(\omega\alpha+\omega^q\alpha^q)f_i)$;

$\tau_{-i}(\alpha)=(f_i\mapsto f_i+\alpha^{q+1}e_i+(\alpha^q\omega+\alpha\omega^q)B^{-1}w_1
+(\alpha^q+\alpha)B^{-1}w_2, \\
\hspace*{2cm} w_1\mapsto w_1 +(\alpha+\alpha^q)e_i,
w_2\mapsto w_2+(\alpha^q\omega+\alpha\omega^q)e_i)=\tau_i(\alpha)^{z_i}$;

$U=(e_1,e_2)(f_1,f_2)$;

$V=(e_1,e_2,\ldots,e_{n-1})(f_1,f_2,\ldots,f_{n-1})$.

\smallskip
All these elements of $\Omega^-(2n,q)$ centralise those basis elements that they 
are not stated to move.   Observe that $\delta_i^{z_i}=\delta_i^{-1}$.

We give a presentation for $\Omega^-(2n, q)$ on the generating set 
$\{\sigma= \sigma_{12}(1), \tau= \tau_1(1),\delta = \delta_1, z=z_1, U, V\}$, 
omitting $V$ if $n=3$.
 
\subsubsection{A presentation for $\Omega^-(6, q)$}
We exploit the isomorphism between $\Omega^-(6,q)$ and $\SU(4,q)$.  
The exterior square of the natural $\SU(4,q)$-module $W$
may be written over $\GF(q)$, 
giving rise to the natural representation of $\Omega^-(6,q)$, as follows.
Take a hyperbolic basis $(u_1,v_1,u_2,v_2)$
 for $W$, and define a basis $(e_1,f_1,e_2,f_2,w_1,w_2)$ for $W\wedge W$ thus:
$$ e_1 = u_1\wedge v_2, f_1 = v_1\wedge u_2, e_2 = v_1\wedge v_2, f_2=u_1\wedge u_2$$ 
$$ w_1 = u_1\wedge v_1 + u_2\wedge v_2,\, w_2 = \omega u_1\wedge v_1+\omega^q u_2\wedge v_2.$$
 The resulting isomorphism
 maps our chosen generators for $\SU(4,q)$ to $\Omega^-(6,q)$ as follows:
$\sigma \mapsto \tau$,
$\tau\mapsto \sigma$,
$Z \mapsto U$,
$\delta\mapsto\delta\delta^{-U}$,
$\Delta \mapsto \delta$,
$U\mapsto z$.
Theorem \ref{SU2npres} now provides the desired presentation for $\Omega^-(6,q)$. 

\subsubsection{A presentation for $\Omega^-(2n, q)$ for $n \ge 4$}
Let $N_1$ be the subgroup of $\Omega^-(2n, q)$ generated by $\{ z,U,V\}$.  
This is a copy of $C_2\wr S_{n-1}$.
\begin{theorem}\label{OminusevenN1}
Let $n \ge 4$. Let $G$ be the group generated by
$\{z,U,V\}$ subject to the relations given below.  Then $G$ is isomorphic to $N_1$.
\begin{enumerate}
\item[(i)] Defining relations for $S_{n - 1}$ on $\{ U,V\}$. 
\item[(ii)] $[z,U^{V}]=1$.
\item[(iii)] If $n>4$ then $[z,VU]=1$.
\item[(iv)] $z^2=1$.
\item[(v)] $[z,z^{U}]=1$.
\end{enumerate}
\end{theorem}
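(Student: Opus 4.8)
The plan is to follow exactly the template of Theorem~\ref{OoddoddN1} (and the other ``$N_1$'' results), since this is the standard wreath-product presentation argument in its simplest guise. The target group is $N_1\cong C_2\wr S_{n-1}$, which has order $2^{n-1}(n-1)!$, with base group the elementary abelian $2$-group $\langle z_1,\dots,z_{n-1}\rangle$ of rank $n-1$ and top group $\langle U,V\rangle\cong S_{n-1}$ permuting the $n-1$ hyperbolic pairs $(e_i,f_i)$. Since all the listed relations are readily checked to hold among the matrices $z,U,V$, the group $N_1$ is a homomorphic image of the presented group $G$; the whole content of the proof is therefore the order bound $|G|\le 2^{n-1}(n-1)!$, which then forces $G\cong N_1$.

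First I would invoke relation (i) to identify $\langle U,V\rangle$ in $G$ with $S_{n-1}$ acting on $\{1,\dots,n-1\}$, with $U=(1,2)$ and $V=(1,2,\dots,n-1)$. I then compute that, as permutations, $U^{V}=(2,3)$ and $VU=(2,3,\dots,n-1)$, so that relations (ii) and (iii) assert precisely that $z$ is centralised by the point-stabiliser $\mathrm{Sym}\{2,\dots,n-1\}\cong S_{n-2}$ of the symbol $1$; here I must note the boundary case $n=4$, where $S_{n-2}=S_2=\langle(2,3)\rangle$ is generated by the single transposition supplied by (ii), which is why (iii) is needed only for $n>4$. Consequently the $\langle U,V\rangle$-conjugates of $z$ may unambiguously be labelled $z_1=z,z_2,\dots,z_{n-1}$, permuted in the natural way, and the orbit has size exactly $[S_{n-1}:S_{n-2}]=n-1$.

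Next I would use relation (v), namely $[z,z^{U}]=[z_1,z_2]=1$, to conclude by conjugation that $z_i$ and $z_j$ commute for all $i\ne j$; together with (iv) this shows that $A:=\langle z_1,\dots,z_{n-1}\rangle$ is elementary abelian of rank at most $n-1$. Since $A$ is the normal closure of $z$ in $\langle U,V\rangle$ it is normalised by $\langle U,V\rangle$, hence $A\trianglelefteq G$ and $G=A\langle U,V\rangle$. Therefore $|G|=|A|\,|\langle U,V\rangle|/|A\cap\langle U,V\rangle|\le 2^{n-1}(n-1)!$. Comparing with the matrix group $N_1$, which has this exact order and is an image of $G$, gives $G\cong N_1$.

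I expect the only genuine care-point to be the verification of the two permutation identities $U^{V}=(2,3)$ and $VU=(2,3,\dots,n-1)$ and the confirmation that these generate the stabiliser $S_{n-2}$, together with the clean handling of the $n=4$ endpoint where relation (iii) drops out; everything else is the routine ``normal closure is abelian of bounded rank, so the order is bounded'' step that recurs throughout the paper.
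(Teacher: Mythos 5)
Your proposal is correct and is exactly the argument the paper intends: the paper's own proof consists of the single remark that this is a standard wreath product presentation, and your write-up (identifying $U^V=(2,3)$ and $VU=(2,3,\ldots,n-1)$ as generators of the point stabiliser, labelling the conjugates $z_i$, using (iv) and (v) to bound the base group by an elementary abelian group of rank $n-1$, and comparing orders with the matrix group) is precisely that standard argument spelled out, including the correct observation that (iii) is superfluous when $n=4$.
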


\begin{proof}
This is a standard wreath product presentation.
\end{proof}

Let $N$ be the subgroup of $\Omega^-(2n,q)$ generated by $\{\delta,z,U,V\}$. 
Now $\delta$ and its conjugates
generate a subgroup of $N$ that is the direct product of 
the cyclic group $\langle\delta\rangle$ 
of order
$q^2-1$ with the direct product of $n-2$ cyclic groups of order $q-1$ generated by 
$\delta_1\delta_2^{-1},\delta_2\delta_3^{-1},\ldots,\delta_{n-2}\delta_{n-1}^{-1}$.  
Thus $N$ has order $(q+1)(q-1)^{n-1}2^{n-1}(n-1)!$, twice that for odd $q$, 
reflecting the fact that 
$\delta$ has order $q^2-1$ if $q$ is even, and order $(q^2-1)/2$ if $q$ is odd.

\begin{theorem}\label{OminusevenN}
Let $n \ge 4$. Let $G$ be the group generated by
$\{\delta, z,U,V\}$ subject to the relations given below.  Then $G$ is isomorphic to $N$.
\begin{enumerate}
\item[(i)] Defining relations for $N_1 = \langle z,U,V\rangle$ as in Theorem
$\ref{OminusevenN1}$.
\item[(ii)] $[\delta,U^{V}]=1$.
\item[(iii)] If $n>4$ then $[\delta,VU]=1$.  
\item[(iv)] $[\delta,z^{U}]=\delta^{q-1}$.
\item[(v)] $\delta^{q^2-1}=1$.
\item[(vi)] $[\delta,\delta^{U}]=1$.
\item[(vii)] $\delta^z=\delta^{-1}$.
\item[(viii)] $[\delta^{q-1},U] = 1$.
\end{enumerate}
\end{theorem}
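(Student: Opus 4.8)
The plan is to mirror the argument of Theorem~\ref{OminusoddN}: the listed relations all hold in the matrix group $N$, so $N$ is a homomorphic image of $G$ and it suffices to prove the matching upper bound $|G|\le |N|=(q+1)(q-1)^{n-1}2^{n-1}(n-1)!$. Throughout I would work inside $G$, writing $\delta_i:=\delta^{w}$ and $z_i:=z^{w}$ for the appropriate $w\in\langle U,V\rangle$, and aim to exhibit an abelian normal subgroup $A=\langle\delta_1,\dots,\delta_{n-1}\rangle$ with $G=AN_1$.

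First I would invoke relation (i) and Theorem~\ref{OminusevenN1} to identify $\langle z,U,V\rangle$ with $N_1\cong C_2\wr S_{n-1}$ of order $2^{n-1}(n-1)!$, so that $\langle U,V\rangle\cong S_{n-1}$ acts on the $z_i$ by the natural permutation of suffices. Relations (ii) and (iii) state that $\delta=\delta_1$ is centralised by $U^V=(2,3)$ and by $VU=(2,\dots,n-1)$, i.e.\ by the copy of $S_{n-2}$ fixing the suffix $1$; this lets me label the $\langle U,V\rangle$-conjugates of $\delta$ as $\delta_i$ for $1\le i\le n-1$, permuted naturally. Relation (vi) gives $[\delta_1,\delta_2]=1$, whence by conjugation all $\delta_i$ commute, so $A$ is abelian. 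Relations (iv) and (vii) record the actions of $z_2=z^{U}$ and $z_1=z$ on $\delta_1$; conjugating yields the action of every $z_i$ on every $\delta_j$ as a power lying in $A$, and together with the permutation action of $\langle U,V\rangle$ this shows $N_1$ normalises $A$. Since $\delta\in A$ and $z,U,V\in N_1$, it follows that $A\trianglelefteq G$ and $G=AN_1$.

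The crucial step is bounding $|A|$. By (v) each $\delta_i$ has order dividing $q^2-1$, which alone would only give the far-too-large bound $(q^2-1)^{n-1}$. To sharpen it I would prove that $\delta^{q-1}$ is centralised by all of $\langle U,V\rangle$: relation (viii) gives $[\delta^{q-1},U]=1$, and then $\langle U,VU\rangle=\langle U,V\rangle$ (using the consequence $[\delta^{q-1},VU]=1$ of (iii)) settles the case $n>4$, while $\langle U,U^V\rangle=S_3=\langle U,V\rangle$ (using the consequence $[\delta^{q-1},U^V]=1$ of (ii)) settles the case $n=4$. Hence $[\delta^{q-1},V]=1$ and all the $\delta_i^{q-1}$ coincide. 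Because $q$ is even, $q-1$ and $q+1$ are coprime, so this common value has order dividing $q+1$, whereas each $\delta_i\delta_{i+1}^{-1}$ satisfies $(\delta_i\delta_{i+1}^{-1})^{q-1}=\delta_i^{q-1}\delta_{i+1}^{-(q-1)}=1$ and thus has order dividing $q-1$. As $A$ is abelian and generated by $\delta_1$ together with $\delta_1\delta_2^{-1},\dots,\delta_{n-2}\delta_{n-1}^{-1}$, this yields $|A|\le(q^2-1)(q-1)^{n-2}=(q+1)(q-1)^{n-1}$.

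Finally I would assemble the estimate $|G|=|AN_1|=|A|\,|N_1|/|A\cap N_1|\le |A|\,|N_1|\le (q+1)(q-1)^{n-1}\cdot 2^{n-1}(n-1)!=|N|$. Since $N$ is a homomorphic image of $G$ of exactly this order, equality forces $G\cong N$. The main obstacle will be the order reduction for $A$: one must verify that forcing the $\delta_i^{q-1}$ to agree is precisely what drops $|A|$ to $(q+1)(q-1)^{n-1}$, and that relations (ii), (iii), (viii) really do centralise $\delta^{q-1}$ by the full group $\langle U,V\rangle$ in both cases $n=4$ and $n>4$; once this is in hand, the remaining bookkeeping runs exactly parallel to the odd-characteristic argument of Theorem~\ref{OminusoddN}, with the simplifications that $U^2=1$ (so there is no analogue of the relation $U'^2=(\delta\delta^{-U'})^{(q-1)/2}$) and that $\delta$ now has full order $q^2-1$.
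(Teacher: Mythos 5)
Your proposal is correct and follows essentially the same route as the paper, which proves this result by carrying over the argument of Theorem~\ref{OminusoddN}: identify $N_1$ via (i), define the commuting conjugates $\delta_i$ via (ii), (iii) and (vi), use (iv), (vii) to see that $N_1$ normalises $A=\langle\delta_1,\dots,\delta_{n-1}\rangle$, and use (viii) together with (ii) or (iii) to force $\delta_i^{q-1}=\delta_j^{q-1}$, giving $|A|\le(q+1)(q-1)^{n-1}$ and hence $|G|\le|N|$. Your handling of the two cases $n=4$ and $n>4$, and of the simplifications coming from $U^2=1$ and $\delta$ having full order $q^2-1$, matches the intended adaptation.
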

\noindent
The proof is essentially identical to that of Theorem \ref{OminusoddN}.

\begin{theorem}\label{Ominus-even-sigma}
The centraliser of $\sigma = \sigma_{12}(1)$ in $N$ 
has index $2(n-1)(n-2)(q-1)$ in $N$ and 
is generated by the following elements: 
\begin{enumerate}
\item[(i)] $U^{{V}^2}$ if $n>4$; 
\item[(ii)] $VUU^V$ if $n>5$; 
\item[(iii)] $z^{{V}^2}$;
\item[(iv)] $\delta^{{V}^2}$;
\item[(v)] $\delta\delta^{U}$;
\item[(vi)] $zz^{U}U$.
\end{enumerate}
\end{theorem}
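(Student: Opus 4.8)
The plan is to adapt the proof of the odd-characteristic analogue, Theorem~\ref{OmegaMinusOdd-cent-sigma}, to even characteristic, where the only arithmetic change is that $\delta$ now has order $q^2-1$ rather than $(q^2-1)/2$.

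First I would pin down the index by the orbit--stabiliser theorem. The subgroup $N$ permutes by conjugation the symbols $\sigma_{ij}(s)$ with $1\le|i|\ne|j|\le n-1$ and $s\in\GF(q)^\times$: the copy of $S_{n-1}$ inside $\langle U,V\rangle$ permutes the moduli, the $z_i$ flip signs, and conjugation by $\delta_i$ multiplies $s$ by a power of $\omega_0$ and so runs through all of $\GF(q)^\times$; hence the action is transitive. There are $4(n-1)(n-2)(q-1)$ such symbols, but they are equal in pairs via $\sigma_{ij}(s)=\sigma_{-j,-i}(s)$, so the orbit has size $2(n-1)(n-2)(q-1)$, which is the asserted index.

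Next I would check that each listed element centralises $\sigma=\sigma_{12}(1)$. Items (i)--(iv), namely $U^{V^2}$, $VUU^V$, $z_3=z^{V^2}$ and $\delta_3=\delta^{V^2}$, all fix the moduli $1$ and $2$ (equivalently act trivially on $\langle e_1,f_1,e_2,f_2\rangle$), so they commute with $\sigma$. For item (v) one computes $\sigma^{\delta_1}=\sigma_{12}(\omega_0^{-1})$ and $\sigma^{\delta_2}=\sigma_{12}(\omega_0)$, whence $\delta\delta^U=\delta_1\delta_2$ fixes $\sigma$. For item (vi), the conjugation rules give $\sigma^{z_1z_2U}=\sigma_{-2,-1}(1)=\sigma_{12}(1)$, using $z_i^2=U^2=1$; so $zz^UU=z_1z_2U$ centralises $\sigma$ and is an involution inverting $\delta_1\delta_2$.

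The substantive step is to show the six elements generate a group $G$ of order $(q+1)(q-1)^{n-2}2^{n-2}(n-3)!$, matching the centraliser order $|N|/(2(n-1)(n-2)(q-1))$ obtained above from $|N|=(q+1)(q-1)^{n-1}2^{n-1}(n-1)!$. The key point is that every $\delta_i$ acts by the \emph{same} rotation of order $q+1$ on $\langle w_1,w_2\rangle$, so that $K:=\langle\delta^{q-1}\rangle$ is this common anisotropic rotation; it is a normal (but not central, being inverted by each $z_i$) cyclic subgroup of order $q+1$. Working modulo $K$, the images of items (i)--(iv) generate $D_{2(q-1)}\wr S_{n-3}$ acting on the coordinates $3,\dots,n-1$ --- with $\langle\delta_3,z_3\rangle\cong D_{2(q-1)}$ the base group at position $3$ and the permutations supplying $S_{n-3}$ --- while items (v) and (vi) generate a commuting copy of $D_{2(q-1)}=\langle\delta_1\delta_2,\,z_1z_2U\rangle$. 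These two factors intersect trivially and commute modulo $K$, so $G/K\cong(D_{2(q-1)}\wr S_{n-3})\times D_{2(q-1)}$ and $|G|=(q+1)(q-1)^{n-2}2^{n-2}(n-3)!$ as required. I expect the main obstacle to lie precisely here: one must verify carefully that, modulo $K$, the classes of $\delta_3$ and of $\delta_1\delta_2$ become independent torus elements of order $q-1$ (the shared $w$-rotation having been factored out), and that the two dihedral factors genuinely commute and meet trivially; keeping track of the fact that $K$ is normal but inverted by the $z_i$ is the delicate bookkeeping.
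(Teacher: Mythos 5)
Your proposal is correct and follows essentially the same route as the paper, which proves this by declaring it ``essentially identical'' to the odd-characteristic case (Theorem~\ref{OmegaMinusOdd-cent-sigma}): orbit--stabiliser on the $\sigma_{ij}(s)$ (equal in pairs) for the index, then the normal subgroup $K=\langle\delta^{q-1}\rangle$ with quotient a direct product of $D_{2(q-1)}\wr S_{n-3}$ and a group generated by the images of (v) and (vi). You have correctly made the two adjustments the paper leaves implicit — $|K|=q+1$ rather than $(q+1)/2$, and the second factor becoming $D_{2(q-1)}$ rather than $Q_{2(q-1)}$ since $zz^UU$ is now an involution — and the resulting order $(q+1)(q-1)^{n-2}2^{n-2}(n-3)!$ matches.
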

\noindent
The proof is essentially identical to that of 
Theorem \ref{OmegaMinusOdd-cent-sigma}.

As in the case of odd characteristic, 
$\tau_i(\alpha)^{\delta_i}=\tau_i(\alpha\omega^2)$ for $i>0$, 
and $\tau_{i}(\alpha)^{\delta_i}=\tau_i(\alpha\omega^{-2})$ for $i<0$. 
Now $N$ acts transitively by conjugation on 
$\{\tau_{\pm i}(\alpha) \, | \, 1 \le i \le n-1, \alpha\in\GF(q^2)^{\times}\}$,
a set of size $2(q^2-1)(n-1)$. 

\begin{theorem}\label{Ominus-even-tau}
The centraliser of $\tau = \tau_1(1)$ in $N$ 
has index $2(q^2-1)(n-1)$  in $N$ and is generated by 
the following elements:
\begin{enumerate}
\item[(i)] $\delta^{V^2}\delta^{-V}$; 
\item[(ii)] $z^U$;
\item[(iii)] $U^V$;
\item[(iv)] $VU$ if $n>4$.
\end{enumerate}
\end{theorem}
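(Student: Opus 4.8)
The plan is to compute the orbit of $\tau = \tau_1(1)$ under conjugation by $N$ and verify that the listed elements both centralise $\tau$ and generate a subgroup of the correct index. The statement already asserts that $N$ acts transitively on the set $\{\tau_{\pm i}(\alpha) \mid 1 \le i \le n-1,\ \alpha \in \GF(q^2)^\times\}$ of size $2(q^2-1)(n-1)$, and that $|N| = (q+1)(q-1)^{n-1}2^{n-1}(n-1)!$. Dividing these gives the target order of the centraliser, namely $(q+1)(q-1)^{n-2}2^{n-2}(n-2)!$, so the claimed index $2(q^2-1)(n-1)$ is exactly the orbit size. Thus the entire proof reduces to two checks: that the four listed elements lie in the centraliser, and that they generate a subgroup of precisely this order.

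First I would verify that each generator centralises $\tau = \tau_1(1)$ by direct matrix computation, exactly as in the analogous Theorems \ref{OmegaMinusOdd-cent-tau} and \ref{SU2ntau}. The permutation-type elements $U^V$ (item (iii)) and $VU$ (item (iv), when $n > 4$) generate the copy of $S_{n-2}$ in $\langle U, V\rangle$ fixing the basis vectors $e_1, f_1$, hence fix $\tau_1(1)$. The element $z^U$ (item (ii)) equals $z_2$ and so fixes $e_1, f_1, w_1, w_2$, centralising $\tau$. The diagonal element $\delta^{V^2}\delta^{-V}$ (item (i)) must be checked to centralise $\tau_1(1)$; since $\tau_i(\alpha)^{\delta_i} = \tau_i(\alpha\omega^2)$, only $\delta_1$ acts nontrivially on $\tau_1$, and one confirms that the exponent combination in $\delta^{V^2}\delta^{-V}$ involves only $\delta_2, \delta_3$ (the $V$ and $V^2$ shifts), hence fixes $\tau_1(1)$.

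Next I would identify the subgroup these elements generate, following the template of Theorem \ref{OmegaMinusOdd-cent-tau}: the elements (i), (ii), (iii), (iv) should generate a copy of $D_{2(q-1)} \wr S_{n-2}$ together with an extra diagonal torus factor accounting for the $(q+1)$. More precisely, item (i) supplies a generator for the torus acting on the $e_i, f_i$ for $i \ge 2$, item (ii) supplies the involution $z_2$, and items (iii), (iv) supply the $S_{n-2}$ permuting the indices $2, \dots, n-1$. I would compute the order of the group generated as matrices and confirm it equals $(q+1)(q-1)^{n-2}2^{n-2}(n-2)!$, the required centraliser order; since the listed elements centralise $\tau$ and generate a subgroup of exactly the index-value predicted by the orbit computation, that subgroup must be the full centraliser.

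The main obstacle will be pinning down the torus contribution: the centraliser order contains a factor $(q+1)(q-1)^{n-2}$ rather than the naive $(q-1)^{n-1}$, reflecting that $\delta$ has order $q^2-1$ in even characteristic, and one must check carefully that item (i), $\delta^{V^2}\delta^{-V}$, together with its $S_{n-2}$-conjugates and the extra $(q+1)$-factor arising from $\delta^{q-1}$ (which commutes with everything by relation (viii) of Theorem \ref{OminusevenN}), generates a torus of the correct order without collapsing. This is precisely the subtlety handled in the proof of Theorem \ref{OmegaMinusOdd-cent-tau}, and the remark preceding the theorem statement explicitly notes that the even-characteristic case doubles the relevant order; I would handle it by the same explicit diagonal-matrix bookkeeping used there, so I expect no new difficulty beyond tracking the index ranges and the $z_i$ action on $w_1, w_2$.
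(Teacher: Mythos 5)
Your index computation is off by a factor of $q+1$, and this propagates into a wrong structural picture of the centraliser. Since $q^2-1=(q+1)(q-1)$, dividing $|N|=(q+1)(q-1)^{n-1}2^{n-1}(n-1)!$ by the orbit size $2(q^2-1)(n-1)$ gives $(q-1)^{n-2}2^{n-2}(n-2)!$, not $(q+1)(q-1)^{n-2}2^{n-2}(n-2)!$: the factor $q+1$ cancels. Correspondingly the centraliser has no cyclic factor of order $q+1$, and $\delta^{q-1}$ does \emph{not} centralise $\tau$. The point you miss is that \emph{every} $\delta_j$ acts nontrivially on the root group $\{\tau_1(\alpha)\}$, because all the $\delta_j$ share the same nontrivial action on $\langle w_1,w_2\rangle$ (one finds $\tau_1(\alpha)^{\delta_1}=\tau_1(\alpha\omega^{2})$ and $\tau_1(\alpha)^{\delta_j}=\tau_1(\alpha\omega^{1-q})$ for $j\ne 1$); only combinations such as $\delta_i\delta_j^{-1}$ with $i,j\ge 2$, and $\delta_1^{q-1}\delta_2^{2}$, lie in the centraliser. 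For the same reason your claim that $z^U=z_2$ ``fixes $w_2$'' is false in even characteristic ($z_2$ sends $w_2\mapsto w_2+Bw_1$), although $z_2$ does in fact centralise $\tau$.

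The more serious omission is the step that makes the theorem work. The torus part of the centraliser has order $(q-1)^{n-2}$, but item (i) and its $S_{n-2}$-conjugates supply only $\{\delta_i\delta_j^{-1}: i,j\ge 2\}$, of order $(q-1)^{n-3}$; the missing factor of $q-1$ is carried by $\delta^{q-1}\delta^{2U}=\delta_1^{q-1}\delta_2^{2}$, which is not among the listed generators. The paper closes this gap by the identity $\delta^{q-1}\delta^{2U}=[\delta^{V^2}\delta^{-V},z^U]$, which holds because $z_2$ inverts the common action of the $\delta_i$ on $\langle w_1,w_2\rangle$ while fixing $e_3,f_3$; hence the needed element already lies in the subgroup generated by (i)--(iv). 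Without this identity (or some substitute) your order count for the generated subgroup cannot reach the full centraliser order, so the argument as proposed does not go through.
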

\begin{proof}
The centraliser of $\tau$ in $N$ is generated by the 
subgroup of $\langle U,V\rangle$ that
as permutations of $\{1,2,\ldots,n-1\}$ fixes $1$, 
together with $\{z_i \, | \, i>1\}$ and $\{\delta_i\delta_2^{-1} \, | \, i>2\}$ and 
$\delta_1^{q-1}\delta_2^2$, and hence is generated by 
$\{\delta^{V^2}\delta^{-V}, z^U, U^V,
VU, \delta^{q-1}\delta^{2U}\}$, omitting
$VU$ if $n=4$.  This group has index $2(q^2-1)(n-1)$ 
in $N$, as required, the final generator having order $q-1$.
But $\delta^{q-1}\delta^{2U}=[\delta^{V^2}\delta^{-V},z^U]$, 
so this generator is redundant.
\end{proof}

\begin{theorem} 
Let $q$ be even and let $n \ge 4$.  Let $G$ be the group generated by 
$\{\sigma,\tau,\delta,z,U,V\}$ subject to the relations given below.  
Then $G$ is isomorphic to $\Omega^-(2n,q)$.  
\begin{enumerate}
\item[(i)] Defining relations for $N=\langle\delta,z,U,V\rangle$
as in Theorem $\ref{OminusevenN}$, but omitting relations (iv) and (v)
from Theorem $\ref{OminusevenN1}$, 
relations (iv) to (viii) of Theorem $\ref{OminusevenN}$, 
and the relation $U^2=1$ from the presentation for $\langle U,V\rangle$.

\item[(ii)] Relations that state that the elements listed
in Theorem $\ref{Ominus-even-sigma}$ centralise $\sigma$,
omitting relations (v) and (vi).  

\item[(iii)] Relations that state that the elements listed
in Theorem $\ref{Ominus-even-tau}$ centralise $\tau$, 
but omitting the relation $[\tau,z^{U}]=1$.

\item[(iv)] Relations that present $\Omega^{-}(6,q)$ 
on $\{ \sigma,\tau,\delta,z,U\}$ as in Theorem $\ref{SU2npres}$.

\item[(v)] The Steinberg relations (iv)(a) to (iv)(c), and (iv)(d) when $n>4$, 
of Theorem $\ref{Opluseven}$.

\item[(vi)] The Steinberg relation $[\sigma^{V},\tau]=1$.
\end{enumerate}
\end{theorem}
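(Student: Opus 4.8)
The plan is to mirror the proof of Theorem \ref{OmegaMinusOdd} for odd characteristic, since the structural reduction is identical in even characteristic. First I would observe that the relations omitted from (i), (ii), and (iii) are all supplied by the $\Omega^-(6,q)$ presentation in (iv): via Theorem \ref{SU2npres} this presentation contains enough relations for $N=\langle\delta,z,U,V\rangle$ and for the centralisers of $\sigma$ and $\tau$ restricted to the $6$-dimensional block, so that $N$ embeds in $G$ and the root elements $\sigma_{ij}(s)$ (for $1\le|i|\ne|j|\le n-1$, $s\in\GF(q)^{\times}$) and $\tau_i(\alpha)$ (for $1\le|i|\le n-1$, $\alpha\in\GF(q^2)^{\times}$) can be defined in $G$ as $N$-conjugates of $\sigma$ and $\tau$, using Theorems \ref{Ominus-even-sigma} and \ref{Ominus-even-tau}.

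Next I would dispose of the Steinberg relations case by case. Those involving $\sigma_{ij}(s)$ and $\tau_k(\alpha)$ in which at most two distinct values of $|i|,|j|,|k|$ occur all hold inside the copy of $\Omega^-(6,q)$, hence follow from (iv); in particular the analogues of relations (iv)(e) and (iv)(f) of Theorem \ref{Opluseven} are supplied, the first by the $\Omega^-(6,q)$ presentation (iv) and the second by conjugating (iv)(d) of Theorem \ref{Opluseven} by a suitable $z_l$. For the commutators $[\sigma_{ij}(s),\sigma_{kl}(t)]$ in which the moduli take at least three distinct values, I would invoke the subgroup $H=\langle\sigma,\delta,z,U,V\rangle$, which fixes $\langle e_1,f_1,\ldots,e_{n-1},f_{n-1}\rangle$ and acts on it as $\Ot^+(2(n-1),q)$. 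Writing $N_0$ for the torus normaliser of $\Omega^+(2(n-1),q)$ from Section \ref{SectOmegaP}, one has $N\ge N_0$, so the Steinberg relations (v) here (precisely (iv)(a)--(iv)(c) and, when $n>4$, (iv)(d) of Theorem \ref{Opluseven}) suffice \emph{a fortiori} to force all such commutator relations, exactly as in the odd case.

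It then remains to handle the relations $[\sigma_{ij}(s),\tau_k(\alpha)]=1$ whose suffices have three distinct moduli; these all follow by conjugation in $N$ from the single instance $[\sigma^{V},\tau]=1$ recorded in (vi). Once all Steinberg relations are established, the standard concluding argument applies: the subgroup generated by the Steinberg generators is isomorphic to $\Omega^-(2n,q)$, and since $z$, $\delta$, and $U$ lie in it — whence $V$ does too, as a product of $N$-conjugates of $U$ — this subgroup is the whole of $G$.

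I expect the main obstacle to lie not in the formal reduction, which transfers essentially verbatim, but in checking that the even-characteristic short-root elements $\tau_i(\alpha)$ — whose explicit action on $w_1,w_2$ involves the coefficient $B=\omega+\omega^q$ and its inverse — interact correctly with the $\sigma_{ij}(s)$ inside the $\Omega^-(6,q)$ block, so that the two-modulus Steinberg relations really are consequences of (iv). A secondary point to verify is that no exceptional cover intervenes: unlike the $\Omega(7,3)$ case treated earlier, for even $q$ and $n\ge 4$ the Steinberg relations present the universal group with no extra multiplier relation required, so relations (i)--(vi) already give $\Omega^-(2n,q)$ exactly.
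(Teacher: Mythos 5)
Your proposal is correct and follows exactly the route the paper takes: the paper's own proof of this theorem is the single remark that the argument for Theorem \ref{OmegaMinusOdd} carries over with no significant change, and your write-up is a faithful transcription of that odd-characteristic argument with the appropriate even-characteristic references (Theorem \ref{SU2npres} for the $\Omega^-(6,q)$ base case and Theorem \ref{Opluseven} for the Steinberg relations). The concluding observations about the coefficient $B=\omega+\omega^q$ and the absence of an exceptional multiplier are sensible sanity checks but introduce nothing beyond what the paper's reduction already handles.
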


\begin{proof}
The proof of Theorem \ref{OmegaMinusOdd} carries over with no 
significant change.
\end{proof}
\noindent
If $q$ is even, then $\Omega^-(2n,q)$ is simple.

\subsubsection{Standard generators for $\Omega^-(2n,q)$} 
In Table 1 of \cite{even} the non-trivial standard generators for 
$\Omega^{-}(2n, q)$ are labelled $s, t, \delta, u, v$.
Observe that $s = z^{V^{-1}}; u = U; v = V$;  
the standard generator $\delta$ is $(\delta^{-1})^{V^{-1}}$; 
if $d \equiv 0 \bmod 4$ then  $t = \tau^{V^{-1}}$, else 
$t = {(\tau^{-1})}^{V^{-1}}$.
As a word in the remaining presentation generators, 
$\sigma = [\tau^{\delta^V},\tau^U]^{zU ({\delta^{-V}})^{(q-m)}}$,
where $m$ is defined by $\omega_0^m=\omega^2+\omega^{2q}$,  
so $\sigma$ can be written as an explicit word in the standard generators.
(This arises from relation (v)(b) of Theorem \ref{SU2npres}.)  

\section{Verification and access to results}\label{verify}
Both the presentations
and definitions of their (presentation and standard) generators
are publicly available \cite{code} in {\sc Magma}.
We store the relators as straight-line programs on the 
generating set. 

We have run extensive checks to verify their correctness. 
By evaluating the relations, it is easy to demonstrate
that a classical group of specified dimension $d$ and field size $q$ 
is a quotient of the corresponding finitely-presented group. 

For each type, we have a presentation for $N$, the 
normaliser of a maximal torus in a split $BN$ pair. 
For an extensive range of $d$ and $q$,
typically where the corresponding $N$ has 
order at most $10^9$, we verified 
using coset enumeration that this presentation defines $N$.
We constructed the permutation action of $N$ on 
unordered pairs of root elements, confirming that 
the orbits and representatives are as claimed.
Extensive checks were also performed in
the classical group: in particular,
the claimed (generating sets and structure of) centralisers
in $N$ of root elements have been checked. 
For relevant $d \leq 10$ and $q \leq 9$
(and sometimes for larger $d$ and smaller $q$), 
we also carried out coset enumeration in 
each finitely-presented group 
over the preimage of the maximal
subgroup of smallest index,
and verified that the resulting permutation representation 
defines (the central quotient of) the claimed classical group.  

We observe that our presentations seem particularly well-suited
to coset enumeration, often defining precisely the 
required number of cosets with no redundancy.


\end{document}